\begin{document}
 
\newtheorem{thm}{Theorem}[section]
\newtheorem{defn}[thm]{Definition}
\newtheorem{lem}[thm]{Lemma}
\newtheorem{prop}[thm]{Proposition}
\newtheorem{cor}[thm]{Corollary}
\newtheorem{rmk}[thm]{Remark}
\newtheorem{sublem}[thm]{Sublemma}
\newtheorem{convention}[thm]{Convention}
 
\renewcommand{\qed}{\hfill \mbox{\raggedright \rule{.1in}{.1in}}}
\def\co{\colon\thinspace}
 
\title{The quasi-isometry invariance of commensurizer subgroups}
\author{Diane M. Vavrichek\footnote{This research was partially supported by NSF grant DMS-0602191.}}
\date{}
\maketitle

\abstract{
We prove that commensurizers of two-ended subgroups with at least three coends in one-ended, finitely presented groups are invariant under quasi-isometries.
We discuss a variety of applications of this result.
}
 
\section{Introduction}

Any finitely generated group can be endowed with a metric, via the Cayley graph of the group with respect to any finite generating set.
The metrics we get on the group from two different finite generating sets may differ greatly, but they will induce the same ``coarse geometry'' on the group, in the sense that there will be a quasi-isometry taking the group endowed with one metric to the group endowed with the other.

The discovery and investigation of algebraic properties of groups that are invariant under quasi-isometries is an active area of geometric group theory.
Though many such properties have been found, there are few results that provide answers to the question that we are interested in here:  are there certain types of subgroups whose existence and location are invariant under quasi-isometries?

In this paper, we show that we can answer this in the affirmative in the case of certain commensurizer subgroups of one-ended, finitely presented groups.  We recall the definition of a commensurizer (also called a commensurator by some authors):

\begin{defn}
Let $G$ be a group with a subgroup $H$.  Then the {\em commensurizer of $H$ in $G$}, denoted Comm$_G(H)$, is the subgroup consisting of all $g \in G$ such that $H \cap gHg^{-1}$ is of finite index in $H$ and in $gHg^{-1}$.
\end{defn}

Our main result is the following.  We use $d_{Haus}$ to denote Hausdorff distance.
\\

\noindent {\bf Theorem \ref{Cqiinv}.} {\it
Let $f\co G \to G'$ be a $(\Lambda, K)$-quasi isometry between finitely presented, one-ended groups, and suppose that $H$ is a two-ended subgroup of $G$ with $n$ coends in $G$, for $n\geq 3$.
Then there is a two-ended subgroup $H'$ of $G'$ such that $H'$ has $n$ coends in $G'$ and there exists some constant $y=y(G, H, \Lambda, K)$ such that 
$$d_{Haus}(f({\rm Comm}_{G}(H)), {\rm Comm}_{G'}(H'))< y.$$
}

To prove this theorem, we make use of ``quasi-lines'', which were defined by Papasoglu in \cite{papa_qlines}, and were used to prove the main results in that work.
We make use of some of his results, and develop his theory further.

A variety of corollaries follow relatively easily from Theorem \ref{Cqiinv}.
First, we have the following.
\\

\noindent {\bf Corollary \ref{fn_cor}.} {\it
Let $f, G, G', H$ and $H'$ be as in Theorem \ref{Cqiinv}.  Then Comm$_G(H)$ is of type $F_n$ if and only if Comm$_{G'}(H')$ is of type $F_n$.
}
\\

This result follows from Theorem \ref{Cqiinv} and the theorem below, which we show by introducing a theory of ``coarse isometries'' (a more general notion than that of quasi-isometries), and applying Kapovich's arguments from \cite{Kap_ggt_book} that show that being of type $F_n$ is a quasi-isometry invariant.
\\

\noindent {\bf Theorem \ref{fn_thm}.} {\it
Let $f \co G \to G'$ be a quasi-isometry between finitely generated groups, and suppose that $C$ is a subgroup of $G$, $C'$ is a subgroup of $G'$, and that $d_{Haus}(f(C), C')<\infty$.
Then $C$ is of type $F_n$ if and only if $C'$ is of type $F_n$, for $n \geq 1$.
}
\\

For the next consequence of Theorem \ref{Cqiinv}, we recall that a JSJ decomposition of a group is a graph of groups decomposition that encapsulates the structure of the different splittings the group admits over two-ended subgroups, in the same way that a JSJ decomposition of a 3-manifold encapsulates the structure of the essential embeddings of annuli and tori into the manifold.  
Many different versions of these decompositions of groups have been defined (see \cite{KropJSJ}, \cite{SelaJSJ}, \cite{RipsSela}, \cite{BowCutPts}, \cite{BowJSJ}, \cite{DunSa}, \cite{DunSwen}, \cite{FujiPapa}). 
Scott and Swarup defined a version in \cite{SS}, which exists for all one-ended, finitely presented groups.

Papasoglu's work in \cite{papa_qlines} proves the invariance under quasi-isometries of many of the vertex groups of the Scott-Swarup JSJ decomposition.
Theorem \ref{Cqiinv} implies the invariance of some of the remaining vertex groups:
\\

\noindent {\bf Corollary \ref{SSJSJ_cor}.} {\it
Let $f \co G \to G'$ be a quasi-isometry between the Cayley graphs of one-ended, finitely presented groups $G$ and $G'$, and suppose that $C$ is a vertex group of commensurizer type of the Scott-Swarup JSJ decomposition of $G$.
Then there is a vertex group, $C'$, of commensurizer type of the Scott-Swarup JSJ decomposition of $G'$, which is such that
$$d_{Haus}(f(C), C')<\infty.$$
}

In the case when $G$ and $G'$ are 3-manifold groups, the Scott-Swarup JSJ decompositions are closely related to the JSJ decompositions of the associated 3-manifolds.  
In particular, Corollary \ref{SSJSJ_cor} implies the following.
\\

\noindent {\bf Corollary \ref{3man_app}.} {\it
Let $M$ and $M'$ be connected orientable Haken 3-manifolds with incompressible boundary, and with $f\co \pi_1(M) \to \pi_1(M')$ a quasi-isometry.  Suppose that $N$ is a nonexceptional Seifert fibered component of the characteristic submanifold of $M$ that meets the boundary of $M$.

Then there is a nonexceptional Seifert fibered component, $N'$, of the characteristic submanifold of $M'$ that meets the boundary of $M'$.
Moreover, if $C$ denotes the subgroup of $\pi_1(M)$ induced by the inclusion of $N$ into $M$ and $C'$ denotes the subgroup of $\pi_1(M')$ induced by the inclusion of $N'$ into $M'$, then
$$d_{Haus}(f(C), C')<\infty.$$
}

In \cite{KapLeeb_Haken}, M. Kapovich and Leeb prove a stronger result that implies Corollary \ref{3man_app}, using different methods.  See also \cite{KapLeeb_SFS} for a related result.

Two more corollaries to Theorem \ref{Cqiinv} are results about groups of quasi-isometries of groups.  
We denote the set of quasi-isometries from a metric space $X$ to a metric space $Y$, after identifying any maps of finite sup distance from one another, by $QI(X,Y)$, and we denote $QI(X,X)$ by $QI(X)$.
Of particular interest are the sets $QI(G)$, for $G$ a finitely generated group.
These sets form groups themselves, and are often quite complicated and difficult to study.

The following two results are implied immediately from the proof of Theorem \ref{Cqiinv}.
\\

\noindent {\bf Corollary \ref{qi1}.} {\it
Suppose that $G$ is a one-ended, finitely presented group such that $G = $ Comm$_G(H)$ for a two-ended subgroup $H$ of $G$ that has at least three coends in $G$.
Consider $G/H$, with the metric described in section \ref{qi(g)_section}.

Then there is a canonical map $QI(G) \to QI(G/H)$ that is surjective.
}
\\

\noindent {\bf Corollary \ref{qi2}.} {\it 
Let $G$ be a one-ended finitely presented group and let $N= \{ 3,4,5,\ldots\} \cup \{ \infty\}$.  
For any $n \in N$, let $K_n$ be a maximal collection of two-ended subgroups of $G$ with $n$ coends that have mutually infinite Hausdorff distance, and let $\{ K_n^j\}_{j \in J_n}$ be the partition of $K_n$ into sets of subgroups of quasi-isometric commensurizers.

Then there is a canonical map 
$$QI(G) \to \left\{ \prod_{n \in N} \prod_{H \in K_n} QI(M_H, M_{\sigma(H)}) \ : \ \sigma \in \prod_{n \in N} \prod_{j \in J_n} Sym(K_n^j)\right\} ,$$
where, for any two-ended subgroup $H$ of $G$, $M_H$ denotes the set Comm$_G(H)/H$, with a metric given in section \ref{qi(g)_section}, and $Sym(K_n^j)$ denotes the symmetric group on the set $K_n^j$.
}
\\

We note that Corollary \ref{qi1} is very similar in spirit to the main result in \cite{SoucheWiest}.

In section \ref{semi_section}, we prove one more corollary to Theorem \ref{Cqiinv}, which characterizes when a semidirect product of a free group with $\mathbb{Z}$ is quasi-isometric to the direct product of the free group with $\mathbb{Z}$:
\\

\noindent {\bf Corollary \ref{semi}.} {\it
Let $F_n$ be the free group on $n$ generators for any $n \geq 1$.
Then $F_n \rtimes \mathbb{Z}$ is quasi-isometric to $F_n \times \mathbb{Z}$ if and only if it is virtually $F_n \times \mathbb{Z}$.
}
\\

Other results on the topic of quasi-isometric semi-direct products and extensions have been obtained by Alonso and Bridson \cite{AlonsoBridson}, Bridson \cite{Bridson_optimal_isoper_ab_by_free}, and Bridson and Gersten \cite{BridsonGersten_optimal_isoper_torus_bundles}.
Alonso and Bridson give a sufficient condition for a group extension of two arbitrary groups to be quasi-isometric to the direct product of the groups, while the latter two works give necessary conditions for two semi-direct products (of the form $A \rtimes F$ for $A$ abelian and $F$ free in \cite{Bridson_optimal_isoper_ab_by_free} and of the form $\mathbb{Z}^n \rtimes \mathbb{Z}$ in \cite{BridsonGersten_optimal_isoper_torus_bundles}) to be quasi-isometric.
\\

\noindent {\bf Acknowledgements}  Much of the following work was done as part of the author's dissertation.
The author gratefully thanks her advisor, Peter Scott, for his help and guidance.  
The author also thanks Panos Papasoglu for several helpful discussions, and Mario Bonk, for his help with the proof of Theorem \ref{fn_thm} in the case that $n=1$.
In addition, thanks to Matt Brin and Ross Geoghegan for their encouragement and advice.



\section{Basic definitions and facts about quasi-lines in finitely presented groups}

In this section, we will introduce ``quasi-lines'', which were defined by Papasoglu in \cite{papa_qlines}, and were main objects of study in that work.
We will prove some basic properties of quasi-lines, including justifying why we may think of two-ended subgroups of finitely generated groups as quasi-lines inside the ambient groups.
We will then prove several results about complementary components of certain quasi-lines - in particular, that quasi-lines satisfy conditions $ess(m_0)$ and $iness(m_1)$ for some number $m_0$ and function $m_1$ in the settings that we are interested in.
Some of these results were used implicitly in \cite{papa_qlines}.

We will first set some basic notation and conventions.
Let $X$ be any metric space and let $x \in X$, $Y \subset X$ and $0 \leq r < \infty$.
Then $\overline{Y}$ shall denote the closure of $Y$ in $X$.
We set balls to be open and neighborhoods closed, i.e. let $B_r(x) = \{ z \in X : d(z,x)<r\}$ and $N_r(Y) = \{ z \in X : d(z,Y) \leq r\}$.

Let $Y'$ be another subset of $X$, and we shall denote by $d_{Haus} (Y,Y')$ the Hausdorff distance between $Y$ and $Y'$.  I.e.,
$$d_{Haus}(Y,Y') = \inf \{ r \geq 0 : Y \subset N_r(Y') \ {\rm and } \  Y' \subset N_r(Y)\}.$$

If $X'$ is another metric space, then a map $f \co X \to X'$ is a {\em $(\Lambda , K)$ quasi-isometry} if $\Lambda \geq 1 $ and $K \geq 0$ are such that, for any $x_1, x_2 \in X$, 
$$\frac{1}{\Lambda} d_X(x_1, x_2) -K \leq d_{X'} (f(x_1), f(x_2)) \leq \Lambda d_X(x_1, x_2) + K,$$
and $X' = N_K(f(X)).$
We say that $f$ is a quasi-isometry if $f$ is a $(\Lambda, K)$ quasi-isometry for some $\Lambda$ and $K$, and in this case we say that $X$ and $X'$ are quasi-isometric.

A map $g \co X' \to X$ is a {\em quasi-inverse} to $f$ if both $\sup_{x \in X}d(x, g \circ f (x))$ and $\sup_{x' \in X'} d(x', f \circ g (x'))$ are finite.

Suppose further that $X$ is a locally finite CW complex.  
Then the number of {\em ends} of $X$ is
$$e(X) = \sup |\{ {\rm infinite\  components\ of \ }X-K\} |,$$
where the supremum is taken over all finite subcomplexes $K$ of $X$.
Thus $e(X)$ can take on any value in $\mathbb{Z}_{\geq 0} \cup \{ \infty\}$.
The number of ends of a metric space is a quasi-isometric invariant:  if $X$ and $X'$ are quasi-isometric, then $e(X) = e(X')$.

\begin{convention}
We shall assume throughout this paper that all finitely generated groups that we deal with come equipped with a chosen finite generating set, and that all finitely presented groups come equipped with a chosen finite collection of defining relations.  
\end{convention}

If $G$ is a finitely generated group, then we shall denote by $\mathscr{C}^1(G)$ the Cayley graph for $G$ with respect to the associated finite generating set.
If $G$ is a finitely presented group, then we shall denote by $\mathscr{C}^2(G)$ the Cayley complex for $G$ with respect to the chosen finite generating set and relations.
We recall that $\mathscr{C}^2(G)$ is a simply connected, 2-dimensional CW complex with 1-skeleton equal to $\mathscr{C}^1(G)$, and that $G$ is identified with the vertex sets of $\mathscr{C}^1(G)$ and $\mathscr{C}^2(G)$.
The group $G$ acts cocompactly and by cellular isometries on $\mathscr{C}^1(G)$, and on $\mathscr{C}^2(G)$ if it exists.
Furthermore, this action is faithful, and transitive on vertices.
We will take these actions of $G$ to be on the left.

Recall that the Cayley graphs (and Cayley complexes, if they exist) of a group $G$ with respect to different finite generating sets (finite presentations, respectively) are all quasi-isometric.
Thus there is a well-defined notion of the number of ends of a finitely generated group:  if $G$ is finitely generated, then the number of ends of $G$, $e(G)$, is defined to equal $e(\mathscr{C}^1(G))$.
It is a fact that the number of ends of any finitely generated group can be one of only $0,1,2$ or $\infty$.
We have that $e(G) = 0$ if and only if $G$ is finite, $e(G) = 2$ if and only if $G$ contains a finite index infinite cyclic subgroup, and, by the work of Stallings, $e(G) = \infty$ if and only if $G$ splits over a finite subgroup and does not have a finite index infinite cyclic subgroup.
See \cite{ScottWall} for more details.

We will always take CW complexes to be metrized to have edges of length one, and to have the interiors of 2-cells be isometric to regular polygons.

Let $X$ be a CW complex and consider $\mathbb{R}$ as a graph by taking the integer points to be the vertices.
Then let $l\co \mathbb{R} \to X^{(1)}$ be continuous, injective and cellular, hence parametrized by arc length, i.e. with $length (l([s,t])) = d_\mathbb{R}(s,t)$, for all $s,t \in \mathbb{R}$.  Suppose further that $l$ is a uniformly proper map, so for every $M>0$, there exists an $N>0$ such that if $A \subset X$ with diam$(A)<M$, then diam$(l^{-1}(A))<N$.  Then we shall say that $l$ is a {\it line}, and we shall sometimes use $l$ to denote the image of $l$.  

To a line $l$, we associate the distortion function $D_l(t)\co \mathbb{R}_{\geq 0} \to \mathbb{R}_{\geq 0}$, where
$$D_l(t) = \sup \{ {\rm diam}(l^{-1}(A)) : {\rm diam}(A) \leq t \}.$$
As $l$ is parameterized by arc length, we note that $D_l(t) \geq t$ for all $t \in \mathbb{R}$.

Let $L$ be a closed, path connected subspace of $X$ containing a line $l$, with $N>0$ such that any point  in $L$ can be joined to $l$ by a path in $L$ of length less than or equal to 
$N$.  If $\phi\co \mathbb{R}_{\geq 0} \to \mathbb{R}_{\geq 0}$ is a proper, increasing function and, for all $t >0$, $D_l(t) \leq \phi(t)$, then we will say that $L$ is a {\it $(\phi, N)$ quasi-line}, or simply, a {\it quasi-line}.  We shall refer to $\phi$ and $N$ as {\em parameters for $L$}, and $l$ as the line {\em associated to $L$}.

We note that the assumption that $\phi$ be proper and increasing is not a strong one, for any line $l$ has $D_l$ bounded by some increasing function, and, as $D_l(t) \geq t$ for all $t$, any such function must be proper.

Observe that if $L$ is a $(\phi, N)$ quasi-line and $R>0$, then $N_R(L)$ is a $(\phi, N+R)$ quasi-line, and we may take the line associated to $L$ to also be associated to $N_R(L)$.


The following lemma shows that the restriction that a line be embedded is not an important one.

\begin{lem}\label{get_line}

Let $l'$ be a uniformly proper cellular map from $\mathbb{R}$ into a CW complex $X$ (taking $\mathbb{R}$ to be a simplicial complex with vertex set $\mathbb{Z}$).
Then there is a line $l \co \mathbb{R} \to X$ with Im$(l) \subset $ Im$(l')$, and such that $d_{Haus}($Im$(l),$Im$(l'))$ is finite, and bounded above by a function of $D_l$.
\end{lem}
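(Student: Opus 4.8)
The plan is to realize $l$ as (essentially) a loop-erasure of $l'$; the engine of the argument is that uniform properness forces every self-intersection of $l'$ to be ``local in the domain.'' Concretely, uniform properness at scale $0$ gives a constant $d_0$ with $\mathrm{diam}((l')^{-1}(v))\le d_0$ for every vertex $v$ of $X$, so that $l'(s)=l'(t)$ implies $|s-t|\le d_0$; and more generally it supplies, for each $M$, a constant $d_M=D_{l'}(M)$ (the distortion function of $l'$, defined by the same formula as for a line) with $\mathrm{diam}((l')^{-1}(A))\le d_M$ whenever $\mathrm{diam}(A)\le M$. Write $v_n=l'(n)$; since $l'$ is cellular, $d(v_n,v_m)\le|n-m|$, and at most $d_0$ consecutive edges of $\mathbb R$ can be collapsed, so collapses cause no trouble.

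First I would loop-erase each ray. Applying the standard loop-erasure to the edge-ray $v_0,v_1,v_2,\dots$ — at the current vertex, jump to the vertex immediately after its last occurrence (these are finitely many, as $l'$ is proper) — yields an embedded edge-ray $\rho^{+}\co[0,\infty)\to X$ with $\rho^{+}(0)=v_0$ and $\mathrm{Im}(\rho^{+})\subseteq\{v_n:n\ge 0\}$. Writing $\rho^{+}(i)=v_{a_i}$ one has $0=a_0<a_1<a_2<\cdots$, hence $a_i\ge i$; and $v_{a_{i+1}-1}=v_{a_i}$ gives $\{a_i,a_{i+1}-1\}\subseteq(l')^{-1}(\rho^{+}(i))$, so $a_{i+1}-a_i\le d_0+1$, and therefore every $v_n$ with $n\ge 0$ is within distance $d_0$ of $\mathrm{Im}(\rho^{+})$. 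The same construction on $v_0,v_{-1},v_{-2},\dots$ gives an embedded ray $\rho^{-}$, say $\rho^{-}(j)=v_{-b_j}$ with $0=b_0<b_1<\cdots$, $b_j\ge j$, and $\{v_n:n\le 0\}\subseteq N_{d_0}(\mathrm{Im}(\rho^{-}))$.

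Next I would splice the two rays at their last common vertex. The key point is that they overlap only near their starts: if $\rho^{+}(i)=\rho^{-}(j)$, this common vertex equals $v_{a_i}=v_{-b_j}$, so $\{a_i,-b_j\}\subseteq(l')^{-1}(\text{that vertex})$, whence $a_i+b_j\le d_0$ and thus $i+j\le d_0$. So $J:=\{i:\rho^{+}(i)\in\mathrm{Im}(\rho^{-})\}$ is a finite subset of $\{0,\dots,d_0\}$ containing $0$. Put $j^{*}=\max J$, let $w=\rho^{+}(j^{*})$, and let $k^{*}$ satisfy $\rho^{-}(k^{*})=w$, so $j^{*}\le d_0$ and $k^{*}\le d_0-j^{*}\le d_0$. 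I then define $l\co\mathbb R\to X$ to run out $\rho^{-}$ from $w$ in one direction and out $\rho^{+}$ from $w$ in the other: $l(t)=\rho^{+}(j^{*}+t)$ for $t\ge 0$ and $l(t)=\rho^{-}(k^{*}-t)$ for $t\le 0$ (these agree at $t=0$, both giving $w$). Its vertex sequence $\dots,\rho^{-}(k^{*}+1),\rho^{-}(k^{*})=\rho^{+}(j^{*}),\rho^{+}(j^{*}+1),\dots$ is embedded: within each half this is embeddedness of $\rho^{\pm}$, while $\rho^{+}(j^{*}+p)=\rho^{-}(k^{*}+q)$ with $p,q\ge 0$ would force $j^{*}+p\in J$, hence $p=0$, hence (as $\rho^{-}$ is embedded) $q=0$. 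Finally, since $\mathrm{Im}(l)\subseteq\mathrm{Im}(l')$ and the $l$-parameter of each vertex is dominated, with the same sign, by the corresponding $l'$-parameter, uniform properness of $l'$ passes to $l$ — indeed $D_l\le D_{l'}$ — so $l$ is a line.

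It remains to estimate the Hausdorff distance and to name the only real difficulty. One inclusion, $\mathrm{Im}(l)\subseteq\mathrm{Im}(l')$, is free. For the other, $\mathrm{Im}(l')\subseteq N_{d_0}(\mathrm{Im}(\rho^{+}))\cup N_{d_0}(\mathrm{Im}(\rho^{-}))$ by the second paragraph; and $\mathrm{Im}(\rho^{+})=\rho^{+}([0,j^{*}])\cup\rho^{+}([j^{*},\infty))$, where the second piece lies in $\mathrm{Im}(l)$ and the first has diameter $\le j^{*}\le d_0$ and contains $w\in\mathrm{Im}(l)$, so $\mathrm{Im}(\rho^{+})\subseteq N_{d_0}(\mathrm{Im}(l))$; likewise for $\rho^{-}$. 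Hence $\mathrm{Im}(l')\subseteq N_{2d_0}(\mathrm{Im}(l))$, so $d_{Haus}(\mathrm{Im}(l),\mathrm{Im}(l'))\le 2d_0=2D_{l'}(0)$ — a bound depending only on the distortion function of $l'$; one also has $D_l\le D_{l'}$. The \emph{one genuine obstacle} here is that loop-erasure has no canonical starting point on a bi-infinite path, and that concatenating loop-erasures of finitely many sub-paths can recreate self-intersections at the seams; the device above — loop-erase the two rays separately and splice at their \emph{last} common vertex — is precisely what sidesteps this, and it works because uniform properness confines the shared structure of the two rays (and hence everything that could go wrong) to a segment of bounded length, which is simultaneously what keeps the Hausdorff distance bounded.
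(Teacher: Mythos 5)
Your argument is correct, and it follows a genuinely different route from the paper's. The paper inducts on the maximal diameter $n(l')$ of a point-preimage: it collapses, all at once, a maximal disjoint family of intervals of length $n(l')$ whose endpoints share an image, checks that the quotient map is still cellular and uniformly proper with Hausdorff displacement at most $\frac{1}{2}n(l')$, shows that $n$ strictly drops, and iterates finitely many times. You instead loop-erase the two half-rays $v_0,v_1,\dots$ and $v_0,v_{-1},\dots$ in one pass and splice the resulting embedded rays at their last common vertex, with the single bound $\mathrm{diam}((l')^{-1}(v))\le D_{l'}(0)$ doing all the work: it makes the last occurrences (hence the erasure) well defined, confines the overlap of the two rays to indices at most $D_{l'}(0)$ so the splice stays embedded, and yields the Hausdorff estimate directly. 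What each buys: the paper's collapsing argument keeps the original parametrization of $l'$ and requires no choice of basepoint, while yours gives a sharper, explicit Hausdorff bound (about $2D_{l'}(0)$, linear in $D_{l'}(0)$, versus the roughly quadratic bound obtained by summing the paper's per-stage estimate over its at most $n(l')$ stages), and, like the paper's, gives $D_l\le D_{l'}$; note that both proofs in fact bound $d_{Haus}$ by a function of $D_{l'}$ rather than of $D_l$ as the statement is worded, which is what the later applications use. For a polished write-up you should make three small points explicit: take the edge of $l$ from $\rho^+(i)$ to $\rho^+(i+1)$ to be $l'([a_{i+1}-1,a_{i+1}])$ (this is what literally gives $\mathrm{Im}(l)\subset\mathrm{Im}(l')$ and the unit-speed cellular parametrization); check injectivity on edge interiors as well as on vertices (immediate, since distinct $1$-cells of a CW complex have disjoint interiors); and in deducing $D_l\le D_{l'}$ invoke the strict monotonicity of the index sequences $a_i$, $b_j$ (which you have established), since sign-respecting domination of parameters alone would not quite suffice.
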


\begin{proof}
If $l'$ is an embedding, then it suffices to take $l = l'$.  So suppose that $l'$ is not an embedding.

As $l'$ is uniformly proper, there is some maximal $n = n(l') \in \mathbb{R}$ such that the preimage of some point in Im$(l')$ has diameter $n$.
Because $l'$ is cellular,  note that $n \in \mathbb{Z}$.  
We shall induct on $n$.  
Note that $n>0$, as $l'$ is not an embedding.

Let $S$ denote a maximal disjoint set of closed intervals of size $n$ in $\mathbb{R}$ such that the endpoints of each interval are sent to the same point of $X$ by $l'$.  
Note that we can take $S$ to be such that the endpoints of each interval in $S$ are mapped by $l'$ to vertices of $X$.

Let $\iota \co \mathbb{R} \to \mathbb{R}$ denote the quotient map attained by identifying each component of $S \subset \mathbb{R}$ to a point, and define a right inverse to $\iota$, $\iota'$, to take each such point to an endpoint of its full preimage.  
Since the endpoints of each component of $S$ are identified by $l'$, there is a well-defined, continuous map $l_1: \mathbb{R} \to X$ defined by $l_1(t) = l'\circ \iota(t)$.
Intuitively, $l_1$ is the map we get by removing a disjoint collection of maximal loops from $l'$.

Clearly $l_1$ is cellular, thus is parameterized by arc length, and we note that 
$l_1$ is uniformly proper, for if $A$ is any subset of $X$, then diam$(l_1^{-1}(A)) \leq $ diam$((l')^{-1}(A))$.
Furthermore, we have that $d_{Haus}($Im$(l_1), $ Im$(l'))\leq \frac{1}{2}n(l')$.

It remains to show that $n(l_1)<n(l')$. 
To see this, let us suppose that there are $t_0, t_1 \in \mathbb{R}$ such that $|t_0-t_1| \geq n(l')$, and $l_1(t_0) = l_1(t_1)$.  
Suppose that $t_0$ is the image of a collapsed segment under $\iota$.
Then there exist two points $s, s' \in \mathbb{R}$ that are the endpoints of this segment, with $\iota (s) = \iota(s') = t_0$, $l'(s) = l'(s')$, and $|s-s'| = n(l')$.  
If $t_1$ is the image under $\iota$ of only a point, then let $s_1$ denote that point.  
If $t_1$ is the image of a segment under the map $\iota$, then let $s_1$ denote an endpoint of that segment.
Then $l'(s_1) = l'(s) = l'(s')$, and either $|s_1-s|>|s-s'|$ or $|s_1-s'|>|s-s'|$.  But $|s-s'|=n(l')$, so this contradicts the definition of the function $n$.

Thus we may suppose that $t_0$ and $t_1$ are images under $\iota$ of single points, say $s_0$ and $s_1$ respectively.  If $\iota$ collapses no segments in the interval $[s_0, s_1]$ then we reach another contradiction, for $[s_0, s_1]$ must be an interval of size $n(l')$, whose endpoints are mapped to the same vertex of $X$ by $l'$, and that is disjoint from $S$.  This contradicts the maximality of $S$.

Finally, suppose that $\iota$ collapses a segment $[s,s']$ in $[s_0, s_1]$.  Then $|s-s'| = n(l')$, so $|s_0- s_1|>n(l')$.  The endpoints $s_0$ and $s_1$ must share the same image under $l'$, and again this contradicts the definition of the function $n$.

Thus $n(l_1)<n(l')$.  If $l_1$ is not an embedding, then we can repeat this process on $l_1$, getting a map $l_2 \co \mathbb{R} \to X$ such that $n(l_2)<n(l_1)$, and so on.

Eventually we must get a map $l_k$ such that $n(l_k) = 0$, and hence $l_k=l$ is the desired line.
\end{proof}

Next, we will establish some basic facts about quasi-lines.
First, we give the connection between quasi-lines and infinite cyclic subgroups.
Recall that $G$ is identified with the vertex set of $\mathscr{C}^1(G)$.

\begin{lem}\label{2ended_subgps_are_qlines}
Let $G$ be a finitely generated group, and $H \subset G$ a two-ended subgroup.  If $R>0$ is large enough so that $N_R(H)\subset \mathscr{C}^1(G)$ is connected, then $N_R(H)$ is a quasi-line.
\end{lem}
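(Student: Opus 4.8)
The plan is to produce, inside $L := N_R(H)$, an embedded line $l$ whose image is coarsely dense in $L$ \emph{when distances are measured by paths that stay inside $L$}, and then to read off from this the two pieces of data $\phi$ and $N$ that the definition of a $(\phi,N)$ quasi-line demands. The first move is the reduction to a cyclic subgroup: since $H$ is two-ended it contains a finite-index infinite cyclic subgroup $C=\langle h\rangle$, with $h$ of infinite order. Left translation by an element of $H$ is a cellular isometry of $\mathscr{C}^1(G)$ fixing $H$ setwise, so $L$ is invariant under the left action of $H$, hence of $C$, and $C$ acts on $L$ by cellular isometries, cocompactly: writing $H=\bigsqcup_{g\in T}Cg$ as a finite union of right cosets, any vertex $w$ of $L$ lies within $R$ of some $cg$ with $c\in C$ and $g\in T$, so $c^{-1}w$ lies in the finite set $\bigcup_{g\in T}B_R(g)\cap L^{(0)}$; the edges of $L$ are handled the same way. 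Combining cocompactness with the hypothesis that $L$ is connected, I would then fix shortest paths in $L$ from a finite set of $C$-orbit representatives to $e\in C\subseteq L$ and translate them around by $C$, obtaining a constant $L_0$ with the property that every point of $L$ is joined to $C$ by a path in $L$ of length at most $L_0$ (with an innocuous adjustment when the point lies in the interior of an edge).

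Next I would construct the line. Fix a shortest edge-path $\gamma_0$ in $L$ from $e$ to $h$, of length $\ell_0\geq1$, and let $l'\co\mathbb{R}\to L$ be the infinite concatenation of its $C$-translates, $l'(n\ell_0+s)=h^n\gamma_0(s)$ for $n\in\mathbb{Z}$ and $s\in[0,\ell_0]$; this is a well-defined cellular map through every $h^n$. The crucial point is that $l'$ is uniformly proper: $l'(s)$ lies within $\ell_0$ of $h^{\lfloor s/\ell_0\rfloor}$, so a bound on $d(l'(s),l'(t))$ gives a bound on $d(h^n,h^m)$ for the nearby orbit points, and since $h$ has infinite order while balls in $\mathscr{C}^1(G)$ are finite, only finitely many $k$ have $|h^k|$ below any prescribed value; this bounds $|n-m|$, hence $|s-t|$. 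Lemma~\ref{get_line} then replaces $l'$ by a genuine line $l$ with $\mathrm{Im}(l)\subseteq\mathrm{Im}(l')\subseteq L$ and $d_{Haus}(\mathrm{Im}(l),\mathrm{Im}(l'))<\infty$; inspecting its proof, the deleted points all sit on loops contained in $\mathrm{Im}(l')$, so in fact every point of $\mathrm{Im}(l')$ is joined to $\mathrm{Im}(l)$ by a path inside $\mathrm{Im}(l')\subseteq L$ of some uniformly bounded length $D_1$. Since $l$ is a line, $D_l$ is dominated by a proper increasing function $\phi$, as noted after the definition of a quasi-line; this $\phi$ is the first parameter.

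It then remains only to chain the two path-length bounds. Every $h^n$ lies on a $C$-translate of $\gamma_0$, so $C\subseteq\mathrm{Im}(l')$. Given an arbitrary point $z$ of $L$, join $z$ to a point of $C$ by a path in $L$ of length $\leq L_0$, and then join that point --- which lies in $\mathrm{Im}(l')$ --- to $\mathrm{Im}(l)$ by a path in $L$ of length $\leq D_1$; the concatenation is a path in $L$ from $z$ to $\mathrm{Im}(l)$ of length at most $N$, for a constant $N$ built from $L_0$ and $D_1$. Hence $L=N_R(H)$ is a $(\phi,N)$ quasi-line with associated line $l$.

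I expect the one genuinely delicate point to be the one flagged at the outset: the neighborhood condition for a quasi-line is phrased in terms of paths that remain in $L=N_R(H)$, not in terms of the ambient metric of $\mathscr{C}^1(G)$, and an ambient geodesic between two points of $N_R(H)$ can certainly leave $N_R(H)$ --- this is exactly what occurs when $C$ is a distorted cyclic subgroup. So no na\"ive nearest-point estimate is available, and every estimate above has to be routed through the cocompact $C$-action on $L$. It is also worth noting precisely where two-endedness enters: that $C$ is infinite cyclic is what makes $l'$ uniformly proper (via finiteness of balls), and that $C$ has finite index in $H$ is what makes its action on $L$ cocompact --- mere finite generation of $H$ would not suffice for either.
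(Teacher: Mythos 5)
Your proposal is correct and follows essentially the same route as the paper: both take a finite-index cyclic subgroup $\langle h\rangle$, build $l'$ by concatenating $\langle h\rangle$-translates of a path in $N_R(H)$ from $e$ to $h$, prove uniform properness from the infinite order of $h$ together with local finiteness of $\mathscr{C}^1(G)$, and then invoke Lemma \ref{get_line}. Your extra care on two points the paper only asserts or glosses over --- deriving the bounded-length path from any point of $N_R(H)$ to the line via cocompactness of the $\langle h\rangle$-action, and checking that after Lemma \ref{get_line} the connecting paths can be taken inside ${\rm Im}(l')\subseteq N_R(H)$ rather than merely in the ambient graph --- is a welcome refinement, not a deviation.
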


\begin{proof}
Let $H$ be a two-ended subgroup of $G$, let $\langle h \rangle \cong \mathbb{Z}$ be a finite index subgroup of $H$, and let $R>0$ be such that $N_R(H)\subset \mathscr{C}^1(G)$ is connected.  Let $p$ be a simplicial path in $N_R(H)$ from the identity to $h$.  Let $l' \co \mathbb{R} \to N_R(H) \subset \mathscr{C}^1(G)$ be the natural map onto $\cup_{n \in \mathbb{Z}} h^n\cdot p$ that is parameterized by arc length.

Note that there is some $N>0$ depending on $\langle h \rangle$ and $R$ such that each point of $N_R(H)$ can be connected to Im$(l')$ by a path in $N_R(H)$ of length less than or equal to $N$.  If we can show that $l'$ is uniformly proper, then it will follow from Lemma \ref{get_line} that $N_R(H)$ is a quasi-line.

Fix any $M>0$, and recall that $p$ is a finite path and that $H$ acts freely on $\mathscr{C}^1(G)$.
It follows that, for any point $p_0 \in p$, the image under $(l')^{-1}$ of the vertices contained in $B_M(p_0)$ is finite, and hence $diam((l')^{-1}(B_M(p_0)))<\infty$.  
Let $D_M = \max \{ diam((l')^{-1}(B_M(p_0))) \}$, where the maximum is taken over all vertices $p_0$ of $p$.

Now suppose that $A \subset \mathscr{C}^1(G)$ has diameter less than $(M-N)$ and meets $N_R(H)$.

Then for any $a \in (A \cap N_R(H))$, certainly $A \subset B_{M-N}(a)$, and hence there is some $p_1 \in l'$ of distance no more than $N$ from $a$, and $A \subset B_M(p_1)$.
Thus there exists some $n \in \mathbb{Z}$ such that $h^nA \subset B_M(h^np_1)$ and $h^np_1$ is a vertex of $p$.  
Therefore we have that 
$$diam((l')^{-1}(A)) = diam((l')^{-1}(h^nA)) \leq diam ((l')^{-1}(B_M(h^np_1))) \leq D_M,$$
and it follows that $l'$ is uniformly proper, with $D_{l'}(M-N)$ bounded by $D_M$ for each $M>0$.
\end{proof}

The next lemma shows that quasi-lines are suitably invariant under quasi-isometries.

\begin{lem}\label{imgs_of_qlines_are_qlines}
Let $f \co \mathscr{C}^1(G) \to \mathscr{C}^1(G')$ be a $(\Lambda, K)$ quasi-isometry and let $L \subset \mathscr{C}^1(G)$ be a $(\phi, N)$ quasi-line.  Then there is some $R = R(\Lambda, K, \phi, N)> 0$ such that $N_R(f(L))$ is a quasi-line.
\end{lem}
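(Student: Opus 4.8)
The plan is to push the line $l$ associated to $L$ forward by $f$, observe that the image is a uniformly proper (but possibly non-embedded and non-cellular) coarse path, straighten it into an honest line using Lemma \ref{get_line}, and then check that a neighborhood of $f(L)$ is path connected, closed, and lies within bounded Hausdorff distance of this new line, with distortion controlled by a function of $\Lambda$, $K$, $\phi$, and $N$.

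First I would address the fact that a quasi-isometry $f$ need not send vertices to vertices, nor be cellular. So I would first replace $f$ by a map $\bar f\co \mathscr{C}^1(G)\to \mathscr{C}^1(G')$ that agrees with $f$ up to bounded error, sends each vertex of $\mathscr{C}^1(G)$ to the nearest vertex of $\mathscr{C}^1(G')$, and is defined on edges by choosing geodesic edge-paths; this $\bar f$ is a $(\Lambda', K')$ quasi-isometry for constants depending only on $\Lambda, K$, and it has bounded sup-distance from $f$, so a quasi-line within bounded distance of $\bar f(L)$ is also within bounded distance of $f(L)$. Now restrict $\bar f$ to the line $l\co \mathbb{R}\to \mathscr{C}^1(G)$, and reparametrize its image by arc length to get a cellular map $l'\co \mathbb{R}\to \mathscr{C}^1(G')$ whose image is $\bar f(l)$ (more precisely, the edge-path carrying it). I would check that $l'$ is uniformly proper: if $A\subset \mathscr{C}^1(G')$ has diameter $\leq t$, then $(\bar f)^{-1}(A)$ — read back through the quasi-isometry inequality — has diameter at most $\Lambda'(t + K')$ in $\mathscr{C}^1(G)$, so $l^{-1}$ of it has diameter at most $D_l(\Lambda'(t+K')) \leq \phi(\Lambda'(t+K'))$, and passing from $l$ to its arc-length reparametrization $l'$ only changes lengths by a bounded multiplicative factor; hence $D_{l'}$ is bounded by an explicit proper increasing function $\psi$ of $t$ depending only on $\Lambda, K, \phi$. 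Then Lemma \ref{get_line} produces a genuine line $\tilde l$ with $\mathrm{Im}(\tilde l)\subset \mathrm{Im}(l')$ and $d_{Haus}(\mathrm{Im}(\tilde l), \mathrm{Im}(l'))$ finite and bounded by a function of $D_{\tilde l}$, which in turn is bounded by $\psi$.

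Next I would produce the quasi-line itself. Set $L' = N_R(f(L))$ for $R$ to be chosen. Since $L$ is path connected, $f(L)$ is coarsely path connected, and for $R$ larger than the relevant constants (the coarseness of $f$ versus $\bar f$, plus $K'$, plus the diameter of a $2$-cell) the neighborhood $N_R(f(L))$ is genuinely path connected in $\mathscr{C}^1(G')$; it is closed by definition of neighborhoods in the paper's convention. Because every point of $L$ is within distance $N$ of $l$, every point of $f(L)$ is within distance $\Lambda N + K$ of $f(l)$, hence within a bounded distance of $\mathrm{Im}(\tilde l)$; enlarging $R$ if necessary, $L'$ contains $\tilde l$ and every point of $L'$ can be joined to $\tilde l$ by a path in $L'$ of length bounded by some $N'=N'(\Lambda,K,\phi,N)$. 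Finally, $D_{\tilde l}\leq \psi$, so $L'$ is a $(\psi, N')$ quasi-line, and $R=R(\Lambda,K,\phi,N)$.

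The main obstacle I anticipate is the bookkeeping in the first step — converting the abstract quasi-isometry $f$ into a cellular, vertex-preserving map and tracking how the parameters degrade — together with verifying path connectivity of $N_R(f(L))$ for an explicit $R$; both are routine but need care so that the final constant $R$ depends only on $\Lambda, K, \phi, N$ and not on $G$ or $G'$. The genuinely substantive input, namely that a uniformly proper image can be straightened to a line with controlled distortion, is already handled by Lemma \ref{get_line}.
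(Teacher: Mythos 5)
Your proposal is correct and follows essentially the same route as the paper: the paper likewise converts $f\circ l$ into a cellular, arc-length parameterized, uniformly proper map (via nearest-point projection to vertices and geodesic interpolation rather than your cellularized $\bar f$), invokes Lemma \ref{get_line} to straighten it into a line, and then takes a sufficiently large neighborhood $N_R(f(L))$ containing that line. The differences are only in bookkeeping, not in substance.
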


\begin{proof}
Let $l$ be the line associated to $L$ and let $\pi$ denote nearest point projection of $\mathscr{C}^1(G')$ onto its vertex set.
Let $l'$ denote $\pi \circ f \circ l |_\mathbb{Z}$, and note that $l'$ is uniformly proper.
As $l$ is a line, for each $i \in \mathbb{Z}$, $d(l(i), l(i+1)) = 1$.
Hence $d (f \circ l(i), f\circ l(i+1)) \leq (\Lambda + K)$ so $d(l'(i), l'(i+1)) \leq (\Lambda + K + 1)$.

Let $\psi \co \mathbb{Z} \to \mathbb{R}$ be such that, for all $i\in \mathbb{Z}$, $\psi(i+1)-\psi(i) = d(l'(i), l'(i+1))$, so $\psi$ increases distances by no more than a factor of $(\Lambda + K + 1)$.
Let $l'' \co Im(\psi) \to \mathscr{C}^1(G')$ be such that $l' = l'' \circ \psi$, and it follows that $l''$ is uniformly proper.

Next, we extend the definition of $l''$ to all of $\mathbb{R}$ by mapping each $[\psi(i), \psi(i+1)]$ isometrically to a geodesic segment from $l'(i)$ to $l'(i+1)$.
Thus $l''$ is a unit speed, cellular map of $\mathbb{R}$ into $\mathscr{C}^1(G')$.
As any point in $l''(\mathbb{R})$ is of distance no more than $\frac{1}{2}(\Lambda + K + 1)$ from a point of $l''(Im(\psi))$ and $l''|_{Im(\psi)}$ is uniformly proper, it follows that $l'' \co \mathbb{R} \to \mathscr{C}^1(G')$ is uniformly proper.

Thus, by Lemma \ref{get_line}, there is a line $\hat{l} \co \mathbb{R} \to \mathscr{C}^1(G')$ such that $Im(\hat{l}) \subset Im(l'')$ and $d_{Haus}(Im(\hat{l}), Im(l''))$ is bounded by a constant depending only on $\Lambda, K, \phi$ and $N$.
There is some $R>0$ depending on the same constants such that $N_{R}(f(L))$ is connected and contains $Im(\hat{l})$, and thus is a quasi-line.
\end{proof}

\begin{rmk}\label{qlines_have_same_consts}
In the argument to prove our main result, Theorem \ref{Cqiinv}, we shall work with a quasi-isometry $f \co \mathscr{C}^1(G) \to \mathscr{C}^1(G')$, a two-ended subgroup $H$ of $G$, and its translates $\{ gH\}$.
We will also be concerned with $f(H)$ and its translates $\{ (g')f(H)\}$.
The previous results show that $H$ and $f(H)$ have neighborhoods that are quasi-lines.
Since left multiplication is an isometry in any Cayley graph, it follows that the translates of $H$ and $f(H)$ have neighborhoods that are quasi-lines, with the same parameters as those containing $H$ and $f(H)$ respectively.
\end{rmk}


We note next that quasi-lines are two-ended:

\begin{lem}\label{e(L)=2}
Let $L$ be a quasi-line contained in a locally finite CW complex $X$.  Then $e(L)=2$.
\end{lem}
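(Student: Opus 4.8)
The plan is to show separately that $e(L) \geq 2$ and that $e(L) \leq 2$, working with the line $l$ associated to $L$ and exploiting that points of $L$ lie within distance $N$ of $l$ and that the distortion function $D_l$ is proper (equivalently, $l$ is uniformly proper).

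\emph{Lower bound.} First I would produce two ends. Take a large finite subcomplex $K \subset X$ and consider $L - K$. Because $l$ is parametrized by arc length and uniformly proper, the two rays $l([T,\infty))$ and $l((-\infty,-T])$ for large $T$ lie outside $K$: indeed if infinitely many integers $i$ had $l(i) \in N_1(K)$, then since $K$ is finite some point of $N_1(K)$ would have infinite preimage under $l$, contradicting uniform properness. So for $T$ large, $l([T,\infty))$ and $l((-\infty,-T])$ are each contained in infinite components of $L - K$. To see these components are \emph{distinct} for $K$ chosen large enough, use that any path in $L$ from one ray to the other, being within distance $N$ of $l$, projects (by nearest-point considerations along $l$) to a path in $\mathbb{R}$ from $[T,\infty)$ to $(-\infty,-T]$; the preimage under $l$ of a bounded neighborhood of such a path is bounded by properness, but we can force the path to pass through $l^{-1}$-far regions by taking $K$ to contain $l([-T,T])$ together with a large metric neighborhood. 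This separates the two rays, so $e(L) \geq 2$.

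\emph{Upper bound.} Now suppose $L - K$ has three or more infinite components $C_1, C_2, C_3$ for some finite $K$. Each $C_j$ contains points arbitrarily far from $K$, hence (within distance $N$ of $l$) points $l(i_j)$ with $|i_j|$ arbitrarily large; since $l$ restricted to $[-T,T]$ has bounded image for every $T$, each $C_j$ must meet at least one of the two "tails" $l([T,\infty))$, $l((-\infty,-T])$ for every $T$. By pigeonhole two of the components, say $C_1$ and $C_2$, both contain points of the same tail, say $l([T,\infty))$, for arbitrarily large $T$ — pick points $l(a) \in C_1$ and $l(b) \in C_2$ with $a, b > T$. But $l([\min(a,b), \max(a,b)])$ is a connected subset of $L$; if it avoided $K$ it would connect $C_1$ to $C_2$, a contradiction, so it meets $K$, forcing an integer in $[\min(a,b),\max(a,b)]$ whose $l$-image is within distance $1$ of $K$. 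Choosing $K$ large enough that $l^{-1}(N_1(K))$ is a bounded interval (possible by uniform properness and finiteness of $K$) and taking $T$ beyond that interval yields a contradiction. Hence $e(L) \leq 2$, and combined with the lower bound, $e(L) = 2$.

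\emph{Main obstacle.} The delicate point is the separation argument in the lower bound: I must choose the finite subcomplex $K$ so that it genuinely disconnects the two rays inside $L$, using only that every point of $L$ is within distance $N$ of $l$ and that $D_l$ is proper. The clean way to phrase this is: enlarge $K$ to contain $N_{N+1}\big(l([-T,T])\big) \cap L$ for $T$ so large that $l^{-1}$ of any bounded set meeting $N_{N+1}(l([-T,T]))$ stays within $[-T',T']$; then any path in $L-K$ from one ray to the other would, staying within $N$ of $l$, yield a path in $\mathbb{R}$ from $[T,\infty)$ to $(-\infty,-T]$ missing $[-T,T]$ — impossible. Making the quantifiers in this step precise, while keeping the dependence on $\phi$ and $N$ under control, is the part that needs care; everything else is a routine pigeonhole on the two tails of the line.
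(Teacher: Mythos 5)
Your argument is essentially the paper's: both bounds come from the associated line $l$, the fact that every point of $L$ lies within path-distance $N$ of $l$, and a compact separator built around a middle segment of $l$ using uniform properness, so the proposal is correct in approach and substance. The one point of care you flag is real: the assignment sending a point of $L$ to a nearby parameter of $l$ is not continuous, so the step ``yields a path in $\mathbb{R}$ missing $[-T,T]$ -- impossible'' must be run coarsely (along a path in $L$ consecutive parameter choices jump by at most $\phi(2N+1)$, hence cannot skip a middle interval longer than that), which is exactly what the paper arranges by excising $l((a,b))$ with $|a-b|>\phi(2N)$ so that the $N$-neighborhoods of the two tails of $l$ in $L$ are disjoint.
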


\begin{proof}
Let $\phi$ and $N$ be parameters for $L$, and let $l \subset L$ be the line associated to $L$.  
Then every point in $L$ can be connected by a path of length less than or equal to $N$ to $l$, and $e(l) = 2$, so $e(L) \leq 2$.  
As the line $l$ is an injective and cellular map into the 1-skeleton of $X$, and $X$ is locally finite, we must have that $e(L) \geq 1$.

To see that $e(L)=2$, first note that if $a, b \in \mathbb{R}$ are such that $|a-b|>\phi(2N)$, then $d(l(a),l(b))>2N$.  Thus if we fix such $a$ and $b$, say with $a<b$, then, for any $q \in l((-\infty, a])$ and $q' \in l([b, \infty ))$, $d(q,q')>2N$.  Let $K$ be the set of all points $p \in L$ such that there is a path of length less than or equal to $N$ contained in $L$ that connects $p$ to $l((a,b))$.  Since $X$ is locally finite, $K$ is compact.  Thus $L-K$ contains two infinite components - one intersecting $l((-\infty, a])$ and one intersecting $l([b,\infty))$.
\end{proof}

The following definitions will be important, particularly to the remainder of this section.  

\begin{defn} \label{ess_iness_ness_def}
If $L$ is a quasi-line in a metric space $X$, then a connected component $C$ of $X-L$ is said to be {\em essential} if $C \cup L$ has one end.  Otherwise, $C$ is said to be {\em inessential}.  

If $C$ is not contained in $N_R(L)$, for any $R \geq 0$, then we shall say that $C$ is {\em nearly essential}.
\end{defn}

\begin{defn}
If there is some $m_0>0$ such that, for each $p \in L$, each essential component of the complement of $L$ intersects a vertex of $B_{m_0}(p)$, then we say that $L$ satisfies {\em ess}$(m_0)$.  
\end{defn}

\begin{defn}
If $m_1 \co \mathbb{R}_{\geq 0} \to \mathbb{R}_{\geq 0}$ is such that, for each $R \geq 0$, each inessential component of the complement of $N_R(L)$ is contained in the $m_1(R)$-neighborhood of $N_R(L)$, then we say that $L$ satisfies {\em iness}$(m_1)$.
\end{defn}

We will see shortly that all the quasi-lines in which we are interested (see Remark \ref{qlines_have_same_consts}) will satisfy $iness(m_1)$ for some $m_1$.  It follows that the components of the complements of these quasi-lines are essential if and only if they are nearly essential.

\begin{defn} $L$ is said to be {\it $n$-parting} if the complement of $L$ has at least $n$ essential components.
\end{defn}

Note that, if $L$ is $n$-parting and $R>0$, then $N_R(L)$ is also $n$-parting.

Next, we shall show that any quasi-line in a one-ended finitely presented group has only finitely many essential components in its complement, and moreover satisfies $ess(m_0)$ for some $m_0$.

\begin{defn} \label{defn_chain}
Let $L$ and $C$ denote subsets of a metric space $X$, let $n>0$, and let $x,y \in C \cap L$.  Then we shall say that $x$ and $y$ are connected by an {\em $(L,n)$-chain in $C \cap L$} if there are points $x=z_0, z_1, \ldots, z_k =y$ in $C\cap L$ such that, for each $i$, there is a path in $L$ connecting $z_i$ to $z_{i+1}$ of length less than or equal to $n$.
\end{defn}

We will begin by working in $\mathscr{C}^2(G)$.

\begin{lem}\label{CC_lem}
Let $G$ be a finitely presented group, let $L$ be a $(\phi, N)$ quasi-line in $\mathscr{C}^1(G)$, and let $0<\epsilon \ll 1$.  
Then let $L'$ be an open set in $\mathscr{C}^2(G)$ such that $L \subset L'$, each point in $L'$ can be connected to $L$ by a path in $\mathscr{C}^2(G)$ of length less than $\epsilon$, and each point in $L' \cap \mathscr{C}^1(G)$ can be connected to $L$ by a path in $\mathscr{C}^1(G)$ of length less than $\epsilon$.
Suppose in addition that both $\overline{L'}$ and $\overline{L'} \cap \mathscr{C}^1(G)$ deformation retract onto $L$. 

Then there is some $n_0 = n_0 (G, \phi, N, \epsilon)$ such that, if $C'$ denotes any component of $\mathscr{C}^2(G)-L'$, and $x, y \in C'\cap \overline{L'}$, then $x$ and $y$ are connected by an $(\overline{L'}, n_0)$-chain $\{ z_i\}$ in $C' \cap \overline{L'}$.  Moreover, the path in $\overline{L'}$ connecting any $z_i$ and $z_{i+1}$ can be taken to be in $L$, outside of an initial segment containing $z_i$ and a final segment containing $z_{i+1}$, each of length less than or equal to $\epsilon$.
\end{lem}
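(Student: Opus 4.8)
The plan is to reduce the statement to a single coarse--connectedness claim about how the component $C'$ meets the thin neighborhood $\overline{L'}$, and then to establish that claim by building a ``parallel copy'' of the line $l$ lying in the complement of $L'$. Let $l$ be the line associated to $L$. Since $L'$ is open, $C'$ is closed in $\mathscr{C}^2(G)$, so any connected subset of $\mathscr{C}^2(G)-L'$ that meets $C'$ is contained in $C'$; this will be used repeatedly. Given $x,y\in C'\cap\overline{L'}$, both points lie within $\epsilon$ of $L$, hence within $\epsilon+N$ of $l$; write their nearest points on $l$ as $l(s_x)$ and $l(s_y)$ and assume $s_x\le s_y$. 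The claim I would isolate, call it $(\ast)$, is that there is a constant $m=m(G,\phi,N,\epsilon)$ such that for every $t\in[s_x,s_y]$ some point of $C'\cap\overline{L'}$ lies within distance $m$ of $l(t)$.

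Granting $(\ast)$, I would build the chain by marching along $l$ at unit speed: pick unit--spaced $t_0=s_x<t_1<\dots<t_k=s_y$, set $z_0=x$, $z_k=y$, and for $0<i<k$ let $z_i\in C'\cap\overline{L'}$ be within $m$ of $l(t_i)$. Consecutive $z_i,z_{i+1}$ then lie within a bounded distance of one another; each joins to a point of $L$ by a path of length $\le\epsilon$ (the hypothesis on $L'$), and those two points of $L$ are joined \emph{inside} $L$ by a path whose length is controlled by $\phi,N,m$ --- travel from each into $l$ within distance $N$, then along the segment of $l$ between them, using that the distortion bound turns the ambient distance into a bound on how much of $l$ is traversed. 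Concatenating yields a path in $\overline{L'}$ of length at most some $n_0=n_0(G,\phi,N,\epsilon)$ that lies in $L$ outside an initial and a final segment of length $\le\epsilon$; this is exactly the required $(\overline{L'},n_0)$-chain, with the ``moreover'' clause included. The step from $z_{k-1}$ to $y$ is handled identically even if $x$ and $y$ sit on opposite ``sides'' of $L$, since the connecting path runs through $L$.

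It remains to prove $(\ast)$, which is the heart of the matter. Because $\epsilon\ll 1$ and $\mathscr{C}^2(G)$ has only finitely many isometry types of $2$-cells --- each a unit--side regular polygon --- every vertex of $l$ is incident to a $2$-cell whose interior meets $\mathscr{C}^2(G)-L'$, indeed meets $\overline{L'}-L'$, in points whose distance to $L$ is bounded below by the (positive) minimal inradius and above by the maximal diameter of the $2$-cells. Pushing $l$ off of itself through a coherent choice of such cells produces a ``parallel curve'' $\gamma$ with $d(\gamma(t),l(t))$ uniformly bounded, with $\gamma\subset\overline{L'}$, and with $\gamma$ avoiding $L'$; moreover $\gamma(s_x)$ can be chosen in the same component of $\mathscr{C}^2(G)-L'$ as $x$, namely $C'$. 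Since $\gamma$ is connected and meets $C'$, all of $\gamma$ lies in $C'$, and $(\ast)$ follows. (If one cannot arrange $\gamma\subset\overline{L'}$ outright, one recovers a point of $C'\cap\overline{L'}$ near each $l(t)$ as follows: from $\gamma(t)$ travel toward $l(t)$ and take the first point $p_t$ of $\overline{L'}$ met; the sub-path before $p_t$ avoids $\overline{L'}$, hence $L'$, so it lies in $C'$, which is closed, so $p_t\in C'\cap\overline{L'}$ within the required distance of $l(t)$.)

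I expect the main obstacle to be the word ``coherent'': one must push $l$ off $L'$ consistently along $l$ --- never flipping from one side of $L$ to the other --- and must begin the push on the side of $L$ occupied by $x$. This is precisely where the hypotheses that $\overline{L'}$ and $\overline{L'}\cap\mathscr{C}^1(G)$ deformation retract onto $L$ are used: they give $\overline{L'}$ a collar/mapping--cylinder structure over $L$, so that ``the side of $L$ on which $x$ lies'' is meaningful and can be propagated along $l$; the fact that $l$ is an embedded line, which never returns near itself by the distortion bound $\phi$, rules out any monodromy obstruction to the propagation. I would carry this out by following the deformation retraction fiberwise and transferring the push-off between consecutive $2$-cells around each vertex of $l$, using local connectivity of $\mathscr{C}^2(G)$ near $L$. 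Everything else is bookkeeping in $\phi$, $N$, $\epsilon$, the $2$-cell geometry of $\mathscr{C}^2(G)$, and estimates of the type already used in Lemmas~\ref{2ended_subgps_are_qlines} and~\ref{get_line}.
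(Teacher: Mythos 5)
Your reduction of the lemma to the claim $(\ast)$ is sound: granted $(\ast)$, the unit-speed march along $l$, the $\le\epsilon$ hops into $L$, and the distortion bound $\phi$ do yield an $(\overline{L'},n_0)$-chain of the required form, with $n_0$ depending only on $G,\phi,N,\epsilon$. But this means the entire content of the lemma has been transferred into $(\ast)$, and your proof of $(\ast)$ --- the ``coherent parallel push-off'' $\gamma\subset C'$ running alongside $l$ --- is not established; it essentially assumes what must be proved. The obstruction is that $\mathscr{C}^2(G)$ is a $2$-complex, not a surface: at a point of $l$ the complement of $L'$ can have arbitrarily many local pieces, consecutive $2$-cells along $l$ are separated by edges that typically lie in $L\subset L'$, so one cannot pass from the non-$L'$ part of one cell to that of the next without either entering $L'$ or making a detour whose length is a priori uncontrolled, and there is no reason the local piece you start in (the one belonging to the given component $C'$) matches any local piece of the next cell. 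The deformation retractions of $\overline{L'}$ and $\overline{L'}\cap\mathscr{C}^1(G)$ onto $L$ give no collar or two-sided structure --- the frontier of $L'$ may be wild and the number of local complementary directions varies along $l$ --- so ``the side of $L$ on which $x$ lies'' is not a propagatable datum, and ``no monodromy because $l$ is embedded'' does not address the real issue. A priori the fixed component $C'$ could meet $\overline{L'}$ only near $l(s_x)$ and near $l(s_y)$ and recede far from $L$ in between; excluding this is exactly the lemma, and your push-off presupposes its exclusion rather than proving it.

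Note also that your argument uses only local data (the $2$-cells have bounded size, $\epsilon\ll1$) and never invokes any global property of $\mathscr{C}^2(G)$, whereas the paper's proof hinges on such input: it first caps off the short loops of $\overline{L'}$ to obtain a simply connected $\overline{L''}$ (Lemma \ref{L''_simp_conn}), then uses simple connectivity of the ambient complex and Van Kampen's theorem to conclude that $\overline{C'}\cap\overline{L''}$ is connected, and finally discretizes a path in this intersection, using the uniform bound on the perimeters of $2$-cells and the retraction onto $L$, to produce the chain and the ``moreover'' clause. Some global, topological argument of this kind (or a genuine substitute for it) is what is missing from your proof of $(\ast)$; as written, the key step is a restatement of the conclusion.
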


To prove this lemma, we first need the following.

\begin{lem}\label{L''_simp_conn}
Let $G, \mathscr{C}^2(G), L, L'$, and $\epsilon$ be as in Lemma \ref{CC_lem}.  

Let $(\mathscr{C}^2)''$ denote the union of $\mathscr{C}^2(G)$ together with a disk added at each closed edge path of $\overline{L'}$ of length less than or equal to $\phi (2(N+\epsilon)+1) + 2(N + \epsilon)+1$, and let $\overline{L''}$ denote the union of $\overline{L'}$ with these disks.
Then $\overline{L''}$ is simply connected.
\end{lem}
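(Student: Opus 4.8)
The plan is to exploit two things: that $\mathscr{C}^2(G)$ is simply connected, and that the distortion function $\phi$ precisely controls how far apart on the line $l$ the two endpoints of a single edge of $L$ can be pushed. First I would reduce the statement to a claim about edge loops. Since $\overline{L'}$ deformation retracts onto $L$, the inclusion induces an isomorphism $\pi_1(\overline{L'},v)\cong\pi_1(L,v)$ for a fixed vertex $v$ lying on $l$, and $L$, being (homotopy equivalent to) a connected graph, has $\pi_1(L,v)$ generated by edge loops in $L$ based at $v$; the hypothesis that both $\overline{L'}$ and $\overline{L'}\cap\mathscr{C}^1(G)$ deformation retract onto $L$ is what lets us treat $L$ this way and keep these loops genuinely inside $L$. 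Since $\overline{L''}$ is obtained from $\overline{L'}$ by attaching $2$-cells along the short closed edge paths, van Kampen's theorem gives $\pi_1(\overline{L''})=\pi_1(\overline{L'})/\langle\langle\text{short loops}\rangle\rangle$, so it suffices to show that every edge loop $\gamma$ in $L$ based at $v$ is null-homotopic in $\overline{L''}$.

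For the key mechanism, for each vertex $x$ of $L$ I would fix, using the defining property of a quasi-line, an edge path $\alpha_x$ in $L$ from $x$ to a vertex $\pi(x)$ of $l$ of length at most $N$, and take $\alpha_v$ to be constant. Given an edge $e=[x,y]$ of $L$, the concatenation $\beta_e:=\alpha_x\,e\,\overline{\alpha_y}$ is an edge path in $L$ from $\pi(x)$ to $\pi(y)$ of length at most $2N+1$, so $d(\pi(x),\pi(y))\le 2N+1$; since $l$ is embedded and parametrized by arc length, the sub-arc $\ell_e$ of $l$ joining $\pi(x)$ to $\pi(y)$ has length at most $D_l(2N+1)\le\phi(2N+1)$. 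Hence $\beta_e\,\overline{\ell_e}$ is a closed edge path contained in $L\subset\overline{L'}$ of length at most $2N+1+\phi(2N+1)$, which is below the threshold $\phi(2(N+\epsilon)+1)+2(N+\epsilon)+1$ appearing in the definition of $\overline{L''}$ (the $\epsilon$-slack absorbs the difference; running the same estimate with edge loops of the $\epsilon$-thickening $\overline{L'}$ rather than of $L$ recovers the stated bound verbatim, the two extra $N+\epsilon$ accounting for the two projection paths). Therefore a disk was attached along $\beta_e\,\overline{\ell_e}$ in passing to $\overline{L''}$, and so $\beta_e$ is homotopic rel endpoints to $\ell_e$ in $\overline{L''}$.

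Finally I would assemble these facts. Writing $\gamma=e_1e_2\cdots e_k$ as an edge loop at $v$ with intermediate vertices $x_i$, and inserting cancelling pairs $\overline{\alpha_{x_i}}\alpha_{x_i}$ at each $x_i$ (with $\alpha_v$ constant at the two ends), one sees that $\gamma$ is homotopic in $L$ to $\beta_{e_1}\beta_{e_2}\cdots\beta_{e_k}$. Replacing each $\beta_{e_i}$ by $\ell_{e_i}$, which is legitimate in $\overline{L''}$ by the previous paragraph, shows that $\gamma$ is homotopic in $\overline{L''}$ to $\ell_{e_1}\ell_{e_2}\cdots\ell_{e_k}$, a loop lying entirely on the line $l$ and based at $v$. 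Since $l$ is homeomorphic to $\mathbb{R}$, hence contractible, this loop is null-homotopic within $l\subset\overline{L''}$, and therefore $\gamma$ is null-homotopic in $\overline{L''}$. As $\gamma$ was arbitrary, $\pi_1(\overline{L''})=1$.

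The step I expect to be most delicate is not the geometric heart — the short estimate that $\phi$ governs the separation of $\pi(x)$ and $\pi(y)$ on $l$ — but the bookkeeping in the reduction: justifying cleanly that $\pi_1(\overline{L'})$ is generated by edge loops that genuinely lie inside $L$ (rather than merely near it), which is exactly where the two deformation-retract hypotheses are used, together with keeping track of basepoints and of which concatenations are homotopies in $L$ versus homotopies in $\overline{L''}$. One also wants to be a little careful that the chosen paths $\alpha_x$ may be taken to be edge paths within $L$; in the intended applications $L$ will be cellular enough for this to be automatic, but it is the kind of point that should be noted.
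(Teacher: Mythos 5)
Your proposal is correct in its essentials and runs on the same engine as the paper's proof: loops of the form (path from $l$ of length at most $N$)\,$\cdot$\,(short jump)\,$\cdot$\,(path back to $l$)\,$\cdot$\,(arc of $l$ of length at most $\phi(\cdot)$) fall under the length threshold, hence bound attached disks, so everything can be pushed into the embedded line $l$, which is simply connected. The organization is genuinely different, though. The paper takes an arbitrary loop $\gamma$ in $\overline{L''}$, homotopes it into $\overline{L'}\cap[\mathscr{C}^2(G)]^{(1)}$, inserts ``spikes'' to $l$ at finitely many points so that $\gamma$ lies on $l$ outside finitely many excursions of length at most $2(N+\epsilon)+1$, and then inducts on the number of excursions, using one attached disk per excursion (this is where the $\epsilon$'s in the threshold come from). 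You instead use the deformation-retract hypotheses together with van Kampen to reduce to finite edge loops in $L$ itself, and then do a rung-by-rung ladder replacement. Your bookkeeping is arguably cleaner (no spikes, no induction), while the paper's route avoids the one statement you leave unproved.

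That statement --- that $\pi_1(\overline{L'})$ is generated by finite edge loops lying in $L$, with the projections $\alpha_x$ realizable as edge paths in $L$ --- is the only real gap, and your remark that this is ``automatic'' in the intended applications is not accurate: quasi-lines such as $N_R(H)$ are closed subspaces of $\mathscr{C}^1(G)$, not subcomplexes, so they do contain partial edges. The gap is fillable without new ideas: since $L$ is closed and path connected in a locally finite graph, its intersection with any closed edge not contained in $L$ is a union of at most two intervals, each containing an endpoint of that edge, so $L$ deformation retracts onto the subcomplex spanned by the vertices of $L$ and the closed edges contained in $L$ (which still contains $l$, as $l$ is an injective cellular line parameterized by arc length); loops and the paths $\alpha_x$ can then be pushed into this core. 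One further small point, shared with the paper's own argument: the attaching circles are ``edge paths'' only in the loose sense of rectifiable paths in the 1-skeleton (arcs of $l$ and the projection paths need not begin or end at vertices), so you should not insist that $\pi(x)$ be a vertex of $l$ --- that costs up to an extra $1/2$ per projection, which the $2\epsilon$ of slack does not cover; dropping the vertex requirement, your bound $2N+1+\phi(2N+1)$ sits safely below $\phi(2(N+\epsilon)+1)+2(N+\epsilon)+1$.
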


\begin{proof}
Let $l$ be the line associated to $L$, so that $l \subset L \subset L'$, and note that $\overline{L'}$ is a $(\phi , N+\epsilon)$ quasi-line.  
Recall that the edges of $\mathscr{C}^2(G)$ are of length one, and metrically, the 2-cells of $\mathscr{C}^2(G)$ are regular polygons.  
Let the disks added to create $(\mathscr{C}^2)''$ and $\overline{L''}$ be regular polygons as well.

Let $\gamma \co S^1 \to \overline{L''}$ be any closed curve in $\overline{L''}$.  
In the following we shall replace $\gamma$ with homotopic curves (which we shall also call $\gamma$), and we shall assume at each stage that $\gamma$ is parameterized by arc length.
Thus we shall consider copies of $S^1$ with different metrics as appropriate below.

Note first that $\gamma$ can be homotoped in $\overline{L''}$ to a cellular path that is contained in $\overline{L'} \cap [\mathscr{C}^2(G)]^{(1)} =\overline{L'} \cap ((\mathscr{C}^2)'')^{(1)}$.  Let $\gamma$ now denote the resulting curve under this homotopy.

Fix a finite collection $P$ of points in $Im(\gamma) - l$ such that the $(1/2)$-neighborhood of $P$ contains $Im(\gamma) - l$.
At each point $p_0$ in $P$, consider a spike from $p_0$ to $l$ in $\overline{L'}$ of length no more than $(N+\epsilon)$.
We shall homotope $\gamma$ to traverse the appropriate spike each time it meets a point of $P$.
We have thereby constructed a homotopy of $\gamma$ to a curve that is contained in $l$ except for finitely many segments in $\overline{L'} \cap [\mathscr{C}^2(G)]^{(1)}$ of length bounded by $2(N+\epsilon)+1$ 

Let $\Sigma$ denote the components of the complement of $\gamma^{-1}(l)$ in $S^1$, so each component of $\Sigma$ has length no more than $2(N+\epsilon) + 1$.
We will prove, by induction on $|\Sigma|$, that $\gamma$ must be null-homotopic in $\overline{L''}$.

Note that, after adding the spikes, $\gamma$ must meet $l$, so $\Sigma$ must consist of at least one segment.
If $|\Sigma| = 1$, then we shall let $\sigma$ denote the element in $\Sigma$, and let $p$ and $q$ denote the endpoints of $\sigma$.  
Then the distance between $\gamma(p)$ and $\gamma(q)$ in $\overline{L'} \cap ((\mathscr{C}^2)'')^{(1)}$ must be bounded by $2(N+\epsilon) + 1$, so $\gamma(p)$ and $\gamma(q)$ are connected in $l$ by a path of length no more than $\phi(2(N+\epsilon)+1)$.  
It follows that this path in $l$ together with $\gamma(\sigma)$ make up a circuit in $\overline{L'}$ of length no more than $\phi(2(N+\epsilon) + 1) + 2(N+\epsilon) + 1$, and thus must bound a disk in $\overline{L''}$.

Thus $\gamma$ may be homotoped in $\overline{L''}$ so that $\gamma(\sigma)$ is taken to this path in $l$, i.e. $\gamma$ may be homotoped within $\overline{L''}$ to lie entirely inside of $l$.
But $l$ is an embedding of $\mathbb{R}$, so $\gamma$ is null-homotopic in $\overline{L''}$.

Next, assume that $|\Sigma|=i>1$.  By choosing any element $\sigma$ of $\Sigma$, the same argument as was given above shows that we can homotope $\gamma$ within $\overline{L''}$ so that $\sigma$ is replaced by a path in $l$.
Thus we have homotoped $\gamma$ so that now $|\Sigma| = i-1$.
By repeating this process, we reduce to the case of $|\Sigma|=1$ above, and hence $\gamma$ must have originally been null-homotopic in $\overline{L''}$.
It follows that $\overline{L''}$ is simply connected, as desired.
\end{proof}

\begin{proof}[Proof of Lemma \ref{CC_lem}]
Again let $l$ be the line associated to $L$, and let $(\mathscr{C}^2)''$ and $\overline{L''}$ be as in Lemma \ref{L''_simp_conn}.

Recall that $\overline{L''}$ differs from $\overline{L'}$ only by disks with boundary in $\overline{L'}$, and that these disks are not contained in $\mathscr{C}^2(G)$.  
It follows that the intersection of $\overline{L'}$ with the closures of the components of $\mathscr{C}^2(G)-L'$ is the same as the intersection of $\overline{L''}$ with the closures of the components of $(\mathscr{C}^2)''-\overline{L''}$.  
Thus, if we can prove that the conclusion of Lemma \ref{CC_lem} holds for $\overline{L''}$ in $(\mathscr{C}^2)''$, then the lemma will follow for $\overline{L'}$ in $\mathscr{C}^2(G)$.

Let $\{ C_\alpha \}$ denote the components of the complement of $\overline{L''}$.
As $\overline{L''}$ is simply connected, we can apply Van Kampen's theorem to $\{ \overline{C_\alpha} \cup \overline{L''}\}$ to see that each $\overline{C_\alpha} \cup \overline{L''}$ is simply connected.  Since $\overline{L''}$ and each $\overline{C_\alpha}$ are connected, we have that $\overline{C_\alpha} \cap \overline{L''}$ is connected.  
Thus for any fixed $x,y \in \overline{C_\alpha} \cap \overline{L''}$, there exists a path $p$ from $x$ to $y$ contained in $\overline{C_\alpha} \cap \overline{L''}$.  

Without loss of generality, we can assume that the frontier of $\overline{L'}$ (which equals the frontier of $\overline{L''}$) meets any edge of $\mathscr{C}^1(G) = (\mathscr{C}^2)^{(1)} = ((\mathscr{C}^2)'')^{(1)}$ in only finitely many points.
The group $G$ has one end and $\mathscr{C}^2(G)$ is simply connected, thus each edge of $\mathscr{C}^2(G)$ is contained in a 2-cell.  The same is true for $(\mathscr{C}^2)''$, thus we can take $p$ to be transverse to $((\mathscr{C}^2)'')^{(1)}$, with $|\{ p \cap ((\mathscr{C}^2)'')^{(1)}\} |$ finite and $p$ still contained in $\overline{C_\alpha} \cap \overline{L''}$.  
Let $z_0, z_1, \ldots , z_{k}$ denote the elements of $p \cap ((\mathscr{C}^2)'')^{(1)}$, numbered in the order in which they are traversed by $p$, traveling from $x$ to $y$, with $z_0=x, z_k=y$.

Recall that $G$ is finitely presented, so the 2-cells of $\mathscr{C}^2(G)$ are of bounded perimeter.
Also the additional 2-cells added to create $(\mathscr{C}^2)''$ have diameter bounded by a function of $\phi, N$ and $\epsilon$, so there is a bound $n_0 = n_0(G, \phi, N, \epsilon)$ on the perimeters of the 2-cells of $(\mathscr{C}^2)''$.
Any component of $p-(\mathscr{C}^2)^{(1)}$ must be contained in such a cell, thus its interior can be replaced by a segment in $(\mathscr{C}^2)^{(1)}$ 
with length less than $n_0$.  

Recall that $p$ was originally contained in the frontier of $\overline{L'}$, $\overline{L'} \subset N_\epsilon (L)$ with $\epsilon \ll 1$, and both $\overline{L'}$ and $\overline{L'} \cap \mathscr{C}^1(G)$ deformation retract onto $L$.
Hence we can replace each segment of $p-(\mathscr{C}^2)^{(1)}$ with a segment in $(\mathscr{C}^2)^{(1)}$ of length less than $n_0$ that is contained in $L$ except for initial and terminal segments of length bounded by $\epsilon$.

Thus the $z_i$'s form an $(\overline{L''}, n_0)$-chain from $x$ to $y$ as desired.
\end{proof}


We can now prove that quasi-lines in finitely presented groups satisfy $ess(m_0)$.

\begin{lem}\label{compl_has_fin_many_comps} 
Let $G$ be a one-ended finitely presented group, with $L$ a $(\phi,N)$ quasi-line in $\mathscr{C}^1(G)$.  Then $\mathscr{C}^1(G)-L$ contains only finitely many essential components.  Moreover, there is some $m_0=m_0(G, \phi , N)$ such that $L$ satisfies $ess(m_0)$.
\end{lem}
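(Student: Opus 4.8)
The plan is to establish the quantitative statement $ess(m_0)$ first, and then deduce the finiteness of the set of essential components as an immediate consequence. The three ingredients are: Lemma~\ref{CC_lem}, which (after thickening $L$ to an open set $L'$ in $\mathscr{C}^2(G)$ as in its statement) yields a uniform constant $n_0=n_0(G,\phi,N,\epsilon)$ such that any two points of a complementary component that lie on $\overline{L'}$ are joined by an $(\overline{L'},n_0)$-chain running along $L$; Lemma~\ref{e(L)=2}, which tells us that $L$ and its sub-rays genuinely split the space into two ends; and the defining bound $D_l(t)\le\phi(t)$ on the distortion of the line $l$ associated to $L$, which converts distances measured in $\mathscr{C}^1(G)$ near $L$ into comparable displacements of the parameter along $l$.

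First I would fix $L'$ and $\epsilon\ll 1$ as in Lemma~\ref{CC_lem}, obtain $n_0$, and record the correspondence between the components of $\mathscr{C}^1(G)-L$ and those of $\mathscr{C}^2(G)-L'$ coming from the hypothesized deformation retractions of $\overline{L'}$ and of $\overline{L'}\cap\mathscr{C}^1(G)$ onto $L$; under this correspondence the frontier $\overline{C}\cap L$ of a component $C$ in $\mathscr{C}^1(G)$ matches $C'\cap\overline{L'}$ up to moving points a bounded amount. To a component $C$ I attach the set $\sigma(C)\subset\mathbb{R}$ of parameters $s$ for which $l(s)$ lies within, say, $N$ of a point of $\overline{C}\cap L$. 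The distortion bound together with Lemma~\ref{CC_lem} shows that consecutive chain points have parameters differing by at most $t_0:=\phi\big(n_0+2(N+\epsilon)\big)$; hence $\sigma(C)$ is ``$t_0$-coarsely connected'', so it is contained in the $t_0$-neighborhood of a single interval, and, crucially, \emph{if $\sigma(C)$ is unbounded in both directions then it is $t_0$-dense in all of $\mathbb{R}$}. In that case every $l(r)$ lies within $m_0':=(N+\epsilon)+t_0+1$ of a vertex of $C$, and since every point of $L$ lies within $N$ of the line, $C$ meets $B_{m_0}(p)$ for every $p\in L$ with $m_0:=m_0'+N+1$; this is exactly $ess(m_0)$ for $C$, and $m_0$ depends only on $G,\phi,N$ (the auxiliary $\epsilon$ being a fixed constant).

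It remains to show that an essential component $C$ must have $\sigma(C)$ unbounded in both directions, and this is the heart of the argument. Suppose instead that $\sigma(C)$ is bounded above by some $b$, so the frontier of $C$ touches $L$ only at parameters $\le b$. Put $r:=(N+\epsilon)+\phi\big(2(N+\epsilon)+1\big)+1$ and $b':=b+r+1$, and remove the ball $B_r(l(b'))$ from $C\cup L$. I claim the result has at least two infinite components, contradicting $e(C\cup L)=1$: one is the ``positive ray of $L$ beyond parameter $b'$'', which is infinite and one-ended by the method of Lemma~\ref{e(L)=2}, and the other is everything remaining, which contains the negative ray of $L$ and hence is infinite. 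No path $\gamma$ avoiding $B_r(l(b'))$ can join the two pieces: either $\gamma$ stays in $L$, in which case coarse continuity of nearest-point projection to $l$ (controlled by $\phi$, as in the estimate above) forces $\gamma$ to come within $r$ of $l(b')$, since its $l$-parameter must decrease from a value larger than $b'$ to one at most $b$; or $\gamma$ leaves $L$, but it can do so only at a frontier point of $C$, whose parameter is $\le b<b'-r$, so the initial sub-path of $\gamma$ that lies in $L$ again must cross near $l(b')$. Either way $\gamma$ meets $B_r(l(b'))$, a contradiction. The symmetric argument rules out $\sigma(C)$ being bounded below.

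Finally, once $ess(m_0)$ holds, fix any vertex $p\in L$: each essential component contains a vertex of the finite set $B_{m_0}(p)$, and distinct components contain distinct vertices, so there are only finitely many. The step I expect to be the main obstacle is the separation argument of the previous paragraph --- in particular making precise, in the CW topology of $\mathscr{C}^1(G)$, the dichotomy for a path that may repeatedly enter and leave $C$ --- together with the bookkeeping needed to pass cleanly between the complementary components of $L$ in $\mathscr{C}^1(G)$ and those of $L'$ in $\mathscr{C}^2(G)$ so that Lemma~\ref{CC_lem} applies.
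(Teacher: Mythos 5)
Your proposal is correct and takes essentially the same route as the paper: both rest on the uniform chains supplied by Lemma~\ref{CC_lem}, on the fact (via $e(C\cup L)=1$ and Lemma~\ref{e(L)=2}) that the frontier of an essential component must reach both ends of $L$, and on the distortion bound $\phi$ to force some chain point near any prescribed point of $L$, with finiteness then following from local finiteness of $\mathscr{C}^1(G)$; your explicit separation argument merely spells out a step the paper asserts without detail, namely that an essential $C$ meets both unbounded components of $L-B_R(a)$. The one place you are looser than the paper is the claimed component-to-component correspondence between $\mathscr{C}^1(G)-L$ and $\mathscr{C}^2(G)-L'$ (no such bijection holds in general, since a single component $C$ of $\mathscr{C}^1(G)-L$ can spread over several components of $\mathscr{C}^2(G)-L'$ and conversely); the paper handles this by taking a path in $\overline{C}$, cutting it at its short excursions through $L'$, applying Lemma~\ref{CC_lem} on each piece and concatenating the resulting chains --- exactly the bookkeeping you flagged, and it does not affect the rest of your argument.
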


\begin{proof}
We shall prove that $L$ satisfies $ess(m_0)$, for some $m_0>0$.  Since $\mathscr{C}^1(G)$ is locally finite, it will follow that the complement of $L$ contains finitely many essential components.

Let $C$ be a component of the complement of $L$ in $\mathscr{C}^1(G)$.  We shall use Lemma \ref{CC_lem} to show that there is some $n>0$ (not depending on our choice of $C$) such that any $x,y \in (\overline{C} \cap L)$  are connected by an $(L,n)$-chain in $(\overline{C} \cap L) \subset \mathscr{C}^1(G) \subset \mathscr{C}^2(G)$.  

Let $L'$ be as defined in Lemma \ref{CC_lem}, so, for some $0<\epsilon \ll 1$, $L \subset L' \subset N_\epsilon (L)$, $L'$ is open in $\mathscr{C}^2(G)$, and both $\overline{L'}$ and $\overline{L'} \cap \mathscr{C}^1(G)$ deformation retract onto $L$.
Recall that in Lemma \ref{CC_lem}, we proved a result similar to that desired now, but for $\overline{L'}\subset \mathscr{C}^2(G)$.

Fix any such $x$ and $y$, and, as $\overline{C}$ is connected, there is a simple oriented edge path $p$ in $\overline{C}$ connecting them.  
Note that our assumptions on $L'$ imply that each edge in $p$ must meet one component of $\mathscr{C}^2(G)-L'$.
Thus $p$ is a union of edge paths $p_1, p_2, \ldots , p_k$ such that, for each $i$, the terminal vertex of $p_i$ is equal to the initial vertex of $p_{i+1}$, and each $p_i$ intersects $L'$ in components of length no more than $\epsilon$ containing its initial and terminal vertices, with the rest of $p_i$ contained in some component $C'$ of $\mathscr{C}^2(G)-L'$.
Let $x_i$ and $y_i$ denote the two points of $(C' \cap \overline{L'})$, so each is within $\epsilon$ of a different endpoint of $p_i$.

By Lemma \ref{CC_lem}, each pair $x_i$ and $y_i$ can be connected by an $(\overline{L'}, n_0)$-chain $\{ z_j'\}$ in $C' \cap \overline{L'}$.  Recall that, moreover, a path of length no more than $n_0$ between any two consecutive points in the chain is in $L$, outside of initial and final segments of length no more than $\epsilon$, and that $\overline{L'} \cap \mathscr{C}^1(G)$ deformation retracts onto $L$.  
Thus, each $z_j'$ can be connected by a path of length no more than $\epsilon$ in $\overline{L'} \cap \mathscr{C}^1(G)$ to a point $z_j \in \overline{C} \cap L$ such that $\{ z_j\}$ forms an $(L, n_0 + 2\epsilon)$-chain in $\overline{C}\cap L$, connecting the endpoints of $p_i$.  Concatenating these chains, we see that, for $n=n_0+2\epsilon$, $x$ and $y$ can be connected by an $(L,n)$-chain in $\overline{C}\cap L$ as desired.

From now on, we shall work only in $\mathscr{C}^1(G)$, not $\mathscr{C}^2(G)$.

We shall now find an $m_0>0$ such that there is an $(L,n)$-chain in the frontier of each essential component $C$ of the complement of $L$ that must intersect the $m_0$-ball about any given point of $L$.

Fix any $a \in L$ and $R\gg N, n$.  As $C$ is essential, $e(C \cup L)=1$, and, from Lemma \ref{e(L)=2}, recall that $L$ must have two ends.  It follows then that $C$ must intersect both unbounded components of $L-B_R(a)$;
let $x$ be in the intersection of $\overline{C}\cap L$ with one, and $y$ in the intersection of $\overline{C}\cap L$ with the other.   
By the work above, there exists an $(L,n)$-chain, $\{ z_i\}$, from $x$ to $y$ in $\overline{C}\cap L$.

Recall that $L$ is a $(\phi,N)$ quasi-line, and let $l$ be the line associated to $L$.  Then, for each $i$, there is a path in $L$ of length less than or equal to $N$ connecting $z_i$ to some $w_i \in l$.  For each $i$, $d(z_i, z_{i+1})\leq n$, thus $d(w_i,w_{i+1})\leq n+2N$, and thus the path in $l$ between any two adjacent $w_i$'s has length less than or equal to $\phi (n+2N)$.

Let $a_0 \in l$ be of distance less than or equal to $N$ from $a \in L$.  As $R\gg 0$, $x$ and $y$ are such that there is some $i$ with $l^{-1}(w_i) \leq l^{-1}(a_0) \leq l^{-1}(w_{i\pm 1})$, and hence, for some $j$, $d(a_0, w_j)\leq \frac{1}{2}\phi (n+2N)$.  Thus
$$d(a,z_j) \leq d(a, a_0) + d(a_0, w_j) + d(w_j, z_j) \leq \frac{1}{2}\phi (n+2N)+2N.$$
Since $z_j \in C$, and $z_j$ is of distance less than 1 from a vertex of $C$, it follows that, for any $m_0\geq [\frac{1}{2}\phi (n+2N)+2N+1]$, $C$ intersects $B_{m_0}(a)$ in a vertex.  Thus $L$ satisfies $ess(m_0)$.
\end{proof}


We note that, in particular, the argument above  proves the following:

\begin{cor}\label{from_ess_proof}
Let $G$ be a one-ended finitely presented group, with $L$ a $(\phi, N)$ quasi-line in $\mathscr{C}^1(G)$ and $C$ a component of $\mathscr{C}^1(G)-L$, which need not be essential.  Let $m_0 = m_0(G, \phi, N)$ be as in Lemma \ref{compl_has_fin_many_comps}.

If $K\subset L$ is such that $K$ separates $L$ into two infinite components and $C$ meets both of those components, then $B_{m_0}(x)$ meets $C$ in a vertex, for each $x \in K$.
\end{cor}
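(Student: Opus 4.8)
The plan is to observe that the proof of Lemma~\ref{compl_has_fin_many_comps} already contains the argument, and to isolate the two ingredients that make it go through without the essentiality hypothesis. Recall that that proof had two stages. In the first stage one shows, via Lemma~\ref{CC_lem} (passing through $\mathscr{C}^2(G)$ but concluding in $\mathscr{C}^1(G)$), that there is a constant $n=n(G,\phi,N)$ such that for \emph{any} component $C$ of $\mathscr{C}^1(G)-L$ and \emph{any} $x,y\in\overline{C}\cap L$, the points $x$ and $y$ are joined by an $(L,n)$-chain lying in $\overline{C}\cap L$; this stage never uses essentiality. In the second stage one picks $a\in L$, uses essentiality together with $e(L)=2$ to produce points of $\overline{C}\cap L$ in the two unbounded components of $L-B_R(a)$, joins them by such a chain $\{z_i\}$, pushes the chain onto the associated line $l$ (replacing each $z_i$ by a point $w_i\in l$ within $N$ of it, so that consecutive $w_i$ are joined along $l$ by subpaths of length $\le\phi(n+2N)$), and extracts a chain point within $\frac{1}{2}\phi(n+2N)+2N$ of $a$. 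Essentiality and the choice $R\gg N,n$ enter \emph{only} through the production of the two chain endpoints far out on both sides of $a$.

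So for the corollary, fix $x\in K$ and let $n$ and $m_0=m_0(G,\phi,N)$ be as in Lemma~\ref{compl_has_fin_many_comps}. Since by hypothesis $C$ meets the two infinite components $L_1,L_2$ of $L-K$, choose $u\in\overline{C}\cap L_1$ and $v\in\overline{C}\cap L_2$; these will play the role of the two chain endpoints. The first stage above gives an $(L,n)$-chain $z_0=u,z_1,\ldots,z_k=v$ in $\overline{C}\cap L$; push it onto $l$ as above to get $w_0,\ldots,w_k\in l$ with consecutive ones joined along $l$ by subpaths of length $\le\phi(n+2N)$, and choose $x_0\in l$ with $d(x,x_0)\le N$. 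If one knows that the first and last coordinates $l^{-1}(w_0)$ and $l^{-1}(w_k)$ lie on opposite sides of $l^{-1}(x_0)$, then some consecutive pair of the $l^{-1}(w_i)$ straddles $l^{-1}(x_0)$, so some $w_j$ satisfies $d(x_0,w_j)\le\frac{1}{2}\phi(n+2N)$ and hence
$$d(x,z_j)\le d(x,x_0)+d(x_0,w_j)+d(w_j,z_j)\le \frac{1}{2}\phi(n+2N)+2N.$$
As in the proof of Lemma~\ref{compl_has_fin_many_comps}, $z_j\in\overline{C}$ lies within distance $1$ of a vertex of $C$, and since $m_0\ge[\frac{1}{2}\phi(n+2N)+2N+1]$ it follows that $B_{m_0}(x)$ meets $C$ in a vertex, as desired.

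The main obstacle is the ``straddling'' claim just used, which is exactly the analogue of the clause ``As $R\gg 0$, $x$ and $y$ are such that\ldots'' in the proof of Lemma~\ref{compl_has_fin_many_comps}, now with $K$ in the role of $B_R(a)$ and the given point $x\in K$ in the role of $a$. This is where the hypotheses on $K$ must be spent: since $e(L)=2$ by Lemma~\ref{e(L)=2} and $L_1,L_2$ are the two infinite components of $L-K$, each $L_i$ is cofinal with one of the two ends of $L$, so a chain in $\overline{C}\cap L$ running from $L_1$ to $L_2$ has image under the projection to $l$ passing through a neighborhood of every point of $l$ that lies ``between'' the two ends — in particular through a neighborhood of $l^{-1}(x_0)$ — possibly after re-choosing $u$ and $v$ to lie sufficiently far out in $L_1$ and $L_2$. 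Making this cofinality-and-betweenness statement precise, and checking that ``$C$ meets both $L_1$ and $L_2$'' really does force $\overline{C}\cap L$ to reach far enough into each $L_i$, is the only step in the corollary that is not a verbatim transcription of the proof of Lemma~\ref{compl_has_fin_many_comps}.
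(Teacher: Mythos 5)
Your overall route is the paper's: Corollary \ref{from_ess_proof} carries no separate proof precisely because it is the second half of the proof of Lemma \ref{compl_has_fin_many_comps}, with the hypothesis that $C$ meets both infinite components of $L-K$ standing in for the use of essentiality, and your first stage (the $(L,n)$-chain in $\overline{C}\cap L$, which indeed never uses essentiality) and your closing estimate $d(x,z_j)\le \frac{1}{2}\phi(n+2N)+2N$ are exactly what the paper intends. The gap is that the one step you defer --- the straddling of $l^{-1}(x_0)$ by the projected chain --- is the entire content of the corollary, and the mechanism you sketch for it is not available. The hypothesis gives only that $\overline{C}\cap L_1$ and $\overline{C}\cap L_2$ are nonempty; it does not permit ``re-choosing $u$ and $v$ sufficiently far out,'' since an inessential pocket component may meet each $L_i$ in a single point adjacent to $K$, and cofinality of the $L_i$ with the two ends of $L$ does not by itself place $l^{-1}(w_0)$ and $l^{-1}(w_k)$ on opposite sides of $l^{-1}(x_0)$.

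What the argument really needs is that the two infinite components of $L-K$ lie, up to errors controlled by $N$ and $\phi(2N)$, in neighborhoods of the two rays of $l$ determined by the position of $x$; then $w_0$ and $w_k$ straddle $x_0$ no matter where $u$ and $v$ sit in $L_1$ and $L_2$, and your computation finishes the proof. This is automatic when $K$ contains $B_{R'}(x)\cap L$ for $R'>\frac{1}{2}\phi(2N)+N$ (such a ball separates $L$ into two pieces lying within $N$ of the two rays of $l$ near $x$), which is exactly how the corollary is applied later, e.g.\ in the proof of Lemma \ref{NRH_sats_iness}. For a literally arbitrary separating set the straddling can genuinely fail: take $K$ to be a separating ball about a point of $l$ together with a distant piece of $L$ whose removal does not disconnect $L$, and $C$ an inessential pocket spanning the ball; the hypotheses hold, yet $C$ stays a bounded distance from the ball and arbitrarily far from the distant piece, so no chain from $u$ to $v$ comes near such an $x\in K$. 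Hence the missing step cannot be closed by pushing $u$ and $v$ outward; it must be derived from where $x$ sits relative to $L_1$ and $L_2$ along $l$ (the local geometry of the separation at $x$), and you should either prove that betweenness statement for the points $x$ you apply it to or build it into the hypotheses, as the paper's uses of the corollary implicitly do.
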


\begin{rmk}\label{all_qlines_sat_ess_w_same_m_0}
By Lemma \ref{compl_has_fin_many_comps}, and since we saw in Remark \ref{qlines_have_same_consts} that all quasi-lines with which we are concerned in any one Cayley graph will have the same parameters, they will all satisfy $ess(m_0)$ for some fixed $m_0$.
\end{rmk}



In the remainder of this section, we will show that any quasi-line $L$ that we are concerned with satisfies $iness(m_1)$ for some $m_1$.

\begin{lem}\label{NRH_sats_iness}
Let $G$ be a one-ended finitely generated group, and let $H$ be a two-ended subgroup of $G$.  Then any neighborhood of $H$ that is a quasi-line satisfies $iness(m_1)$, for some $m_1$ depending only on $H$ and the size of the neighborhood.
\end{lem}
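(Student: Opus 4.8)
The plan is to exploit the cocompact action of an infinite cyclic subgroup of $H$ to reduce the statement to finitely many complementary components, and then to rule out unbounded inessential components using the one-endedness of $G$. Let $\langle h\rangle\cong\mathbb{Z}$ be a finite-index subgroup of $H$, as furnished by Lemma~\ref{2ended_subgps_are_qlines}, and write $T=\langle h\rangle$. Let $M=N_s(H)\subset\mathscr{C}^1(G)$ be the given quasi-line, with associated line $l$ and parameters $(\phi,N)$; for $R\geq 0$ write $M_R=N_R(M)=N_{R+s}(H)$, which is again a quasi-line with the same associated line $l$, and which therefore has exactly two ends by Lemma~\ref{e(L)=2}. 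Since $T\leq H$ acts on $\mathscr{C}^1(G)$ by left translations fixing $H$ setwise, it preserves each $M_R$, and because $T$ has finite index in $H$ this action is cocompact on each $M_R$, with a compact fundamental domain $F_R$. The frontier of $M_R$ lies in $T\cdot F_R$, and every component $C$ of $\mathscr{C}^1(G)-M_R$ has closure meeting that frontier; since $\mathscr{C}^1(G)$ is locally finite, $F_R$ meets the closures of only finitely many such components, so there are only finitely many $T$-orbits of components of $\mathscr{C}^1(G)-M_R$. Fix orbit representatives $C_1,\dots,C_k$. Both the property of being bounded and the property of lying in a fixed neighborhood $N_r(M_R)$ are $T$-invariant, as is the property of being essential; so it suffices to prove that every \emph{unbounded} component $C$ of $\mathscr{C}^1(G)-M_R$ is essential. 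Granting this, each inessential $C_i$ is bounded, hence contained in $N_{r_i}(M_R)$ for some finite $r_i$, and $m_1(R)=\max\{r_i : C_i\text{ inessential}\}$, which depends only on $R$, $H$ and $s$, witnesses $iness(m_1)$.

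So let $C$ be an unbounded component of $\mathscr{C}^1(G)-M_R$. First I would show $\overline{C}\cap M_R$ is unbounded: otherwise it lies in a finite subcomplex $K_0$, and enlarging $K_0$ to a finite subcomplex $\hat K$ and using that $\mathscr{C}^1(G)$ is one-ended, an unbounded part of $C$ and an unbounded part of $M_R$ lie in the unique unbounded component of $\mathscr{C}^1(G)-\hat K$ and so are joined by a path avoiding $\hat K$; but such a path, beginning in $C$, must first leave $C$ at a point of $\overline{C}\cap M_R\subseteq K_0\subseteq\hat K$, a contradiction. The same argument shows that any unbounded component of $(C\cup M_R)-\hat K$ that contains no unbounded portion of $M_R$ is impossible (``no stray unbounded pieces''). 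Next I would show $\overline{C}\cap M_R$ is unbounded in \emph{both} directions along $l$: if it were unbounded in only one direction, then, since a set that is bounded on one side along $l$ cannot be invariant under a nontrivial power of the shift, the $T$-orbit of $C$ is infinite; and then for all large $n$ the set $\overline{t^{-n}C}\cap M_R=t^{-n}(\overline{C}\cap M_R)$ contains points within $N$ of $l(0)$, so infinitely many distinct components meet the compact ball $\overline{B_{N+1}(l(0))}$, contradicting local finiteness.

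It remains to deduce that $C$ is essential, i.e.\ $e(C\cup M_R)=1$. Given any finite subcomplex $K\subset C\cup M_R$, enlarge it to a finite $\hat K$ so that $M_R-\hat K$ is the union of its two unbounded components $M_R^{+}$, $M_R^{-}$ together with bounded pieces (possible by the quasi-line structure, as in the proof of Lemma~\ref{e(L)=2}), and so that $\hat K$ absorbs those bounded pieces. Both $M_R^{+}$ and $M_R^{-}$ meet $\overline C$ by the previous step. If they lay in distinct unbounded components of $(C\cup M_R)-\hat K$, then $\hat K$ would disconnect $C$ into an unbounded ``$+$ part'' touching $M_R^{+}$ and an unbounded ``$-$ part'' touching $M_R^{-}$; these both lie in the unique unbounded component of $\mathscr{C}^1(G)-\hat K$, hence are joined by a path avoiding $\hat K$ and leaving $C\cup M_R$, which forces the existence of a further component $C'$ of the complement whose closure meets both $M_R^{+}$ and $M_R^{-}$; running this over an exhaustion $\hat K_1\subset\hat K_2\subset\cdots$ and combining local finiteness with the cocompactness of the $T$-action eliminates this possibility. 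Hence $M_R^{+}$ and $M_R^{-}$ lie in a single unbounded component of $(C\cup M_R)-\hat K$, and by ``no stray unbounded pieces'' there are no others, so $e(C\cup M_R)=1$.

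The main difficulty is this last step --- showing that an unbounded complementary component whose closure is cofinal in both directions along $l$ must actually be essential, i.e.\ that $\mathscr{C}^1(G)$ cannot contain a complementary component of $M_R$ that ``runs alongside'' $M_R$, or makes excursions off of $M_R$ of unbounded length, without fusing the two ends of $M_R$ into one. This is precisely where one-endedness of $G$, cocompactness of the $T$-action, and local finiteness all must be used together, and the exhaustion bookkeeping needed to pin down the offending component $C'$ (and to show it cannot recur infinitely often nor wander off) is the delicate part; the orbit-counting reduction and the two unboundedness claims are routine by comparison.
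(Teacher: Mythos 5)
There is a genuine gap, and it sits exactly where you flag the ``delicate part.'' Your whole proof rests on the reduction ``every unbounded component of $\mathscr{C}^1(G)-M_R$ is essential,'' i.e.\ ``inessential $\Rightarrow$ bounded.'' That claim is strictly stronger than what the lemma needs, and it is not what is true in general: an inessential component can be unbounded. The scenario you must rule out is a component $C$ with $e(C\cup M_R)=2$, a two-ended ``parallel strand'' that is invariant under some power $h^k$ of a generator of a finite-index $\mathbb{Z}\leq H$ and stays within a bounded (but $>R$) distance of $H$ (for instance, any right coset $Hg$ with $d(g,H)>R$ sits at \emph{constant} distance $d(g,H)$ from $H$ and is an unbounded set at finite Hausdorff distance from $H$; whether the component containing it escapes to infinity away from $H$ is not forced by one-endedness). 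Note that your step (b) does not exclude this case: such a $C$ is $h^k$-invariant, so $\overline{C}\cap M_R$ is automatically cofinal in both directions along $l$. The paper's own proof is organized around precisely this possibility: it shows only that an inessential $C$ has $e(C\cup M_R)=2$, that its $\langle h\rangle$-orbit is finite (using Corollary \ref{from_ess_proof}), and then that a finite-index subgroup of $H$ acts cocompactly on the two-ended set $C\cup M_R$, so $C$ lies in a \emph{finite neighborhood} of $M_R$ -- not that $C$ is bounded or essential.

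Moreover, the contradiction you sketch for the key step does not materialize. From $M_R^{+}$ and $M_R^{-}$ lying in different unbounded components of $(C\cup M_R)-\hat K$ you deduce the existence of a further complementary component $C'$ whose closure meets both $M_R^{+}$ and $M_R^{-}$; but that is not contradictory -- every essential component of the complement bridges the two ends of $M_R$ at every scale, and in the applications $M_R$ is $3$-parting, so such $C'$ abound. No amount of exhaustion bookkeeping, local finiteness, or cocompactness of the $T$-action, as described, converts their existence into a contradiction, and you explicitly leave that step uncarried. The repair is to aim at the weaker, correct statement: show that an inessential $C$ satisfies $e(C\cup M_R)=2$, that (when $C$ is unbounded, so its frontier is infinite by one-endedness) its $T$-orbit is finite, pass to a power of $h$ stabilizing $C$, and use cocompactness of that action on the two-ended $C\cup M_R$ to conclude $C\subset N_r(M_R)$; your own (correct) finite-orbit-count reduction then yields the uniform $m_1(R)$, which is in substance the paper's final paragraph phrased in the quotient $H\backslash\mathscr{C}^1(G)$.
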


\begin{proof}
Fix $R>0$ such that $N_R(H)$ is connected, hence is a quasi-line. 
It will suffice to find some number $m_1(R)$ such that each inessential component of the complement of $N_R(H)$  is contained in the $m_1(R)$-neighborhood of $N_R(H)$.

We will prove this by first showing that a component $C$ of $\mathscr{C}^1(G)-N_R(H)$ is essential if and only if $C$ is nearly essential.
As $N_R(H)$ has two ends, it follows that $C$ is nearly essential if $C$ is essential.  We now prove the converse.

Suppose that $C$ is not essential.
Then $C \cup N_{R}(H)$ has more than one end, so there is a compact $K \subset (C \cup N_{R}(H))$ such that $(C \cup N_{R}(H))-K$ has more than one infinite component.  

Let $m$ denote the number of infinite components of $(C \cup N_{R}(H))-K$, and suppose that $m>2$.
Since $e(G)=1$, each of these components must meet $N_R(H)$, and
as $e(N_R(H))=2$, the intersection of $N_R(H)$ with at least $(m-2)$ of these components must be finite.  
Let $M$ be the union of $K$ with these finite regions of $N_R(H)$, and thus at least $(m-2)$ components of the complement of $M$ in $(C\cup N_R(H))$ do not intersect $N_R(H)$.  
Moreover, note that each such component, of which there is at least one, must be a component of $\mathscr{C}^1(G)-M$.  
At least one other component of $\mathscr{C}^1(G)-M$ has infinite intersection with $N_R(H)$, and hence $e(G) > 1$, a contradiction.

Thus we must have that $e(C \cup N_R(H))=2$.  We shall show that we can find a finite index subgroup of $H$ that fixes $C$.

Note that since $C$ is a component of the complement of $N_R(H)$, then, for any $g \in H$, $gC$ is also a component of the complement of $N_R(H)$.  

Let $\langle h \rangle$ be a finite index subgroup of $H$, and suppose that the $\langle h \rangle$-orbit of $C$ contains infinitely many components of the complement of $N_R(H)$.

Suppose, in addition, that $C$ does not meet $N_R(H)$ along its entire length, i.e. that there is some compact region $K' \subset N_R(H)$ and an infinite component $L_+$ of $N_R(H)-K'$ such that $C$ does not meet $L_+$.  
As $e(\mathscr{C}^1(G)) = 1$, the intersection of $C$ with $N_R(H)$ must be infinite, so $N_R(H)-K'$ must have another infinite component, call it $L_-$, and $C$ must meet $L_-$.  
Moreover, for any point $q \in N_R(H)$ and any $r>0$, $C$ must meet $L_-$ outside of $B_r(q)$.

Let $\phi$ and $N$ be parameters for $N_R(H)$ and let $m_0=m_0(G, \phi, N)$ be as in Lemma \ref{compl_has_fin_many_comps}.
Then, by Corollary \ref{from_ess_proof}, for any point $p \in L_-$ that is sufficiently far from $K'$, $C$ must meet $B_{m_0}(p)$ in a vertex.  Fix such a point $p$.

As we have assumed that $\langle h \rangle \cdot C$ consists of infinitely many components, choose $\{ n_i\}$ such that $\{ h^{n_i} C\}$ are distinct.  We can moreover choose the $\{ n_i\}$ such that $L_- \subset h^{n_i} L_-$, for all $i$.

But then each $h^{n_i} C$ must meet $B_{m_0}(p)$ in a vertex.  $\mathscr{C}^1(G)$ is finitely generated, hence there are only finitely many vertices in $B_{m_0}(p)$, but the translates $h^{n_i}\cdot C$ are disjoint, thus we have reached a contradiction.  

If instead, for any compact subset $K'$ of $N_R(H)$, $C$ meets both infinite components of $N_R(H)-K'$, then a similar argument, with $L$ taking the role of $L_-$, also gives a contradiction.  
Thus the $\langle h \rangle$-orbit of $C$ must be a finite collection of components.  

By passing to a finite index subgroup of $\langle h \rangle$ if necessary, we can assume that $\langle h \rangle$ fixes $C$.

Recall that we showed above that $(C\cup N_R(H))$ has two ends.  The subgroup $\langle h \rangle$ acts on this union by isometries, so the quotient of $(C \cup N_R(H))$ by this action must be compact.
It follows that $C$ is contained in a finite neighborhood of $N_R(H)$, hence is not nearly essential.

Thus $C$ is essential if and only $C$ is nearly essential.

It remains to argue that each inessential component of the complement of $N_R(H)$ is contained in the $m_1(R)$-neighborhood of $N_R(H)$ for some $m_1(R)$.
Note that any inessential component of $\mathscr{C}^1(G)-N_R(H)$ is not nearly essential, hence projects onto a bounded component of $H\backslash \mathscr{C}^1(G)- H\backslash N_R(H)$.
As $H \backslash N_R(H)$ is compact and $H \backslash \mathscr{C}^1(G)$ is locally finite, there are only finitely many components of $H \backslash \mathscr{C}^1(G) - H \backslash N_R(H)$, thus there is some $m_1(R)>0$ such that each bounded component is contained in the $m_1(R)$-neighborhood of $H \backslash N_R(H)$.  

It follows that any inessential component of the complement of $N_R(H)$ is contained in the $m_1(R)$-neighborhood of $N_R(H)$, as desired.
\end{proof}

We show next that the property of satisfying $iness(m_1)$ for some function $m_1$ is invariant under quasi-isometries.

\begin{lem}\label{iness_preserved_under_qi}
Let $f \co \mathscr{C}^1(G) \to \mathscr{C}^1(G')$ be a $(\Lambda, K)$ quasi-isometry between the Cayley graphs of one-ended, finitely presented groups $G$ and $G'$, and let $L\subset \mathscr{C}^1(G)$ be a quasi-line satisfying $iness(m_1)$.  

If $R'\geq 0$ is such that $L'=N_{R'}(f(L))$ is a quasi-line in $\mathscr{C}^1(G')$, then $L'$ must satisfy $iness (m_1')$, for some $m_1'$ depending on $\Lambda, K,$ $m_1$, and $R'$.
\end{lem}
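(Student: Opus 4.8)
The plan is to transfer the property $iness(m_1)$ from $L$ to $L'$ along a quasi-inverse. Fix a quasi-inverse $g\co \mathscr{C}^1(G') \to \mathscr{C}^1(G)$ of $f$; enlarging constants, we may take $g$ to be a $(\Lambda, K)$ quasi-isometry with $d(x, g\circ f(x)) \le K$ and $d(x', f\circ g(x')) \le K$ for all $x \in \mathscr{C}^1(G)$, $x' \in \mathscr{C}^1(G')$. Since $f(L) \subset L'$ and $g \circ f$ displaces points by at most $K$, the sets $g(L')$ and $L$ are a finite Hausdorff distance apart, bounded in terms of $\Lambda, K, R'$, and $g(N_R(L')) \subset N_S(L)$ for some $S = S(\Lambda,K,R,R')$. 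Throughout I will use that for connected, locally finite subcomplexes a finite Hausdorff distance forces the same number of ends, since the inclusion is then a quasi-isometry and the number of ends is a quasi-isometry invariant.

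First I would reduce to the case that $R$ is large. If $R_1 \le R_2$ then each component of $\mathscr{C}^1(G') - N_{R_2}(L')$ lies in a component of $\mathscr{C}^1(G') - N_{R_1}(L')$, and a component $D'$ of $\mathscr{C}^1(G') - N_{R_2}(L')$ contained in an inessential component $C'$ of $\mathscr{C}^1(G') - N_{R_1}(L')$ is itself inessential: otherwise $D'$, hence $C'$, contains a path joining the two ends of $N_{R_2}(L')$, and rerouting the portions of such a path lying in $N_{R_2}(L')$ through the Hausdorff-close set $N_{R_1}(L')$ forces $C' \cup N_{R_1}(L')$ to be one-ended, contradicting inessentiality of $C'$. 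Hence a bound $m_1'(R_2)$ valid at $R_2$ gives the bound $m_1'(R_2) + (R_2 - R_1)$ at any $R_1 \le R_2$, so it suffices to treat $R \ge R_*$ for a threshold $R_*$ making all the constants below nonnegative.

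So let $R \ge R_*$ and let $C'$ be an inessential component of $\mathscr{C}^1(G') - N_R(L')$. Every point of $C'$ lies at distance $> R$ from $L' \supset f(L)$, so every point of $g(C')$ lies at distance $> \tfrac{R}{\Lambda} - 2K$ from $L$; and since $g$ sends each edge to a path of length $\le \Lambda + K$, the $g$-image of the connected set $C'$ cannot cross $N_{\sigma_0}(L)$, where $\sigma_0 := \tfrac{R}{\Lambda} - 2K - (\Lambda+K) - 1 \ge 0$. Thus $g(C')$ lies in a single component $C$ of $\mathscr{C}^1(G) - N_{\sigma_0}(L)$. If $C$ is inessential we are done: since $L$ satisfies $iness(m_1)$, the inessential components of $\mathscr{C}^1(G) - N_{\sigma_0}(L)$ lie in $N_{m_1(\sigma_0)}(N_{\sigma_0}(L)) = N_{\sigma_0 + m_1(\sigma_0)}(L)$, so $C$ does; pushing this forward by $f$ and using $d(x', f\circ g(x')) \le K$ confines $C'$ to a neighborhood of $N_R(L')$ of radius depending only on $\Lambda, K, R', m_1, R$.

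It remains to exclude the case that $C$ is essential. Since $L'$, and hence $N_R(L')$, is a quasi-line, $e(N_R(L')) = 2$ by Lemma \ref{e(L)=2}, so — running the argument in the proof of Lemma \ref{NRH_sats_iness}, which uses $e(G')=1$ — the set $C' \cup N_R(L')$ has exactly two ends, whereas $C$ essential means $C \cup N_{\sigma_0}(L)$ has one end. To reach a contradiction one must show $g$ carries $C' \cup N_R(L')$ to a set of finite Hausdorff distance from $C \cup N_S(L)$ — that is, that $C'$ is coarsely all of the essential region of $G$ it maps into — for then the two spaces would have equal numbers of ends. This is the main obstacle: because $\Lambda > 1$, the radii of the neighborhoods of $L$ and of $L'$ that can be matched differ by factors of $\Lambda$, so $f(C)$ may dip back into $N_R(L')$ and $C$ and $C'$ cannot simply be compared at a common scale. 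The plan for handling it is to use that, by Lemma \ref{compl_has_fin_many_comps}, both $G$ and $G'$ have only finitely many essential complementary components of each neighborhood of their quasi-line, and that for a quasi-line satisfying $iness$ a complementary component is essential precisely when it is not contained in any bounded neighborhood of the line (since such a component, together with $N_{\sigma_0}(L)$, is Hausdorff-close to the two-ended set $N_{\sigma_0}(L)$). Passing back and forth between these two finite collections of essential regions under $f$ and $g$ then forces $g(C')$ to fill its essential region, yielding the contradiction and completing the proof.
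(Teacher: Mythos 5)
Your setup is sound as far as it goes: transferring along a quasi-inverse $g$, interpolating between images of adjacent vertices so that $g(C')$ lands in a single component $C$ of $\mathscr{C}^1(G)-N_{\sigma_0}(L)$, and dispatching the case where $C$ is inessential via $iness(m_1)$ all work. But the case you yourself flag as ``the main obstacle'' --- excluding an essential $C$ --- is the entire content of the lemma, and you have not proved it. The sentence ``passing back and forth between these two finite collections of essential regions under $f$ and $g$ then forces $g(C')$ to fill its essential region'' is an announced plan, not an argument: finiteness of the essential complementary components (Lemma \ref{compl_has_fin_many_comps}) does not by itself prevent two distinct components of $\mathscr{C}^1(G')-N_R(L')$ from mapping into the same component $C$ downstairs, nor does it show that points of $C$ far from $L$ lie close to $g(C')$; without both of these facts the Hausdorff-distance comparison of $C'\cup N_R(L')$ with $C\cup N_S(L)$, and hence the ends-count contradiction, never gets off the ground. (A secondary point: finite Hausdorff distance alone does not make an inclusion a quasi-isometry for the induced path metrics; this also needs the coarse-covering statement you are missing.)

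The paper supplies exactly this step by a quantitative separation argument rather than a finiteness argument. Writing $f^{-1}$ for a $(\Lambda',K')$ quasi-inverse with $f^{-1}(L')\subset N_\delta(L)$, one takes $R_1>\Lambda'K'$ and $R>\delta+\Lambda'R_1+K'$. Then two points lying in distinct components of the complement of $L'$ and outside $N_{R_1}(L')$ are at distance at least $2R_1>2\Lambda'K'$ (any path between them crosses $L'$), so their $f^{-1}$-images cannot be $K'$-close; since $f^{-1}(N_{R_1}(L'))\subset N_R(L)$ and $N_{K'}(f^{-1}(\mathscr{C}^1(G')))=\mathscr{C}^1(G)$, it follows that distinct components of $\mathscr{C}^1(G')-L'$ have $f^{-1}$-images meeting distinct components of $\mathscr{C}^1(G)-N_R(L)$, and that every point of a component $C$ met by $f^{-1}(C')$ is within $K'$ of $f^{-1}$ of a point which must lie in $C'\cup L'$; hence $C$ is coarsely contained in $f^{-1}(C')\cup N_\delta(L)$. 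This is precisely the ``filling'' you need, and it comes from matching the scale $R$ to the quasi-isometry constants plus coarse surjectivity, not from counting essential components. (The paper uses $ess(m_0)$ and local finiteness only for the further point that the union of $N_R(L)$ with all essential components met by $f^{-1}(C')$ is one-ended when there are several of them; your interpolation trick sidesteps that, but not the scale-matching step.) So there is a genuine gap: the heart of the proof is missing, and the substitute mechanism you propose is not adequate as stated.
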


\begin{proof}
Recalling Lemma \ref{imgs_of_qlines_are_qlines}, we fix $R'$ so that $L' = N_{R'}(f(L))$ is a quasi-line.
As was the case previously, it suffices to prove that there is some number $m_1'(R')>0$ (dependent on $R'$) such that the inessential components of the complement of $L'$ are contained in the $m_1'(R')$-neighborhood of $L'$.

Again we will begin by showing that any component of $\mathscr{C}^1(G')-L'$ is essential if and only if it is nearly essential.
Recall that we always have that essential implies nearly essential.

Let $f^{-1}$ be a quasi-inverse to $f$, and note that, for any $R>0$, each component of $\mathscr{C}^1(G')-L'$ gets mapped by $f^{-1}$ either into $N_R(L)$ or into the union of $N_R(L)$ with components of its complement.  
We claim that we may choose $R$ large enough that, if $C'$ is a component of $\mathscr{C}^1(G')-L'$ such that $f^{-1}(C')$ meets a component $C$ of $\mathscr{C}^1(G)-N_R(L)$, then the image under $f^{-1}$ of no other component of $\mathscr{C}^1(G')-L'$ will meet $C$.

To see this, let $\Lambda', K', \delta$ be such that $f^{-1}$ is a $(\Lambda', K')$ quasi-isometry, with $f^{-1}(L') \subset N_\delta (L)$.  
Let $\{ C_\alpha'\}$ be the components of $\mathscr{C}^1(G')-L'$, and let $R_1> \Lambda' K'$.  Note that, if $\alpha \neq \beta$, and $C_\alpha' -N_{R_1}(L')$ and $C_\beta' -N_{R_1}(L')$ are nonempty, then any points $p_\alpha \in C_\alpha' -N_{R_1}(L')$, $p_\beta \in C_\beta' -N_{R_1}(L')$ are at least a distance of $2\Lambda' K'$ apart.

Let $R> (\delta + \Lambda' R_1 + K')$, and
note that $f^{-1}(N_{R_1}(L')) \subset N_R(L)$.
Recall that $f^{-1}$ is coarsely surjective, with $N_{K'}(f^{-1}(\mathscr{C}^1(G'))) = \mathscr{C}^1(G)$.
Suppose that there is some component of $\mathscr{C}^1(G)-N_R(L)$ that is met by more than one image $f^{-1}(C_\alpha')$.  
Then there are two such, call them $f^{-1}(C_\alpha')$ and $f^{-1}(C_\beta')$, with some $p_\alpha \in C_\alpha' - N_{R_1}(L'), p_\beta \in C_\beta' - N_{R_1}(L')$, such that $d(f^{-1}(p_\alpha),f^{-1}( p_\beta))<K'$.  
But this means that 
$\frac{1}{\Lambda'}d(p_\alpha, p_\beta)-K' < K'$, i.e. that
$d(p_\alpha, p_\beta)<2\Lambda' K'$, which is a contradiction.

Thus, with $R$ chosen as above, we have that the images under $f^{-1}$ of different components of the complement of $L'$ shall not meet the same component of the complement of $N_R(L)$.  

Suppose now that $C'$ is a component of $\mathscr{C}^1(G')-L'$ that is nearly essential.  Let $C_0$ be the union of the components of $\mathscr{C}^1(G)-N_R(L)$ that are met by $f^{-1}(C')$.  
Since $L$ satisfies $iness (m_1)$, $C_0$ must contain an essential component of the complement of $N_R(L)$.
Let $C_0^e$ denote the essential components in the complement of $N_R(L)$ that are met by $f^{-1}(C')$, and now we have that $C_0^e$ is nonempty.

Observe that $(C' \cup L')$ is quasi-isometric to $C_0 \cup f^{-1}(L')$, which is quasi-isometric to $(C_0 \cup L)$.  Certainly this is quasi-isometric to $(C_0 \cup N_R(L))$, which in turn must be quasi-isometric to $(C_0^e \cup N_{R}(L))$, since the components of $C_0$ that are inessential must be contained in the $m_1(R)$-neighborhood of $N_R(L)$.  

We claim that $e(C_0^e \cup N_{R}(L))=1$.
This is immediate if $C_0^e$ contains only one component, so assume that $C_0^e = \{ C_i\}$ contains more than one, and suppose for a contradiction that $e(C_0^e \cup N_{R}(L)) >1$.

Then there is some finite subgraph $K$ of $(C_0^e \cup N_{R}(L))$ whose complement has more than one infinite component.
For each $i$, $C_i$ is essential so $(C_i \cup N_R(L))-K$ has exactly one infinite component which we shall call $D_i$, and as $\mathscr{C}^1(G)$ is locally finite, we note that $N_R(L) - (D_i \cap N_R(L))$ must therefore be finite.
On the other hand, as $G$ is finitely presented, so $N_R(L)$ satisfies $ess(m_0)$ for some $m_0$. 
Thus there are only finitely many $C_i$'s, so there must be indices $i$ and $j$ such that $D_i$ and $D_j$ are disconnected in $(C_0^e \cup N_{R}(L))$ by $K$.
As $N_R(L)$ is finite outside of $D_i$ and is finite outside of $D_j$, we have reached a contradiction.

Thus $e(C_0^e \cup N_{R}(L))=1$, so $e(C' \cup L') =1$ and $C'$ is essential.

Hence components of the complement of $L'$ are essential if and only if they are nearly essential.
It remains to conclude that $L'$ satisfies $iness(m_1)$.  

As $f^{-1}$ is coarsely surjective, we have that a finite neighborhood of the image under $f^{-1}$ of any component of the complement of $L'$ is equal to a subset of $N_R(L)$, together with a collection of components of the complement of $N_R(L)$.
As the inessential components of the complement of $N_R(L)$ are contained in the $m_1(R)$-neighborhood of $N_R(L)$, it follows that there is some $m_1'(R')>0$ such that any component $C'$ of the complement of $L'$ is either contained in the $m_1'(R')$-neighborhood of $L'$, or is contained in no finite neighborhood of $L'$.
Thus, $C'$ must be contained in $N_{m_1'(R')}(L')$ or else is nearly essential hence essential, as desired.

\end{proof}

\begin{rmk}\label{iness_preserved}
Recall that we are concerned with the following quasi-lines.
If $G$ is a one-ended finitely presented group with a two-ended subgroup $H$, then we will consider a quasi-line in $\mathscr{C}^1(G)$ of the form $N_R(H)$ and its translates under the action of $G$.
We will also consider a quasi-line that is the $R'$-neighborhood of the image of $H$ under a quasi-isometry $f \co \mathscr{C}^1(G) \to \mathscr{C}^1(G')$, and the translates of that quasi-line in $\mathscr{C}^1(G')$.

$G$ acts on $\mathscr{C}^1(G)$ by isometries, and hence it follows from Lemma \ref{NRH_sats_iness} that any collection of quasi-lines that are translates of $N_R(H)$ by elements of $G$ will all satisfy $iness(m_1)$ for the same function $m_1$.

Simiarly, and using Lemma \ref{iness_preserved_under_qi}, we have that any translates of the $R'$-neighborhood of $f(H)$ will all satisfy $iness(m_1')$ for the same function $m_1'$.
\end{rmk}


\section{The quasi-isometry invariance of two-ended subgroups with at least three coends} \label{section3}

In this section, we will prove that, up to finite Hausdorff distances, quasi-isometries take two-ended subgroups with at least three coends to other two-ended subgroups with at least three coends (Theorem \ref{existence_qi_inv}).
Coends will be defined below, and we will see that two-ended subgroups having at least three coends will be exactly those whose corresponding quasi-lines are 3-parting (Lemma \ref{coends_and_ess_comps}).

The main ingredient in the proof of the quasi-isometry invariance of two-ended subgroups with at least three coends is Proposition \ref{LnearZ}, which shows that any $3$-parting quasi-line satisfying $iness(m_1)$ is a finite Hausdorff distance from an infinite cyclic subgroup.
For this, we use the proof of a similar result from \cite{papa_qlines}.
There, Papasoglu shows that, given a 3-parting quasi-line in the Cayley graph of a one-ended, finitely presented group, either the quasi-line is a finite distance from an infinite cyclic subgroup, or a related limit of translates of quasi-lines is.
We will see below that this latter possibility can be eliminated.
\\

We first note that quasi-lines satisfying $iness(m_1)$ but that are not 3-parting need not be a finite Hausdorff distance from a copy of $\mathbb{Z}$.
For example, consider the nearest-point projection of a line $l_0$ in $\mathbb{R}^2$ with irrational slope into the Cayley graph of $\mathbb{Z}^2$, where the vertices are taken to be the integer lattice points in $\mathbb{R}^2$ and we take the standard generating set.  
Let $L$ denote a connected neighborhood in $\mathscr{C}_{\mathbb{Z}^2}$ of the projection of $l_0$.  Then $L$ is a 2-parting quasi-line in $\mathscr{C}_{\mathbb{Z}^2}$ that satisfies $iness(m_1)$ for some $m_1$.  
However, the infinite cyclic subgroups of $\mathbb{Z}^2$ correspond to lines in $\mathbb{R}^2$ with rational slope, hence $L$ is an infinite Hausdorff distance from any subgroup of $\mathbb{Z}^2$.
\\


In order to prove Proposition \ref{LnearZ}, we will need to know that 3-parting quasi-lines do not cross one another in an essential way.  
We shall say that $a,b \in \mathscr{C}^1(G)$ are {\it $K$-separated by a quasi-line $L$} if $B_K(a)$ and $B_K(b)$ are in different components of the complement of $L$.
The following is Proposition 2.1 from \cite{papa_qlines}:

\begin{thm} {\rm \cite{papa_qlines}} \    
\label{2.1}
Let $G$ be a one-ended, finitely presented group, and let $L, L_1$ be $(\phi',N')$ quasi-lines in $\mathscr{C}^1(G)$ that satisfy $iness(m_1)$.
Suppose that $L$ is 3-parting.

Then there is some $K = K(G, \phi', N', m_1)$ such that no two points $a,b \in L$ are $K$-separated by $L_1$.
\end{thm}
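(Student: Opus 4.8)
The plan is to argue by contradiction, exploiting the fact that $L$ is $3$-parting to find three essential components $C_1, C_2, C_3$ of $\mathscr{C}^1(G) - L$, and then to show that a quasi-line $L_1$ which $K$-separates two points $a, b \in L$ (for $K$ large) would be forced to either ``swallow'' one of these essential components into its inessential part, contradicting $iness(m_1)$, or else fail to be a quasi-line. First I would fix $a, b \in L$ with $B_K(a), B_K(b)$ in distinct components of $\mathscr{C}^1(G) - L_1$; since $L$ satisfies $ess(m_0)$ for the relevant $m_0 = m_0(G,\phi',N')$ by Lemma \ref{compl_has_fin_many_comps} (and Remark \ref{all_qlines_sat_ess_w_same_m_0}), each essential component $C_i$ of the complement of $L$ comes within $m_0$ of every point of $L$, so in particular comes close to both $a$ and $b$. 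Because $L$ is two-ended (Lemma \ref{e(L)=2}) and each $C_i$ is essential, the proof of Lemma \ref{compl_has_fin_many_comps} (specifically Corollary \ref{from_ess_proof}) produces, inside each $\overline{C_i}$, an $(L,n)$-chain running the whole length of $L$ — in particular one joining a point near $a$ to a point near $b$.

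The key geometric step is then: each such $(L,n)$-chain in $\overline{C_i}$ yields a path from a neighborhood of $a$ to a neighborhood of $b$ which, apart from short excursions of length $\le n$, stays on $L$; replacing its $L$-portions by $l$-portions and then, where the chain passes near the separating picture, this gives a ``broad'' connection between the two sides of $L_1$ that lies boundedly far from $L$. If $K$ is chosen larger than $m_1(\text{that bound}) + n + N' + (\text{constants})$, then this path cannot cross $L_1$ through $B_K(a)$ or $B_K(b)$ (those balls miss $L_1$-components by hypothesis), so it must cross $L_1$ somewhere in between; and it does so within a component of the complement of $L_1$ that is pinched between the $a$-side and the $b$-side — i.e. an inessential-looking region of $\mathscr{C}^1(G) - L_1$ that nonetheless is not contained in $N_{m_1}(L_1)$, since it contains a point of the essential component $C_i$ which is nearly essential (using that, by the discussion after the $iness$ definitions and Lemma \ref{NRH_sats_iness}/\ref{iness_preserved_under_qi}, for these quasi-lines essential $=$ nearly essential). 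Running this argument for all three components $C_1, C_2, C_3$ simultaneously: two of the three must land on the same side of $L_1$, and the third either joins them — contradicting that $B_K(a)$, $B_K(b)$ are separated — or is trapped in a bounded region on one side, contradicting near-essentiality. Either way we reach a contradiction, so no such $a,b$ exist for $K$ sufficiently large, and the bound on $K$ depends only on $G, \phi', N', m_1$ through the constants $m_0, n_0, m_1$ invoked above.

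The dependence of $K$ on the stated parameters is automatic: $n$ and $n_0$ depend on $(G, \phi', N')$ by Lemma \ref{CC_lem} and Lemma \ref{compl_has_fin_many_comps}, $m_0$ depends on $(G,\phi',N')$, and the only other input is the given function $m_1$; assembling the inequalities gives $K = K(G,\phi',N',m_1)$.

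I expect the main obstacle to be making precise the ``broad connection'' step — i.e. showing that the $(L,n)$-chains inside the three essential components of $\mathscr{C}^1(G)-L$, taken together, genuinely force a contradiction with the separation of $B_K(a)$ and $B_K(b)$ by $L_1$. The delicate point is bookkeeping which component of $\mathscr{C}^1(G)-L_1$ each chain passes through and ruling out the possibility that the chains route ``around'' $L_1$ through genuinely essential components of its complement; this is exactly where one must invoke that, for the quasi-lines under consideration, inessential components are uniformly close to the quasi-line (the $iness(m_1)$ hypothesis on $L_1$), so that a chain staying boundedly close to $L$ but crossing $L_1$ is confined to inessential pieces of $\mathscr{C}^1(G) - L_1$ — yet must also meet the nearly-essential components of $\mathscr{C}^1(G)-L$, which cannot be confined. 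Balancing these two facts is the crux. This is essentially Papasoglu's argument for Proposition 2.1 in \cite{papa_qlines}, reorganized to use the $ess$/$iness$ formalism developed in the previous section.
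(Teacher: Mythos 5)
Your proposal has a genuine gap at exactly the step you flag as the ``crux,'' and the mechanism you propose for it does not work. The $(L,n)$-chains of Corollary \ref{from_ess_proof} consist of points of $\overline{C_i}\cap L$ joined by short paths \emph{in $L$}; so the ``broad connection'' from near $a$ to near $b$ is really a path lying on $L$ itself. When such a path crosses $L_1$, there is no reason the crossing is confined to inessential components of $\mathscr{C}^1(G)-L_1$: it typically crosses from the (possibly essential) component containing $B_K(a)$ into the (possibly essential) component containing $B_K(b)$, passing through whatever components lie in between, and nothing you have set up produces an inessential component of $\mathscr{C}^1(G)-L_1$ that escapes $N_{m_1}(L_1)$. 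Likewise, the fact that each essential component $C_i$ of $\mathscr{C}^1(G)-L$ comes within $m_0$ of both $a$ and $b$ only says that each $C_i$ meets both sides of $L_1$, i.e. that $L_1$ passes through each $C_i$; that is perfectly consistent with $L_1$ being a quasi-line satisfying $iness(m_1)$, so the final pigeonhole (``two of the three must land on the same side of $L_1$'') has no precise content --- the $C_i$ straddle $L_1$ rather than landing on a side, and neither alternative you offer is a contradiction. A symptom that the mechanism cannot be right: as written it never uses $3$-parting quantitatively, and essentially the same words could be said with two essential components; but the statement is false for $2$-parting quasi-lines (the irrational-slope lines in $\mathbb{Z}^2$ discussed in Section \ref{section3} $K$-separate points of one another for every $K$), so any correct proof must make the third component do specific work that your sketch does not identify.

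For comparison, the argument the paper actually runs is essentially $2$-dimensional and cannot be replaced by this coarse bookkeeping: one passes to the Cayley complex, makes $L$ simply connected by capping short loops, and uses Papasoglu's Lemma \ref{1.9} to produce, in each of three essential components $X_1,X_2,X_3$ of the complement of $L$, a path $p_i$ from $a$ to $b$ that avoids a large neighborhood of the middle of $[a,b]_L$; this is what forces every point of $p_i\cap L_1$ to lie at distance $>K_1$ from $L$. One then fills each loop $p_i\cup[a,b]_l$ with a disk (van Kampen, using simple connectivity of $X_i\cup L$), glues pairs of disks along $\hat{[a,b]}$, and analyzes the component of $g^{-1}(L_1)$ separating $\hat{e}_1$ from $\hat{e}_2$: a winding-number argument (Lemma \ref{scc_case}, which uses the coarse null-homotopies of Lemma \ref{coarsely_sc}) shows it is an arc with endpoints on $p_i\cup p_j$, hence far from $L$ and lying in specified essential components of $\mathscr{C}^1(G)-L$; finally, projecting these endpoints to the line $l_1$ and comparing with the segment $\chi$ (Lemma \ref{second_lemma}) yields two segments of $\overline{l_1-\chi}$ that must coexist on the same side of an embedded line, the contradiction. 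None of this input --- Lemma \ref{1.9}, the disk diagrams, the winding numbers, the distance-$K_1$ estimate, or the projection to $l_1$ --- appears in your sketch, and the $ess/iness$ formalism alone does not substitute for it.
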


We include a proof of this theorem in the appendix, in order to clarify some points from the proof given in \cite{papa_qlines}.

To restate Theorem \ref{2.1}, if there is some $a \in L$ that is in an essential component $C$ of the complement of $L_1$ and is more than a distance of $K$ from $L_1$, and $b \in L$ is in a different essential component of the complement of $L_1$, then $b$ is no more than a distance of $K$ from $L_1$.  

It follows that $L$ is contained in the $K$-neighborhood of $L_1 \cup C$.
Let $m_0$ be such that $L$ and $L_1$ satisfy $ess(m_0)$ (see Lemma \ref{compl_has_fin_many_comps}), and let $K' = K + 2m_0$.
Then it follows that $L$ is contained in the $K'$-neighborhood of $C$.

Thus we have the following corollary to Theorem \ref{2.1}:

\begin{cor} \label{q_lines_are_K_disjoint}
Let $G$, $L$ and $L_1$ be as in Theorem \ref{2.1}.

  Then there is some $K'=K'(G, \phi , N, m_1)$ such that $L$ is contained in the $K'$-neighborhood of an essential component of the complement of $L_1$.
 \end{cor}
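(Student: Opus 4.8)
The plan is to read the corollary off directly from Theorem~\ref{2.1}, together with the facts that the quasi-lines in question satisfy $ess(m_0)$ and $iness(m_1)$. Let $K = K(G,\phi,N,m_1)$ be the constant furnished by Theorem~\ref{2.1}, and let $m_0 = m_0(G,\phi,N)$ be as in Lemma~\ref{compl_has_fin_many_comps}, so that both $L$ and $L_1$ satisfy $ess(m_0)$. Applying the defining property of $iness(m_1)$ with $R=0$, every inessential component of $\mathscr{C}^1(G)-L_1$ is contained in $N_{m_1(0)}(L_1)$; since a $K'$-separation with $K' \geq K$ is in particular a $K$-separation, the conclusion of Theorem~\ref{2.1} remains valid if we enlarge $K$, so we may assume $K \geq m_1(0)$. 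Note also that $\mathscr{C}^1(G)-L_1$ has at least one essential component: otherwise all components would lie in $N_{m_1(0)}(L_1)$, forcing $\mathscr{C}^1(G) = N_{m_1(0)}(L_1)$, which is a quasi-line, and hence $e(G)=2$ by Lemma~\ref{e(L)=2}, contradicting one-endedness.

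First I would dispose of the case $L \subset N_K(L_1)$: choosing any essential component $C$ of $\mathscr{C}^1(G)-L_1$, the property $ess(m_0)$ for $L_1$ gives that $C$ meets $B_{m_0}(p)$ for every $p \in L_1$, so $L_1 \subset N_{m_0}(C)$ and therefore $L \subset N_{K+m_0}(C)$. Otherwise there is a point $a \in L$ with $d(a,L_1) > K$; then the connected set $B_K(a)$ is disjoint from $L_1$ and so lies in a single component $C$ of $\mathscr{C}^1(G)-L_1$, which must be essential because $K \geq m_1(0)$. The key step is to show that every $b \in L$ either lies in $C$ or satisfies $d(b,L_1)\leq K$: if $b \notin C$ and $d(b,L_1) > K$, then $B_K(b)$ lies in a component of $\mathscr{C}^1(G)-L_1$ different from $C$, so $a$ and $b$ would be $K$-separated by $L_1$, contradicting Theorem~\ref{2.1}. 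Hence $L \subset N_K(L_1 \cup C)$, and combining this with $L_1 \subset N_{m_0}(C)$ (as above) yields $L \subset N_{K+m_0}(C)$. In either case, setting $K' = K + 2m_0 = K'(G,\phi,N,m_1)$ gives $L \subset N_{K'}(C)$ for an essential component $C$, as desired.

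The argument is essentially bookkeeping once Theorem~\ref{2.1} is in hand, so I do not anticipate a genuine obstacle; the two points requiring care are ensuring that the component $C$ containing the far-away point $a$ is essential rather than inessential — which is precisely where the hypothesis $iness(m_1)$ for $L_1$ enters, via the uniform bound $m_1(0)$ — and tracking that every constant depends only on $G$, $\phi$, $N$, and $m_1$, as claimed in the statement.
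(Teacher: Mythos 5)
Your proof is correct and follows essentially the same route as the paper, which derives the corollary directly from Theorem \ref{2.1} by observing that $L \subset N_K(L_1 \cup C)$ for the essential component $C$ containing a point of $L$ far from $L_1$, and then uses $ess(m_0)$ for $L_1$ to absorb $L_1$ into $N_{m_0}(C)$, taking $K' = K + 2m_0$. Your additional bookkeeping (existence of an essential component, essentialness of $C$ via $m_1(0)$, and the degenerate case $L \subset N_K(L_1)$) only fills in details the paper leaves implicit.
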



The next observation will be needed in Lemma \ref{seq_of_qlines_is_fin}.

\begin{lem}\label{get_haus}
Let $L$ and $L'$ be $(\phi, N)$ quasi-lines in a metric space $X$.
Then for any $x_2$, there is some $x_1 = x_1(\phi, N, x_2)>x_2$ such that if $L \nsubseteq N_{x_1}(L')$, then $L' \nsubseteq N_{x_2}(L)$.
\end{lem}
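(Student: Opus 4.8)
The plan is to argue by contradiction, exploiting the fact that a quasi-line is "coarsely connected along its full length": every point of it lies within $N$ of the associated embedded line, which is a bi-infinite geodesic-like object, so a quasi-line cannot be contained in a bounded neighborhood of a set unless it runs along that set all the way to both ends. Concretely, suppose the lemma fails; then for the given $x_2$ there is, for every candidate $x_1 > x_2$, a pair of $(\phi,N)$ quasi-lines $L, L'$ with $L \not\subseteq N_{x_1}(L')$ but $L' \subseteq N_{x_2}(L)$. The strategy is to extract a contradiction for $x_1$ chosen as an explicit function of $\phi, N, x_2$ — roughly $x_1 = \phi(2x_2 + 2N) + \text{(lower-order terms in } N, x_2)$.

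The key steps, in order: First, pick a point $p \in L$ with $d(p, L') > x_1$, and let $l, l'$ be the lines associated to $L, L'$. Using the $(\phi,N)$ structure, replace $p$ by a nearby point $\hat p \in l$ with $d(\hat p, L')$ still large (at least $x_1 - N$), and replace $L'$ by $l'$ up to the bound $N$. Second, because $L' \subseteq N_{x_2}(L)$, every point of $l'$ is within $x_2 + N$ of a point of $l$; in particular $l'$ itself is "trapped" in a bounded neighborhood of $l$. Third — the crucial geometric step — walk along $l$ from $\hat p$ toward $+\infty$ and toward $-\infty$: since $l$ is parametrized by arc length and $D_l \le \phi$, two points of $l$ at parameter-distance exceeding $\phi(t)$ are at genuine distance exceeding $t$ in $X$. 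Using $e(l') = 2$ (so $l'$ also has two ends) and the trapping from the second step, show that $l$ must, on one of its two rays from $\hat p$, come within $x_2 + 2N$ of a point $w$ of $l'$; then the segment of $l$ between $\hat p$ and (the point over) $w$ has bounded length, which forces $d(\hat p, l') \le$ (a bound built from $\phi$ of that length, plus $N$'s), contradicting $d(\hat p, L') > x_1 - N$ once $x_1$ was chosen large enough.

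The main obstacle is Step 3: pinning down exactly why $l$ must approach $l'$ on a \emph{short} segment rather than wandering far before returning. The clean way is to use that both $l$ and $l'$ separate into exactly two ends (Lemma \ref{e(L)=2}), together with the containment $l' \subseteq N_{x_2+N}(l)$: parametrize $l$ so $\hat p = l(0)$, and observe that the two ends of $l$ are sent by proximity into neighborhoods of the two ends of $l'$ (or the same one, which is even easier). Then some point $l'(s)$ lies within $x_2 + N$ of $l(0)$ — for if not, $l(0)$ together with a bounded ball would separate $l'$ into two infinite pieces both staying near $l$, contradicting that $l'$ only has one component near each of its two ends while $l(0)$ is near neither. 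Once such an $s$ is found, $d(\hat p, l(0)) = 0$ already gives $d(\hat p, l') \le x_2 + N$, and choosing $x_1 = x_2 + 2N + 1$ (say) suffices; in fact one can avoid $\phi$ entirely here and the stated dependence $x_1 = x_1(\phi, N, x_2)$ is more generous than needed. I would therefore present the argument with $x_1 := x_2 + 2N + 1$, deriving the contradiction $d(p, L') \le d(p,\hat p) + d(\hat p, l') + N \le N + (x_2 + N) + \text{(slack)} \le x_1$, and remark that the dependence on $\phi$ in the statement is harmless.
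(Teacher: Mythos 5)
There is a genuine gap, and it sits exactly at your Step 3 and the closing remark that one can take $x_1 = x_2+2N+1$ and ``avoid $\phi$ entirely.'' The claim that, because $l' \subseteq N_{x_2+N}(l)$ and $l'$ has two ends, some point of $l'$ must come within $x_2+N$ of $\hat p = l(0)$ is false: the hypothesis $L' \subseteq N_{x_2}(L)$ says every point of $L'$ shadows $L$, not that $l'$ shadows every point of $l$, and a point of $l$ far from $l'$ does not separate anything that $l'$ is forced to cross, because $l$ may make a long detour whose entrance and exit are close together in $X$, which $l'$ can simply bypass. Concretely, let $X$ be the real line (as a graph with vertices at the integers) with an arc $T$ of length $2h$ attached at the vertices $0$ and $1$; let $l'$ be the real line with $L'$ its image, and let $l$ run along $(-\infty,0]$, then through $T$, then along $[1,\infty)$, with $L$ its image. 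Both are $(\phi,N)$ quasi-lines with $N=1$ and $\phi(t)=t+2h+2$, and $L' \subseteq N_1(L)$; but the midpoint of $T$ is a point of $L$ at distance $h$ from $L'$, so $L \subseteq N_{x_1}(L')$ fails for every $x_1 < h$, while your $x_1 = x_2+2N+1$ is bounded independently of $h$. So any valid choice of $x_1$ must grow with $\phi$ (here $\phi$ is essentially $2h$), and the dependence in the statement is not ``more generous than needed'' -- it is essential.

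For comparison, the paper takes $x_1 > \tfrac{1}{2}\phi(2(x_2+N)) + 2N + x_2$ and argues: if $l(t)$ is a point of $l$ whose $(x_1-N)$-ball misses $l'$, while $l' \subseteq N_{x_2+N}(l)$, then (using that $l'$ is connected and uniformly proper, so its two ends must track the two ends of $l$ -- this is the correct use of your two-ends intuition) there are parameters $t_1 < t < t_2$ with $|t-t_i| \geq (x_1-N)-(x_2+N)$ and $d(l(t_1),l(t_2)) \leq 2(x_2+N)$; the distortion bound $D_l \leq \phi$ then gives $|t_1-t_2| \leq \phi(2(x_2+N))$, contradicting the choice of $x_1$. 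Your Steps 1 and 2 match this setup, but the quantitative contradiction has to come from the distortion function of $l$, not from an additive bound in $N$ and $x_2$ alone; as the example above shows, no argument that never invokes $\phi$ can close the gap.
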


\begin{proof}
Given $x_2$, let $x_1 > \frac{1}{2}\phi(2(x_2+N)) + 2N + x_2$.
Suppose for a contradiction that $L \nsubseteq N_{x_1}(L')$ and that $L' \subset N_{x_2}(L)$.
Let $l$ and $l'$ be the lines associated to $L$ and $L'$ respectively, and it follows that there is some $t \in \mathbb{R}$ such that the $(x_1-N)$-ball about $l(t)$ does not meet $l'$.

As $L' \subset N_{x_2}(L)$, hence $l' \subset N_{x_2+N}(l)$, it follows that there are $t_1 < t < t_2$ such that $|t-t_i| \geq ((x_1 - N)-(x_2+N))$ for each $i$ and $d(l(t_1), l(t_2)) \leq 2(x_2+N)$.
But our assumption on $x_1$ implies that $\phi (2(x_2+N)) < 2(x_1-2N-x_2)$, a contradiction.
Thus $L' \nsubseteq N_{x_2}(L)$.
\end{proof}

We shall need the following lemma, both to prove Proposition \ref{LnearZ} and also to prove another later result.


\begin{lem}\label{seq_of_qlines_is_fin}
Let $G$ be a one-ended, finitely presented group, and let $\{ L^i\}$ be a collection of 3-parting $(\phi, N)$ quasi-lines in $\mathscr{C}^1(G)$ satisfying $iness(m_1)$.  Suppose that $\cap_i L^i$ contains a vertex.  

Then there is some constant $x_1 = x_1(G, \phi, N, m_1)$ such that if, for all $i,j$, $d_{Haus}(L^i, L^j) > x_1$,
then $\{ L^i \}$ is finite.
\end{lem}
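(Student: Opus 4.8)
The plan is to argue by contradiction: assume $\{L^i\}$ is infinite while all pairwise Hausdorff distances exceed $x_1$, and produce a configuration that cannot occur in a one‑ended, finitely presented group. First I would fix a vertex $v\in\bigcap_i L^i$ and assemble the uniform constants available to us. By Remark~\ref{all_qlines_sat_ess_w_same_m_0} every $L^i$ satisfies $ess(m_0)$ for a single $m_0=m_0(G,\phi,N)$, and by Corollary~\ref{q_lines_are_K_disjoint} there is a single $K'=K'(G,\phi,N,m_1)$ such that for every pair $i\neq j$ the quasi‑line $L^i$ lies in the $K'$‑neighbourhood of one essential component of $\mathscr{C}^1(G)-L^j$; write $C(i,j)$ for that component. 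Since all $L^i$ contain $v$, each essential component of $\mathscr{C}^1(G)-L^j$ meets $B_{m_0}(v)$ in a vertex, and these components are pairwise disjoint, so the number of essential components of $\mathscr{C}^1(G)-L^j$ is at most $V_0:=|\{\text{vertices of }B_{m_0}(v)\}|$, a bound independent of $j$. I would take $x_1$ much larger than $K'$, $m_0$, $N$, and than the finitely many values of $\phi$ that enter below; then, using Lemma~\ref{get_haus} and $iness(m_1)$, the hypothesis $d_{Haus}(L^i,L^j)>x_1$ will force $L^i$ (and symmetrically $L^j$) to contain a point lying deep inside the essential component $C(i,j)$ and far from its frontier.

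Next I would carry out a pigeonhole reduction. Because $\mathscr{C}^1(G)-L^1$ has at most $V_0$ essential components, infinitely many of the $L^i$ share the same value of $C(i,1)$; discarding the others, all surviving quasi‑lines lie in $N_{K'}(E)$ for one fixed essential component $E$ of $\mathscr{C}^1(G)-L^1$. Iterating this with $L^2,L^3,\dots$ in place of $L^1$ produces, after relabelling, an infinite \emph{nested} subsequence $L^1,L^2,L^3,\dots$ with the property that for every $j>i$ one has $L^j\subseteq N_{K'}(E_i)$ for a fixed essential component $E_i$ of $\mathscr{C}^1(G)-L^i$; all these quasi‑lines still pass through $v$ and are still pairwise $x_1$‑far.

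The heart of the argument — and the step I expect to be the main obstacle — is to contradict the existence of such a nested sequence, and this is where the hypothesis that the inner quasi‑lines are $3$‑parting is essential (for a merely $2$‑parting quasi‑line, e.g.\ a geodesic in a surface group, the statement would fail, since infinitely many such lines pass through a point at pairwise infinite Hausdorff distance). The idea is to apply the non‑crossing Corollary~\ref{q_lines_are_K_disjoint} a second time, now to the pair $(L^i,L^{i+1})$, recording which essential component of $\mathscr{C}^1(G)-L^{i+1}$ contains $L^i$. Since $L^{i+1}\subseteq N_{K'}(E_i)$ and $L^{i+1}$ passes through $v$, which lies within $K'$ of the frontier of $E_i$, the quasi‑line $L^{i+1}$ must ``turn back'' at $v$; meanwhile $L^i$, being $3$‑parting, has at least two essential complementary components besides $E_i$, and each of these — being nearly essential and essentially disjoint from $E_i\supseteq N_{?}(L^{i+1})$, yet coming within $m_0$ of $v$ by $ess(m_0)$ — must lie on a single side of $L^{i+1}$. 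Tracking this ``which component of $\mathscr{C}^1(G)-L^{i+1}$ contains $L^i$'' datum down the nested sequence, and noting there are at most $V_0$ choices at each stage, I would argue that it eventually stabilises, and that once it does the corresponding quasi‑lines are forced to fellow‑travel: one extracts a bound on $d_{Haus}(L^i,L^j)$ in terms of $K'$, $m_0$, $N$ and bounded values of $\phi$, via Lemma~\ref{get_haus} together with the distortion control on the associated lines (which limits how far two $3$‑parting quasi‑lines running near a common essential component and sharing the vertex $v$ can separate). Choosing $x_1$ larger than this bound contradicts the pairwise $x_1$‑separation, so $\{L^i\}$ must be finite. The delicate points I expect to have to nail down carefully are exactly the two bracketed assertions in this last paragraph: that each of the extra essential components of $\mathscr{C}^1(G)-L^i$ stays (up to a uniform neighbourhood) on one side of $L^{i+1}$, and that stabilisation of the recorded components yields a genuinely uniform fellow‑travelling constant.
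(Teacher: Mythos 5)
Your setup and your pigeonhole/nesting step are essentially the paper's: the constants $m_0$ and $K'$, the appeal to Corollary \ref{q_lines_are_K_disjoint}, and the extraction of a sequence $L_0,L_1,\ldots$ together with essential components $B_i$ (and $B_i'$) of $\mathscr{C}^1(G)-L_i$ so that all later (resp.\ earlier) quasi-lines lie in $N_{K'}(B_i)$ (resp.\ $N_{K'}(B_i')$) is exactly how the paper's proof begins. The gap is in what you call the heart of the argument. You propose to record which complementary component of $L^{i+1}$ contains $L^i$, argue that this datum stabilises along the nested subsequence, and then extract a uniform fellow-travelling bound on $d_{Haus}(L^i,L^j)$ that contradicts the $x_1$-separation. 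No such bound follows from what you have assembled: Corollary \ref{q_lines_are_K_disjoint} only says the quasi-lines do not cross, and two non-crossing $3$-parting quasi-lines through a common vertex can be at arbitrarily large (indeed infinite) Hausdorff distance --- this is precisely the situation of the translates $g_iL$ in the proof of Proposition \ref{LnearZ}, which all pass near a fixed point and are pairwise far apart, and whose finiteness is exactly what this lemma is meant to establish, so a fellow-travelling estimate cannot be presupposed. Stabilisation of the ``side'' datum carries no metric information, and Lemma \ref{get_haus} only converts one-sided non-containment into two-sided non-containment at a smaller scale; it does not produce the distance bound your contradiction requires.

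What the paper does instead, and what your sketch is missing, is a different use of the $3$-parting hypothesis: for each $L_i$ in the nested sequence choose a \emph{third} essential component $D_i$ of its complement, distinct from the two nesting components $B_i$ and $B_i'$. By $iness(m_1)$, the set $D_i-N_{K'}(L_i)$ contains an essential component $E_i$ of the complement of $N_{K'}(L_i)$. Since $L_i\nsubseteq N_{x_2}(L_j)$, there is $p\in L_i$ with $B_{x_2}(p)\cap L_j=\emptyset$, so $B_{x_2}(p)$ lies in $B_j$ or $B_j'$; taking $x_2>m_0'=m_0(G,\phi,N+K')$, this ball also meets $E_i$ (by $ess(m_0')$ for $N_{K'}(L_i)$), and since $L_j$ misses $D_i-N_{K'}(L_i)$, the connected set $B_{x_2}(p)\cup E_i$ sits inside $B_j$ or $B_j'$ and hence is disjoint from $E_j\subset D_j$. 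Thus the $E_i$ are pairwise disjoint, yet each meets the fixed ball $B_{m_0'}(y)$ about the common vertex $y$, contradicting local finiteness of $\mathscr{C}^1(G)$. The contradiction is local and combinatorial at the shared vertex, not a fellow-travelling estimate, and your proposal as written does not reach it.
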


\begin{proof}
Let $m_0 = m_0(G, \phi, N)$ be as in Lemma \ref{compl_has_fin_many_comps}, so that each $L^i$ satisfies $ess(m_0)$.
Let $K' = K'(G, \phi , N, m_1)$ be as in Corollary \ref{q_lines_are_K_disjoint}, so that, for each $i, j$, $L^i$ is contained in the $K'$-neighborhood of an essential component of the complement of $L^j$.
Furthermore, let $m_0' = m_0(G, \phi, N+K')$, so that, for any $i$, $N_{K'}(L^i)$ (which is a $(\phi, N+K')$ quasi-line) satisfies $ess(m_0')$.
Let $x_2 > \max \{ K', m_0'\}$ and let $x_1$ be from Lemma \ref{get_haus}.
Thus $d_{Haus}(L^i, L^j) > x_1$ implies that $L^i \nsubseteq N_{x_2}(L^j)$ and $L^j \nsubseteq N_{x_2}(L^i)$.

Let $\mathscr{L}_0$ denote $\{ L^i\}$, and suppose that $\mathscr{L}_0$ is infinite.
Then choose any element $L_0$ from $\mathscr{L}_0$.  
As $L_0$ satisfies $ess(m_0)$, the complement of $L_0$ has only finitely many essential components, so there is some essential component $B_0$ whose $K'$-neighborhood contains infinitely many elements of $\mathscr{L}_0$.  
Let $\mathscr{L}_1 = \{L \in [\mathscr{L}_0-L_0] : L \subset N_{K'}(B_0)\}$.  Choose $L_1$ from $\mathscr{L}_1$, and let $B_1'$ be the essential component of the complement of $L_1$ whose $K'$-neighborhood contains $L_0$.  Note that $x_2>K'$ implies that $B_1'$ is unique.

As $\mathscr{L}_1$ is infinite, there is some essential component of the complement of $L_1$ whose $K'$-neighborhood contains infinitely many elements of $\mathscr{L}_1$.  Let $B_1$ denote this component, and let $\mathscr{L}_2$ denote $\{ L \in [\mathscr{L}_1-\{L_0, L_1\} ] : L \subset N_{K'}(B_1)\}$.  Choose $L_2$ from $\mathscr{L}_2$, and continue on in this manner.  This produces an infinite sequence of quasi-lines $\{ L_i\}$ and subsets of $\mathscr{C}^1(G)$, $\{ B_i\}$ and $\{B_i'\}$, such that, for each $i$, $B_i$ is an essential component of the complement of $L_i$ such that $L_j \subset N_{K'}(B_i)$ for all $j>i$, and $B_i'$ is an essential component of the complement of $L_i$ such that $L_j \subset N_{K'}(B_i')$ for all $j<i$ (with perhaps $B_i  = B_i'$).  
Each $L_i$ is 3-parting, so we may set $D_i$ to be an essential component of the complement of $L_i$ that is not equal to $B_i$ nor $B_i'$, for each $i$.

We shall see next that the $D_i$'s are basically disjoint.  
Let $i \neq j$, and note that, since $L_i$ is not contained in the $x_2$-neighborhood of $L_j$,
 there must be some point $p \in L_i$ such that $B_{x_2}(p)$ does not intersect $L_j$.  Thus $B_{x_2}(p)$ is contained in $B_j$ or $B_j'$.  
 
 Note that, for each $i$, $D_i - N_{K'}(L_i)$ is a collection of essential and inessential components of the complement of $N_{K'}(L_i)$.
Since $D_i$ is an essential component of the complement of $L_i$, and $L_i$ satisfies $iness(m_1)$, it follows that $D_i-N_{K'}(L_i)$ must contain an essential component $E_i$ of the complement of $N_{K'}(L_i)$.

As $x_2> m_0'$, $B_{x_2}(p)$ must meet each essential component of the complement of $N_{K'}(L_i)$, so, in particular, $B_{x_2}(p)$ meets $E_i$, hence $B_{x_2}(p) \cup E_i$ is connected.  

The quasi-line $L_j$ is disjoint from $D_i-N_{K'}(L_i)$, hence does not meet $E_i$, or the union $B_{x_2}(p) \cup E_i$.  It follows that this union is contained in $B_j$ or $B_j'$, so is disjoint from $D_j$, and hence from $E_j \subset D_j$.  Thus, the $E_i$'s are disjoint.

Now we recall that $\cap_i L_i$ contains a vertex, say $y \in \mathscr{C}^1(G)$, and hence $B_{m_0'}(y)$ intersects each $E_i$.  Since these regions are disjoint, $B_{m_0'}(y)$ must contain a collection of vertices in bijection with $\{ L_i\}$.  But $G$ is finitely generated, hence $B_{m_0'}(y)$ has only finitely many vertices, and we have reached a contradiction.  
\end{proof}


\begin{prop}\label{LnearZ}
Let $L$ be a $3$-parting $(\phi, N)$ quasi-line in the Cayley graph of a one-ended, finitely presented group $G$, and suppose that $L$ satisfies $iness(m_1)$ for some $m_1$.  Then there is some subgroup $H \cong \mathbb{Z}$ of $G$ such that $d_{Haus}(L,H)< \infty$.
 \end{prop}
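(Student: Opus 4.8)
The plan is to run the argument Papasoglu gives in \cite{papa_qlines} for the analogous statement and then strengthen its conclusion. That argument yields a dichotomy: for a $3$-parting $(\phi,N)$ quasi-line $L$ satisfying $iness(m_1)$ in a one-ended, finitely presented group $G$, either $d_{Haus}(L,H)<\infty$ for some subgroup $H\cong\mathbb{Z}$ of $G$ --- in which case there is nothing left to prove --- or there is a sequence of distinct translates $\{g_iL\}$ which converges, in the sense of agreeing with a fixed quasi-line on larger and larger balls about a basepoint $\ast$, to a quasi-line $L_\infty$ with $d_{Haus}(L_\infty,H_\infty)<\infty$ for some $H_\infty\cong\mathbb{Z}$ in $G$. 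The limit $L_\infty$ is produced by the usual compactness argument in the locally finite complex $\mathscr{C}^1(G)$, using that every $g_iL$ has parameters $(\phi,N)$ and, by Lemma~\ref{compl_has_fin_many_comps} and Remark~\ref{iness_preserved}, satisfies $ess(m_0)$ and $iness(m_1)$ with uniform constants. So I would assume we are in the second alternative and argue that it, too, produces the required subgroup.

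The reduction I would use is this: it suffices to show $d_{Haus}(g_iL,L_\infty)<\infty$ for some (hence every large) $i$. Granting this, put $H:=g_i^{-1}H_\infty g_i\cong\mathbb{Z}$. Since left translation is an isometry of $\mathscr{C}^1(G)$, we have $d_{Haus}(L,g_i^{-1}L_\infty)=d_{Haus}(g_iL,L_\infty)$ and $d_{Haus}(g_i^{-1}L_\infty,g_i^{-1}H_\infty)=d_{Haus}(L_\infty,H_\infty)$, while $g_i^{-1}H_\infty=Hg_i^{-1}$ and, by left-invariance of the metric, $d_{Haus}(Hg_i^{-1},H)\le d(g_i^{-1},e)$. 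The triangle inequality for $d_{Haus}$ then gives $d_{Haus}(L,H)<\infty$, as wanted. Thus the entire burden is to upgrade the local agreement of the converging translates with $L_\infty$ to genuine finite Hausdorff distance.

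This upgrade is exactly where the ``latter possibility'' of Papasoglu's dichotomy gets eliminated, and it is the main obstacle: local agreement on balls never implies bounded Hausdorff distance for general quasi-lines (two geodesic lines in $\mathbb{H}^2$ can share an arbitrarily long segment and then diverge), so the $3$-parting hypothesis has to be used in an essential way. The mechanism I would pursue is rigidity. The quasi-line $L_\infty$ is coarsely invariant under the infinite cyclic group $H_\infty$, and $H_\infty$ permutes the translates of $L$, so the family $\{h\cdot g_iL : h\in H_\infty\}$ consists of $3$-parting $(\phi,N)$ quasi-lines that, by the convergence, agree with $L_\infty$ (up to a fixed error) on balls marching along $H_\infty$. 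Since distinct translates of $L$ are coarsely disjoint (Corollary~\ref{q_lines_are_K_disjoint}) and, after a translation putting the relevant quasi-lines through a common vertex, cannot be pairwise of Hausdorff distance exceeding $x_1$ (Lemma~\ref{seq_of_qlines_is_fin}), any divergence of $g_iL$ from $L_\infty$ far along its length would be forced either to cross $L_\infty$ or one of its $H_\infty$-translates, or to produce infinitely many pairwise-far $3$-parting quasi-lines through a single vertex --- both impossible. Making these constants line up uniformly, and extracting from Papasoglu's proof exactly the convergence statement quoted above, is where essentially all of the work goes.
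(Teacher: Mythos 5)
Your reduction step (transferring finiteness of $d_{Haus}$ back to $L$ by conjugating $H_\infty$ by $g_i$) is fine and is essentially the same bookkeeping the paper does, but the entire weight of your argument rests on upgrading ``$g_iL$ agrees with $L_\infty$ on larger and larger balls'' to $d_{Haus}(g_iL,L_\infty)<\infty$, and the rigidity mechanism you sketch does not accomplish this. Divergence of $g_iL$ from $L_\infty$ does not force any crossing: Theorem \ref{2.1} and Corollary \ref{q_lines_are_K_disjoint} only say that a $3$-parting quasi-line cannot be $K$-separated by another and hence lies in a bounded neighborhood of a single essential complementary component; a translate can simply drift off inside one essential component of the complement of $L_\infty$ without ever violating these statements, so ``no essential crossing'' yields no Hausdorff bound. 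Likewise, the family $\{h\cdot g_iL : h\in H_\infty\}$ does not contain a common vertex --- its basepoints $hy$ march off along $H_\infty$ --- so Lemma \ref{seq_of_qlines_is_fin}, whose hypothesis is precisely that all the quasi-lines contain one fixed vertex, cannot be applied to it, and you give no construction that converts the assumed divergence into infinitely many pairwise-far $3$-parting quasi-lines through a single vertex. Since you yourself flag that ``essentially all of the work'' lies in this step, the proposal as it stands has a genuine gap at the crucial point.

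The paper's proof avoids this issue entirely: it never compares $L_\infty$ with the translates, because the branch of Papasoglu's dichotomy in which a limit is needed is shown to be vacuous. Since $g_iy_i=y$, every translate $g_iL$ contains the vertex $y$. Either some $g_iL$ is within a fixed Hausdorff distance of infinitely many $g_jL$, in which case Papasoglu's case-1 argument already produces $g\in G$ with $\langle g\rangle\cong\mathbb{Z}$ and $d_{Haus}(g_iL,\langle g\rangle)<\infty$, and conjugating back gives the proposition; or else, for every $i$, only finitely many $j$ satisfy $d_{Haus}(g_iL,g_jL)\le x_1$, and one greedily extracts an infinite subfamily of translates, all containing $y$, which are pairwise more than $x_1$ apart in Hausdorff distance --- contradicting Lemma \ref{seq_of_qlines_is_fin} directly. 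So no local-to-global upgrade is ever needed; the common-vertex hypothesis of Lemma \ref{seq_of_qlines_is_fin} is supplied by the translates through $y$ themselves, not by an $H_\infty$-orbit of a single translate. If you want to salvage your write-up, the repair is to drop the analysis of $L_\infty$ and instead rule out the second alternative exactly this way.
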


\begin{proof} 

Let $L$ be as in the statement of the proposition, and let $x_1$ be as in Lemma \ref{seq_of_qlines_is_fin}, defined with the parameters of $L$.
In case 1 of section 6 of \cite{papa_qlines}, Papasoglu makes the following construction.

Fix some $y \in L$, and choose a sequence $\{y_i \} \subset L$ such that $d(y,y_i) \to \infty$.  Let $g_i$ be such that $g_iy_i = y$, and, by passing to a subsequence, we may assume that, for all $i>j$, 
$$g_jL \cap B_j(y) = g_iL \cap B_j(y).$$
If there is some $i$ such that $d_{Haus} (g_iL, g_jL) $ is less than or equal to any fixed constant for infinitely many $g_j$, then it is shown in \cite{papa_qlines} that 
there is some $g$ contained in the subgroup generated by these $g_j$ such that $\langle g \rangle \cong \mathbb{Z}$, and $g_iL$ is a finite Hausdorff distance from $\langle g \rangle$.
Thus $d_{Haus}(L, g_i^{-1}\langle g \rangle) < \infty$.  Since $d_{Haus}(g_i^{-1}\langle g \rangle, g_i^{-1}\langle g \rangle g_i)$ is bounded by the word length of $g_i$, it follows that $L$ is a finite Hausdorff distance from $g_i^{-1}\langle g \rangle g_i \cong \mathbb{Z}$.

So, by passing to a subsequence, we may assume that, for each $i$ and $j$, $d_{Haus}(g_iL, g_jL)>x_1$.  
It follows that this infinite subsequence of $\{ g_iL\}$ satisfies the hypotheses of Lemma \ref{seq_of_qlines_is_fin}, which is a contradiction.
\end{proof}


Next, we will give the definition of coends.  We will see that Theorem \ref{existence_qi_inv} follows quickly from Proposition \ref{LnearZ} and a few basic facts about coends.

Given a group $G$ with a subgroup $H$ and a subset $Y$, we say that $Y$ is {\em $H$-finite} if $Y$ is contained in finitely many cosets $Hg$ of $H$.
In \cite{KropRol}, Kropholler and Roller defined 
$$\tilde{e}(G,H) = {\rm dim}_{\mathbb{F}_2} (\mathcal{P}G / \mathcal{F}_HG)^G,$$
where $\mathcal{P}G$ is the power set of all subsets of $G$, and $\mathcal{F}_HG$ is the set of all $H$-finite subsets of $G$.  
The quotient set $\mathcal{P}G/\mathcal{F}_HG$ forms a vector space over $\mathbb{F}_2$, the field with two elements, under the operation of symmetric difference.  Thus a subset $X$ of $G$ represents an element of $(\mathcal{P}G / \mathcal{F}_HG)^G$ if and only if the symmetric difference $X+Xg$ is $H$-finite for all $g \in G$.  

Following Bowditch \cite{BowJSJ}, we shall call $\tilde{e}(G,H)$ the number of {\em coends} of $H$ in $G$.  
(Kropholler and Roller called $\tilde{e}(G,H)$ the number of relative ends of $H$ in $G$, and we note that this is also sometimes referred to as the number of filtered ends of $H$ in $G$.)

If $X$ is a subset of $G$, then we can think of $X$ as a subset of the vertex set of $\mathscr{C}^1(G)$, and thus $\delta X$, the coboundary of $X$, is the set of edges in $\mathscr{C}^1(G)$ that have exactly one vertex contained in $X$.
It is a fact that $X$ represents an element of $(\mathcal{P}G / \mathcal{F}_HG)^G$ exactly when $\delta X$ is $H$-finite.
(See Cohen \cite{Cohen_cohom_dim_one} for a proof of this in the case when $H$ is trivial.)

The following lemma shows that we can characterize the number of coends of a two-ended subgroup in terms of essential components:

\begin{lem}\label{coends_and_ess_comps}
Let $G$ be a one-ended, finitely generated group with two-ended  subgroup $H$, and let $n<\infty$.  Then $\tilde{e}(G,H)\geq n$ if and only if there is some $R>0$ such that $N_R(H)$ is a quasi-line in $\mathscr{C}^1(G)$ that is $n$-parting.

Moreover, $\tilde{e}(G,H)=\infty$ if and only if, for each $n< \infty$ there is some $R = R(n)$ such that $N_R(H)$ is an $n$-separating quasi-line.
\end{lem}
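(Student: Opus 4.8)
The plan is to translate between the algebraic notion of coends and the geometric notion of essential complementary components of a quasi-line. The bridge is the observation already recorded in the excerpt: for a subset $X \subset G$, thought of as a vertex set in $\mathscr{C}^1(G)$, the class of $X$ lies in $(\mathcal{P}G/\mathcal{F}_HG)^G$ exactly when the coboundary $\delta X$ is $H$-finite. I would work with a fixed $R$ large enough that $N_R(H)$ is connected, hence (by Lemma \ref{2ended_subgps_are_qlines}) a quasi-line; by Lemma \ref{NRH_sats_iness} it satisfies $iness(m_1)$, so its complementary components are essential if and only if they are nearly essential. The key point is that, since $H$ acts cocompactly on $N_R(H)$ and (by the proof of Lemma \ref{NRH_sats_iness}) stabilizes each essential component up to finite index, the essential components of $\mathscr{C}^1(G) - N_R(H)$ are exactly the $H$-infinite complementary pieces, and each is ``$H$-cocompact'' in an appropriate sense.

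For the forward direction of the first statement, suppose $\tilde{e}(G,H) \geq n$. Then there are $n-1$ classes in $(\mathcal{P}G/\mathcal{F}_HG)^G$ that, together with the classes of $\emptyset$ and $G$, are linearly independent over $\mathbb{F}_2$; equivalently there are subsets $X_1, \dots$ with $H$-finite coboundaries whose Boolean combinations realize $n$ distinct ``pieces'' none of which is $H$-finite. Since $\delta X_i$ is $H$-finite, it lies in a bounded neighborhood of $H$, so after enlarging $R$ each $X_i$ is, up to a set inside $N_R(H)$, a union of complementary components of $N_R(H)$. The $n$ non-$H$-finite pieces then each contain an essential (equivalently, nearly essential) complementary component — an $H$-finite complementary component is contained in a bounded neighborhood of $N_R(H)$ and hence is inessential — and these are distinct, so $N_R(H)$ is $n$-parting. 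Conversely, if $N_R(H)$ is $n$-parting with essential components $C_1, \dots, C_n$, I would check that each $C_i \cup N_R(H)$ has one end and $H$ acts cocompactly on it, so the indicator set of $C_i$ has $H$-finite coboundary (the coboundary sits in $N_R(H)$, which is $H$-cocompact) and represents a nonzero, non-full class in $(\mathcal{P}G/\mathcal{F}_HG)^G$; the classes of $C_1, \dots, C_{n-1}$ are then linearly independent modulo $\mathcal{F}_HG$ together with $0$, giving $\tilde{e}(G,H) \geq n$. The statement about $\tilde{e}(G,H) = \infty$ follows formally: $\tilde{e}(G,H) = \infty$ iff $\tilde{e}(G,H) \geq n$ for every finite $n$, which by the first part holds iff for every $n$ there is $R = R(n)$ with $N_R(H)$ an $n$-parting quasi-line. (I read the word ``$n$-separating'' in the statement as synonymous with ``$n$-parting''.)

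The main obstacle is the bookkeeping in the correspondence between complementary components and cosets: one must verify carefully that (i) an $H$-finite complementary component lies in a bounded neighborhood of $N_R(H)$ and is therefore inessential, and (ii) conversely an essential component, being nearly essential and (by the argument in Lemma \ref{NRH_sats_iness}) stabilized by a finite-index subgroup of $H$ acting cocompactly on its union with $N_R(H)$, has $H$-finite coboundary — so that the count of essential components matches, within $\pm 1$ coming from the trivial classes $0$ and $G$, the $\mathbb{F}_2$-dimension computing $\tilde{e}(G,H)$. Once this dictionary is set up, choosing $R$ large enough to absorb all the relevant finite ($H$-finite) error sets makes both directions routine.
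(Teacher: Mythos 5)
Your overall strategy is the paper's: identify classes in $(\mathcal{P}G/\mathcal{F}_HG)^G$ with unions of complementary components of $N_R(H)$ via $H$-finiteness of coboundaries, use $iness(m_1)$ (Lemma \ref{NRH_sats_iness}) to discard inessential components, and get independence from disjointness. Your forward direction works (despite the slip of listing the class of $\emptyset$, which is $0$, in a supposedly independent set), and your formal treatment of the $\tilde{e}(G,H)=\infty$ case is valid and in fact shorter than the paper's explicit nested construction. The problems are in the converse direction, ``$n$-parting $\Rightarrow \tilde{e}(G,H)\geq n$,'' which is the crux, and you flag but do not resolve them.

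First, the dimension count is off. You take only $C_1,\dots,C_{n-1}$ and assert they are ``linearly independent together with $0$, giving $\tilde{e}(G,H)\geq n$''; a set containing $0$ is never independent, and $n-1$ independent classes only give $\tilde{e}(G,H)\geq n-1$. Your hedge that the count matches ``within $\pm1$ coming from the trivial classes $0$ and $G$'' reflects the same confusion: there is no shift. The paper uses all $n$ essential components: they are pairwise disjoint; each has coboundary contained in $N_{R+1}(H)$, which is $H$-finite; and each is non-$H$-finite, because an essential component is nearly essential (if it lay in a finite neighborhood of $N_R(H)$, its union with the two-ended set $N_R(H)$ would have two ends, not one) and so lies in no finite neighborhood of $H$. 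Disjoint non-$H$-finite sets with $H$-finite coboundary represent independent classes, since any nonempty partial sum is their union, which is not $H$-finite; hence $\tilde{e}(G,H)\geq n$ with no correction term. Second, your stated mechanism for $H$-finiteness of the coboundary --- that an essential component is stabilized by a finite-index subgroup of $H$ acting cocompactly on its union with $N_R(H)$ --- is false, and is in fact the reverse of what the proof of Lemma \ref{NRH_sats_iness} shows: that cocompactness argument applies to the \emph{inessential} components (it is exactly how they are shown to lie in a bounded neighborhood), and if it held for an essential component $C$ then $C\cup N_R(H)$ would be two-ended, contradicting essentiality. Fortunately it is also unnecessary: $\delta C\subset N_{R+1}(H)$ holds for every complementary component simply because the frontier of $C$ lies in $\overline{N_R(H)}$, and $N_{R+1}(H)$ is $H$-finite. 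With these two repairs your argument coincides with the paper's proof.
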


\begin{proof}
A subset $X$ of $G$ represents an element in the $\mathbb{F}_2$-vector space $(\mathcal{P}G/\mathcal{F}_HG)^G$ if and only if $\delta X$ is an $H$-finite set of edges in $\mathscr{C}^1(G)$.
Note that this happens precisely when $\delta X$ is contained in a finite neighborhood of $H$ in $\mathscr{C}^1(G)$.

Essential components of the complement of any quasi-line of the form $N_R(H)$ naturally correspond to elements of $(\mathcal{P}G/\mathcal{F}_HG)^G$:  let $\hat{Y}$ be an essential component of the complement of $N_R(H)$, and let $Y$ denote the vertex set of $\hat{Y}$.  Then for any $\epsilon >0$, the boundary of $\hat{Y}$ is contained in $N_{R+\epsilon}(H)$, hence $\delta Y \subset N_{R+1}(H)$, thus $Y$ represents an element of $(\mathcal{P}G/\mathcal{F}_HG)^G$.
Note that $Y$ is not $H$-finite, so the element it represents must be nontrivial in $(\mathcal{P}G/\mathcal{F}_HG)^G$.

By Lemma \ref{2ended_subgps_are_qlines}, we can fix $R>0$ such that $N_R(H)$ is a quasi-line.
Suppose that $N_R(H)$ be $n$-parting, and let $Y_1, \ldots, Y_{n}$ be essential components of the complement of $N_R(H)$.  They are disjoint, hence represent independent elements of $(\mathcal{P}G/\mathcal{F}_HG)^G$, and thus $\tilde{e}(G,H) \geq n$.

If $\tilde{e}(G,H)\geq n$ for some $n<\infty$, then we can find representatives $X_1, \ldots , X_n$ of elements of a basis for $(\mathcal{P}G/\mathcal{F}_HG)^G$.  Thus there is some $R>0$ such that, in $\mathscr{C}^1(G)$, $\delta X_i \subset N_R(H)$, for all $i$.  Then note that each $X_i$ is equivalent in $(\mathcal{P}G/\mathcal{F}_HG)^G$ to a union of components of $\mathscr{C}^1(G)-N_R(H)$.  Recall from Lemma \ref{NRH_sats_iness} that, for some $m_1>0$, $N_R(H)$ satisfies $iness (m_1)$, hence each $X_i$ is equivalent to a union of essential components of $\mathscr{C}^1(G)-N_R(H)$.
Since the $X_i$'s are independent, $n$ of these essential components must be disjoint, so the complement of $N_R(H)$ has at least $n$ distinct essential components, i.e. $N_R(H)$ is $n$-parting.

Now suppose that $\tilde{e}(G,H)= \infty$, and fix any $n < \infty$.  Then in particular $\tilde{e}(G,H)\geq n$ so, by the previous paragraph, there is some $R = R(n)$ such that $N_R(H)$ is an $n$-parting quasi-line.

Lastly, suppose now that $H$ is such that, for any $n<\infty$ there exists some $R(n)$ such that $N_R(H)$ is an $n$-parting quasi-line, and let $Y_1^n, Y_2^n, \ldots , Y_n^n$ denote the essential components of the complement of $N_R(H)$.

Fix any sequence $n_1, n_2, n_3, \ldots $ such that $R(n_i) < R(n_{i+1})$ for all $i$.
Then we note that there are indices $j_i \neq k_i$ such that $1 \leq j_i, k_i \leq n_i$ and such that $Y_{j_i}^{n_i} \subset Y_{k_l}^{n_l}$ for all $i > l$, and hence $\{ Y_{j_i}^{n_i}\}_{i=1}^\infty$ are a disjoint collection of representatives of elements of $(\mathcal{P}G / \mathcal{F}_H(G))^G$.
It follows that $\tilde{e}(G,H)=\infty$.
\end{proof}

\begin{lem}\label{ess_comps_go_to_ess_comps}
Let $f\co \mathscr{C}^1(G) \to \mathscr{C}^1(G')$ be a $(\Lambda, C)$ quasi-isometry between the Cayley graphs of one-ended, finitely presented groups $G$ and $G'$, and let $L$ be a $(\phi, N)$ quasi-line in $\mathscr{C}^1(G)$ satisfying $iness(m_1)$.

Then there is some $R'=R'(\Lambda, C, \phi, N, m_1) >0$ such that, if $L$ is $n$-parting, then $N_{R'}(f(L))$ is also $n$-parting.
\end{lem}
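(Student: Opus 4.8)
The plan is to use Lemma \ref{imgs_of_qlines_are_qlines} to first produce the quasi-line $L' = N_{R'}(f(L))$ with parameters depending only on $\Lambda, C, \phi, N$, then compare essential components of the complement of $L$ with those of the complement of $L'$ via the quasi-isometry and its quasi-inverse. The essential idea is that $n$-parting is equivalent, for quasi-lines satisfying $iness$, to having $n$ complementary components that are \emph{nearly} essential (this is exactly the remark following Definition \ref{ess_iness_ness_def}, together with Lemmas \ref{NRH_sats_iness}, \ref{iness_preserved_under_qi}), and near-essentiality (i.e.\ not being contained in any bounded neighborhood) is a crude metric condition that is manifestly preserved by quasi-isometries.

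First I would fix $R'$ large enough, using Lemma \ref{imgs_of_qlines_are_qlines}, so that $L' = N_{R'}(f(L))$ is a $(\phi'', N'')$ quasi-line with $\phi'', N''$ depending only on $\Lambda, C, \phi, N$. Since $G$ and $G'$ are one-ended and finitely presented, Lemma \ref{iness_preserved_under_qi} gives a function $m_1'$, depending only on $\Lambda, C, m_1, R'$, such that $L'$ satisfies $iness(m_1')$; in particular, by the discussion after Definition \ref{ess_iness_ness_def}, a component of $\mathscr{C}^1(G')-L'$ is essential if and only if it is nearly essential (and likewise for $L$, using Lemma \ref{NRH_sats_iness}-style reasoning — here $L$ is assumed to satisfy $iness(m_1)$ directly). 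So it suffices to produce $n$ pairwise distinct nearly essential components of $\mathscr{C}^1(G')-L'$ from $n$ nearly essential components of $\mathscr{C}^1(G)-L$.

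Next, let $C_1, \dots, C_n$ be distinct essential (hence nearly essential) components of $\mathscr{C}^1(G)-L$. Let $f^{-1}$ be a quasi-inverse to $f$, with constants and the containment $f^{-1}(L') \subset N_\delta(L)$ as in the proof of Lemma \ref{iness_preserved_under_qi}. I would run the argument of that proof in the reverse direction: choosing $R'$ (enlarging if necessary, still depending only on the allowed constants) so that the images under $f^{-1}$ of distinct components of $\mathscr{C}^1(G')-L'$ do not meet a common component of $\mathscr{C}^1(G)-N_R(L)$ for a suitable $R$. Then for each $i$, the set $f(C_i \cap (\mathscr{C}^1(G) - N_R(L)))$, being unbounded (as $C_i$ is nearly essential) and coarsely connected, lies in a bounded neighborhood of a single complementary component $C_i'$ of $L'$; and $C_i'$ must be nearly essential, since otherwise $f^{-1}(C_i')$ would be bounded near $L$ and could not ``absorb'' the unbounded set $f(C_i \setminus N_R(L))$. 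Distinctness of the $C_i'$ follows because distinct $C_i$'s are separated by $L$ at arbitrarily large distances, so their images are separated by $L'$ at arbitrarily large distances (using that $f$ is a quasi-isometry and $L' = N_{R'}(f(L))$), hence cannot lie in a common complementary component of $L'$. This gives $n$ distinct nearly essential — hence essential — components of $\mathscr{C}^1(G')-L'$, so $N_{R'}(f(L))$ is $n$-parting.

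The main obstacle I anticipate is the bookkeeping in the ``reverse'' separation step: making precise that a nearly essential component $C_i$ of $\mathscr{C}^1(G)-L$ maps to something that is trapped in a \emph{single} nearly essential component of $\mathscr{C}^1(G)-L'$, rather than spreading across several, and that this component genuinely ``remembers'' the unboundedness. This is exactly the content that was established in Lemma \ref{iness_preserved_under_qi} (the step where one shows images under $f^{-1}$ of distinct complementary components hit distinct complementary components of $N_R(L)$, and near-essentiality is transferred), so the cleanest route is to invoke that lemma's proof essentially verbatim, applied symmetrically to $f$ rather than $f^{-1}$, and then package the conclusion in terms of the $n$-parting property via the equivalence ``essential $\Leftrightarrow$ nearly essential'' for quasi-lines satisfying $iness$.
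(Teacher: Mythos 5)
Your proposal is correct and takes essentially the same route as the paper: form $L'=N_{R'}(f(L))$ via Lemma \ref{imgs_of_qlines_are_qlines}, use Lemma \ref{iness_preserved_under_qi} to get $iness(m_1')$ and the equivalence ``essential $\Leftrightarrow$ nearly essential'', transfer near-essentiality by coarse surjectivity, and obtain distinctness by rerunning the separation argument from the proof of Lemma \ref{iness_preserved_under_qi} with $f$ in place of $f^{-1}$ --- which is exactly the paper's proof. The only caveat is your intermediate claim that $f(C_i\cap(\mathscr{C}^1(G)-N_R(L)))$ lies near a \emph{single} complementary component of $L'$, which is stronger than needed and not obviously true; the argument only requires that each $f(C_i)$ meets some essential component of $\mathscr{C}^1(G')-L'$ and that distinct $C_i$ cannot meet the same one, which your final packaging supplies.
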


\begin{proof}
Lemma \ref{imgs_of_qlines_are_qlines} shows that we can find some $R''>0$ such that $N_{R''}(f(L))$ is a quasi-line.  Thus so is $N_{R_0'}(f(L))$ for any $R_0' \geq R''$, and, by Lemma \ref{iness_preserved_under_qi}, we also have that $N_{R_0'}(f(L))$ satisfies $iness(m_1')$ for some $m_1'$ (depending on $R_0'$).
For any such $R_0'$,  the image under $f$ of any component of $\mathscr{C}^1(G)-L$ will be contained in the union of $N_{R_0'}(f(L))$ and components of its complement.
As in the proof of Lemma \ref{iness_preserved_under_qi}, there is some $R' \geq R''$ such that the images under $f$ of distinct components of $\mathscr{C}^1(G)-L$ do not meet the same components of $\mathscr{C}^1(G') - N_{R'}(f(L))$.  
Let $L' = N_{R'}(f(L))$.

As $f$ is coarsely surjective and $L'$ satisfies $iness(m_1')$, the image of any essential component in the complement of $L$ meets an essential component in the complement of $L'$.  As no two components of the complement of $L$ meet the same components of the complement of $L'$, it follows that the complement of $L'$ contains at least as many essential components as the complement of $L$. 
\end{proof}

We now can prove the following:

\begin{thm}\label{existence_qi_inv}
Let $f\co \mathscr{C}^1(G) \to \mathscr{C}^1(G')$ be a quasi-isometry between the Cayley graphs of one-ended, finitely presented groups $G$ and $G'$, and assume that $G$ contains a 2-ended subgroup $H$ that has $n$ coends in $G$, for $n \in \{ 3, 4, \ldots \} \cup \{ \infty\}$.
Then there is a two-ended subgroup $H'$ of $G'$ that has $n$ coends in $G'$, and furthermore 
$$d_{Haus}(f(H), H')<\infty.$$
\end{thm}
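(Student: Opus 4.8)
The plan is to recast the statement in the language of quasi-lines and then feed it to Proposition~\ref{LnearZ}. Assume first that $n$ is finite. By Lemma~\ref{coends_and_ess_comps} there is an $R>0$ for which $L:=N_R(H)$ is an $n$-parting quasi-line in $\mathscr{C}^1(G)$, and by Lemma~\ref{NRH_sats_iness} this $L$ satisfies $iness(m_1)$ for some $m_1$. Lemma~\ref{ess_comps_go_to_ess_comps} then supplies an $R'$ with $L':=N_{R'}(f(L))$ an $n$-parting quasi-line in $\mathscr{C}^1(G')$, and Lemma~\ref{iness_preserved_under_qi} shows $L'$ satisfies $iness(m_1')$ for some $m_1'$. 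Since $n\geq 3$, $L'$ is $3$-parting, so Proposition~\ref{LnearZ} produces a subgroup $H'\cong\mathbb{Z}$ of $G'$ (in particular two-ended) with $d_{Haus}(L',H')<\infty$. As $d_{Haus}(H,L)\leq R$ and $f$ is coarsely Lipschitz, $d_{Haus}(f(H),f(L))<\infty$; combining this with $d_{Haus}(f(L),L')\leq R'$, with $d_{Haus}(L',H')<\infty$, and with the triangle inequality for $d_{Haus}$ yields $d_{Haus}(f(H),H')<\infty$, which is the Hausdorff assertion of the theorem.

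It remains to pin the number of coends of $H'$ to exactly $n$. The one elementary fact I will use repeatedly is this: if $P\subseteq Q$ are quasi-lines in the Cayley graph of a one-ended finitely presented group, each satisfying $iness$ (which holds for all quasi-lines we meet: neighborhoods of two-ended subgroups by Lemma~\ref{NRH_sats_iness}, and quasi-isometric images of these by Lemma~\ref{iness_preserved_under_qi}), and $Q$ lies in a bounded neighborhood of $P$, then the complement of $Q$ has at least as many essential components as the complement of $P$. Indeed, for such quasi-lines essential coincides with nearly essential, so an essential component $C$ of the complement of $P$ lies in no bounded neighborhood of $P$, hence in none of $Q$, while $C\cap Q$ does lie in a bounded neighborhood of $P$ and hence of $Q$; thus the components of the complement of $Q$ contained in $C$ cannot all sit in a bounded neighborhood of $Q$, producing an essential component of the complement of $Q$ inside $C$, and distinct $C$'s give distinct such components. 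In particular, if $P$ is $n$-parting then so is $Q$; and, taking suprema over neighborhood radii, this shows the number of coends of a two-ended subgroup of a one-ended finitely presented group is unchanged by passing to a subgroup at finite Hausdorff distance.

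Applying the fact with $P=L'$ and $Q=N_{R''}(H')$, for $R''$ large enough that $N_{R''}(H')$ is connected and contains $L'$, shows $N_{R''}(H')$ is $n$-parting, so $\tilde{e}(G',H')\geq n$ by Lemma~\ref{coends_and_ess_comps}. For the reverse inequality I run the construction backwards through a quasi-inverse $\bar f$ of $f$: since $\tilde{e}(G',H')\geq n\geq 3$, the above applied to $\bar f$ and $H'$ gives a two-ended $H''\leq G$ with $d_{Haus}(\bar f(H'),H'')<\infty$ and $\tilde{e}(G,H'')\geq\tilde{e}(G',H')$; but $d_{Haus}(H',f(H))<\infty$ together with $\sup_x d(\bar f\circ f(x),x)<\infty$ and coarse Lipschitzness of $\bar f$ force $d_{Haus}(H'',H)<\infty$, whence $\tilde{e}(G,H)=\tilde{e}(G,H'')\geq\tilde{e}(G',H')$ by the invariance noted above, so $\tilde{e}(G',H')=n$. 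If instead $n=\infty$, then for each finite $k\geq 3$ the finite case applied to a $k$-parting $N_R(H)$ yields a two-ended $H'_k$ with $d_{Haus}(f(H),H'_k)<\infty$ and $\tilde{e}(G',H'_k)\geq k$; these are pairwise at finite Hausdorff distance, so by the invariance any one of them, call it $H'$, has $\tilde{e}(G',H')\geq k$ for all $k$, i.e. $\tilde{e}(G',H')=\infty=n$, and $d_{Haus}(f(H),H')<\infty$.

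The real content of the theorem lives in the earlier results, chiefly Theorem~\ref{2.1} and Proposition~\ref{LnearZ}; here the only step that needs genuine care is upgrading ``at least $n$ coends'' to ``exactly $n$ coends'', since Lemma~\ref{ess_comps_go_to_ess_comps} is one-directional, and this is precisely what forces the detour through a quasi-inverse together with the (routine but necessary) observation that the coend count is a finite-Hausdorff-distance invariant of two-ended subgroups.
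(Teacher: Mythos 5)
Your proposal is correct and follows essentially the same route as the paper: pass from $H$ to an $n$-parting quasi-line satisfying $iness$ via Lemmas \ref{coends_and_ess_comps} and \ref{NRH_sats_iness}, push it forward with Lemmas \ref{imgs_of_qlines_are_qlines}, \ref{iness_preserved_under_qi} and \ref{ess_comps_go_to_ess_comps}, apply Proposition \ref{LnearZ} to obtain $H'$ with $\tilde{e}(G',H')\geq n$, and then use a quasi-inverse to rule out $\tilde{e}(G',H')>n$, treating $n=\infty$ separately. Your explicit monotonicity fact for nested quasi-lines and the finite-Hausdorff-distance invariance of the coend count are just careful spellings-out of steps the paper asserts more briefly (e.g.\ that a neighborhood of $H'$ containing $L'$ is again $n$-parting), so the argument is the same in substance.
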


\begin{proof}
Suppose first that $n<\infty$.
By Lemmas \ref{2ended_subgps_are_qlines} and \ref{NRH_sats_iness}, for any $R$ such that $N_R(H)$ is connected, we ahve that $N_R(H)$ is a $(\phi, N)$ quasi-line that satisfies $iness(m_1)$, where $\phi, N$, and $m_1$ all depend on $R$.
As $\tilde{e}(G,H)= n$, it follows from Lemma \ref{coends_and_ess_comps} that we can further choose $R$ so that $N_R(H)$ is $n$-parting.
Let $L = N_R(H)$ for some such $R$.

Then, by Lemmas \ref{imgs_of_qlines_are_qlines} and  \ref{iness_preserved_under_qi}, and \ref{ess_comps_go_to_ess_comps}, there is some $R'$ such that $N_{R'}(f(L))$ is a quasi-line satisfying $iness(m_1')$, and $N_{R'}(f(L))$ is $n$-parting.  Let $L'$ denote $N_{R'}(f(L))$ for some such $R'$.

Proposition \ref{LnearZ} implies that there is some $H' \cong \mathbb{Z}$ that is a finite Hausdorff distance from $L'$.  Let $L'' = N_{R''}(H')$, with $R''>0$ such that $L''$ contains $L'$.  Then $L''$ is $n$-parting, so, by Lemma \ref{coends_and_ess_comps}, $\tilde{e}(G', H') \geq n$.

If $\tilde{e}(G',H')>n$, then Lemma \ref{coends_and_ess_comps} and Lemma \ref{ess_comps_go_to_ess_comps}, applied to a quasi-inverse of $f$, implies that there is some quasi-line that is a finite Hausdorff distance from $H$ and is $m$-parting for some $m>n$.  There is a neighborhood of $H$ contains this quasi-line, thus is $m$-parting, so by Lemma \ref{coends_and_ess_comps}, $\tilde{e}(G,H) \geq m$, a contradiction.

Thus $\tilde{e}(G', H')=n$, so $H'$ is the desired subgroup.

Suppose then that $\tilde{e}(G,H)=\infty$.  
Then $\tilde{e}(G,H) \geq m$ for any $m < \infty$, and the above argument shows that there is some $R''>0$ such that $N_{R''}(H')$ is an $m$-parting quasi-line.
Thus Lemma \ref{coends_and_ess_comps} implies that $\tilde{e}(G', H')=\infty$.
\end{proof}


\section{The quasi-isometry invariance of commensurizer subgroups}

In this section, we will see that commensurizers of two-ended subgroups with at least three coends are invariant under quasi-isometries.

More specifically, we saw in the last section that, if $f \co \mathscr{C}^1(G) \to \mathscr{C}^1(G')$ is a quasi-isometry between the Cayley graphs of one-ended, finitely presented groups $G$ and $G'$, and $H$ is a two-ended subgroup of $G$ with at least three coends, then there is a two-ended subgroup $H'$ with at least three coends, that is a finite Hausdorff distance from $f(H)$ in $\mathscr{C}^1(G')$.
We will now see that in fact Comm$_{G'}(H')$ is a finite Hausdorff distance from the image under $f$ of Comm$_G(H)$ in $\mathscr{C}^1(G')$ (Theorem \ref{Cqiinv}).  

We first observe the geometric structure of commensurizers: 

\begin{lem}\label{commcharacterization}
If $G$ is a finitely generated group with subgroup $H$, then 
$${\rm Comm}_G(H) = \{ g \in G : d_{Haus}(H,gH)<\infty \}.$$
\end{lem}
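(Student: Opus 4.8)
The statement is an equivalence, so I would prove the two inclusions separately, both by elementary coset geometry in $\mathscr{C}^1(G)$. Throughout I use that $G$ acts on $\mathscr{C}^1(G)$ on the left by isometries, so that for any $g \in G$ the set $gH$ (a coset, viewed as a vertex subset) is an isometric translate of $H$, and that for a finite-index subgroup $K \leq H$ we have $d_{Haus}(K, H) < \infty$ since $H$ is covered by finitely many translates $k_iK$ with the $k_i$ ranging over coset representatives, each of bounded word length.

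\emph{Step 1: $g \in \mathrm{Comm}_G(H) \implies d_{Haus}(H, gH) < \infty$.} Suppose $K := H \cap gHg^{-1}$ has finite index in both $H$ and $gHg^{-1}$. Then $d_{Haus}(H, K) < \infty$ by the observation above. Also $g^{-1}Kg = g^{-1}Hg \cap H$ has finite index in $g^{-1}Hg$, so again $d_{Haus}(g^{-1}Hg, g^{-1}Kg) < \infty$; left-translating by the isometry $g$ gives $d_{Haus}(Hg, Kg) < \infty$, hence $d_{Haus}(gH, gKg^{-1}) = d_{Haus}(gH, K)<\infty$ after noting $gKg^{-1} \subseteq gHg^{-1}$ — more directly, $K \le gHg^{-1}$ of finite index gives $d_{Haus}(K, gHg^{-1})<\infty$, and left-translating $gHg^{-1}$ is not quite $gH$; instead I take $K' := g^{-1}Hg \cap H$, note $[g^{-1}Hg : K'] < \infty$ so $d_{Haus}(g^{-1}Hg, K') < \infty$, then apply the isometry given by left multiplication by $g$ to conclude $d_{Haus}(H g, gK') < \infty$, i.e. $d_{Haus}(gH, gK')<\infty$ is wrong too — the cleanest route is: $K \le H$ finite index gives $d_{Haus}(K,H)<\infty$, and $K \le gHg^{-1}$ finite index gives $d_{Haus}(K, gHg^{-1}) < \infty$; combining, $d_{Haus}(H, gHg^{-1})<\infty$; finally $d_{Haus}(gHg^{-1}, gH)$ is at most the word length of $g^{-1}$ (right multiplication by $g^{-1}$ moves every point a bounded amount, being a bounded-distance perturbation of the identity on cosets), so by the triangle inequality $d_{Haus}(H, gH) < \infty$.

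\emph{Step 2: $d_{Haus}(H, gH) < \infty \implies g \in \mathrm{Comm}_G(H)$.} Assume $d_{Haus}(H, gH) < \infty$; then also $d_{Haus}(H, gHg^{-1}) < \infty$ since right multiplication by $g^{-1}$ is a bounded perturbation. So there is $D$ with $H \subseteq N_D(gHg^{-1})$ and $gHg^{-1} \subseteq N_D(H)$. Set $K = H \cap gHg^{-1}$. To see $[H : K] < \infty$: for each $h \in H$ there is $k \in gHg^{-1}$ with $d(h, k) \le D$, i.e. $k = h h'$ with $h' = h^{-1}k$ of word length $\le D$; then $h k^{-1} \in H$ has word length $\le D$... the point is that every coset $hK$ has a representative of bounded word length, and there are only finitely many group elements of bounded word length, so $[H:K] \le |B_D(e) \cap G| < \infty$. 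Symmetrically $[gHg^{-1} : K] < \infty$. Hence $g \in \mathrm{Comm}_G(H)$.

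\emph{Main obstacle.} There is no deep obstacle here; the content is entirely the standard dictionary between the algebraic notion of commensurability (finite index in the intersection) and the coarse-geometric notion of finite Hausdorff distance. The one point requiring a little care is the repeated use of the fact that conjugation $gHg^{-1}$ differs from the left coset $gH$, and that left/right multiplication by a fixed element are \emph{isometries}/bounded perturbations of $\mathscr{C}^1(G)$ in the appropriate sense — specifically that right multiplication by $g$ moves every vertex a distance exactly the word length of $g$, so $d_{Haus}(A, Ag) \le |g|$ for any subset $A$. Getting the bookkeeping of which side the translation acts on correct is the only thing to watch; everything else is the pigeonhole principle that a bounded ball in a finitely generated group is finite.
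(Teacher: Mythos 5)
Your overall route is the paper's route: reduce the coset $gH$ to the conjugate $gHg^{-1}$ by noting that right multiplication by $g^{-1}$ moves every vertex at most $l(g^{-1})$, and then translate between finite index of $H\cap gHg^{-1}$ and finite Hausdorff distance by a pigeonhole on a finite ball in $G$. The paper does exactly this, via the coverings $H \subset \cup_{k\in L(M)}H^gk$ and $H^g\subset \cup_{k\in L(M)}Hk$ and right cosets of $H\cap H^g$. So there is no difference of method; the issue is that in both of your steps the left/right coset bookkeeping (which you yourself flag as the only delicate point) is done incorrectly as written.

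In Step 1 you justify $d_{Haus}(K,H)<\infty$ for $K=H\cap gHg^{-1}$ of finite index in $H$ by covering $H$ with \emph{left} translates $k_iK$ of bounded-length representatives. But a point $h=k_ik$ of $k_iK$ has $d(h,k)=l(k^{-1}k_i^{-1}k)$, the length of a conjugate, which is unbounded in general: in $F_2=\langle a,b\rangle$ with $K=\langle a\rangle$ and $k_i=b$, the points $ba^n\in bK$ satisfy $d(ba^n,K)=n+1\to\infty$. The conclusion is true, but the correct argument uses \emph{right} cosets: write $H=\cup_i Kk_i$; then $d(kk_i,k)=l(k_i)$ for every $k\in K$, so $H\subset N_M(K)$ with $M=\max_i l(k_i)$. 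In Step 2 the assertions ``$hk^{-1}\in H$ has word length $\le D$'' and ``every coset $hK$ has a representative of bounded word length'' are not established: the first is false as stated ($k\in gHg^{-1}$, so $hk^{-1}$ need not lie in $H$, and $l(hk^{-1})\neq l(h^{-1}k)$ in general), and the second is essentially what you are trying to prove. The clean pigeonhole is: for each $h\in H$ choose $k_h\in gHg^{-1}$ with $u_h:=h^{-1}k_h\in B_D(e)$; if $u_{h_1}=u_{h_2}$ then $h_2h_1^{-1}=k_{h_2}k_{h_1}^{-1}\in H\cap gHg^{-1}=K$, so distinct \emph{right} cosets $Kh$ yield distinct elements of the finite ball $B_D(e)$, giving $[H:K]\le |B_D(e)|$, and symmetrically for $gHg^{-1}$. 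With these right-coset corrections your argument closes and coincides in substance with the paper's proof.
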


\begin{proof}
Let $l(g)$ be the minimal word length of representatives for $g \in G$, with respect to the given finite generating set for $G$.  Then note that, for all $x, g \in G$, $d(x,xg) =d(e,g) = l(g)$.
Thus $d_{Haus}(gH, gHg^{-1}) \leq l(g^{-1})$, so it suffices to show that $g \in Comm_G(H)$ if and only if $d_{Haus}(H, gHg^{-1})<\infty$.

Let $H^g$ denote $gHg^{-1}$.  If $d_{Haus}(H, H^g)=M<\infty$, then, for any $x \in H$, there is some $y \in H^g$ such that $d(x,y) \leq M$, i.e. $d(y^{-1}x, e) = l(y^{-1}x)\leq M$.
Let $L(M) = \{ k \in G : l(k) \leq M\}$.  It follows that
\begin{equation}\label{comm1}
H \subset \cup_{k\in L(M)}H^gk
\end{equation}
and similarly that
\begin{equation}\label{comm2}
H^g \subset \cup_{k\in L(M)}Hk.
\end{equation}
Observe that in fact (\ref{comm1}) and (\ref{comm2}) are equivalent to having $d_{Haus}(H, H^g)\leq M$.

If $H$ meets $H^gk$, then there is some $h_1 \in H$ with $H^gk=H^gh_1$.
$G$ is finitely generated, so $L(M)$ is finite, and it follows that (\ref{comm1}) implies that there are finitely many elements $h_1, \ldots , h_n$ in $H$ such that
$$H \subset \cup_{i=1}^n H^gh_i.$$
Thus $H = \cup_{i=1}^n (H \cap H^g)h_i$, i.e. $(H \cap H^g)$ is of finite index in $H$.
Similarly $(H \cap H^g)$ is of finite index in $H^g$, so $H$ and $H^g$ are commensurable, hence $g \in $ Comm$_G(H)$.

Conversely, if $g \in $ Comm$_G(H)$, then there are elements $h_1, \ldots , h_n$ in $H$ such that $H = \cup_{i=1}^n (H \cap H^g)h_i$, and elements $h_1', \ldots , h_n'$ in $H^g$ such that $H^g = \cup_{i=1}^{n'} (H \cap H^g)h_i'$.
In particular, (\ref{comm1}) and (\ref{comm2}) hold if we take $M$ to be the maximal word length of the $h_i$'s and $(h_i')$'s.

Thus $d_{Haus}(H, H^g)\leq M$, so we have shown the lemma.
\end{proof}

\begin{rmk}\label{char_comm}
As we saw in the proof of Theorem \ref{existence_qi_inv}, if $H$ is a two-ended subgroup of $G$ with at least $n$ coends, $n < \infty$, then there is some $R$ such that $N_R(H)$ is an $n$-parting $(\phi, N)$ quasi-line satisfying $iness(m_1)$, for some $\phi, N$, and $m_1$.
Thus, by Lemma \ref{commcharacterization} and since $G$ acts on its Cayley graph by isometries on the left, 
$$N_R({\rm Comm}_G(H)) = \cup_{g \in {\rm Comm}_G(H)}N_R(gH) = \cup_{g \in {\rm Comm}_G(H)}g\cdot N_R(H) $$
is a union of isometric copies of $N_R(H)$ that are pairwise of finite Hausdorff distance from one another.  Hence we may think of Comm$_G(H)$ as a collection of ``parallel'' $n$-parting $(\phi, N)$ quasi-lines that satisfy $iness(m_1)$.
\end{rmk}

Consider the following.

\begin{prop}\label{fin_H_dist} 
Let $G$ be a one-ended, finitely presented group with a two-ended subgroup $H$ that has at least three coends, and let $C= $ Comm$_G(H) = \coprod_i g_iH$.  
Given quasi-line parameters $\phi$ and $N$, and a function $m_1$, there exists a constant $x = x(\phi, N, m_1, H)$ such that, if $L$ is a $3$-parting $(\phi, N)$ quasi-line in $\mathscr{C}^1(G)$ satisfying $iness(m_1)$ and such that $d_{Haus}(L,H)<\infty$, then, for some $i$, $d_{Haus}(L, g_iH)<x$.
\end{prop}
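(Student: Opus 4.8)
The plan is to show that $L$, being a $3$-parting quasi-line satisfying $iness(m_1)$ that lies within finite Hausdorff distance of $H$, must in fact be parallel (within a \emph{uniform} bound) to one of the translates $g_iH$ making up $\mathrm{Comm}_G(H)$. The key point is that $\mathrm{Comm}_G(H)$ is, by Remark~\ref{char_comm}, a union of pairwise-finite-Hausdorff-distance copies $g_i\cdot N_R(H)$ of the fixed $n$-parting quasi-line $N_R(H)$, so $L$ already has \emph{some} finite Hausdorff distance to each $g_iH$; the issue is to bound that distance independently of $L$. I would set this up by combining $L$ with these translates into a family of $3$-parting $(\phi,N')$ quasi-lines (after enlarging $N$ to a common $N'$ so that both $L$ and the $g_i\cdot N_R(H)$ are $(\phi,N')$ quasi-lines and both satisfy $iness$ of a common function), all sharing a vertex in a bounded region, and then invoke Lemma~\ref{seq_of_qlines_is_fin}.

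First I would fix $R$ so that $N_R(H)$ is an $n$-parting $(\phi_0,N_0)$ quasi-line satisfying $iness(m_1^0)$ as in Remark~\ref{char_comm}, and enlarge the parameters to common ones $(\phi,N)$, $m_1$ that simultaneously work for $L$ and for every $g_i\cdot N_R(H)$ (this is legitimate since all the $g_i\cdot N_R(H)$ are isometric to $N_R(H)$, by Remark~\ref{iness_preserved}). Let $x_1=x_1(G,\phi,N,m_1)$ be the constant from Lemma~\ref{seq_of_qlines_is_fin}. Next I would translate the whole picture so that $L$ and all $g_iH$ share a common vertex (conjugating $\mathrm{Comm}_G(H)$ by an element of bounded word length only changes Hausdorff distances by a bounded amount, as in the proof of Lemma~\ref{commcharacterization} and Proposition~\ref{LnearZ}), so that we may apply Lemma~\ref{seq_of_qlines_is_fin} to the family $\{L\}\cup\{g_i\cdot N_R(H)\}$. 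If no $g_i\cdot N_R(H)$ were within Hausdorff distance $x_1$ of $L$, then — discarding from $\{g_i\}$ the (finitely many, since $\mathrm{Comm}_G(H)$ has finitely many cosets of $H$ within any bounded Hausdorff distance) translates that are close to $L$ or close to each other — we would be left with either an infinite subfamily of the $g_i\cdot N_R(H)$ that are pairwise more than $x_1$ apart, contradicting Lemma~\ref{seq_of_qlines_is_fin}, or else $L$ together with a single remaining far translate, giving a two-element family still subject to the same contradiction once we observe there must in fact be infinitely many cosets (if there were only finitely many cosets of $H$ in $\mathrm{Comm}_G(H)$, then $\mathrm{Comm}_G(H)$ itself would be two-ended and within finite Hausdorff distance of $H$, and we can take $x$ to be that finite distance plus $d_{Haus}(L,H)$, which is bounded since $L$ is a $3$-parting quasi-line satisfying $iness(m_1)$ — wait, this last bound is not automatic, so more care is needed: see below). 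Hence some $g_i\cdot N_R(H)$, and therefore $g_iH$, is within Hausdorff distance $x_1+R$ of $L$, and we set $x=x_1+R+1$.

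The main obstacle I anticipate is the uniformity of the bound $d_{Haus}(L,g_iH)<x$: a priori the set $\{i : d_{Haus}(L,g_iH)<\infty\}$ could be large, and the hypothesis only gives $d_{Haus}(L,H)<\infty$ with no a priori bound. The resolution is exactly Lemma~\ref{seq_of_qlines_is_fin}: among any collection of $3$-parting $(\phi,N)$ quasi-lines sharing a common vertex and satisfying $iness(m_1)$, one cannot have infinitely many pairwise at Hausdorff distance greater than $x_1$. So either $\mathrm{Comm}_G(H)$ consists of infinitely many $H$-cosets — in which case the translates $g_i\cdot N_R(H)$ already form an infinite family of pairwise-finite-Hausdorff-distance $3$-parting quasi-lines, forcing (again by Lemma~\ref{seq_of_qlines_is_fin}, applied carefully by grouping into Hausdorff-distance classes) that the number of such classes within any bound is finite, so in particular $L$ lies in one of boundedly many classes and hence within $x_1$ of some $g_i\cdot N_R(H)$ — or $\mathrm{Comm}_G(H)$ has finitely many $H$-cosets, and then $N_R(\mathrm{Comm}_G(H))$ is itself a quasi-line within bounded Hausdorff distance of each $g_iH$, and applying Corollary~\ref{q_lines_are_K_disjoint} to the pair $L$, $N_R(\mathrm{Comm}_G(H))$ (both $3$-parting, both satisfying $iness$ of a common function) gives that $L$ lies in a uniform $K'$-neighborhood of an essential component of the complement of the other, hence within a uniform Hausdorff distance of $\mathrm{Comm}_G(H)$ and therefore of some $g_iH$. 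Either way the bound depends only on $\phi$, $N$, $m_1$, and $H$, as required.
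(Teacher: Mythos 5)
Your proposal correctly identifies Lemma \ref{seq_of_qlines_is_fin} as the key tool and uniformity as the crux, but the way you apply the lemma does not work. The lemma requires the quasi-lines in the family to contain a \emph{common vertex}, and its conclusion (finiteness) is only contradicted by an infinite family that is simultaneously pairwise $x_1$-far \emph{and} has a common point. The translates $g_i\cdot N_R(H)$ attached to distinct cosets can never satisfy this: distinct cosets of $H$ are disjoint, so only finitely many of them meet any fixed ball of radius $R$, and hence no infinite subfamily of the $g_i\cdot N_R(H)$ contains a common vertex; no single left translation of ``the whole picture'' changes this, and if you instead translate each $g_i\cdot N_R(H)$ separately by $g_i^{-1}$ to force a common basepoint, they all become the same set $N_R(H)$ and every pairwise Hausdorff distance is destroyed. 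Consequently, an infinite collection of cosets that are pairwise more than $x_1$ apart is not a contradiction at all (in $F_2\times\mathbb{Z}$ with $H$ the center, $\mathrm{Comm}_G(H)=G$ and there are infinitely many cosets of $H$, pairwise arbitrarily far apart), and a two-element family is finite, so neither branch of your dichotomy yields the uniform bound. The finite-coset branch is also flawed: Corollary \ref{q_lines_are_K_disjoint} places $L$ in a $K'$-neighborhood of an essential \emph{complementary component} of the other quasi-line, not of the quasi-line itself, so it gives no bound on $d_{Haus}(L,\mathrm{Comm}_G(H))$; and, as you yourself flag, the hypothesis $d_{Haus}(L,H)<\infty$ carries no uniform bound.

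The missing idea is to run the sequence argument over the quasi-lines $L$, not over the cosets. Suppose no uniform $x$ exists and choose quasi-lines $L_i$ (all $3$-parting, $(\phi,N)$, satisfying $iness(m_1)$, at finite Hausdorff distance from $H$) with $\min_{g\in C} d_{Haus}(L_i,gH)\to\infty$; let $c_i\in C$ realize the minimum and pass to a subsequence so that $d_{Haus}(L_j,c_jH)>d_{Haus}(L_i,c_iH)+x_1$ for $j>i$. The decisive step --- which your proposal has no analogue of --- is to show that then $d_{Haus}(gL_i,g'L_j)>x_1$ for \emph{all} $g,g'\in G$ and $i\neq j$: if $g\notin C$ the distance is infinite because both $L_i$ and $L_j$ lie at finite Hausdorff distance from $H$, while if $g\in C$ and $d_{Haus}(L_i,gL_j)\le x_1$, the triangle inequality together with the minimality defining $c_j$ would force $d_{Haus}(L_j,c_jH)\le x_1+d_{Haus}(L_i,c_iH)$, violating the gap condition. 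Because the separation holds under \emph{every} translation, one may translate each $L_i$ individually to pass through the identity without collapsing the pairwise distances, and only then does Lemma \ref{seq_of_qlines_is_fin} apply, forcing the family to be finite and giving the contradiction. It is exactly this translation-invariant separation estimate that legitimizes the common-vertex hypothesis, and without it your application of the lemma fails.
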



Assuming this proposition for the moment, we shall see how it implies the invariance of these commensurizer subgroups under quasi-isometries.

Suppose that $f \co \mathscr{C}^1(G) \to \mathscr{C}^1(G')$ is a quasi-isometry between the Cayley graphs of one-ended, finitely presented groups $G$ and $G'$, that $H$ is a two-ended subgroup of $G$ with at least $n$ coends, for some $3 \leq n < \infty$, and that $C = $ Comm$_{G}(H)$.  
Then, by Remark \ref{char_comm}, we have that some neighborhood $N_R(C)$ of $C$ is a union of pairwise finite Hausdorff distance, $n$-parting $(\phi ,N)$ quasi-lines $\{ L_i\}$, all of which satisfy $iness(m_1)$ for some $m_1$.  

By Theorem \ref{existence_qi_inv}, there is a two-ended subgroup $H'$ of $G'$ such that $\tilde{e}(G,H) = \tilde{e}(G', H')$ and $d_{Haus}(f(H), H')$ is finite.  
Let $C' = $ Comm$_{G'}(H')$.
By Lemma \ref{imgs_of_qlines_are_qlines}, there exists $R'$ such that the $R'$-neighborhood of each $f(L_i)$ is a $(\phi', N')$ quasi-line, for some $\phi'$ and $N'$ depending on $R'$.  
By Lemma \ref{ess_comps_go_to_ess_comps}, we can choose $R'$ so that each $N_{R'}(f(L_i))$ is $3$-parting.  
By Lemma \ref{iness_preserved_under_qi} and Remark \ref{iness_preserved}, we can further suppose that each $N_{R'}(f(L_i))$ satisfies $iness(m_1')$, for some fixed $m_1'$.
Thus we may apply Proposition \ref{fin_H_dist} to get some $x$ such that each $N_{R'}(f(L_i))$ is contained in $N_x(C')$.  
It follows that $N_{R'}(f(N_R(C))) \subset N_x(C')$, i.e. that $f(C)$ is contained in a finite neighborhood of $C'$.

As was the case for $C$, recall that a neighborhood of $C'$ is a union of quasi-lines as above.
Thus, by running the same argument on a quasi-inverse to $f$, it follows that $d_{Haus}(f(C), C')<\infty$.  Hence we have the following.


\begin{thm} \label{Cqiinv}
Let $f\co \mathscr{C}^1(G) \to \mathscr{C}^1(G')$ be a $(\Lambda, K)$-quasi isometry between finitely presented, one-ended groups, and suppose that $H$ is a two-ended subgroup of $G$ with $n$ coends in $G$, for $n \geq 3$.
Then there is a two-ended subgroup $H'$ of $G'$ such that $H'$ has $n$ coends in $G'$ and there exists some constant $y=y(G, H, \Lambda, K)$ such that 
$$d_{Haus}(f({\rm Comm}_{G}(H)), {\rm Comm}_{G'}(H'))< y.$$
\end{thm}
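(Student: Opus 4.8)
The plan is to derive Theorem \ref{Cqiinv} from Proposition \ref{fin_H_dist} together with the quasi-line machinery of Sections 2 and 3, along the lines of the discussion preceding the statement. Geometrically: by Remark \ref{char_comm} a suitable neighborhood of ${\rm Comm}_G(H)$ is a union of mutually parallel $3$-parting quasi-lines, all with the same parameters (the neighborhoods of the cosets $g_iH$); a quasi-isometry carries this family to a family of mutually parallel $3$-parting quasi-lines lying a finite Hausdorff distance from $H'$; and Proposition \ref{fin_H_dist} forces each of these to lie within a uniformly bounded distance of one of the cosets making up ${\rm Comm}_{G'}(H')$. Carrying this out in both directions yields the asserted bounded Hausdorff distance.

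\textbf{Forward inclusion.} First I would fix $R$ large enough that $N_R(H)$ is connected and $3$-parting; this is possible for every $n \geq 3$ (including $n=\infty$) by Lemmas \ref{2ended_subgps_are_qlines}, \ref{NRH_sats_iness} and \ref{coends_and_ess_comps}, and only the $3$-parting property is used below. By Remark \ref{char_comm} and Remark \ref{qlines_have_same_consts}, writing $C = {\rm Comm}_G(H) = \coprod_i g_iH$,
$$N_R(C) \;=\; \bigcup_i N_R(g_iH)$$
is a union of $(\phi, N)$ quasi-lines $L_i := N_R(g_iH)$, each $3$-parting, each satisfying $iness(m_1)$, with $\phi, N, m_1$ independent of $i$, and with $d_{Haus}(L_i, L_j) < \infty$ for all $i, j$. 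Apply Theorem \ref{existence_qi_inv} to obtain a two-ended $H' \leq G'$ with $\tilde{e}(G', H') = \tilde{e}(G, H) = n$ and $d_{Haus}(f(H), H') < \infty$; set $C' = {\rm Comm}_{G'}(H')$. By Lemmas \ref{imgs_of_qlines_are_qlines}, \ref{iness_preserved_under_qi} and \ref{ess_comps_go_to_ess_comps}, together with Remark \ref{iness_preserved}, there is a single $R'$ depending only on $\Lambda, K, \phi, N, m_1$ such that each $L_i' := N_{R'}(f(L_i))$ is a $3$-parting $(\phi', N')$ quasi-line satisfying $iness(m_1')$, with $\phi', N', m_1'$ independent of $i$. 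Since $d_{Haus}(L_i, H) < \infty$ and $f$ is a quasi-isometry, $d_{Haus}(L_i', H') < \infty$ for every $i$. Proposition \ref{fin_H_dist}, applied in $G'$ to the subgroup $H'$, then yields a constant $x$ depending only on $\phi', N', m_1'$ and $H'$ such that each $L_i'$ lies within $N_x(gH')$ for some coset $gH'$ of $H'$ contained in $C'$; in particular $L_i' \subset N_x(C')$ for all $i$. Hence $N_{R'}(f(N_R(C))) \subset N_x(C')$, so $f(C)$ lies in a bounded neighborhood of $C'$.

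\textbf{Reverse inclusion and conclusion.} Let $\bar{f}$ be a quasi-inverse to $f$. Applying Theorem \ref{existence_qi_inv} to $\bar{f}$ and $H'$ produces a two-ended subgroup of $G$ within finite Hausdorff distance of $\bar{f}(H')$, hence within finite Hausdorff distance of $H$ (as $d_{Haus}(f(H), H') < \infty$), with the same coend count $n$; and $N_R(C')$ is itself a union of parallel $3$-parting quasi-lines with uniform parameters by Remark \ref{char_comm}. The argument of the previous paragraph, with the roles of $(G, H, C)$ and $(G', H', C')$ interchanged, therefore shows that $\bar{f}(C')$ lies in a bounded neighborhood of $C$, whence $C'$ lies in a bounded neighborhood of $f(C)$. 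Combining the two inclusions gives $d_{Haus}(f(C), C') < \infty$. Finally, every constant invoked — the $R$ and the parameters attached to $H$, the outputs of Lemmas \ref{imgs_of_qlines_are_qlines}--\ref{ess_comps_go_to_ess_comps}, the $x$ of Proposition \ref{fin_H_dist}, and the displacement of a chosen quasi-inverse — depends only on $G$, $H$, $\Lambda$ and $K$, so the bound $y$ has the claimed dependence.

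\textbf{Main obstacle.} Granting Proposition \ref{fin_H_dist}, the substance of the argument above is entirely in the uniformity of constants: one must be sure that a single $R'$ simultaneously converts \emph{all} of the $f(L_i)$ into quasi-lines sharing parameters, satisfying $iness$, and still $3$-parting, and that Proposition \ref{fin_H_dist} returns the \emph{same} $x$ for every index $i$. Both facts rest on the observation, recorded in Remarks \ref{qlines_have_same_consts}, \ref{iness_preserved} and \ref{char_comm}, that the $L_i$ are isometric translates and hence have common parameters, so the difficulty here is organizational rather than mathematical. The genuinely hard input is Proposition \ref{fin_H_dist} itself, whose crux — producing a bound $x$ uniform over \emph{all} competing $3$-parting quasi-lines near $H$, not merely finite for each one — I would expect to require a pigeonhole/compactness argument in the spirit of Lemma \ref{seq_of_qlines_is_fin}.
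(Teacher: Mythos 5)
Your proposal is correct and follows essentially the same route as the paper: view a neighborhood of ${\rm Comm}_G(H)$ as a family of parallel $3$-parting quasi-lines with uniform parameters (Remark \ref{char_comm}), push them through $f$ using Lemmas \ref{imgs_of_qlines_are_qlines}, \ref{iness_preserved_under_qi}, \ref{ess_comps_go_to_ess_comps} and Theorem \ref{existence_qi_inv}, apply Proposition \ref{fin_H_dist} to pin each image near a coset of $H'$, and symmetrize with a quasi-inverse. Your explicit attention to the uniformity of the constants and to the hypothesis $d_{Haus}(L_i', H')<\infty$ matches what the paper leaves implicit.
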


\begin{proof}[Proof of Proposition \ref{fin_H_dist}]
Let $\mathscr{L}$ be the set of $3$-parting $(\phi, N)$ quasi-lines in $\mathscr{C}^1(G)$ that satisfy $iness(m_1)$, and are a finite Hausdorff distance from $H$.  If $\mathscr{L}$ is finite, then we are done, so assume that $\mathscr{L}$ is infinite, and that no such $x$ exists.  Then we can find a sequence $\{L_i\}$ of elements of $\mathscr{L}$ such that
$$ \min_{g \in C}  d_{Haus}(L_i, gH) \to \infty,$$
as $i \to \infty$.

Let $c_{i}=g \in C$ realize the minimum above for $L_i$, and fix $x_1=x_1(G,\phi, N, m_1)$ from Lemma \ref{seq_of_qlines_is_fin}.
Then we can pass to a subsequence so that, for all $j>i$, 
\begin{equation}
d_{Haus}(L_j, c_{j}H)>d_{Haus}(L_i, c_{i}H)+x_1.
\label{assumption}
\end{equation}
Then, by the following argument we will have that, for all $g,g' \in G$ and $i \neq j$, we have
\begin{equation}
d_{Haus} (gL_i, g'L_j)>x_1.
\label{eq1}
\end{equation}
Firstly, note that it suffices to show that $d_{Haus}(L_i, gL_j)>x_1$, for any $g \in G$ and $i<j$.  If $g \notin C$, then $d_{Haus}(H, gH) = \infty$.  But $d_{Haus}(L_i, H)$ and $d_{Haus}(gL_j, gH)$ are finite, so $d_{Haus}(L_i, gL_j) = \infty$.

Assume then that $g \in C$, and $d_{Haus} (L_i, gL_j)\leq x_1$.  Then 
$$d_{Haus}(gL_j, c_{i}H) \leq d_{Haus}(gL_j, L_i) + d_{Haus}(L_i, c_iH) \leq x_1 + d_{Haus}(L_i, c_{i}H).$$ 
Thus $$d_{Haus}(L_j, g^{-1}c_{i}H) = d_{Haus}(gL_j, c_{i}H) \leq x_1 + d_{Haus}(L_i, c_{i}H).$$  But note that $d_{Haus}(L_j, c_{j}H) \leq d_{Haus}(L_j, g^{-1}c_{i}H)$ by the definition of $c_j$, so we have that
$$d_{Haus}(L_j, c_{j}H)  \leq x_1+d_{Haus}(L_i, c_{i}H),$$ 
contradicting (\ref{assumption}).
Thus (\ref{eq1}) holds for all $g \in G$.

By translating the $L_i$'s, we can obtain a new set of quasi-lines that each contain $e \in G$, and for which (\ref{eq1}) holds for all $g \in G$, though the quasi-lines may no longer be a finite Hausdorff distance from $H$.  
This new sequence of quasi-lines satisfies the hypotheses of Lemma \ref{seq_of_qlines_is_fin}.  This leads to a contradiction, since we had assumed $\mathscr{L}$ to be infinite.
\end{proof}

\section{Commensurizers of type $F_n$}

Recall that a group $C$ is of type $F_n$ if there is a $K(C,1)$ with finite $n$-skeleton.  
Being of type $F_1$ is equivalent to being finitely generated, and being of type $F_2$ is equivalent to being finitely presented.

We shall prove the following result in this section:

\begin{thm}\label{fn_thm}
Let $f \co \mathscr{C}^1(G) \to \mathscr{C}^1(G')$ be a quasi-isometry between finitely generated groups, and suppose that $C$ is a subgroup of $G$, $C'$ is a subgroup of $G'$, and that $d_{Haus}(f(C), C')<\infty$.
Then $C$ is of type $F_n$ if and only if $C'$ is of type $F_n$, for $n \geq 1$.
\end{thm}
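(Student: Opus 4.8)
\textbf{Proof proposal for Theorem \ref{fn_thm}.}

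The plan is to introduce a notion that is slightly more flexible than quasi-isometry, so that the subgroup $C$ can be compared directly with $C'$ without first passing to a quasi-isometry between $C$ and $C'$ as abstract metric spaces. Call a map between metric spaces a \emph{coarse isometry} if it satisfies the upper Lipschitz and coarse density conditions of a quasi-isometry but with the lower bound weakened: instead of $\frac{1}{\Lambda}d(x_1,x_2)-K \le d(f(x_1),f(x_2))$ we only require that $f$ be uniformly proper, i.e. $d(f(x_1),f(x_2))$ tends to infinity with $d(x_1,x_2)$, uniformly. The point is that if $d_{Haus}(f(C),C')<\infty$, then restricting $f$ to $C$ (and post-composing with nearest-point projection onto $C'$) yields a coarse isometry $\mathscr{C}^1(C)\to\mathscr{C}^1(C')$: the upper Lipschitz and density bounds are inherited from $f$, and uniform properness is inherited because $f$ itself is uniformly proper (being a quasi-isometry). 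Composing with a quasi-inverse of $f$ gives a coarse isometry the other way, so $C$ and $C'$ are ``coarsely isometric''.

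Next I would show that being of type $F_n$ is a coarse-isometry invariant. Here the idea is to revisit Kapovich's proof from \cite{Kap_ggt_book} that $F_n$ is a quasi-isometry invariant and check that it only uses the coarse-isometry properties. The heart of that argument is: given a group of type $F_n$, build a highly-connected simplicial complex (a Rips-type complex on which the group acts cocompactly with finite stabilizers, up to dimension $n$), transport $n$-cycles across the coarse map by filling, and use coarse properness to control the diameters of fillings so that cocompactness is preserved on the other side; one then invokes a standard criterion (finitely many orbits of cells in each dimension $\le n$, together with the right connectivity) to conclude type $F_n$ for the target. Since filling arguments only need an upper Lipschitz bound to push cycles forward and uniform properness to pull preimages back to bounded sets, the lower quasi-isometry bound is never actually used; hence the argument goes through verbatim for coarse isometries. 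This gives: $C$ coarsely isometric to $C'$ and $C$ of type $F_n$ implies $C'$ of type $F_n$, and by symmetry the converse.

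For the case $n=1$ a direct argument is cleaner and is what the acknowledgement to Bonk suggests: if $C$ is finitely generated, then $\mathscr{C}^1(C)$ is connected and coarsely dense in the coarse image, and one checks directly that the coarse-isometric image of a connected, locally-finite-scale space is ``coarsely connected'' at some uniform scale $s$; then the standard lemma that a group acting cobounded-ly on an $s$-coarsely-connected metric space is finitely generated (Milnor--Schwarz, coarse version) yields that $C'$ is finitely generated. Here uniform properness of $f$ is exactly what prevents the coarse image from breaking into infinitely many clusters, so finite generation is preserved.

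The main obstacle I anticipate is the bookkeeping in verifying that Kapovich's argument really does factor through only the coarse-isometry axioms; in particular, one must be careful that the complexes built on $C$ and on $C'$ are genuinely locally finite (so that ``cocompact'' means ``finitely many orbits of cells''), which requires that $C$ and $C'$ are themselves finitely generated before one can even speak of their Cayley complexes with the right properties — so the argument is naturally organized as an induction on $n$, with the base case $n=1$ (finite generation) handled first and then used to set up the geometry for $n\ge 2$. A secondary technical point is choosing the quasi-inverse and the nearest-point projections coherently so that the composite $C\to C'\to C$ is within bounded distance of the identity on $C$; this is routine but needs the hypothesis $d_{Haus}(f(C),C')<\infty$ used on both sides.
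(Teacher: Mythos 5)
Your overall route is the one the paper takes: treat $n=1$ separately by a chain argument (the paper's Lemma \ref{char_fg_subgps}), and for $n\ge 2$ introduce a notion weaker than quasi-isometry relating $C$ to $C'$ and then check that Kapovich's Rips-complex/coarse-connectedness machinery only uses that weaker structure. However, there is a genuine gap at the central step. You keep the affine upper bound $d(f(x_1),f(x_2))\le \Lambda d(x_1,x_2)+K$ in your definition of coarse isometry and assert that restricting $f$ to $C$ and projecting to $C'$ gives such a map $\mathscr{C}^1(C)\to\mathscr{C}^1(C')$ because ``the upper Lipschitz and density bounds are inherited from $f$.'' That inheritance holds only for the subspace metrics $d_G|_C$ and $d_{G'}|_{C'}$. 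What type $F_n$ is about are the intrinsic word metrics $d_C$ and $d_{C'}$, and a subgroup can be distorted in the ambient group, so the intrinsic and ambient metrics differ by a proper but wildly non-affine function (the centre of the Heisenberg group is already quadratically distorted). So neither bound is simply inherited: the upper bound for the composite $(C,d_C)\to(C',d_{C'})$ a priori involves the distortion function of $C'$ in $G'$, and even the uniform-properness half needs the local-finiteness fact that a bounded $d_G$-gap between elements of $C$ forces a bounded $d_C$-gap. This passage between intrinsic and ambient metrics is exactly the crux, and it is why the paper's ``uniformly distorting''/coarse-isometry formalism allows \emph{both} bounds to be arbitrary proper functions (Lemma \ref{inclusion_is_wup}), rather than retaining an affine upper bound as you do.

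The gap is repairable, but the repair has to be said. Once both $C$ and $C'$ are known to be finitely generated (your $n=1$ base case), run a chaining argument at unit scale: move along a $d_C$-geodesic in unit steps; each step has $d_{G'}$-image of size at most $\Lambda L+K+2t$ between points of $C'$ (where $L$ bounds the $G$-length of the generators of $C$ and $t\geq d_{Haus}(f(C),C')$); there are only finitely many elements of $C'$ of that ambient length, so each such step has $d_{C'}$-length at most a constant, and summing gives an affine upper bound for the induced map $(C,d_C)\to(C',d_{C'})$. Doing the same for a quasi-inverse and using the local-finiteness fact again to see the composites are boundedly close to the identities shows the induced map is in fact a genuine quasi-isometry, after which you could quote quasi-isometry invariance of $F_n$ outright instead of re-auditing Kapovich's proof. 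Either this chaining step or the paper's u.d.\ formalism must appear explicitly; as written, ``inherited from $f$'' skips the essential point. The remainder of your outline (Rips complexes, coarse $(n-1)$-connectedness, the Milnor--Schwarz-style argument for $n=1$) is in line with the paper's proof.
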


In light of Theorem \ref{Cqiinv}, we have, as an immediate corollary:

\begin{cor}\label{fn_cor}
Let $f, G, G', H$ and $H'$ be as in Theorem \ref{Cqiinv}.  Then Comm$_G(H)$ is of type $F_n$ if and only if Comm$_{G'}(H')$ is of type $F_n$.
\end{cor}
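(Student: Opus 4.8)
The plan is to reduce Corollary \ref{fn_cor} directly to Theorem \ref{Cqiinv} and Theorem \ref{fn_thm}. By Theorem \ref{Cqiinv}, the quasi-isometry $f$ produces a two-ended subgroup $H'$ of $G'$ with $n$ coends such that $d_{Haus}(f(\mathrm{Comm}_G(H)), \mathrm{Comm}_{G'}(H')) < y < \infty$. Setting $C = \mathrm{Comm}_G(H) \subset G$ and $C' = \mathrm{Comm}_{G'}(H') \subset G'$, the hypotheses of Theorem \ref{fn_thm} are satisfied verbatim: $f$ is a quasi-isometry between finitely generated groups (indeed finitely presented, hence finitely generated), $C$ and $C'$ are subgroups, and $d_{Haus}(f(C), C') < \infty$.

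The only substantive point to check is that it is legitimate to apply Theorem \ref{fn_thm}, which is phrased for a quasi-isometry $f\co \mathscr{C}^1(G) \to \mathscr{C}^1(G')$ between the Cayley graphs, whereas Theorem \ref{Cqiinv} phrases $f$ as a quasi-isometry $G \to G'$ of the groups. These are the same notion, since a finitely generated group is identified with the vertex set of its Cayley graph and any quasi-isometry of groups extends (coarsely) to one of Cayley graphs and conversely restricts to one on vertex sets; the Hausdorff distance bound is preserved under this identification up to an additive constant depending only on the generating sets. Having matched up the data, Theorem \ref{fn_thm} immediately gives that $C = \mathrm{Comm}_G(H)$ is of type $F_n$ if and only if $C' = \mathrm{Comm}_{G'}(H')$ is of type $F_n$, for every $n \geq 1$, which is exactly the assertion of the corollary.

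I do not anticipate any real obstacle here: this is a formal consequence, and the ``hard part'' has already been isolated into the two theorems being quoted. The only thing requiring a sentence of care is confirming that the two formulations of ``quasi-isometry between finitely generated groups'' (on the groups versus on their Cayley graphs) agree, and that the finite-Hausdorff-distance conclusion transports between the two pictures; both are standard and follow from the conventions fixed in Section 2. Thus the proof is a two-line deduction once these identifications are noted.

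\begin{proof}
By Theorem \ref{Cqiinv}, there is a two-ended subgroup $H'$ of $G'$ with $n$ coends in $G'$ such that, writing $C = \mathrm{Comm}_G(H)$ and $C' = \mathrm{Comm}_{G'}(H')$, we have $d_{Haus}(f(C), C') < \infty$. Since $G$ and $G'$ are finitely presented, they are in particular finitely generated, and $f$, viewed on the vertex sets (equivalently, as a quasi-isometry $\mathscr{C}^1(G) \to \mathscr{C}^1(G')$), together with the subgroups $C \subset G$ and $C' \subset G'$, satisfies the hypotheses of Theorem \ref{fn_thm}. Therefore $C = \mathrm{Comm}_G(H)$ is of type $F_n$ if and only if $C' = \mathrm{Comm}_{G'}(H')$ is of type $F_n$, for every $n \geq 1$.
\end{proof}
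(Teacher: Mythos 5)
Your proposal is correct and matches the paper exactly: the corollary is obtained by feeding the finite Hausdorff distance bound of Theorem \ref{Cqiinv} into Theorem \ref{fn_thm} with $C = \mathrm{Comm}_G(H)$ and $C' = \mathrm{Comm}_{G'}(H')$, which is precisely how the paper derives it as an immediate consequence. The remark about identifying quasi-isometries of groups with quasi-isometries of their Cayley graphs is fine and consistent with the paper's conventions.
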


We can prove Theorem \ref{fn_thm} in the case that $n=1$ with a short and simple argument, using a coarse geometric characterization for a subgroup to be finitely generated (Lemma \ref{char_fg_subgps}).
To prove the theorem for $n>1$, we will introduce some new terminology and basic facts about ``uniformly distorting'' maps and ``coarse isometries''.
In \cite{Kap_ggt_book}, Kapovich gives a proof that being of type $F_n$ is a quasi-isometry invariant, and we will note that his arguments go through in the more general setting of coarse isometries.

First, we shall see that Theorem \ref{fn_thm} holds when $n=1$.
Consider the following.

\begin{lem}\label{char_fg_subgps}
Let $C$ be a subgroup of a finitely generated group $G$.  Then $C$ is finitely generated if and only if there exists some $A_0>0$ such that, for any $g,h \in C$, there is some sequence $s_0, s_1, \ldots s_n \subset C$ so that $g=s_0$, $h=s_n$, and for all $i$, $d(s_i, s_{i+1})< A_0$.
\end{lem}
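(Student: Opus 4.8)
The plan is to prove both directions by exhibiting an explicit finite generating set on one side and, conversely, by using such a "coarse connectivity at scale $A_0$" condition inside $C$ to build a generating set for $C$. The key observation is that $C$ is a subgroup, so the condition can be rephrased in terms of the left-translates: the steps $s_i \in C$ with $d(s_i,s_{i+1}) < A_0$ can be replaced by $s_i^{-1}s_{i+1} \in C$ with $d(e, s_i^{-1}s_{i+1}) = l(s_i^{-1}s_{i+1}) < A_0$, where $l$ denotes word length in $G$ with respect to the chosen finite generating set.

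\textbf{The ``only if'' direction.} Suppose $C$ is finitely generated, say by a finite set $T \subset C$. Let $A_0$ be larger than $\max_{t \in T \cup T^{-1}} l(t)$, i.e.\ larger than the maximum word length (in $G$) of the generators of $C$ and their inverses. Given $g, h \in C$, write $g^{-1}h = t_1 t_2 \cdots t_m$ as a word in $T \cup T^{-1}$, and set $s_0 = g$, $s_{k} = g\, t_1 \cdots t_k$ for $1 \le k \le m$, so $s_m = h$. Each $s_k \in C$, and $d(s_k, s_{k+1}) = l(s_k^{-1}s_{k+1}) = l(t_{k+1}) < A_0$. This gives the required sequence.

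\textbf{The ``if'' direction.} Suppose such an $A_0 > 0$ exists. Let $L(A_0) = \{ k \in C : l(k) < A_0\}$, and let $T = L(A_0)$; since $G$ is finitely generated, the ball of word-radius $A_0$ in $G$ is finite, so $T$ is a finite subset of $C$. I claim $T$ generates $C$. Indeed, given any $g \in C$, apply the hypothesis to the pair $e, g \in C$ to obtain $s_0 = e, s_1, \ldots, s_n = g$ in $C$ with $d(s_i, s_{i+1}) < A_0$ for each $i$. Then $s_i^{-1}s_{i+1} \in C$ and $l(s_i^{-1}s_{i+1}) = d(s_i, s_{i+1}) < A_0$, so $s_i^{-1}s_{i+1} \in T$ for each $i$. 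Telescoping, $g = (s_0^{-1}s_1)(s_1^{-1}s_2)\cdots(s_{n-1}^{-1}s_n)$ is a product of elements of $T$, so $C = \langle T \rangle$ is finitely generated.

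\textbf{Main obstacle.} There is essentially no obstacle here; this is a bookkeeping argument. The only point requiring any care is to make sure the metric statement $d(s_i, s_{i+1}) < A_0$ is correctly translated into the algebraic statement $s_i^{-1}s_{i+1} \in L(A_0)$, which uses the left-invariance of the word metric on $\mathscr{C}^1(G)$ (namely $d(x, xg) = d(e,g) = l(g)$), together with the fact that $C$ is closed under products and inverses so that each $s_i^{-1}s_{i+1}$ genuinely lies in $C$. One should also note at the outset that closed versus open ball conventions are immaterial since $l$ takes integer values, so $L(A_0)$ is finite whether we use $l(k) < A_0$ or $l(k) \le A_0$.
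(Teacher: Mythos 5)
Your proof is correct and follows essentially the same route as the paper's: in one direction a word in the generators of $C$ is converted into a chain of small steps, and in the other direction the finite set $C \cap B_{A_0}(e)$ is shown to generate $C$ via the telescoping product $g=(s_0^{-1}s_1)\cdots(s_{n-1}^{-1}s_n)$. The only cosmetic difference is your explicit handling of inverses and of the open/closed-ball convention, which the paper leaves implicit.
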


\begin{proof}
Call a sequence $\{ s_i\}$ as in the statement of the lemma an {\it $A_0$-chain from $g$ to $h$}.  If $C$ is finitely generated, then fix a generating set $S_C$ for $C$, and note that the generators of $C$ have word length in $\mathscr{C}^1(G)$ less than some constant $A_0$.  For any $g,h \in C$, we can represent $g^{-1}h$ by a word $s_1s_2\cdots s_m$ with each $s_i$  in $S_C$, and then the sequence $e$, $s_1$, $s_1s_2$, $\ldots$, $s_1s_2\cdots s_m=g^{-1}h$ is a $A_0$-chain from $e$ to $g^{-1}h$, and hence $g, gs_1, gs_1s_2, \ldots , gs_1s_2\cdots s_m=h$ is a $A_0$-chain in $C$ from $g$ to $h$.

Assume now that $C$ contains a $A_0$-chain between any two of its elements, for some $A_0$, and let $S_C = C \cap B_{A_0}(e)$.  
Since $G$ is finitely generated, $S_C$ is finite, and we claim that $S_C$ generates $C$.  
Fix any $h \in C$, and let $e=s_0,$ $ s_1,$ $  \ldots$ $ s_{n-1},s_n=h$ be a  $A_0$-chain in $C$ from $e$ to $h$.  
Then $h = s_0(s_0^{-1}s_1)(s_1^{-1}s_2)\cdots$ $(s_{n-2}^{-1}s_{n-1})(s_{n-1}^{-1}s_n)$, with $s_0 = e$ and $(s_{i}^{-1}s_{i+1})$ in $S_C$ for each $i$.  
Thus $S_C$ generates $C$, so we are done.  
\end{proof}

\begin{proof}[Proof of Theorem \ref{fn_thm} in the case that $n=1$]
Recall that $f\co \mathscr{C}^1(G) \to \mathscr{C}^1(G')$ is a quasi-isometry, and that $d_{Haus}(f(C), C') < \infty$.  We shall show that $C$ is finitely generated if and only if $C'$ is.

Let $f$ be a $(\Lambda, \kappa)$-quasi-isometry.  Let $n = d_{Haus}(f(C),C')$, and assume that $C$ is finitely generated.  Since $f$ has a quasi-inverse, it suffices to prove that $C'$ must also be finitely generated.

Fix $g',h' \in C'$, and let $s_0, \ldots s_m$ be a sequence of vertices in $C$ such that $d(f(s_0), g')$ $<n$, $d(f(s_m), h')<n$, and $d(s_i, s_{i+1}) < A_0$ for all $i$.  Let $s_i' = f(s_i)$, and let $s_i'' \in C'$ be such that $d(s_i', s_i'') <n$.  As $d(s_i', s_{i+1}') < \Lambda A_0 +\kappa$, we must have that $d(s_i'', s_{i+1}'') < \Lambda A_0 +\kappa+2n$.  
Then the consecutive terms of the sequence $g', s_0'',s_1'', \ldots , s_n'', h'$ are less than $(\Lambda A_0 + \kappa + 2n)$ apart, thus by the lemma above, $C'$ is finitely generated.  
\end{proof}

Next, we will introduce a few new notions, and then see that the theorem holds for $n>1$.
If $C$ is a finitely generated subgroup of a finitely generated group $G$, then we shall use $d_G$ to denote the metric on $G$, and hence $C$, induced from the finite generating set that is fixed for $G$, and we shall denote by $d_C$ the metric on $C$ that is induced from the finite generating set fixed for $C$.
For any $w \in C$, we shall write $|w|_C$ for $d_C(e,w)$, and we shall write $|w|_G$ for $d_G(e,w)$, for any $w \in G$.
For simplicity, in the remainder of this section we shall work with metric spaces such as $(G, d_G)$, instead of with Cayley graphs.

\begin{defn}
We shall say that a map between metric spaces, $f \co (X,d_X) \to (Y,d_Y)$, is {\em $(\phi, \Phi)$-uniformly distorting}, or {\em $(\phi, \Phi)$-u.d.} if $\phi$ and $\Phi$ are weakly increasing proper maps from $\mathbb{R}_{\geq 0}$ to $\mathbb{R}_{\geq 0}$, or from $Im(d_X)$ to $Im(d_Y)$, such that, for any $x, x' \in X$ and any $r$,
\begin{enumerate}
\item{if $d_X(x,x') \geq r$ then $d_Y(f(x), f(x')) \geq \phi (r)$, and}
\item{if $d_X(x,x') \leq r$ then $d_Y(f(x), f(x')) \leq \Phi(r)$.}
\end{enumerate}
We will say that $f$ is {\em u.d.} if $f$ is $(\phi, \Phi)$-u.d. for some $\phi$ and $\Phi$.
\end{defn}

Note that the composition of u.d. maps is u.d.

\begin{convention}
The metric spaces that we are interested in are groups with word metrics, hence all distance functions below will take on only integer values.  
Therefore we will only consider $(\phi, \Phi)$-u.d. maps where we shall take the domain and range of $\phi$ and $\Phi$ to be $\mathbb{Z}_{\geq 0}$.
\end{convention}

We note the following fact:

\begin{lem}\label{inclusion_is_wup}
Let $G$ be a finitely generated group with finitely generated subgroup $C$.  Then the indentity map $i_C \co (C, d_C) \to (C, d_G)$ is u.d.
\end{lem}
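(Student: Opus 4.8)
We want to show the identity map $i_C\co (C,d_C)\to (C,d_G)$ is u.d., i.e. that there exist weakly increasing proper functions $\phi,\Phi\co\mathbb{Z}_{\geq 0}\to\mathbb{Z}_{\geq 0}$ controlling distances in both directions. The easy direction is the upper bound: a generator of $C$ has some word length in $G$, so if $w\in C$ has $|w|_C\le r$ then writing $w$ as a product of at most $r$ generators of $C$ gives $|w|_G\le r\cdot M$, where $M=\max\{|s|_G : s\in S_C\}$. So taking $\Phi(r)=Mr$ (which is weakly increasing and proper) handles condition (2), using left-invariance of both metrics to pass from $d(e,w)$ to $d(x,x')$.

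\smallskip
The substance is condition (1): producing $\phi$ so that $|w|_G\le t$ forces $|w|_C\le\phi(t)$, equivalently that the restriction to $C$ of the $G$-ball of radius $t$ lands inside a $C$-ball of bounded radius. First I would observe that, for each fixed $t$, the set $C\cap B^G_t(e)$ is finite, since $G$ is finitely generated and hence $B^G_t(e)$ is finite; therefore $\phi_0(t):=\max\{\,|w|_C : w\in C,\ |w|_G\le t\,\}$ is a well-defined finite number for every $t\in\mathbb{Z}_{\ge 0}$. This $\phi_0$ is automatically weakly increasing (the defining set grows with $t$). To get a bona fide $\phi$ satisfying condition (1) I then set $\phi(r)=\phi_0(\lfloor r\rfloor)$ or simply work with integer $r$ as the convention allows, and check: if $d_G(x,x')\ge r$ then $d_C(x,x')\ge r$ is what we'd want, but that's false in general — rather, the correct statement, matching the definition, is that condition (1) only needs a lower bound on $d_Y$ in terms of $r$, and here $Y=(C,d_G)$ while the distortion runs the other way. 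I need to be careful about which direction is ``distorted'': re-reading the definition, $f=i_C$ has $X=(C,d_C)$ and $Y=(C,d_G)$, so condition (1) asks: $d_C(x,x')\ge r \Rightarrow d_G(x,x')\ge\phi(r)$. This is the genuinely nontrivial inequality, and it is equivalent to: $d_G(x,x')< \phi(r)\ \Rightarrow\ d_C(x,x') < r$, i.e. to the properness of the inverse, which is exactly what finiteness of $C\cap B^G_t(e)$ gives. Concretely, define $\phi(r)=1+\min\{\,t\in\mathbb{Z}_{\ge 0} : C\cap B^G_t(e)\not\subset B^C_{r}(e)\,\}$ (and $\phi(r)=\infty$ only if no such $t$ exists, but since $\bigcup_t B^G_t(e)=G\supset C$ and the $B^C_r$ exhaust $C$ only as $r\to\infty$, for each fixed $r$ such a $t$ exists once $C\not\subset B^C_r$; the finitely many $r$ with $C\subset B^C_r$ force $C$ finite, a degenerate case handled separately). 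Then $\phi$ is weakly increasing and proper, and by construction $d_G(x,x')<\phi(r)$ together with left-invariance forces $x^{-1}x'\in C\cap B^G_{\phi(r)-1}(e)\subset B^C_r(e)$, i.e. $d_C(x,x')\le r$ — wait, I want strict, so I should shift indices by one; these are routine bookkeeping adjustments.

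\smallskip
The main obstacle, and where I would spend the most care, is precisely getting the logical direction of condition (1) straight and confirming that the ``distortion'' of the inclusion is the phenomenon that $C$-distances can be arbitrarily larger than $G$-distances (undistorted would mean the two metrics are bi-Lipschitz, which fails for badly embedded subgroups), yet this still yields a valid $\phi$: the point is that $\phi$ need not be bounded below by a linear function — it can grow as fast as the distortion of $C$ in $G$ demands — and the definition permits exactly this. Once that is clear, the proof is just: (i) finiteness of balls in $G$ gives the monotone proper function $\phi$ for direction (1); (ii) bounded generator length gives the linear $\Phi$ for direction (2); (iii) left-invariance of $d_C$ and $d_G$ reduces everything from pairs $(x,x')$ to single elements $x^{-1}x'$. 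I would write it up in that order, stating at the outset that the only content is finiteness of metric balls in a finitely generated group.
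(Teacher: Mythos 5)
Your argument is correct and is essentially the paper's proof: the paper simply sets $\phi(r)=\min\{\,|c|_G : c\in C,\ |c|_C\ge r\,\}$ and $\Phi(r)=\max\{\,|c|_G : c\in C,\ |c|_C\le r\,\}$, using left-invariance to reduce to $c=x^{-1}x'$, finite generation of $C$ to make $\Phi$ finite (your linear bound $Mr$ is the same estimate the paper records afterwards), and finiteness of balls in the locally finite Cayley graph of $G$ to make both functions proper -- exactly the mechanism you describe. The only blemish is the off-by-one in your explicit formula for $\phi$ (as written, $C\cap B^G_{\phi(r)-1}(e)$ is precisely the ball that \emph{fails} the containment), but as you note this is routine bookkeeping, and the paper's direct min/max definition sidesteps it.
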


\begin{proof}
We shall see that the geometric action of $C$ on itself ensures that all metric distortion is uniform.

Let 
$$\phi (r) = \min \{ |c|_G : c \in C, |c|_C \geq r\},$$
and let 
$$\Phi(r) = \max \{ |c|_G : c \in C, |c|_C \leq r\}.$$
As $C$ is finitely generated, $\Phi$ is finite valued, and note that both functions are weakly increasing.
In addition, for any $c_1, c_2 \in C$, let $c = c_1^{-1}c_2$ and note that $d_C(c_1, c_2) = |c|_C \geq r$ implies $d_G(c_1, c_2) = |c|_G \geq \phi(r)$ and $d_C(c_1, c_2) = |c|_C \leq r$ implies $d_G(c_1, c_2) = |c|_G \leq \Phi(r)$.
As $G$ is finitely generated, hence locally finite, it follows that $\lim_{r \to \infty} \phi(r) = \lim_{r \to \infty} \Phi(r) = \infty$, and hence that $\phi$ and $\Phi$ are proper.
\end{proof}

We note, though we shall not make use of this fact, that in the above proof, the function $\Phi$ is bounded above by a linear function.
For let $S_C$ denote the finite generating set for $C$ and let 
$$L = \max_{s \in S_C} |s|_G.$$
It follows that, for any $c \in C$, $|c|_G \leq L|c|_C$, hence $\Phi(r) \leq Lr$.

\begin{defn}
If $f \co (X, d_X) \to (Y,d_Y)$ is a map between metric spaces and $t \geq 0$, then we will say that $f$ is {\em $t$-onto} if the $t$-neighborhood of $Im(f)$ in $Y$ is equal to $Y$.
If $f$ is $t$-onto for some $t$, then we will say that $f$ is {\em coarsely onto}.

If $f$ is both u.d. and coarsely onto, then we shall say that $f$ is a {\em coarse isometry}.
\end{defn}

We note that any quasi-isometry is a coarse isometry.

\begin{defn}
We say that a function $f_1 \co (X, d_X) \to (Y,d_Y)$ has {\em finite distance } from a function $f_2 \co (X, d_X) \to (Y,d_Y)$ if
$$\sup_{x \in X} d_Y(f_1(x), f_2(x)) < \infty.$$
\end{defn}

Justification for the terminology ``coarse isometry'' is in the following fact:

\begin{lem}\label{coarse_inverse}
If $f \co (X, d_X) \to (Y,d_Y)$ is a coarse isometry between metric spaces, then there is a coarse isometry $f' \co (Y,d_Y) \to (X,d_X)$ such that $f'f$ and $ff'$ have finite distances from $id_X$ and $id_Y$ respectively.
\end{lem}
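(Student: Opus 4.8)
The plan is to construct the coarse inverse $f'$ by the same mechanism one uses for quasi-isometries: since $f$ is coarsely onto, say $t$-onto, for each $y \in Y$ choose some $x \in X$ with $d_Y(f(x),y) \leq t$ and set $f'(y) = x$. (One picks once and for all, using some well-ordering or choice function on $X$.) The two compositions $f'f$ and $ff'$ will then be controlled: for $x \in X$ we have $d_Y(f(x), f(f'f(x))) \leq 2t$ by construction, and then $d_X(x, f'f(x))$ is bounded because $f$ is u.d.\ --- if $d_Y(f(x),f(x'')) \leq 2t$ then, using property (1) of $(\phi,\Phi)$-u.d.\ contrapositively, $d_X(x,x'')$ cannot be arbitrarily large, since $\phi$ is proper. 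Similarly $d_Y(y, ff'(y)) \leq t$ directly. So both compositions have finite distance from the respective identity maps.

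Next I would verify that $f'$ is itself a coarse isometry, i.e.\ that it is u.d.\ and coarsely onto. Coarse ontoness of $f'$ is easy: given $x \in X$, the point $f'(f(x))$ lies within the bounded distance just established of $x$, so $\mathrm{Im}(f')$ is coarsely dense. For the u.d.\ property of $f'$, take $y, y' \in Y$ with $d_Y(y,y') \leq r$. Then $d_Y(f(f'(y)), f(f'(y'))) \leq r + 2t$, and applying property (1) of $f$ contrapositively again (if $d_X(f'(y),f'(y'))$ were at least some value $s$ then $d_Y(f(f'(y)),f(f'(y')))$ would be at least $\phi(s)$), properness of $\phi$ gives an upper bound on $d_X(f'(y),f'(y'))$ depending only on $r$; call it $\Phi'(r)$. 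Conversely, if $d_Y(y,y') \geq r$, then $d_Y(f(f'(y)),f(f'(y'))) \geq r - 2t$, so by property (2) of $f$ we get $d_X(f'(y),f'(y')) \geq \phi'(r)$ where $\phi'(r)$ is chosen so that $\Phi(\phi'(r)) < r - 2t$; properness of $\Phi$ ensures such a proper $\phi'$ exists (for $r$ large, and one adjusts for small $r$ trivially). One should take a small amount of care to ensure $\phi', \Phi'$ come out weakly increasing and proper with values in $\mathbb{Z}_{\geq 0}$ as per the stated convention, which amounts to replacing them by their monotone envelopes.

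The main obstacle --- though it is really a matter of bookkeeping rather than a genuine difficulty --- is managing the inversions of $\phi$ and $\Phi$ cleanly: $\phi$ and $\Phi$ need not be strictly increasing or have honest inverses, so all the bounds must be phrased via properness (``$\phi(s) \to \infty$'' or ``choose $s$ with $\Phi(s) < \text{something}$'') rather than by literally writing $\phi^{-1}$. Organizing the constants so that the final $\phi', \Phi'$ are legitimately weakly increasing and proper, and handling the additive constant $2t$ uniformly, is the one place to be careful. Everything else is a direct unwinding of the definitions, parallel to the classical construction of a quasi-inverse to a quasi-isometry.
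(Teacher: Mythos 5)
Your proposal is correct and is essentially the paper's proof: the paper likewise defines $f'$ by sending $y$ to a point of $f^{-1}(\pi(y))$ for a projection onto $\mathrm{Im}(f)$, and handles the non-invertibility of $\phi,\Phi$ by the generalized inverses $\phi'(r)=\min\{s:\Phi(s)\geq r\}$ and $\Phi'(r)=\max\{s:\phi(s)\leq r\}$, shifted by $2t$, exactly as you describe. (Only a cosmetic remark: what makes your $\phi'$ proper is that $\Phi$ is finite-valued, rather than its properness.)
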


\begin{defn}
We shall call any function $f'$ satisfying the conclusion of the above lemma a {\em coarse inverse} to $f$.
\end{defn}

\begin{proof}[Proof of Lemma \ref{coarse_inverse}]
Let $\phi, \Phi$ and $t$ be such that $f$ is $t$-onto and $(\phi, \Phi)$-u.d., and note that we can assume that $\Phi(0)= 0$.
Let $\pi$ denote nearest point projection from $Y$ to $Im(f)$, and define $f' \co Y \to X$ to take any $y \in Y$ to a point $x \in X$ such that $f(x) = \pi(y)$.

Let 
$$\phi'(r) = \min \{ s \in \mathbb{Z}_{\geq 0} : \Phi(s) \geq r\}$$
and let 
$$\Phi'(r) = \max \{ s \in \mathbb{Z}_{\geq 0} : \phi (s) \leq r\}.$$
Note that both $\phi'$ and $\Phi'$ are weakly increasing, and that $\phi'$ is proper.
As $\phi$ is a proper map, it follows that $\Phi'$ is as well.

Suppose that $y_1, y_2 \in Y$ and $r \geq 0$ are such that $d(y_1,y_2) \geq r$.  Then we have that $d(\pi(y_1), \pi(y_2)) \geq r-2t$, and it follows that $d(f'(y_1), f'(y_2)) \geq \phi' (r-2t)$.
Similarly if $d(y_1, y_2) \leq r$, then $d(\pi(y_1), \pi(y_2)) \leq r+2t$ and hence $d(f'(y_1), f'(y_2)) \leq \Phi'(r+2t)$.
Hence if we let $\phi''(r) = \phi'(r-2t)$ (taking $\phi'$ to be zero on the negative integers) and $\Phi''(r) = \Phi'(r+2t)$, then we have that $f'$ is $(\phi'', \Phi'')$-u.d.

To see that $f'$ is coarsely onto, note that if $f(x) = f(x')$ then $d(x,x') \leq \Phi'(0)$, and hence for any $y \in Y$, $diam(f^{-1}(y)) \leq \Phi'(0)$.
Note that $Im(f')$ meets $f^{-1}(y)$ for each $y \in Y$, and it follows that $f'$ is $\Phi'(0)$-onto, and hence a coarse isometry.

The above argument also implies that $\sup_{x \in X} d(f'f(x), x) \leq \Phi'(0)$, so the composite map $f'f$ is a finite distance from $Id_X$.
On the other hand, for any $y \in Y$, $ff'(y) = \pi(y)$, hence $\sup_{y \in Y} d(ff'(y), y) \leq t$, so $ff'$ is a finite distance from $Id_Y$ as desired.
\end{proof}

We note also the following, the proof of which is left to the reader:

\begin{lem}
Suppose that $f \co X \to Y, g \co Y \to Z$ are coarse isometries.  Then $g f \co X \to Z$ is also a coarse isometry.
\end{lem}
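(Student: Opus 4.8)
The plan is to unwind the two defining conditions: a coarse isometry is by definition a map that is both u.d.\ and coarsely onto, so it suffices to verify each of these two properties separately for the composite $gf$.

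First I would record that the composition of u.d.\ maps is u.d., which is exactly the remark made just after the definition of uniformly distorting maps. Concretely, if $f$ is $(\phi_1, \Phi_1)$-u.d.\ and $g$ is $(\phi_2, \Phi_2)$-u.d., then for $x, x' \in X$ with $d_X(x,x') \geq r$ one has $d_Y(f(x), f(x')) \geq \phi_1(r)$, and since $\phi_2$ is weakly increasing this gives $d_Z(gf(x), gf(x')) \geq \phi_2(\phi_1(r))$; the upper estimate is handled symmetrically using $\Phi_1$ and $\Phi_2$. Since a composition of weakly increasing proper maps $\mathbb{Z}_{\geq 0} \to \mathbb{Z}_{\geq 0}$ is again weakly increasing and proper, $gf$ is $(\phi_2 \circ \phi_1, \Phi_2 \circ \Phi_1)$-u.d.

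Second I would check that $gf$ is coarsely onto. Suppose $f$ is $s$-onto and $g$ is $t$-onto. Given any $z \in Z$, there is $y \in Y$ with $d_Z(z, g(y)) \leq t$, and then there is $x \in X$ with $d_Y(y, f(x)) \leq s$. Applying the upper bound from the u.d.\ property of $g$ yields $d_Z(g(y), gf(x)) \leq \Phi_2(s)$, hence $d_Z(z, gf(x)) \leq t + \Phi_2(s)$. So $gf$ is $(t + \Phi_2(s))$-onto, and combining with the previous paragraph, $gf$ is a coarse isometry.

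I do not expect any real obstacle here; the statement is essentially the coarse-isometry analogue of the standard fact that a composition of quasi-isometries is a quasi-isometry, and the proof is routine bookkeeping. The only point requiring a moment's care is the coarsely-onto step, where one must invoke the uniform distortion of $g$ (not merely that $g$ is a function) in order to bound the displacement $d_Z(g(y), gf(x))$ in terms of $d_Y(y, f(x))$; everything else follows immediately from the definitions.
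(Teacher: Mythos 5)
Your proof is correct, and since the paper leaves this lemma to the reader there is no competing argument to compare it with: the u.d.\ part is exactly the remark the paper makes after the definition, and your coarsely-onto step, correctly invoking the upper distortion bound $\Phi_2$ of $g$ to get that $gf$ is $\bigl(t+\Phi_2(s)\bigr)$-onto, is the intended routine verification.
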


We can now show the following, which explains our interest in coarse isometries:

\begin{prop}\label{cc'_ci}
Let $f \co \mathscr{C}^1(G) \to \mathscr{C}^1(G')$ be a quasi-isometry between finitely generated groups, and suppose that $C$ is a subgroup of $G$, $C'$ is a subgroup of $G'$, and that $d_{Haus}(f(C), C')<\infty$.
Then there is a coarse isometry between $(C, d_C)$ and $(C', d_{C'})$.
\end{prop}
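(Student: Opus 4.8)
The plan is to build the coarse isometry between $(C,d_C)$ and $(C',d_{C'})$ by factoring through the ambient metrics, using the three structural lemmas already established: Lemma \ref{inclusion_is_wup}, Lemma \ref{coarse_inverse}, and the composition lemma for coarse isometries. The key observation is that a quasi-isometry $f\co\mathscr{C}^1(G)\to\mathscr{C}^1(G')$ with $d_{Haus}(f(C),C')<\infty$ restricts to a coarse isometry $(C,d_G)\to(C',d_{G'})$, and then the inclusions $i_C\co(C,d_C)\to(C,d_G)$ and $i_{C'}\co(C',d_{C'})\to(C',d_{G'})$ are coarse isometries by Lemma \ref{inclusion_is_wup}; composing with a coarse inverse to $i_{C'}$ (which exists by Lemma \ref{coarse_inverse}) yields the desired coarse isometry $(C,d_C)\to(C',d_{C'})$.

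Concretely, first I would define a map $\bar{f}\co C\to C'$ as follows: given $c\in C$, the point $f(c)$ lies within $d_{Haus}(f(C),C')$ of $C'$, so choose $\bar{f}(c)\in C'$ realizing (approximately) the nearest point of $C'$ to $f(c)$. I would then check that $\bar{f}\co(C,d_G)\to(C',d_{G'})$ is u.d.: if $d_G(c,c')\geq r$ then $d_{G'}(f(c),f(c'))\geq \frac1\Lambda r-K$, so $d_{G'}(\bar{f}(c),\bar{f}(c'))\geq \frac1\Lambda r-K-2\,d_{Haus}(f(C),C')$, which is a proper weakly increasing function of $r$ after truncating at zero; the upper bound is entirely analogous using $d_{G'}(f(c),f(c'))\leq\Lambda r+K$. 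For coarse surjectivity onto $(C',d_{G'})$: given any $c'\in C'$, since $C'\subset N_{d_{Haus}}(f(C))$ there is $c\in C$ with $d_{G'}(f(c),c')$ bounded, hence $d_{G'}(\bar{f}(c),c')$ is bounded, so $\bar{f}$ is coarsely onto and thus a coarse isometry from $(C,d_G)$ to $(C',d_{G'})$.

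Next, by Lemma \ref{inclusion_is_wup} the identity maps $i_C\co(C,d_C)\to(C,d_G)$ and $i_{C'}\co(C',d_{C'})\to(C',d_{G'})$ are u.d., and they are trivially $0$-onto, hence are coarse isometries. Here I should remark that $C$ and $C'$ are finitely generated: this follows from the $n=1$ case of Theorem \ref{fn_thm} already proved (a quasi-isometry preserves finite generation of subgroups at finite Hausdorff distance), so $d_C$ and $d_{C'}$ make sense. By Lemma \ref{coarse_inverse}, $i_{C'}$ has a coarse inverse $i_{C'}'\co(C',d_{G'})\to(C',d_{C'})$. Then the composite
$$
i_{C'}'\circ\bar{f}\circ i_C\co (C,d_C)\longrightarrow(C',d_{C'})
$$
is a composition of coarse isometries, hence a coarse isometry by the composition lemma, which is exactly what is claimed.

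The routine-but-necessary checks are the explicit verifications that $\bar{f}$ is u.d. and coarsely onto with respect to the $d_G$, $d_{G'}$ metrics; these are straightforward perturbations of the quasi-isometry inequalities by the constant $d_{Haus}(f(C),C')$, with the only mild subtlety being that one must truncate the lower distortion function $\phi$ at zero (as is done in the proof of Lemma \ref{coarse_inverse} with $\phi''$). The genuinely load-bearing ingredients — that inclusions of finitely generated subgroups are u.d., and that coarse isometries admit coarse inverses and compose — are all supplied by the preceding lemmas, so the main obstacle is essentially organizational: assembling the factorization correctly and making sure finite generation of $C$ and $C'$ is invoked so that the intrinsic metrics $d_C$, $d_{C'}$ are defined. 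There is no deep difficulty, which is appropriate since this proposition is a technical bridge toward applying Kapovich's $F_n$-invariance argument in the coarse-isometry setting.
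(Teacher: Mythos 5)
Your proposal is correct and follows essentially the same route as the paper: the paper also factors through the ambient word metrics, using Lemma \ref{inclusion_is_wup} for the identity maps $i_C$, $i_{C'}$, Lemma \ref{coarse_inverse} for a coarse inverse to $i_{C'}$, and the composition lemma, the only cosmetic difference being that the paper folds your projection map $\bar{f}$ into an extension $j_{C'}$ of $i_{C'}'$ to the $t$-neighborhood of $C'$ and takes $j_{C'}\circ f\circ i_C$, which is the same construction organized slightly differently.
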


\begin{proof}
Let $i_C \co (C, d_C) \to (C, d_G)$ and $i_{C'} \co (C', d_{C'}) \to (C', d_{G'})$ denote the identity maps on $C$ and $C'$. 
As we saw in Lemma \ref{inclusion_is_wup}, both $i_C$ and $i_{C'}$ are u.d. and hence coarse isometries.
Let $i_{C'}'$ be a coarse inverse to $i_{C'}$.

Let $t$ be such that $d_{Haus}(f(C), C')\leq t$.  Then we can extend $i_{C'}'$ to a coarse isometry $j_{C'}$ from the $t$-neighborhood of $C'$ in $G'$ to $(C', d_{C'})$ by defining the projection map $\pi$ to take each point $n$ in the neighborhood to a point $c' \in C'$ such that $d_{G'}(n,c') \leq t$, and then setting $j_{C'}(n) = i_{C'}'(c')$.
We have that $j_{C'}$ is a coarse isometry, hence so is $j_{C'} \circ f \circ i_C \co (C, d_C) \to (C', d_{C'})$.
\end{proof}

For any discrete metric space $(X,d_X)$ and $d \geq 0$, we let $Rips_d(X)$ denote the $d$-Rips complex of $X$, i.e. the simplicial complex whose vertex set is equal to $X$, and such that any finite collection $X_0$ of vertices spans a simplex if and only if $d_X(x, x') \leq d$ for all $x,x' \in X_0$.
It is immediate that the proof of Lemma 2.9 of \cite{Kap_ggt_book} extends to the following:

\begin{lem}\label{simpl_maps}
Let $f \co (X, d_X) \to (Y,d_Y)$ be $(\phi, \Phi)$-u.d.  Then $f$ induces a simplicial map $Rips_d(X) \to Rips_{\Phi(d)}(Y)$ for each $d \geq 0$.
\end{lem}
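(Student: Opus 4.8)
The plan is to mimic the proof of Lemma 2.9 of \cite{Kap_ggt_book}, which establishes the corresponding statement for quasi-isometries, replacing the quasi-isometry estimates by the u.d. estimates. First I would recall that a simplicial map between simplicial complexes is determined by its effect on vertices, subject to the condition that it takes vertices spanning a simplex to vertices spanning a simplex. The vertex set of $Rips_d(X)$ is $X$ and the vertex set of $Rips_{\Phi(d)}(Y)$ is $Y$, and $f$ already gives a set map $X \to Y$ on vertices, so the only thing to check is that $f$ respects simplices.

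The key step is the following verification: suppose $X_0 = \{x_0, \ldots, x_k\} \subset X$ spans a simplex in $Rips_d(X)$, so that $d_X(x_i, x_j) \leq d$ for all $i,j$. Since $f$ is $(\phi, \Phi)$-u.d., condition (2) in the definition of u.d. applied with $r = d$ gives $d_Y(f(x_i), f(x_j)) \leq \Phi(d)$ for all $i,j$. Hence $\{f(x_0), \ldots, f(x_k)\}$ is a finite collection of vertices of $Rips_{\Phi(d)}(Y)$ that are pairwise within distance $\Phi(d)$, so by definition of the Rips complex they span a simplex of $Rips_{\Phi(d)}(Y)$. (One should note that if some of the $f(x_i)$ coincide, this collection, viewed as a set, still spans a simplex; this is the usual convention for simplicial maps, where a simplex may be collapsed onto a face.) Therefore the vertex map $f$ extends to a simplicial map $Rips_d(X) \to Rips_{\Phi(d)}(Y)$, sending each simplex spanned by $X_0$ affinely onto the simplex spanned by $f(X_0)$.

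I do not anticipate a real obstacle here; the statement is essentially a bookkeeping exercise, and the only subtlety is the purely formal one of allowing degenerate simplices in the target when $f$ is not injective, which is handled by the standard definition of simplicial maps. One could phrase the whole argument in a single paragraph. The reason it is worth stating separately is that in later sections it will be applied repeatedly to the u.d. maps and coarse isometries produced above (for instance the coarse isometry $(C, d_C) \to (C', d_{C'})$ of Proposition \ref{cc'_ci}), in order to compare Rips complexes of $C$ and $C'$ and thereby transport the type $F_n$ property, exactly as Kapovich does in the quasi-isometry setting.
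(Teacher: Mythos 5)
Your proposal is correct and is exactly the argument the paper has in mind: the paper simply observes that Kapovich's Lemma 2.9 extends verbatim, and the content of that extension is precisely your check that pairwise distances $\leq d$ map, via condition (2) of the u.d.\ definition, to pairwise distances $\leq \Phi(d)$, with degenerate simplices handled by the usual simplicial-map convention. No gap.
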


We recall Definition 2.10 of \cite{Kap_ggt_book}:

\begin{defn}
A metric space $X$ is said to be {\em coarsely $n$-connected} if, for each $r \geq 0$ there exists some $R \geq r$ such that the map $Rips_r(X) \to Rips_R(X)$ induces the trivial maps on $i^{th}$ homotopy groups, for every $0 \leq i \leq n$.
\end{defn}

Corollary 2.15 of \cite{Kap_ggt_book} shows that coarse $n$-connectedness is a quasi-isometry invariant.  
In light of Lemma \ref{simpl_maps}, a minor alteration of that proof shows the following.

\begin{thm}\label{cnc_is_ci_inv}
Coarse $n$-connectedness is a coarse isometry invariant.
\end{thm}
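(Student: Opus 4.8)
The plan is to follow the proof that coarse $n$-connectedness is a quasi-isometry invariant (Corollary 2.15 of \cite{Kap_ggt_book}), replacing the linear control functions of a quasi-isometry by the control functions $\phi,\Phi$ of a u.d.\ map; Lemma \ref{simpl_maps} is precisely the tool that lets this substitution go through, and Lemma \ref{coarse_inverse} supplies the coarse inverse the argument needs. So let $f \co (X,d_X) \to (Y,d_Y)$ be a coarse isometry, let $f' \co (Y,d_Y) \to (X,d_X)$ be a coarse inverse given by Lemma \ref{coarse_inverse}, and let $D \geq 0$ be such that $ff'$ and $f'f$ lie within distance $D$ of $\mathrm{id}_Y$ and $\mathrm{id}_X$ respectively. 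It suffices to show that if $X$ is coarsely $n$-connected, then so is $Y$.

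First I would fix $r \geq 0$ and produce an $R \geq r$ for which $Rips_r(Y) \to Rips_R(Y)$ induces the trivial map on $\pi_i$ for $0 \leq i \leq n$. Let $\phi,\Phi$ be control functions for $f$ and $\phi',\Phi'$ control functions for $f'$. By Lemma \ref{simpl_maps}, $f'$ induces a simplicial map $f'_\# \co Rips_r(Y) \to Rips_{\Phi'(r)}(X)$. Since $X$ is coarsely $n$-connected, there is some $S \geq \Phi'(r)$ so that the inclusion $\iota \co Rips_{\Phi'(r)}(X) \hookrightarrow Rips_S(X)$ is trivial on $\pi_i$ for $0 \leq i \leq n$. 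Applying Lemma \ref{simpl_maps} again, $f$ induces a simplicial map $f_\# \co Rips_S(X) \to Rips_{\Phi(S)}(Y)$. Set $g = f_\# \circ \iota \circ f'_\# \co Rips_r(Y) \to Rips_{\Phi(S)}(Y)$; since $g$ factors through $\iota$, which is trivial on $\pi_i$ for $0 \leq i \leq n$, the same is true of $g$.

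Next I would compare $g$ with the natural inclusion $Rips_r(Y) \to Rips_R(Y)$ for a suitable $R$. On vertices $g$ agrees with $ff'$, so $d_Y(g(v), v) \leq D$ for every vertex $v$. Take $R = \max\{\Phi(S),\, r + D\}$. Then for any simplex $\{v_0,\dots,v_k\}$ of $Rips_r(Y)$ the set $\{v_0,\dots,v_k,g(v_0),\dots,g(v_k)\}$ has pairwise distances at most $R$ (using $d_Y(v_i,v_j)\leq r$, $d_Y(g(v_i),g(v_j))\leq\Phi(S)$, and $d_Y(v_i,g(v_j)) \leq d_Y(v_i,v_j) + D \leq r+D$), so it spans a simplex in $Rips_R(Y)$. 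Hence, in $Rips_R(Y)$, the map $g$ (post-composed with the inclusion) and the inclusion $Rips_r(Y)\to Rips_R(Y)$ are contiguous, so they are homotopic. Since $g$ is trivial on $\pi_i$ for $0 \leq i \leq n$, so is the inclusion $Rips_r(Y) \to Rips_R(Y)$; as $r$ was arbitrary, $Y$ is coarsely $n$-connected, which gives the theorem.

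The step I expect to be the main obstacle is not any single estimate but the verification that Kapovich's argument uses the data of a quasi-isometry only in the form abstracted by the notions ``u.d.'' and ``coarsely onto'' — i.e.\ that once the linear bounds are replaced by the monotone proper functions $\phi,\Phi$, every step (especially the contiguity argument above, and the passage through a coarse inverse) goes through unchanged. Granting Lemma \ref{simpl_maps} and Lemma \ref{coarse_inverse}, this is exactly the ``minor alteration'' alluded to before the statement.
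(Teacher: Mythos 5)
Your proposal is correct and follows the same route as the paper, which simply cites Corollary 2.15 of Kapovich's notes and observes that, given Lemma \ref{simpl_maps}, the quasi-isometry argument goes through verbatim for coarse isometries; you have written out exactly that "minor alteration" (coarse inverse from Lemma \ref{coarse_inverse}, induced simplicial maps on Rips complexes, factoring through a trivial inclusion, and the contiguity/homotopy comparison with the identity inclusion). No gaps.
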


Thus we have

\begin{thm}\label{fn_is_ci_inv}
The property of a finitely generated group being of type $F_n$, $n > 1$, is a coarse isometry invariant.
\end{thm}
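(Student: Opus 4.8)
The plan is to reduce Theorem~\ref{fn_is_ci_inv} to Theorem~\ref{cnc_is_ci_inv} by way of Kapovich's coarse-geometric characterization of type $F_n$. Recall from \cite{Kap_ggt_book} that, for a finitely generated group $C$ and any $n \geq 2$, the group $C$ is of type $F_n$ if and only if the metric space $(C,d_C)$ is coarsely $(n-1)$-connected. Granting this, the theorem follows at once: if $C$ and $C'$ are finitely generated groups and $f \co (C,d_C) \to (C',d_{C'})$ is a coarse isometry, then Theorem~\ref{cnc_is_ci_inv} gives that $(C,d_C)$ is coarsely $(n-1)$-connected if and only if $(C',d_{C'})$ is, whence $C$ is of type $F_n$ if and only if $C'$ is.

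So the content to be recorded is Kapovich's equivalence, together with the remark that his proof of it refers only to the coarse geometry of the group, with the Rips-complex machinery made functorial for u.d.\ maps by Lemma~\ref{simpl_maps}. For the forward direction I would note that a $K(C,1)$ with finite $n$-skeleton has a contractible universal cover whose $n$-skeleton $X$ is therefore $(n-1)$-connected (since $X$ and the universal cover share their $i$-skeleta for $i \leq n$, hence their homotopy groups in degrees $\leq n-1$), and $C$ acts on $X$ freely and cocompactly by deck transformations; thus $X$ is quasi-isometric to $(C,d_C)$, and Kapovich's argument that a finitely generated group acting freely and cocompactly on an $(n-1)$-connected complex is coarsely $(n-1)$-connected yields the claim for $(C,d_C)$. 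For the converse I would invoke the construction of \cite{Kap_ggt_book}: beginning with $Rips_R(C)$ for $R$ large, on which $C$ acts freely and cocompactly by left translation, coarse $(n-1)$-connectedness allows one to attach the $C$-orbits of finitely many cells, in dimensions between $2$ and $n$, killing $\pi_i$ for $1 \leq i \leq n-1$ and so producing an $(n-1)$-connected complex with a free cocompact cellular $C$-action; its quotient is a finite complex supplying the $n$-skeleton of a $K(C,1)$, which one completes by adjoining cells of dimension $> n$, so $C$ is of type $F_n$.

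The genuinely new ingredient --- that coarse $(n-1)$-connectedness survives the weaker equivalence of coarse isometry, not merely quasi-isometry --- has already been isolated as Theorem~\ref{cnc_is_ci_inv}, so I anticipate no serious obstacle here; the proof should amount to little more than citing Kapovich's equivalence and invoking that theorem. The one point to keep straight is the hypothesis $n > 1$: the equivalence ``type $F_n$ $\Leftrightarrow$ coarsely $(n-1)$-connected'' is clean only for $n \geq 2$, while the base case $n = 1$, finite generation, is handled separately, as in the proof of Theorem~\ref{fn_thm} for $n=1$ via Lemma~\ref{char_fg_subgps}.
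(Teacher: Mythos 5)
Your proposal is correct and follows essentially the same route as the paper: cite Kapovich's equivalence that a finitely generated group is of type $F_n$ (for $n \geq 2$) if and only if it is coarsely $(n-1)$-connected (Theorem 2.17 and the proof of Theorem 2.21 of \cite{Kap_ggt_book}), and then apply Theorem \ref{cnc_is_ci_inv}. The extra sketch of how Kapovich's equivalence is proved is accurate but not needed beyond the citation.
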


\begin{proof}
Suppose that $C$ and $C'$ are finitely generated groups, with $C$ of type $F_n$, and that there is a coarse isometry between $(C,d_C)$ and $(C', d_{C'})$.  
Then Theorem 2.17 of \cite{Kap_ggt_book} implies that $(C,d_C)$ is coarsely $(n-1)$-connected, hence, by Theorem \ref{cnc_is_ci_inv}, so is $(C', d_{C'})$.  It is shown in the proof of Theorem 2.21 of \cite{Kap_ggt_book} that each coarsely $(n-1)$-connected group has type $F_n$, so the theorem follows.
\end{proof}

Thus, Theorem \ref{fn_thm} in the case that $n>1$ is immediate from Proposition \ref{cc'_ci} and Theorem \ref{fn_is_ci_inv}.


\section{The quasi-isometry invariance of the vertices of commensurizer type of the Scott-Swarup JSJ decomposition}

The proof of Theorem \ref{Cqiinv} was motivated by the goal of showing that certain vertex groups of the Scott-Swarup JSJ decomposition for finitely presented, one-ended groups are invariant under quasi-isometries.  
We will see that our result follows immediately from Theorems \ref{existence_qi_inv} and \ref{Cqiinv}, given the theory of Scott and Swarup.

We shall begin with some basic facts about the Scott-Swarup decomposition, and then discuss Papasoglu's results from \cite{papa_qlines} that show the invariance under quasi-isometry of certain parts of the decomposition.
Next we will discuss Scott and Swarup's theory in more detail, and see that our invariance results follow from Theorems \ref{existence_qi_inv} and \ref{Cqiinv}.

For an introduction to splittings and graphs of groups decompositions of groups, and group actions on trees, the reader is referred to \cite{ScottWall}.
In \cite{SS}, Scott and Swarup construct a canonical JSJ decomposition $\Gamma_1(G) $ of any one-ended, finitely presented group $G$, in which the vertex groups ``enclose'' all splittings of $G$ over two-ended subgroups, and moreover, enclose all nontrivial almost invariant subsets of $G$ over two-ended subgroups (see Definition \ref{def_enclosing2}).  

If $v$ is a vertex of $\Gamma_1(G)$, then we shall denote its vertex group by $G(v)$, which is defined up to conjugacy.  Similarly if $e$ is an edge of $\Gamma_1(G)$, then we shall let $G(e)$ denote its edge group.

$\Gamma_1(G)$ is a regular neighborhood, as defined in \cite{SS}, of all the nontrivial almost invariant subsets of $G$ over two-ended subgroups.  
Thus $\Gamma_1(G)$ is a bipartite graph of groups with fundamental group $G$, and with the vertices in the complementary subsets called {\it $V_0$-vertices} and {\it $V_1$-vertices}.  
If $v$ is a vertex of $\Gamma_1(G)$, then $G(v)$ is said to be either a {\it $V_0$-} or {\it $V_1$-vertex group}, depending on whether $v$ is a $V_0$- or $V_1$-vertex.

Furthermore, each nontrivial almost invariant subset of $G$ over a two-ended subgroup is ``enclosed'' by some $V_0$-vertex.
We shall define nontrivial almost invariant sets and the notion of enclosure below, but in the case that such an almost invariant set is associated to a splitting of $G$, this means that the enclosing $V_0$-vertex group admits a splitting that is compatible with $\Gamma_1(G)$.  
Moreover, when $\Gamma_1(G)$ is refined by this splitting, the added edge is associated to the given splitting of $G$.
Each $V_0$-vertex of $\Gamma_1(G)$ encloses at least one such splitting of $G$ over a two-ended subgroup.

Each $V_0$-vertex $v$ is one of three types:
\begin{enumerate}
  \item{$v$ is isolated}
  \item{$v$ is of Fuchsian type, or}
  \item{$v$ is of commensurizer type.}
\end{enumerate}

If $v$ is isolated, then $v$ is of valence two.  Moreover, if we let $e_1$ and $e_2$ denote the edges incident to $v$, then the inclusions of $G(e_1)$ and $G(e_2)$ into $G(v)$ are isomorphisms, and all three subgroups are two-ended. 

If $v$ is of Fuchsian type, then $G(v)$ is finite-by-Fuchsian, where the Fuchsian group is a discrete group of isometries of the hyperbolic plane or of the Euclidean plane, but is not finite nor two-ended.  Associated to each peripheral subgroup of $G(v)$ there is exactly one corresponding edge $e$ incident to $v$, and $G(e)$ is conjugate to that subgroup.

Lastly, if $v$ is of commensurizer type, then $v$ is not isolated nor of Fuchsian type, and there is a two-ended subgroup $H$ of $G$ with $\tilde{e}(G,H) >3$ such that $G(v) = $ Comm$_G(H)$.  Only in this case is it possible that the subgroups carried by the edges incident to $v$ are not two-ended, and in fact they may not even be finitely generated.  It follows that the $V_1$-vertex groups of $\Gamma_1(G)$ may not be finitely generated either.

We say that a subgroup $C$ of $G$ is a vertex group of isolated, Fuchsian or commensurizer type respectively if $C$ is the vertex group of a vertex of $\Gamma_1(G)$ of isolated, Fuchsian or commensurizer type respectively.
\\

It is natural to ask if $\Gamma_1(G)$ is somehow invariant under quasi-isometries.
While the underlying graph of $\Gamma_1(G)$ need not be invariant, one could ask whether or not the existence of vertex groups of certain types is invariant under quasi-isometries. 
If the answer to this is `yes', then one could ask if the locations of these vertex groups is also invariant under quasi-isometries, in the sense of a quasi-isometry being forced to take a vertex group to within a finite Hausdorff distance of a vertex group of the same type.
Papasoglu has addressed these questions for the Dunwoody-Sageev JSJ decomposition of one-ended, finitely presented groups.

The JSJ decomposition of a group $G$ as given by Dunwoody and Sageev in \cite{DunSa} is a graph of groups decomposition of $G$, say $\Gamma_{DS}(G)$, which is bipartite.  
Call the two types of vertex groups white and black, and then all the black vertex groups are either of Fuchsian type or of isolated type (see above).
$\Gamma_{DS}(G)$ describes all the splittings of $G$ over two-ended subgroups, in the sense that if $G$ splits over a two-ended subgroup $C$, either as $A*_CB$ or $A*_C$, then $C$ is conjugate into a vertex group of $\Gamma_{DS}$, has a  finite index subgroup which is contained in a black vertex group, and each white vertex group is conjugate into $A$ or $B$.

All of the edge groups of $\Gamma_{DS}(G)$ are two-ended, and it is this that Papasoglu exploits in \cite{papa_qlines} to prove the quasi-isometry invariance of this JSJ decomposition.  Specifically, the author proves the following.

\begin{thm} {\rm \cite{papa_qlines}} \    
\label{papa_jsj}
Let $G$ and $G'$ be one-ended, finitely presented groups. 
Suppose that 
$f \co \mathscr{C}^1(G) \to \mathscr{C}^1(G')$ is a quasi-isometry.  
Then there is a constant $C>0$ such that if $A$ is a subgroup of $G$ conjugate to a vertex group, a vertex group of Fuchsian type, or an edge group of the graph of groups $\Gamma_{DS}(G)$, then $f(A)$ has Hausdorff distance less than or equal to $C$ from a subgroup of $G'$ conjugate to, respectively, a vertex group, a vertex group of Fuchsian type, or an edge group of $\Gamma_{DS}(G')$.
\end{thm}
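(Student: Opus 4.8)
This is Papasoglu's theorem, and the plan is to deduce it from the quasi-line technology above together with the asymptotic-topology characterization of splittings in \cite{papa_qlines}. Since $\Gamma_{DS}(G)$ is canonical it is determined by intrinsic coarse data of $\mathscr{C}^1(G)$: the family $\mathscr{P}(G)$ of quasi-lines $N_R(E)$ that arise from splittings of $G$ over two-ended subgroups $E$ (recorded up to finite Hausdorff distance, and closed under the $G$-action), together with the symmetric ``crossing'' relation, where $L$ crosses $L_1$ when $L_1$ fails to be contained in a finite neighbourhood of one complementary component of $L$. First I would show that a quasi-isometry $f$ induces a crossing-preserving bijection $\mathscr{P}(G) \to \mathscr{P}(G')$ with uniformly bounded Hausdorff displacement; then, since edge groups, Fuchsian vertex groups and general vertex groups of $\Gamma_{DS}$ are each read off from $\mathscr{P}$ combinatorially (edge groups and isolated vertices from the non-crossing members, Fuchsian vertices from the cross-connected subfamilies and the regions they fill, the remaining white vertices from the complementary rigid regions), $f$ must carry each such object to the corresponding object of $\Gamma_{DS}(G')$ up to bounded Hausdorff distance.

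For the bijection on $\mathscr{P}$: each member is, by Lemma \ref{2ended_subgps_are_qlines}, of the form $N_R(E)$ with $E$ two-ended, and is essential and separating; by Lemmas \ref{imgs_of_qlines_are_qlines} and \ref{ess_comps_go_to_ess_comps} and Remark \ref{qlines_have_same_consts}, $f$ sends it to an essential separating $(\phi',N')$ quasi-line of $\mathscr{C}^1(G')$ with parameters depending only on $(\Lambda,K)$, and by Theorem \ref{2.1} and Corollary \ref{q_lines_are_K_disjoint} this assignment respects crossing and nesting within the same bounds. The substantive point, which I expect to be the main obstacle, is that the image is itself a \emph{uniformly} bounded Hausdorff distance from a two-ended subgroup of $G'$ that is an \emph{edge group} of $\Gamma_{DS}(G')$ --- that is, recognizing ``is a splitting quasi-line'' coarsely. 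When the quasi-line is $3$-parting this is essentially Theorem \ref{existence_qi_inv} and Proposition \ref{LnearZ}, proved via the limiting argument of Lemma \ref{seq_of_qlines_is_fin}: a sequence of translates either stabilizes, yielding an infinite cyclic subgroup it lies near, or contains an infinite mutually-far-apart $3$-parting family, which is impossible. The $2$-parting case genuinely occurs among $\Gamma_{DS}$ edge groups and is not covered by the present paper; here one invokes Papasoglu's asymptotic topology of group splittings, namely that ``comes from a splitting'' is itself a quasi-isometry invariant --- this distinguishes, for instance, rational- from irrational-slope quasi-lines in $\mathscr{C}^1(\mathbb{Z}^2)$ --- and the Bass--Serre tree then supplies the cocompact action forcing closeness to a subgroup.

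For the uniform constant: every Hausdorff-distance and parameter estimate used above depends only on $(\Lambda,K)$ and on finitely much combinatorial data of $\Gamma_{DS}(G)$, and there are only finitely many $G$-conjugacy classes of edge and vertex groups, so taking the maximum yields a single $C = C(\Lambda,K,G)$ valid for all $A$ at once; applying the argument to a quasi-inverse of $f$ upgrades ``the image lies near a subgroup of the stated type'' to ``the image lies at Hausdorff distance $\le C$ from one''. Thus the only ingredient needed beyond the lemmas reproduced above is the coarse recognition of algebraicity for $2$-parting splitting quasi-lines, for which I would cite the asymptotic-topology results of \cite{papa_qlines}; everything else is bookkeeping of the constants produced by Lemmas \ref{imgs_of_qlines_are_qlines} and \ref{ess_comps_go_to_ess_comps}, Theorem \ref{2.1}, and Corollary \ref{q_lines_are_K_disjoint}.
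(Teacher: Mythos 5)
This statement is not proved in the paper at all: it is Papasoglu's main theorem, quoted verbatim from \cite{papa_qlines} as background (the paper only re-proves Proposition 2.1 of that work, restated here as Theorem \ref{2.1}, in the appendix). So there is no internal proof to compare against, and your proposal has to be judged as a free-standing argument. As such it has a genuine gap, and in fact is essentially circular at its core. The two places where the real content lies are exactly the places you delegate: (i) the recognition that the image of a splitting quasi-line is uniformly close to a two-ended subgroup over which $G'$ actually splits, and in fact to an \emph{edge group} of $\Gamma_{DS}(G')$, and (ii) the recovery of the vertex groups (rigid and Fuchsian) of $\Gamma_{DS}$ from the coarse pattern of these quasi-lines, with uniformly bounded Hausdorff displacement. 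For (i) you cite "Papasoglu's asymptotic topology of group splittings, namely that `comes from a splitting' is itself a quasi-isometry invariant" --- but that invariance of splittings over two-ended subgroups \emph{is} the hard content of \cite{papa_qlines}, i.e., of the theorem you are trying to prove; invoking it is not a proof. Similarly, reading off Fuchsian and rigid vertex groups from "cross-connected subfamilies and the regions they fill" is not bookkeeping: it occupies the bulk of Papasoglu's paper and none of the lemmas reproduced here address it.

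Moreover, the step you claim is covered by this paper is not. In the $3$-parting case, Theorem \ref{existence_qi_inv} and Proposition \ref{LnearZ} only produce \emph{some} infinite cyclic subgroup $H'\subset G'$ at \emph{finite} (not uniformly bounded) Hausdorff distance from the image quasi-line; they say nothing about $G'$ splitting over $H'$, nor about $H'$ being conjugate to (or commensurable with) an edge group of $\Gamma_{DS}(G')$ --- indeed two-ended subgroups with many coends are precisely the ones whose almost invariant sets need not come from splittings, which is why this paper has to work with commensurizers rather than edge groups. The uniformity of the constant also does not follow from "finitely many conjugacy classes plus take the maximum": Proposition \ref{LnearZ} gives finiteness only, and upgrading it to a uniform bound requires a compactness/translation argument of the type carried out in Proposition \ref{fin_H_dist} for commensurizers, or Papasoglu's own uniform estimates for edge groups. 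So the proposal identifies the right obstacles but resolves them either by citing the theorem itself or by appealing to lemmas of this paper that prove weaker statements; as written it does not constitute a proof.
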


Given any one-ended, finitely presented group $G$, $\Gamma_{DS}(G)$ differs from $\Gamma_1(G)$ as follows.  The Fuchsian type vertex groups of $\Gamma_{DS}(G)$ and the Fuchsian type vertex groups of $\Gamma_1(G)$ are the same (up to conjugacy), and have the same edge groups.   
Also, the isolated vertex groups of $\Gamma_1(G)$ are vertex groups of $\Gamma_{DS}(G)$, and have the same edge groups.  
Thus $V_1$-vertices adjacent only to Fuchsian and isolated vertices of $\Gamma_1(G)$ are the same as the corresponding white vertex groups of $\Gamma_{DS}(G)$. 
So $\Gamma_1(G)$ differs from $\Gamma_{DS}(G)$ only at the vertices of commensurizer type, and the adjacent edges and $V_1$-vertices.

Thus Theorem \ref{papa_jsj} shows the invariance under quasi-isometries of the $V_0$-vertex groups of $\Gamma_1(G)$ of isolated and Fuchsian types, as well as those $V_1$-vertices that are adjacent only to the isolated and Fuchsian $V_0$-vertices.
It does not, however, answer the question of the invariance of the vertex groups of commensurizer type.

In fact the vertices of commensurizer type are invariant under quasi-isometries, and this fact is an immediate corollary to Theorems \ref{existence_qi_inv} and \ref{Cqiinv}, in light of the following fact about $\Gamma_1(G)$:

\begin{thm}\label{SSJSJ_comm_char}
Let $G$ be a one-ended finitely presented group, and let $H$ be a two-ended subgroup of $G$.
Then Comm$_G(H)$ is a vertex group of $\Gamma_1(G)$ of commensurizer type if and only if $\tilde{e}(G,H) \geq 4$.
\end{thm}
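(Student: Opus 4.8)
The plan is to read the statement off from Scott and Swarup's construction of $\Gamma_1(G)$ in \cite{SS}, eliminating the cases that cannot occur with the help of the quasi-line results established above --- above all the crossing obstruction of Theorem \ref{2.1} (equivalently, Corollary \ref{q_lines_are_K_disjoint}). Two elementary facts are used throughout: both $\tilde{e}(G,-)$ and ${\rm Comm}_G(-)$ depend only on the commensurability class of their two-ended argument, since $\mathcal{F}_{H_1}G = \mathcal{F}_{H_2}G$ and ${\rm Comm}_G(H_1) = {\rm Comm}_G(H_2)$ whenever $H_1$ and $H_2$ are commensurable.

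For the ``only if'' direction, suppose ${\rm Comm}_G(H) = G(v)$ with $v$ a $V_0$-vertex of commensurizer type. The structural description of $\Gamma_1(G)$ recalled above gives a two-ended subgroup $H_v$ with $\tilde{e}(G,H_v)\geq 4$ and $G(v) = {\rm Comm}_G(H_v)$. In Scott and Swarup's picture a commensurizer-type vertex is the regular neighbourhood of a single commensurability class of two-ended subgroups, and its vertex group is their common commensurizer; the point I would check is that every two-ended subgroup $H'$ of $G(v)$ with ${\rm Comm}_G(H') = G(v)$ is commensurable with $H_v$ --- this is where one uses that $G(v)$, being of commensurizer and not of Fuchsian type, admits no such $H'$ outside the commensurability class of $H_v$. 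Since $H$ is itself such an $H'$ (because ${\rm Comm}_{G(v)}(H) = {\rm Comm}_G(H)\cap G(v) = G(v)$), it is commensurable with $H_v$, and therefore $\tilde{e}(G,H) = \tilde{e}(G,H_v)\geq 4$.

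For the ``if'' direction, assume $\tilde{e}(G,H)\geq 4$. By Lemmas \ref{2ended_subgps_are_qlines}, \ref{NRH_sats_iness} and \ref{coends_and_ess_comps} there is an $R$ for which $L = N_R(H)$ is a $(\phi, N)$ quasi-line that is $4$-parting, hence $3$-parting, and satisfies $iness(m_1)$; and since $\tilde{e}(G,H)\geq 2$ there is a nontrivial almost invariant subset of $G$ over $H$, which by \cite{SS} is enclosed by some $V_0$-vertex $v$ of $\Gamma_1(G)$. I claim $v$ is of commensurizer type; granting this, the argument of the previous paragraph (with $H$ commensurable with the distinguished subgroup $H_v$ of $v$) yields $G(v) = {\rm Comm}_G(H_v) = {\rm Comm}_G(H)$. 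Now $v$ cannot be isolated, for an isolated vertex has two-ended vertex group equal to both its edge groups and encloses only the splitting determined by that commensurability class, which would force $H$ commensurable with $G(v)$ and hence $\tilde{e}(G,H) = \tilde{e}(G,G(v)) = 2$, a contradiction. And $v$ cannot be of Fuchsian type, for a Fuchsian vertex is assembled from a family of almost invariant sets over two-ended subgroups crossing one another in the pattern of essential curves on an orbifold: were $H$ commensurable with one of the interior such subgroups, the corresponding quasi-lines would cross --- there would be points of $L$ that are $K$-separated by a quasi-line satisfying $iness(m_1)$ for arbitrarily large $K$ --- contradicting Theorem \ref{2.1}, since every two-ended subgroup of $G$ has such a quasi-line neighbourhood by Lemmas \ref{2ended_subgps_are_qlines} and \ref{NRH_sats_iness}; and the remaining possibility, that $H$ is commensurable with a peripheral subgroup of the orbifold, I would exclude by checking, from Scott and Swarup's account of how almost invariant sets are distributed among the $V_0$-vertices, that a peripheral subgroup with four or more coends has commensurizer equal to an adjacent commensurizer-type vertex group rather than to the Fuchsian one.

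The main obstacle is precisely this Fuchsian case, and within it the peripheral subgroups, where Theorem \ref{2.1} gives no leverage and one must instead track carefully how \cite{SS} partitions the almost invariant sets over two-ended subgroups among the $V_0$-vertices. The two commensurability assertions used above --- that the two-ended subgroups carrying the almost invariant sets enclosed by a commensurizer-type vertex form a single commensurability class, and that this class consists of exactly the two-ended $H'$ with ${\rm Comm}_G(H') = G(v)$ --- are the remaining technical points; once they are in place, the theorem follows from the structure of $\Gamma_1(G)$ together with Theorem \ref{2.1}.
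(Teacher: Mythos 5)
Your overall strategy -- read the statement off from Scott--Swarup's structure theory for $\Gamma_1(G)$ by a case analysis of the three types of $V_0$-vertices -- is the same as the paper's, but as written the argument has genuine gaps, and they sit exactly where the paper leans on precise citations rather than on the quasi-line machinery. First, in the ``if'' direction the step ``since $\tilde{e}(G,H)\geq 2$ there is a nontrivial almost invariant subset of $G$ over $H$'' is not correct: an element of $(\mathcal{P}G/\mathcal{F}_HG)^G$ need not have a representative that is invariant under the left action of $H$ (this is precisely the caveat recorded after Definition \ref{def_ai}), so $\tilde{e}(G,H)\geq 4$ does not by itself produce a nontrivial $H$-almost invariant set. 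The paper handles this by invoking Lemma 2.40 of \cite{SS} to pass to a finite-index subgroup $H'\leq H$ with $\tilde{e}(G,H')=\tilde{e}(G,H)$ that does carry one; your commensurability-invariance remarks would let you absorb this fix, but the step must be made. Second, the points you explicitly defer -- that the two-ended subgroups whose almost invariant sets are enclosed by a commensurizer-type vertex form a single commensurability class, that this class is exactly the set of two-ended $H'$ with ${\rm Comm}_G(H')=G(v)$, and the exclusion of the case that $H$ is commensurable with a peripheral subgroup of a Fuchsian vertex -- are not routine checks; they are the substance of the theorem, and no argument is offered for them.

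The paper closes these gaps by citation rather than by the quasi-line results: Propositions 7.2--7.5 and Theorem 7.8 of \cite{SS} show that a Fuchsian vertex encloses, over a two-ended subgroup $H$, only almost invariant sets with $\tilde{e}(G,H)=2$ or sets associated to the splittings at incident edges, and that the subgroups attached to a commensurizer vertex are commensurable with $G(v)={\rm Comm}_G(H)$ and $\tilde{e}(G,H)\geq 4$; part 1 of Theorem 1.9 of \cite{SS_PDn_pairs} gives both that isolated vertices only enclose sets over subgroups with $2$ or $3$ coends (your isolated case asserts the value $2$, which is too strong, though $\leq 3$ suffices) and, crucially, that $\tilde{e}(G,H)\geq 4$ together with the existence of a nontrivial $H$-almost invariant set forces ${\rm Comm}_G(H)$ to be a commensurizer vertex group. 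Because of that last citation the paper's ``if'' direction never needs to locate the enclosing vertex and classify it by hand, which is where your argument stalls: your appeal to Theorem \ref{2.1} can only rule out strong crossing (the interior-curve configuration), and gives no leverage on the peripheral case, which is exactly the configuration Theorem 7.8 of \cite{SS} is quoted to handle. So the proposal is the right outline, but without Lemma 2.40, Theorem 7.8 and Theorem 1.9(1) (or independent proofs of their content) it does not yet constitute a proof.
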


This theorem is not stated explicitly in \cite{SS}, so we digress in order to explain how it follows from that work.
\\

We will need a more detailed description of $\Gamma_1(G)$ in order to do this. 
We begin with some definitions from \cite{SS}.

Let $H$ be a subgroup of $G$, let $X$ be a subset of $G$ and let $X^*$ denote the complement of $X$ in $G$.
Recall that $X$ is said to be $H$-finite if it is contained in the union of finitely many cosets $Hg$ of $H$.

\begin{defn}\label{def_ai}
$X$ is an {\em $H$-almost invariant subset of $G$}, or an {\em almost invariant subset of $G$ over $H$}, if $HX=X$ and the symmetric difference of $X$ and $Xg$ is $H$-finite for all $g \in G$.

We say that an $H$-almost invariant set $X$ is {\em nontrivial} if neither $X$ nor $X^*$ is $H$-finite.
\end{defn}

(We note that any $H$-almost invariant subset of $G$ represents an element of $(\mathcal{P}G/\mathcal{F}_HG)^G$ from Section \ref{section3}, though a representative of an element of $(\mathcal{P}G/\mathcal{F}_HG)^G$ need not be fixed by the left action of $H$.)

\begin{rmk} \label{splitting_gives_ai_set}
$H$-almost invariant subsets of $G$ generalize splittings of $G$ over $H$, for there is a natural way to associate to any splitting of $G$ over $H$ an $H$-almost invariant set as follows.  

Suppose that $G$ admits a splitting over $H$, and let $T$ be the associated tree.
Let $V(T)$ denote the vertex set of $T$, fix a basepoint $w \in V(T)$ and let $e$ be an oriented edge of $T$ with stabilizer $H$.
Then $e$ determines a partition of $V(T)$:  consider the two subtrees of $T$ resulting from the removal of the interior of $e$.  
Let $Y_e$ denote the vertex set of the subtree containing the terminal vertex of $e$ and let $Y_e^*$ denote the vertex set of the other subtree.
Let $\varphi \co G \to V(T)$ be defined by setting $\varphi (g) = g\cdot w$, let $Z_e = \varphi ^{-1}(Y_e)$ and let $Z_e^* = \varphi^{-1}(Y_e^*)$.

Lemma 2.10 of \cite{SS} shows that $Z_e$ and $Z_e^*$ are $H$-almost invariant subsets of $G$.
Moreover, these sets are canonically associated to the given splitting, up to complementation and $H$-finite symmetric difference.
(A different choice of basepoint, for instance, would result in an $H$-almost invariant set that has $H$-finite symmetric difference with $Z_e$.)  See section 2 of \cite{SS} for more details.

By looking at the translates $gZ_e$ of $Z_e$ by elements of $G$, one can recover the action of $G$ on $T$, but in general one does not get a tree action by looking at translates of almost invariant sets.
\end{rmk}

There is a notion of almost invariant sets crossing:

\begin{defn}\label{def_crossing}
Let $X$ be an $H$-almost invariant subset of $G$ and let $Y$ be a $K$-almost invariant subset of $G$.
We say that {\em $Y$ crosses $X$} if none of the four sets $X \cap Y$, $X^* \cap Y$, $X \cap Y^*$ and $X^* \cap Y^*$ is $H$-finite.
\end{defn}

Scott shows in \cite{Scott_intersection_numbers} that the crossing of nontrivial almost invariant sets is symmetric:

\begin{thm}{\rm \cite{Scott_intersection_numbers}} \ 
Let $G$ be a finitely generated group with subgroups $H$ and $K$, let $X$ be a nontrivial $H$-almost invariant subset of $G$ and let $Y$ be a nontrivial $K$-almost invariant subset of $G$.

Then $Y$ crosses $X$ if and only if $X$ crosses $Y$.
\end{thm}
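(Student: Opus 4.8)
The plan is to reduce the biconditional to a single finiteness-transfer statement, and then to prove that statement by a translation argument inside $\mathscr{C}^1(G)$, on which $G$ acts on the left by isometries. Two bookkeeping facts will carry most of the weight. First, since $G$ is finitely generated, $\mathscr{C}^1(G)$ is locally finite, and a subset $A\subseteq G$ is $H$-finite if and only if $A\subseteq N_r(H)$ for some $r\geq 0$: a coset $Hg$ lies in $N_{|g|}(H)$, while only finitely many cosets $Hg$ meet any ball $B_r(e)$, so a subset of $N_r(H)$ meets only finitely many cosets. Second, for a $K$-almost invariant set $Y$ the vertex set of the coboundary $\delta Y$ is a finite union of sets $Y+Yg$ with $g$ a generator, each $K$-finite by the definition of $K$-almost invariance; hence $\delta Y\subseteq N_b(K)$ for some $b$.

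I would then isolate the following claim: if $X$ is a nontrivial $H$-almost invariant subset of $G$, $Y$ is a nontrivial $K$-almost invariant subset of $G$, and $X\cap Y$ is $H$-finite, then $X\cap Y$ is $K$-finite. Granting the claim, the theorem follows quickly. If $Y$ does not cross $X$ then one of $X\cap Y$, $X^*\cap Y$, $X\cap Y^*$, $X^*\cap Y^*$ is $H$-finite; since the complement of a nontrivial almost invariant set over a subgroup is again one (the defining conditions are symmetric in a set and its complement), the claim applied to the relevant pair among $\{X,X^*\}$ and $\{Y,Y^*\}$ shows that same set is $K$-finite, so $X$ does not cross $Y$; the reverse implication is the claim with the roles of $(H,X)$ and $(K,Y)$ interchanged.

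For the claim I would argue by contradiction. Assume $X\cap Y\subseteq N_c(H)$ and that $X\cap Y$ is not $K$-finite, so by the first fact there are $p_n\in X\cap Y$ with $d(p_n,K)\to\infty$. Choose $h_n\in H$ with $d(p_n,h_n)\leq c$ and translate $Y$ to $Y_n:=h_n^{-1}Y$. Then $p_n':=h_n^{-1}p_n$ lies in the fixed finite ball $B_{c+1}(e)$; because $X$ is $H$-invariant, $X\cap Y_n=h_n^{-1}(X\cap Y)\subseteq h_n^{-1}N_c(H)=N_c(H)$; and because $\delta Y_n=h_n^{-1}\delta Y$ with $\delta Y\subseteq N_b(K)$, each vertex $v$ of $\delta Y_n$ satisfies $d(v,p_n')=d(h_nv,p_n)\geq d(p_n,K)-b$. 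With $\rho_n:=d(p_n,K)-b\to\infty$, a geodesic out of $p_n'$ of length less than $\rho_n$ cannot cross $\delta Y_n$, so $B_{\rho_n}(p_n')\subseteq Y_n$ and hence $B_{\rho_n}(p_n')\cap X\subseteq X\cap Y_n\subseteq N_c(H)$. Passing to a subsequence along which $p_n'$ equals a fixed vertex $p'$, we obtain $B_{\rho_n}(p')\cap X\subseteq N_c(H)$ with $\rho_n\to\infty$, so $X=\bigcup_n\big(B_{\rho_n}(p')\cap X\big)\subseteq N_c(H)$; thus $X$ is $H$-finite, contradicting its nontriviality.

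I expect the main obstacle to be the transfer of finiteness from $H$ to the a priori unrelated subgroup $K$, since there is no direct comparison between neighborhoods of $H$ and of $K$. The device that resolves it is to translate only $Y$, and only by elements of $H$: this preserves both $X$ and the $H$-invariant containment $X\cap Y\subseteq N_c(H)$ while dragging a point of $X\cap Y$ that is deep inside $Y$ back into a fixed ball, forcing an ever-larger ball to lie inside the translated $Y$ and hence inside $N_c(H)$ until it engulfs all of $X$. The remaining ingredients — the metric reformulation of $H$-finiteness, the coboundary computation, and the stability of the four corners under complementation — are routine once local finiteness of $\mathscr{C}^1(G)$ is used.
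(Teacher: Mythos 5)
The paper does not give a proof of this statement at all—it is imported from Scott's paper \cite{Scott_intersection_numbers}—so there is no internal argument to compare against. Your proof is correct: the reduction to the single transfer claim is legitimate since $X^*$ and $Y^*$ are again nontrivial almost invariant sets over $H$ and $K$, and the translation step (moving $Y$ by $h_n^{-1}\in H$, which fixes both $X$ and $N_c(H)$ while forcing ever larger balls about a fixed vertex into $h_n^{-1}Y$, so that $X\subseteq N_c(H)$, contradicting nontriviality of $X$) is sound; this is essentially Scott's own argument for the symmetry of crossing, so there is no genuinely different route to discuss.
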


Recall that, if $X$ is a subset of $G$ then we may think of $X$ as a subset of the vertex set of $\mathscr{C}^1(G)$, hence we may think of the coboundary of $X$, $\delta X$, as a collection of edges in $\mathscr{C}^1(G)$.

In \cite{SS}, notions of strong and weak crossings play an important role.

\begin{defn}
Let $X$, $Y$ be as in Definition \ref{def_crossing}.
Then we say that {\em $Y$ crosses $X$ strongly} if $\delta Y \cap X$ and $\delta Y \cap X^*$ project to infinite sets in $H \backslash \mathscr{C}^1(G)$.

If $Y$ crosses $X$, but not strongly, then we say that {\em $Y$ crosses $X$ weakly}.
\end{defn}

\begin{rmk}
If $Y$ crosses $X$ strongly, then $Y$ crosses $X$.

Whether or not $Y$ crosses $X$ strongly does not depend on our choice of finite generating set for $G$.

In general, strong crossing is not symmetric:  it is possible to have $Y$ cross $X$ strongly  and $X$ cross $Y$ weakly (see Example 2.26 of \cite{SS}).  However, if $H$ and $K$ are both two-ended, then the notion is symmetric (\cite{SS}, Proposition 7.2).
\end{rmk}


Finally, we must discuss the notion of enclosing.  Recall that, given a based $G$-tree $T$ with an edge $e$, we defined almost invariant sets $Z_e$ and $Z_e*$ in Remark \ref{splitting_gives_ai_set}.

\begin{defn}\label{def_enclosing2}
Let $X$ be a nontrivial $H$-almost invariant subset of a group $G$, let $T$ be a $G$-tree and let $\Gamma$ denote the graph of groups decomposition of $G$ associated to $T$.
Pick a basepoint in $V(T)$, so that, for any oriented edge $e$ of $T$, we can define the sets $Z_e$, $Z_e*$ as in Remark \ref{splitting_gives_ai_set}.

Suppose that $u \in V(T)$ is such that, for all edges $e$ of $T$ that are incident to $u$ and directed towards $u$, $X \cap Z_e*$ and $X^* \cap Z_e*$ are $H$-finite.
Then we say that the vertex $v = G\backslash u$ in $\Gamma$ {\em encloses} $X$.
\end{defn}

Suppose for a moment that $X$ is associated to a splitting $\sigma$ of $G$ (see Remark \ref{splitting_gives_ai_set}).
In this special case, a vertex $v$ in $\Gamma$ enclosing $X$ is equivalent to $\Gamma$ and $\sigma$ having a common refinement $\Gamma'$, that differs from $\Gamma$ only in that $v$ is replaced by an edge, and the splitting of $G$ associated to that edge is $\sigma$.

The reader is referred to Section 4 of \cite{SS} for more detail and motivation for the concept of enclosing.

We have defined everything we need in order to discuss $\Gamma_1(G)$ in enough detail to explain how Theorem \ref{SSJSJ_comm_char} follows from \cite{SS}.
All almost invariant subsets of $G$ discussed in the remainder of this section are over two-ended subgroups.

Recall that $\Gamma_1(G)$ is such that its $V_0$-vertices enclose all nontrivial almost invariant subsets of $G$, and that each $V_0$-vertex encloses at least one such subset.


If $v$ is isolated, then the only almost invariant sets enclosed by $v$ are those from the splitting of $G$ associated to the edges incident to $v$, hence $v$ does not enclose any crossing almost invariant sets.  Conversely, if a $V_0$-vertex $v$ does not enclose any crossing almost invariant sets over two-ended subgroups, then $v$ is isolated.
Moreover, if $X$ is an $H$-almost invariant set of $G$ which is enclosed by $v$, then $\tilde{e}(G,H)$ must be $2$ or $3$ (see part 1 of Theorem 1.9 from \cite{SS_PDn_pairs}).


If $v$ is of Fuchsian type, then $v$ is not isolated, and any almost invariant sets enclosed by $v$ that cross do so strongly.  (See Propositions 7.2, 7.4 and 7.5 of \cite{SS}.)
Also, Theorem 7.8 of \cite{SS}, tells us that, if $X$ is an $H$-almost invariant set that is enclosed by a vertex $v$ of Fuchsian type, then we have that either $\tilde{e}(G,H) = 2$, or $X$ is associated to the splitting given by an edge incident to $v$.


If $v$ is of commensurizer type, then any two almost invariant sets enclosed by $v$ that cross do so weakly.  Moreover, if $X$ and $Y$ are almost invariant sets over subgroups $H$ and $K$ that are enclosed by $v$, then $H$ and $K$ are commensurable (see Propositions 7.3 and 7.5 of \cite{SS}), $G(v)=$ Comm$_G(H)=$ Comm$_G(K)$, and $\tilde{e}(G,H) = \tilde{e}(G,K)$ is at least 4.
Conversely, if $H$ is a two-ended subgroup of $G$ such that $\tilde{e}(G,H) \geq 4$ and there exists a nontrivial $H$-almost invariant subset of $G$, then there is a commensurizer vertex group of $\Gamma_1(G)$ that is equal to Comm$_G(H)$.
(See part 1 of Theorem 1.9 from \cite{SS_PDn_pairs}.) 
We note that if $\tilde{e}(G,H) \geq 4$ and there are no nontrivial $H$-almost invariant subsets of $G$, then Lemma 2.40 of \cite{SS} implies that $H$ contains a (finite index) subgroup $H'$ such that $\tilde{e}(G,H') = \tilde{e}(G,H)$ and there exists a nontrivial $H'$-almost invariant subset of $G$.  
Hence in this case $\Gamma_1(G)$ contains a commensurizer vertex group equal to Comm$_G(H') = $ Comm$_G(H)$.

Thus if $H$ is a two-ended subgroup of $G$, then Comm$_G(H)$ is a vertex group of $\Gamma_1(G)$ of commensurizer type if and only if $\tilde{e}(G,H) \geq 4$.
This proves Theorem \ref{SSJSJ_comm_char}, and thus we have the following immediate corollary to Theorem \ref{existence_qi_inv}:

\begin{cor}
Let $f \co \mathscr{C}^1(G) \to \mathscr{C}^1(G')$ be a quasi-isometry between the Cayley graphs of one-ended, finitely presented groups $G$ and $G'$.  
Then $\Gamma_1(G)$ has a vertex group of commensurizer type if and only if $\Gamma_1(G')$ does.
\end{cor}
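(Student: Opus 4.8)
The plan is to combine Theorem \ref{SSJSJ_comm_char}, which we have just established, with the quasi-isometry invariance of two-ended subgroups with many coends furnished by Theorem \ref{existence_qi_inv}. By Theorem \ref{SSJSJ_comm_char}, $\Gamma_1(G)$ has a vertex group of commensurizer type if and only if $G$ contains a two-ended subgroup $H$ with $\tilde{e}(G,H) \geq 4$, and the same statement holds for $G'$. So it suffices to show that the existence of such a subgroup is preserved both by $f$ and by a quasi-inverse of $f$.

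First I would assume that $\Gamma_1(G)$ has a vertex group of commensurizer type, so that $G$ contains a two-ended subgroup $H$ with $\tilde{e}(G,H) = n$ for some $n \in \{ 4, 5, \ldots \} \cup \{ \infty\}$. Since $n \geq 3$, Theorem \ref{existence_qi_inv} applies and yields a two-ended subgroup $H' \leq G'$ with $\tilde{e}(G',H') = n$ (in fact also with $d_{Haus}(f(H),H') < \infty$, though only the equality of coend counts is needed here). In particular $\tilde{e}(G',H') \geq 4$, so by Theorem \ref{SSJSJ_comm_char} again, ${\rm Comm}_{G'}(H')$ is a vertex group of $\Gamma_1(G')$ of commensurizer type.

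For the converse, I would replace $f$ by a quasi-inverse $\bar{f} \co \mathscr{C}^1(G') \to \mathscr{C}^1(G)$, which is again a quasi-isometry between the Cayley graphs of one-ended, finitely presented groups, and run the identical argument: a vertex group of commensurizer type in $\Gamma_1(G')$ produces a two-ended subgroup of $G'$ with at least four coends, whose image under $\bar{f}$ lies a finite Hausdorff distance from a two-ended subgroup of $G$ with the same number of coends, forcing $\Gamma_1(G)$ to have a vertex of commensurizer type. This gives the equivalence.

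There is essentially no obstacle here: all of the substance is packaged into Theorems \ref{existence_qi_inv} and \ref{SSJSJ_comm_char}. The only point requiring a moment's care is that Theorem \ref{existence_qi_inv} is stated for $n \geq 3$ (including $n = \infty$), which comfortably covers the range $n \geq 4$ relevant to commensurizer-type vertices, so both the finite and the infinite coend cases are handled uniformly by the same citation.
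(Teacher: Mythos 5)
Your proposal is correct and follows exactly the route the paper intends: the paper records this statement as an immediate corollary of Theorem \ref{existence_qi_inv} once Theorem \ref{SSJSJ_comm_char} identifies commensurizer-type vertex groups with two-ended subgroups having at least four coends, and your argument (including the use of a quasi-inverse for the converse direction) is precisely that reasoning spelled out.
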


Moreover, the next fact follows immediately from Theorem \ref{Cqiinv}.

\begin{cor}\label{SSJSJ_cor}
Let $f \co \mathscr{C}^1(G) \to \mathscr{C}^1(G')$ be a quasi-isometry between the Cayley graphs of one-ended, finitely presented groups $G$ and $G'$, and suppose that $C$ is a vertex group of $\Gamma_1(G)$ of commensurizer type.
Then there is a vertex group, $C'$, of $\Gamma_1(G')$ of commensurizer type such that 
$$d_{Haus}(f(C), C')<\infty.$$
\end{cor}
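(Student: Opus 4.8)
The plan is to deduce Corollary \ref{SSJSJ_cor} directly from Theorem \ref{SSJSJ_comm_char} together with the main invariance result Theorem \ref{Cqiinv}; once those are in hand there is essentially nothing left to prove. The point is that ``being a vertex group of commensurizer type of $\Gamma_1$'' has, by Theorem \ref{SSJSJ_comm_char}, a purely algebraic reformulation --- namely being the commensurizer of a two-ended subgroup with at least four coends --- and both ingredients of that reformulation (being a commensurizer, up to finite Hausdorff distance, and the exact value of the coend count) are transported by $f$ by Theorem \ref{Cqiinv}.

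Concretely, I would argue as follows. Suppose $C$ is a vertex group of $\Gamma_1(G)$ of commensurizer type. By the description of the $V_0$-vertices of commensurizer type recalled before Theorem \ref{SSJSJ_comm_char} (equivalently, by the ``only if'' direction of Theorem \ref{SSJSJ_comm_char} itself), there is a two-ended subgroup $H$ of $G$ with $C = {\rm Comm}_G(H)$ and $\tilde{e}(G,H)\geq 4$. Set $n=\tilde{e}(G,H)$, so $n\in\{4,5,\ldots\}\cup\{\infty\}$ and in particular $n\geq 3$, which is the hypothesis needed to invoke Theorem \ref{Cqiinv} for $G$, $G'$, $f$ and $H$. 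Applying Theorem \ref{Cqiinv} yields a two-ended subgroup $H'$ of $G'$ with $\tilde{e}(G',H')=n\geq 4$ and a constant $y$ with $d_{Haus}(f({\rm Comm}_G(H)),{\rm Comm}_{G'}(H'))<y$; putting $C'={\rm Comm}_{G'}(H')$ this says $d_{Haus}(f(C),C')<\infty$. Finally, since $\tilde{e}(G',H')\geq 4$, the ``if'' direction of Theorem \ref{SSJSJ_comm_char} applied to the group $G'$ shows that $C'={\rm Comm}_{G'}(H')$ is a vertex group of $\Gamma_1(G')$ of commensurizer type, which is exactly the assertion.

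I do not expect any genuine obstacle: the entire substance has been front-loaded into Theorems \ref{existence_qi_inv}, \ref{Cqiinv} and \ref{SSJSJ_comm_char}. The only matters requiring attention are bookkeeping ones --- making sure one uses the sharp coend count $n$ carried by Theorem \ref{Cqiinv} (so that the threshold $\tilde{e}\geq 4$ defining ``commensurizer type'' is transferred, not merely some weaker bound like $\tilde{e}\geq 3$), and noting that $n\geq 4$ comfortably clears the $n\geq 3$ hypothesis of the invariance theorems --- together with the mild subtlety, already absorbed into the proof of Theorem \ref{SSJSJ_comm_char}, that if no nontrivial $H'$-almost invariant subset of $G'$ exists one first replaces $H'$ by a finite-index subgroup with the same commensurizer and the same number of coends before identifying the commensurizer with a $\Gamma_1(G')$-vertex group.
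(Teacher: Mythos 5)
Your proposal is correct and matches the paper's own route exactly: the paper derives Corollary \ref{SSJSJ_cor} immediately from Theorem \ref{Cqiinv} once Theorem \ref{SSJSJ_comm_char} identifies commensurizer-type vertex groups of $\Gamma_1$ with commensurizers of two-ended subgroups having at least four coends. Your handling of the exact coend count $n\geq 4$ and the finite-index replacement subtlety is consistent with how the paper absorbs these points.
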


In addition, ``small'' and ``large'' vertex groups of commensurizer type are invariant under quasi-isometries.
We recall from \cite{SS} that a vertex group $C=$ Comm$_G(H)$ of $\Gamma_1(G)$ of commensurizer type is said to be {\it small} if $H$ is of finite index in $C$, and otherwise, $C$ is said to be {\it large}.  
Thus $C$ is small if and only if $e(C)=2$.  

Hence Theorem \ref{Cqiinv} also implies the following.

\begin{cor}
If $f \co \mathscr{C}^1(G_1) \to \mathscr{C}^1(G_2)$ is a quasi-isometry between the Cayley graphs of finitely presented, one-ended groups $G_1$ and $G_2$, then $\Gamma_1(G_1)$ has a vertex group of small commensurizer type if and only if $\Gamma_1(G_2)$ does, and $\Gamma_1(G_1)$ has a vertex group of large commensurizer type if and only if $\Gamma_1(G_2)$ does.
\end{cor}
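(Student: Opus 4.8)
The plan is to show that, just as being of commensurizer type is a coarsely detectable property, so is the refinement into ``small'' and ``large''; once this is in hand the corollary drops out of Theorem \ref{Cqiinv}. The first step is a coarse reformulation of smallness: if $C={\rm Comm}_G(H)$ is a vertex group of commensurizer type, then $C$ is small if and only if $d_{Haus}(C,H)<\infty$. Indeed, if $H$ has finite index $k$ in $C$, write $C=\cup_{i=1}^{k} g_iH$, so $d_{Haus}(C,H)\leq \max_i |g_i|_G<\infty$; conversely, if $d_{Haus}(C,H)\leq M$ then each $c\in C$ lies in $H\cdot(C\cap B_M(e))$, which is a union of finitely many cosets of $H$ since $G$ is finitely generated, so $H$ has finite index in $C$. (This is the ball-counting argument already used in the proof of Lemma \ref{commcharacterization}.) I would also record the elementary fact that finiteness of Hausdorff distance is preserved by quasi-isometries in both directions: a $(\Lambda,K)$-quasi-isometry $g$ satisfies $d_{Haus}(g(A),g(B))\leq \Lambda\, d_{Haus}(A,B)+K$, and composing with a quasi-inverse shows that $d_{Haus}(g(A),g(B))<\infty$ implies $d_{Haus}(A,B)<\infty$.

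Next comes the main argument. Suppose $\Gamma_1(G_1)$ has a vertex group $C$ of commensurizer type; by Theorem \ref{SSJSJ_comm_char} we may write $C={\rm Comm}_{G_1}(H)$ for a two-ended subgroup $H$ with $\tilde e(G_1,H)\geq 4$. Applying Theorem \ref{existence_qi_inv} and Theorem \ref{Cqiinv} (the subgroup $H'$ produced is the same in both, by the construction in the proof of Theorem \ref{Cqiinv}), there is a two-ended subgroup $H'$ of $G_2$ with $\tilde e(G_2,H')=\tilde e(G_1,H)\geq 4$, with $d_{Haus}(f(H),H')<\infty$, and with $d_{Haus}\bigl(f(C),{\rm Comm}_{G_2}(H')\bigr)<\infty$. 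Set $C'={\rm Comm}_{G_2}(H')$; by Theorem \ref{SSJSJ_comm_char} it is a vertex group of $\Gamma_1(G_2)$ of commensurizer type. Then
$$ d_{Haus}(C',H') \leq d_{Haus}(C',f(C)) + d_{Haus}(f(C),f(H)) + d_{Haus}(f(H),H'), $$
and, symmetrically, $d_{Haus}(f(C),f(H))$ is bounded by $d_{Haus}(f(C),C')+d_{Haus}(C',H')+d_{Haus}(H',f(H))$; combining these with the quasi-isometry invariance of finite Hausdorff distance, $d_{Haus}(C',H')<\infty$ holds precisely when $d_{Haus}(C,H)<\infty$. By the coarse reformulation of smallness from the first step, $C'$ is small if and only if $C$ is small; and since both $C$ and $C'$ are of commensurizer type, $C'$ is large if and only if $C$ is large.

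Finally, applying the same construction with a quasi-inverse of $f$ (again a quasi-isometry between the two Cayley graphs), with the roles of $G_1$ and $G_2$ interchanged, yields the reverse implications, which together with the above give both of the asserted ``if and only if'' statements. I do not expect a genuine obstacle here: essentially everything is bookkeeping on top of Theorems \ref{existence_qi_inv}, \ref{Cqiinv} and \ref{SSJSJ_comm_char}. The one point that needs care is the coarse characterization ``$C$ small $\iff d_{Haus}(C,H)<\infty$'' in the first step, which ties the algebraic notion of finite index of $H$ in ${\rm Comm}_G(H)$ to a geometric condition that the quasi-isometry can transport.
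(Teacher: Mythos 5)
Your proposal is correct and follows essentially the same route as the paper: both reduce the statement to Theorem \ref{Cqiinv} (together with Theorem \ref{SSJSJ_comm_char}) via a coarse characterization of smallness. The only cosmetic difference is that the paper uses the equivalent criterion that a commensurizer vertex group $C$ is small if and only if $e(C)=2$, whereas you use $d_{Haus}(C,H)<\infty$ and transport it through $f$ (via Theorems \ref{existence_qi_inv} and \ref{Cqiinv}); these characterizations coincide, so the arguments agree in substance.
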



\section{On the quasi-isometry invariance of the topological JSJ decomposition}

We recall that any orientable Haken 3-manifold $M$ with incompressible boundary has a JSJ decomposition, and the ``characteristic'' pieces of this decomposition essentially make up the characteristic submanifold of $M$, $V(M)$.
(See below for a description of $V(M)$.)
In this section, we shall discuss how Scott and Swarup's theory of JSJ decompositions of groups, together with Corollary \ref{SSJSJ_cor}, imply the invariance under quasi-isometries of the Seifert fibered components of $V(M)$ that meet $\partial M$.

We first remark that M. Kapovich and Leeb have used different methods to prove a stronger result that implies this one.
In Theorem 1.1 of \cite{KapLeeb_Haken}, the authors prove the quasi-isometry invariance of all components of characteristic submanifolds of Haken manifolds with zero Euler characteristic.
We also note that, in the earlier work \cite{KapLeeb_SFS}, the authors proved the quasi-isometry invariance of the existence of (not necessarily peripheral) Seifert fibered components of characteristic submanifolds for Haken manifolds with zero Euler characteristic that are not Nil nor Sol.

Our main result in this section is the following.

\begin{cor}\label{3man_app}
Let $M$ and $M'$ be connected orientable Haken 3-manifolds with incompressible boundary, and with $f\co \pi_1(M) \to \pi_1(M')$ a quasi-isometry.  Suppose that $N$ is a nonexceptional Seifert fibered component of the characteristic submanifold of $M$ that meets the boundary of $M$.

Then there is a nonexceptional Seifert fibered component, $N'$, of the characteristic submanifold of $M'$ that meets the boundary of $M'$.
Moreover, if $C$ denotes the subgroup of $\pi_1(M)$ induced by the inclusion of $N$ into $M$ and $C'$ denotes the subgroup of $\pi_1(M')$ induced by the inclusion of $N'$ into $M'$, then
$$d_{Haus}(f(C), C')<\infty.$$
\end{cor}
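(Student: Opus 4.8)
The plan is to reduce the corollary to Corollary~\ref{SSJSJ_cor} by means of the dictionary, due to Scott and Swarup, between the characteristic submanifold of a Haken $3$-manifold with incompressible boundary and the graph of groups $\Gamma_1$ of its fundamental group. I begin by noting the preliminaries that let the machinery apply: since $M$ is Haken with incompressible boundary it is irreducible with no $2$-sphere boundary components, and $\pi_1(M)$ is neither free nor infinite cyclic (else $M$ would be a handlebody or a solid torus, both of which have compressible boundary), so $\pi_1(M)$ does not split over a finite subgroup and is one-ended; it is finitely presented because every compact $3$-manifold group is. The same will hold for $M'$ once it has been produced. I also recall that conjugation moves a subgroup of a finitely generated group a bounded Hausdorff distance (cf.\ Lemma~\ref{commcharacterization}), so the usual conjugacy ambiguity in ``the subgroup induced by the inclusion of $N$'' is harmless for assertions of finite Hausdorff distance.

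Next I would translate the hypothesis on $N$ into the language of $\Gamma_1$. Let $H \le \pi_1(M)$ be the infinite cyclic subgroup carried by a regular fiber of the Seifert fibration of $N$, and let $C \le \pi_1(M)$ be the image of $\pi_1(N)$. Because $N$ is nonexceptional its Seifert fibration is unique up to isotopy and $H$ is normal in $\pi_1(N)$, so $C \subseteq {\rm Comm}_{\pi_1(M)}(H)$; conversely, canonicity of the characteristic submanifold forces any element of $\pi_1(M)$ commensurizing $H$ to preserve $N$ up to isotopy, whence $C = {\rm Comm}_{\pi_1(M)}(H)$. Since $N$ meets $\partial M$ and is nonexceptional, Scott and Swarup's analysis of $\Gamma_1$ for Haken $3$-manifold groups gives $\tilde{e}(\pi_1(M),H) \ge 4$, so by Theorem~\ref{SSJSJ_comm_char} the subgroup $C = {\rm Comm}_{\pi_1(M)}(H)$ is a vertex group of $\Gamma_1(\pi_1(M))$ of commensurizer type. (In the degenerate case $V(M)=M$ this just says $C=\pi_1(M)$, consistently.)

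Now Corollary~\ref{SSJSJ_cor} applies directly and yields a vertex group $C'$ of $\Gamma_1(\pi_1(M'))$ of commensurizer type with $d_{Haus}(f(C),C')<\infty$; here we use, as above, that $\pi_1(M')$ is one-ended and finitely presented. By Theorem~\ref{SSJSJ_comm_char} we may write $C' = {\rm Comm}_{\pi_1(M')}(H')$ with $H'$ two-ended and $\tilde{e}(\pi_1(M'),H') \ge 4$. Running the Scott--Swarup dictionary for $M'$ in the opposite direction identifies every commensurizer-type vertex group of $\Gamma_1(\pi_1(M'))$ with the image in $\pi_1(M')$ of the fundamental group of a nonexceptional Seifert fibered component of the characteristic submanifold of $M'$ that meets $\partial M'$; choosing $N'$ to be a piece realizing $C'$ in this way gives a component with induced subgroup $C'$, and then $d_{Haus}(f(C),C')<\infty$ is exactly the assertion.

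The main obstacle is precisely the two-way correspondence invoked in the previous two paragraphs: that for a Haken $3$-manifold with incompressible boundary the commensurizer-type $V_0$-vertex groups of $\Gamma_1(\pi_1)$ are \emph{exactly} the images of the fundamental groups of the nonexceptional Seifert fibered pieces of the characteristic submanifold that meet the boundary. Establishing this requires unwinding Scott and Swarup's comparison of their algebraic JSJ decomposition with the topological JSJ decomposition (and keeping careful track of small or exceptional Seifert pieces, $I$-bundle components, and the difference between frontier annuli and boundary tori), but it is a known consequence of the results in \cite{SS} and involves no new quasi-isometry input; once it is in hand, the corollary is immediate from Corollary~\ref{SSJSJ_cor}.
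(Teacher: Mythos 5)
Your reduction is the same as the paper's: deduce Corollary \ref{3man_app} from Corollary \ref{SSJSJ_cor} together with a two-way correspondence between the nonexceptional peripheral Seifert fibered components of $V(M)$ and the commensurizer-type vertex groups of $\Gamma_1(\pi_1(M))$. The problem is that this correspondence is exactly where the mathematical content of the corollary lies, and you assert it rather than prove it. In the paper it is Proposition \ref{3man_prop}, and it is not a quotable consequence of \cite{SS}: its proof needs 3-manifold input beyond the algebraic/topological JSJ comparison. Concretely, the direction you dispatch with ``canonicity of the characteristic submanifold forces any element commensurizing $H$ to preserve $N$ up to isotopy'' is precisely the hard inclusion ${\rm Comm}_{\pi_1(M)}(H)\subseteq C$. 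A priori ${\rm Comm}_{\pi_1(M)}(H)$ could be much larger than $C$ (it is not even obviously finitely generated), and ``preserving $N$ up to isotopy'' is not a notion that applies directly to elements of $\pi_1(M)$, so this sentence does not constitute an argument. The paper handles it by first combining Jaco's root theorem with Kropholler's ascending chain condition on centralizers to identify ${\rm Comm}_G(H)=N_G(\langle h^m\rangle)$ for some $m\geq 1$, hence finitely generated; then Scott's compact core theorem produces a compact submanifold of the cover corresponding to ${\rm Comm}_G(H)$, which is Seifert fibered by Waldhausen--Tollefson (normal infinite cyclic subgroup), and the Jaco--Shalen/Johannson enclosing property homotopes its image into a Seifert fibered component of $V(M)$; finally a graph-of-groups argument along the canonical annuli and tori separating that component from $N$ rules out a proper containment, giving $C={\rm Comm}_G(H)$. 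The converse direction of your ``dictionary'' (every commensurizer vertex group of $\Gamma_1(\pi_1(M'))$ is carried by a nonexceptional peripheral Seifert piece of $V(M')$), which you need in order to produce $N'$, requires the same machinery.

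So the strategy is correct and matches the paper's, the quasi-isometry input is correctly localized in Corollary \ref{SSJSJ_cor}, and your preliminary remarks (one-endedness and finite presentability of the groups, harmlessness of conjugacy ambiguity, the identification of the relevant coend count $\tilde e(\pi_1(M),H)\geq 4$ via Theorem \ref{SSJSJ_comm_char}) are fine; but as written the proof has a genuine gap, namely the equivalence that plays the role of Proposition \ref{3man_prop}. Deferring it to ``a known consequence of the results in \cite{SS}'' is not accurate --- the paper itself has to prove it, using Jaco, Kropholler, Scott's compact core and Waldhausen--Tollefson --- and your one-line isotopy argument for ${\rm Comm}_{\pi_1(M)}(H)=C$ does not substitute for that proof.
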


This corollary will follow immediately from Corollary \ref{SSJSJ_cor} and the following proposition.

\begin{prop} \label{3man_prop}
Let $M$ be a connected orientable Haken 3-manifold with incompressible boundary and let $G=\pi_1(M)$.
Then there is a one-to-one correspondence between the nonexceptional Seifert fibered components of $V(M)$ that meet $\partial M$ and the commensurizer vertex groups of $\Gamma_1(G)$, given by taking a Seifert fibered component $N$ to the subgroup of $G$ given by the inclusion of $\pi_1(N)$ into $\pi_1(M)$.
\end{prop}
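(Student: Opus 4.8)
The plan is to reduce the statement to a dictionary between the type of a $V_0$-vertex of $\Gamma_1(G)$ and the topology of the corresponding piece of the characteristic submanifold $V(M)$, using Theorem~\ref{SSJSJ_comm_char} on the algebraic side and Scott and Swarup's realisation of $\Gamma_1(G)$ by $V(M)$ on the topological side. First I would recall the relevant 3-manifold input from \cite{SS}: since $M$ is Haken with incompressible boundary, every component $N$ of $V(M)$ is $\pi_1$-injective, the $V_0$-vertices of $\Gamma_1(G)$ are in natural bijection with the components of $V(M)$, and the vertex group attached to a component $N$ is $\mathrm{im}(\pi_1 N \to G)$, well defined up to conjugacy. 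The assignment $N \mapsto \mathrm{im}(\pi_1 N \to G)$ is then injective, because distinct components of $V(M)$ are distinct vertices of a reduced graph-of-groups decomposition and hence have non-conjugate vertex groups. So the task is to show that the Seifert fibered components of $V(M)$ that are nonexceptional and meet $\partial M$ are exactly those whose associated vertex of $\Gamma_1(G)$ is of commensurizer type.

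For the forward implication, let $N$ be a nonexceptional Seifert fibered component of $V(M)$ meeting $\partial M$, with base orbifold $O$ and regular-fibre subgroup $H \cong \mathbb{Z}$, viewed in $G$. Because $N$ meets $\partial M$, $O$ has nonempty boundary, and because $N$ is nonexceptional, $O$ is not a disc, annulus, M\"obius band, or one of the small exceptional orbifolds, so $\pi_1^{\mathrm{orb}}(O)$ is virtually free of rank at least $2$. Since $H$ has index at most two in its normalizer in $\pi_1 N$, we get $\pi_1 N \subseteq \mathrm{Comm}_G(H)$; conversely, the Torus Theorem together with Johannson's uniqueness of the characteristic submanifold shows that an element of $G$ commensurizing $H$ must be carried by $N$, so $\mathrm{Comm}_G(H) = \pi_1 N$. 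Passing if necessary to the finite-index subgroup of $N$ on which the fibre is central, $\pi_1 N / H$ is commensurable with $\pi_1^{\mathrm{orb}}(O)$ and hence has infinitely many ends; concretely this exhibits, inside a quasi-line neighbourhood of $H$ in $\mathscr{C}^1(G)$, arbitrarily many disjoint essential complementary components, so Lemma~\ref{coends_and_ess_comps} gives $\tilde{e}(G,H) = \infty \geq 4$. By Theorem~\ref{SSJSJ_comm_char}, the vertex of $\Gamma_1(G)$ with vertex group $\mathrm{Comm}_G(H) = \pi_1 N$ is of commensurizer type.

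For the converse, let $v$ be a vertex of $\Gamma_1(G)$ of commensurizer type, so $G(v) = \mathrm{Comm}_G(H)$ with $H$ two-ended and $\tilde{e}(G,H)\geq 4$, and let $N$ be the corresponding component of $V(M)$. Then $G(v)$ commensurizes a two-ended subgroup, hence has a finite-index subgroup containing an infinite normal cyclic subgroup; since a surface group, with or without boundary, has no such subgroup, $N$ cannot be an $I$-bundle component, so $N$ is Seifert fibered. If $N$ were exceptional, or failed to meet $\partial M$, its base orbifold would be closed or one of the small orbifolds, and a short case check shows $\tilde{e}(G,H) \leq 3$ in every such case — when the base is closed, a quasi-line neighbourhood of the fibre of $N$ has at most two essential complementary components, the fibre of $N$ being non-peripheral in any piece glued to $N$ along a torus, so crossing such tori creates no new essential complementary components — contradicting $\tilde{e}(G,H) \geq 4$. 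Hence $N$ is a nonexceptional Seifert fibered component meeting $\partial M$, and by the forward direction $G(v) = \mathrm{Comm}_G(H) = \pi_1 N$. This gives surjectivity of the assignment onto the commensurizer vertex groups and completes the correspondence.

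The main obstacle I expect is the bookkeeping that matches the topological condition ``nonexceptional Seifert fibered component meeting $\partial M$'' with the purely group-theoretic condition $\tilde{e}(G,H)\geq 4$: one must verify that for every exceptional Seifert piece, and for every Seifert piece with closed base orbifold, the fibre subgroup has at most three coends in $G$, which requires controlling how the coends of the fibre subgroup of one Seifert piece behave across the tori along which neighbouring pieces are attached, together with the (minor) nuisance of the non-orientable base case where the fibre is normal only up to index two. The identity $\mathrm{Comm}_G(H) = \pi_1 N$ for $H$ a regular fibre is the other delicate point, but it is essentially Johannson's characteristic submanifold theorem read through the observation that a regular fibre is a canonical two-ended subgroup of $G$.
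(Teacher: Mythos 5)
Your reduction rests on an input that is not available and is, as stated, not true: you assume from the outset that ``the $V_0$-vertices of $\Gamma_1(G)$ are in natural bijection with the components of $V(M)$, with vertex group $\mathrm{im}(\pi_1 N \to G)$,'' and then only have to match types. But $\Gamma_1(G)$ is the regular neighbourhood of almost invariant sets over \emph{two-ended} subgroups, so components of $V(M)$ glued only along tori (Seifert pieces not meeting $\partial M$, torus neighbourhoods) do not give $V_0$-vertices of $\Gamma_1(G)$ at all, while isolated $V_0$-vertices correspond to canonical annuli rather than to the pieces you list; establishing which vertex groups of $\Gamma_1(G)$ are realised by which pieces of $V(M)$ is essentially the content of the proposition, so invoking such a dictionary is circular. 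The paper instead proves the correspondence directly: it never presupposes any vertex-by-vertex realisation, but uses Theorem \ref{SSJSJ_comm_char} on the algebraic side and builds the topological side by hand.

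The second gap is that the two genuinely delicate identities are asserted rather than proved. You claim that ``the Torus Theorem together with Johannson's uniqueness'' gives $\mathrm{Comm}_G(H)=\pi_1 N$ for $H$ a regular fibre, and in the converse direction that a group commensurizing a two-ended subgroup ``has a finite-index subgroup containing an infinite normal cyclic subgroup'' (false in general, e.g.\ for $BS(1,2)$ commensurizing $\langle a\rangle$). These are exactly the points where the paper does its work: Jaco's theorem on roots plus Kropholler's ascending chain condition on centralizers show $\mathrm{Comm}_G(H)=N_G(\langle h^m\rangle)$ for some $m$ and that this group is finitely generated; Scott's compact core theorem applied to the cover $M_{C'}$ with $\pi_1=\mathrm{Comm}_G(H)$, together with the Waldhausen--Tollefson criterion (normal infinite cyclic subgroup $\Rightarrow$ Seifert fibered), produces a Seifert fibered core; Jaco--Shalen/Johannson homotopes it into a Seifert component of $V(M)$; and a separate argument rules out proper containment $\pi_1 N \subsetneq \mathrm{Comm}_G(H)$ by showing it would force $\pi_1 N$ to be conjugate into an edge group $\pi_1(\Sigma_i)$ of the decomposition along canonical annuli and tori. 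Without these steps (or explicit substitutes for them), your proof of both $\mathrm{Comm}_G(H)=\pi_1 N$ and the surjectivity onto commensurizer vertex groups does not go through.
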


Before proving Proposition \ref{3man_prop}, we recall some basic definitions.
We say that a 3-manifold $M$ is {\em irreducible} if every 2-sphere in $M$ bounds a 3-ball.
We call a map of a surface $S$ into $M$ {\em proper} if the map takes $\partial S$ into $\partial M$.
A proper embedding of an orientable surface $S$ that is not the disk or 2-sphere into $M$ is said to be {\em incompressible} if it induces an injection on fundamental groups. 
An embedding of the 2-sphere into $M$ is {\em incompressible} if the image does not bound a 3-ball.
We say that $M$ has {\em incompressible boundary} if the inclusion of $\partial M$ into $M$ induces an injection on fundamental groups.
We say that $M$ is an {\em orientable Haken} 3-manifold if $M$ is compact, orientable, irreducible, and contains an incompressible surface.

A map of the torus into $M$ is said to be {\em essential} if it is incompressible and not homotopic into $\partial M$, and a proper map of the annulus into $M$ is said to be {\em essential} if it is incompressible and is not properly homotopic into $\partial M$.

Following \cite{SS}, we shall say that an embedded essential annulus or torus $S$ in $M$ is {\em canonical} if any essential map of the annulus or torus into $M$ can be properly homotoped until it is disjoint from $S$.
We shall say that a submanifold $N$ of $M$ is {\em simple} if any essential map of an annulus or torus into $M$ with image in $N$ can be properly homotoped into the frontier of $N$.

Jaco and Shalen \cite{JacoShalen} and Johannson \cite{Johannson} proved that there is a unique finite collection $\mathcal{T}$ of disjoint canonical annuli and tori in $M$ such that $\mathcal{T}$ contains one representative from each isotopy class of canonical annuli and tori in $M$.
These authors also showed that the pieces obtained by cutting $M$ along $\mathcal{T}$ are $I$-bundles over surfaces, Seifert fibered, or simple; we shall consider these pieces to be submanifolds of $M$.

We define the characteristic submanifold of $M$, $V(M)$, to be the collection of $I$-bundle and Seifert fibered submanifolds as above, except that if two such submanifolds meet one another at some surface $S \in \mathcal{T}$, then we shall remove a regular neighborhood of $S$ from $V(M)$.  
Also if two simple submanifolds meet at some $S \in \mathcal{T}$, then we shall add a regular neighborhood of $S$ to $V(M)$.

Thus $V(M)$ is a submanifold of $M$, each component of which is a regular neighborhood of an annulus or a torus, an $I$-bundle over a surface, or is Seifert fibered.
We shall say that a component of $V(M)$ is {\em exceptional} if it is a solid torus with frontier 3 annuli of degree 1, or 1 annulus of degree 2 or 3, or is a twisted I-bundle over the Klein bottle as described in \cite{SS}. 
A component of $V(M)$ is called {\em peripheral} if it meets $\partial M$.

We can now show the correspondence between the nonexceptional peripheral Seifert fiber components of $V(M)$ and the commensurizer vertex groups of $\Gamma_1(\pi_1(M))$.

\begin{proof}[Proof of Proposition \ref{3man_prop}]
Let $M$ be a connected orientable Haken 3-manifold with incompressible boundary and let $G=\pi_1(M)$.
For any subgroup $H$ of a group $G$, we shall denote by $N_G(H)$ the normalizer of $H$ in $G$.

The proposition is vacuously true if $G$ is finite, so it suffices to take $M$ such that $G$ is infinite, hence torsion-free.  
Thus the two-ended subgroups of $G$ are infinite cyclic.

If $N$ is a Seifert fiber space with infinite fundamental group, then we have the following short exact sequence:
$$1 \to \mathbb{Z} \to \pi_1(N) \to \pi_1(B) \to 1,$$
where the $\mathbb{Z}$ is generated by a regular fiber of $N$ and $B$ denotes the base 2-orbifold of $N$ (see, for example, \cite{Scott_geometries}).

In fact, \cite{Waldhausen_gruppen} and \cite{Tollefson_involutions_of_SFSs} imply that if $N$ is an orientable Haken 3-manifold with infinite fundamental group, then the converse holds:  if $\pi_1(N)$ has a normal infinite cyclic subgroup then $N$ is Seifert fibered. 
We will be interested in manifolds $N$ that are orientable, irreducible and with nonempty boundary.
Any such $N$ is Haken, so this result will apply.

Consider again $M$ and $G$, and suppose that $H = \langle h \rangle \subset G$ is infinite cyclic.
Recall that $N_G(H) \subset $ Comm$_G(H)$; it follows from work of Jaco and Kropholler that Comm$_G(H) = N_G(\langle h^m \rangle)$ for some $m \geq 1$.
To see this, we have that Jaco showed in \cite{Jaco_roots} that any $g \in $ Comm$_G(H)$ is contained in $N_G(\langle h^n \rangle)$ for some $n$, thus any finitely generated subgroup of Comm$_G(H)$ is contained in $N_G(\langle h^n \rangle)$, for some $n$ depending on the subgroup.
In \cite{Kropholler_note}, Kropholler showed that ascending chains of centralizers in $G$ must terminate, and thus ascending chains of normalizers of infinite cyclic subgroups must also terminate.
Since Comm$_G(H)$ can be exhausted by finitely generated subgroups, it follows that Comm$_G(H) = N_G(\langle h^m \rangle)$ for some $m\geq 1$, and that Comm$_G(H)$ is finitely generated.

We note that if $M$ is Seifert fibered and $H \cong \mathbb{Z}$ denotes the subgroup of $\pi_1(M)$ that is carried by a regular fiber of $M$, then $H$ has infinite index in $\pi_1(M)$, and has more than three coends in $G$ if and only if $M$ is a nonexceptional Seifert fiber space with nonempty boundary.
Thus the proposition follows if $M$ is Seifert fibered or $G$ is itself of commensurizer type.

We shall now prove the proposition, assuming neither of these are the case.
Suppose that $N$ is a Seifert fibered component of $V(M)$ and let $C$ denote the subgroup of $G$ that is carried by $N$.
Thus $N$ is orientable, Haken and with boundary, and $C$ must be finitely generated.
Let $H = \langle h \rangle$ denote the subgroup of $G$ generated by a regular fiber of $N$, so that $H$ is normal in $C$.
As we noted above, $H$ is of infinite index in $C$, and $N$ is peripheral and nonexceptional (hence a peripheral component of $V(M)$) exactly when $\tilde{e}(G,H)>3$.

In this case, $C$ is contained in a commensurizer vertex group $C' = $ Comm$_G(H)$ of $\Gamma_1(G)$, with $C' = N_G(\langle h^m \rangle)$ finitely generated and $m \geq 1$.
Suppose that this containment is proper.

Consider the cover of $M$ with fundamental group $C'$, which we shall call $M_{C'}$, and denote by $\rho$ the projection from $M_{C'}$ to $M$.  
By the theory of Scott \cite{Scott_compact_core}, $M_{C'}$ contains a compact submanifold, let's say $N'$, with fundamental group $C'$.  
Thus $N'$ is a Seifert fiber space, and it follows from \cite{JacoShalen} or \cite{Johannson} that $\rho$ can be homotoped so that $\rho(N')$ is contained in a Seifert fibered component of $V(M)$.

As we have assumed that $C$ is properly contained in $C'$, this component must be different from $N$.  Let us call it $S$, and note that $S$ is nonexceptional and peripheral, for $\tilde{e}(G, \langle h^m \rangle ) > 3$ and hence the subgroup carried by a regular fiber of $S$ will also have more than three coends in $G$.
It follows that there is a collection of canonical annuli and tori that separates $N$ from $S$ in $M$, which we shall call $\Sigma_1, \ldots , \Sigma_k$.  
Hence $G$ has a graph of groups decomposition over the surface groups $\pi_1(\Sigma_i)$, with $C$ and $C'$ contained in distinct vertex groups.  
But $C \subset C'$, so $C$ must be contained, up to conjugacy, in an edge group $\pi_1(\Sigma_i)$.
This is not possible, hence we must have that $C = C'$.

Now suppose that $C' = $ Comm$_G(H)$ is a commensurizer vertex group of $\Gamma_1(G)$, so $C' = N_G(\langle h^m \rangle)$ for some $m \geq 1$ and $C'$ is finitely generated.
Consider the cover of $M$ with fundamental group $C'$, which we shall call $M_{C'}$, and denote by $\rho$ the projection from $M_{C'}$ to $M$.  
As we saw above, $M_{C'}$ contains a Seifert fiber space $N'$, and $\rho$ can be homotoped so that $\rho(N')$ is contained in a Seifert fibered component $S$ of $V(M)$.
Let $D$ denote the inclusion of $\pi_1(S)$ into $G$, and we have that $C' \subset D$ and $D = N_G(H')$, where $H'$ is a finite index subgroup of $\langle h^m \rangle$.
We note that $\tilde{e}(G,H') > 3$, so $S$ is nonexceptional peripheral.
But $N_G(H') \subset $ Comm$_G(H') = $ Comm$_G(H)$, so $D=C'$, and the proposition follows.
\end{proof}


\section{Application to the groups $QI(G)$}\label{qi(g)_section}

Proposition \ref{fin_H_dist} gives us some insight into the structure of groups of quasi-isometries of one-ended, finitely presented groups.
In this section, we provide an analogue (Corollary \ref{qi1}) to a result of Souche and Wiest, who investigate $QI(T\times \mathbb{R}^n)$ for infinite trees $T$ in \cite{SoucheWiest}.
In addition, we note Corollary \ref{qi2}, which is a weaker, but far more general result.

We first introduce the notion of the group of quasi-isometries of a group.
Given metric spaces $X$ and $Y$, one may consider all quasi-isometries from $X$ to $Y$, modulo the relation that $f \sim f'$ when 
\begin{equation}\label{fin_sup_dist}
\sup_{x \in X}d(f(x), f'(x))<\infty.
\end{equation}
We shall denote this set by $QI(X,Y)$.  
It is standard to denote $QI(X , X)$ by $QI(X)$, and $QI(\mathscr{C}^1(G))$ by $QI(G)$, for any finitely generated group $G$.

These latter sets form groups, and are generally very complicated--for instance, Sankaran showed in \cite{Sankaran} that $QI(\mathbb{Z})$ contains Thompson's group F and the free group of continuous rank.

Our first corollary is about the quasi-isometries of commensurizer groups.
Suppose that $G$ is finitely presented, one-ended, and equal to the commensurizer of a two-ended subgroup $H$ such that $\tilde{e}(G,H) \geq 3$.
Then recall that we can think of $H$ as a subset of the vertex set of $\mathscr{C}^1(G)$.
The vertex set of $\mathscr{C}^1(G)$ is equal to the disjoint union of the translates (that is, cosets) of $H$, and any two translates $gH, g'H$ are of finite Hausdorff distance from one another (see Lemma \ref{commcharacterization}).

Thus we can define a metric on $G/H$ such that the distance between $gH$ and $g'H$ is equal to the Hausdorff distance between $gH$ and $g'H$ in $\mathscr{C}^1(G)$.
Note that if $H$ happens to be a normal subgroup of $G$, then this recovers the metric on the vertex set of the Cayley graph for $G/H$, with respect to the given generating set for $G$.

If $f$ is a quasi-isometry from $G$ to itself, then, by Theorem \ref{existence_qi_inv} and Proposition \ref{fin_H_dist}, there is some infinite cyclic $H' \subset G$ with $\tilde{e}(G,H')\geq 3$ and a constant $y$ such that, for each $g \in G$, $d_{Haus}(f(gH), g'H')<y$ for some $g' \in G$.
In addition, it follows from Theorem \ref{2.1} that $H$ and $H'$ are a finite Hausdorff distance from one another in $\mathscr{C}^1(G)$.  Hence there is some constant $z \geq y$ such that, for each $g \in G$, $d_{Haus}(f(gH), g'H)<z$ for some $g' \in G$.

Thus the quasi-isometry $f \co \mathscr{C}^1(G) \to \mathscr{C}^1(G)$ induces a map from $G/H$ to itself, that takes any $gH$ to some point $g'H$ such that $d_{Haus} (f(gH), g'H)$ $< z$.
As $f$ is a quasi-isometry, it follows that this new map is as well.

We note moreover that any quasi-isometry of $G/H$ to itself induces a natural quasi-isometry of $G$ to itself.  
Hence we have the following.

\begin{cor}\label{qi1}
Suppose that $G$ is a one-ended, finitely presented group such that $G = $ Comm$_G(H)$ for a two-ended subgroup $H$ of $G$ that has at least three coends in $G$.
Consider $G/H$, with a metric defined by setting the distance between $gH$ and $g'H$ to equal the Hausdorff distance between the vertex sets $gH$ and $g'H$ in $\mathscr{C}^1(G)$.

Then there is a canonical map $QI(G) \to QI(G/H)$ that is surjective.
\end{cor}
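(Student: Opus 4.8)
The construction of the canonical map is essentially the one sketched in the paragraphs preceding the statement, so the plan has two parts: making that construction precise, and then producing a lift to prove surjectivity.

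\medskip

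\noindent\emph{The map $QI(G)\to QI(G/H)$.} Given $f\in QI(G)$, Theorem~\ref{existence_qi_inv} together with Proposition~\ref{fin_H_dist} supplies, for each $g\in G$, a coset $g'H$ with $d_{Haus}(f(gH),g'H)$ bounded by a constant $z$ depending only on $f$ (and $G,H$); set $\bar f(gH)=g'H$. I would then check three things. First, the choice of $g'H$ is unambiguous up to bounded error (any two admissible cosets lie within $2z$ of one another in $(G/H,d_{Haus})$), and replacing $f$ by a representative at finite sup distance replaces each $f(gH)$ by a set at uniformly bounded Hausdorff distance; hence $\bar f$ is a well-defined point of $QI(G/H)$ depending only on the class of $f$. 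Second, $\bar f$ is a genuine quasi-isometry of $(G/H,d_{Haus})$: this is immediate from the elementary fact that a $(\Lambda,K)$-quasi-isometry $f$ of $\mathscr{C}^1(G)$ satisfies
$$\frac{1}{\Lambda}\,d_{Haus}(A,B)-K\ \le\ d_{Haus}(f(A),f(B))\ \le\ \Lambda\,d_{Haus}(A,B)+K$$
for all nonempty $A,B\subset\mathscr{C}^1(G)$, so $\bar f$ inherits the same linear distortion. Third, $f\mapsto\bar f$ respects composition and sends the identity to the identity up to bounded error, hence is a group homomorphism; applying this to a quasi-inverse of $f$ shows $\bar f$ has a coarse inverse, so it is also coarsely onto. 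This yields the canonical (homomorphic) map $QI(G)\to QI(G/H)$.

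\medskip

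\noindent\emph{Surjectivity via a lift.} I would lift an arbitrary $\psi\in QI(G/H)$ to an element of $QI(G)$ as follows. Fix once and for all an infinite-order element $h\in H$; right multiplication by $h$ preserves each coset $c=gH$ setwise and translates it along its length. Since every $N_R(c)$ is a quasi-line with the same parameters as $N_R(H)$ (Remark~\ref{qlines_have_same_consts}), there is a uniform $d_0$ so that, fixing for each coset $c$ a coarse basepoint $s(c)\in c$ (say a point of $c$ nearest the identity), the points $\{s(c)h^n\}_{n\in\mathbb{Z}}$ are $d_0$-dense in $c$ and coarsely monotone along the line associated to $N_R(c)$. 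Define $\Psi\colon\mathscr{C}^1(G)\to\mathscr{C}^1(G)$ by sending a point $x$ lying in the coset $c$ to $s(\psi(c))\,h^{n(x)}$, where $n(x)$ is an integer for which $s(c)h^{n(x)}$ is nearest to $x$. By construction $\Psi(c)$ lies in a uniformly bounded neighborhood of $\psi(c)$, so $\Psi$ induces $\psi$ on $G/H$, and $\Psi$ is coarsely onto because $\psi$ is coarsely onto and the fiber coordinate $n$ ranges over all of $\mathbb{Z}$. Thus it remains only to verify that $\Psi$ is a quasi-isometry, and then composing with the previous part gives that the canonical map is surjective, as asserted.

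\medskip

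\noindent\emph{The main obstacle.} The point where I expect the real work to lie is precisely the verification that $\Psi$ is a quasi-isometry, which reduces to one estimate, to be proved with constants \emph{uniform over all cosets $c_1,c_2$ and all $n_1,n_2\in\mathbb{Z}$}:
$$d_{\mathscr{C}^1(G)}\bigl(s(c_1)h^{n_1},\,s(c_2)h^{n_2}\bigr)\ \asymp\ d_{Haus}(c_1,c_2)\ +\ |n_1-n_2|,$$
i.e.\ that $(c,n)\mapsto s(c)h^{n}$ is a quasi-isometry from $(G/H,d_{Haus})\times\mathbb{Z}$ onto $\mathscr{C}^1(G)$. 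Granting this, $\Psi$ agrees up to bounded error with the composite $\mathscr{C}^1(G)\to(G/H)\times\mathbb{Z}\to(G/H)\times\mathbb{Z}\to\mathscr{C}^1(G)$ whose middle map is $\psi\times\mathrm{id}$, which is a quasi-isometry since $\psi$ is. The ``$\lesssim$'' half of the estimate is routine bookkeeping with the triangle inequality, the bound $|h^{n}|\le|h|\cdot|n|$, and coarse Lipschitzness of $c\mapsto s(c)$ (itself a consequence of the uniform distortion function of the $N_R(c)$). The ``$\gtrsim$'' half splits in two: moving within a single coset costs at least a constant multiple of $|n_1-n_2|$, because $s(c)h^{n_1}$ and $s(c)h^{n_2}$ lie in order on the associated line of the quasi-line $N_R(c)$, whose uniform distortion function then forces linear growth; and moving between two distinct cosets costs at least a constant multiple of $d_{Haus}(c_1,c_2)$, which is where the non-crossing results enter — since $N_R(c_1)$ and $N_R(c_2)$ are $3$-parting quasi-lines satisfying $iness(m_1)$ with the same parameters, Theorem~\ref{2.1} and Corollary~\ref{q_lines_are_K_disjoint} show each lies in a uniformly bounded neighborhood of a single essential complementary component of the other, so they do not link and any path between them must shadow one of them for a length comparable to their Hausdorff distance. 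Making all of this uniform across the (uncountable) family of all cosets — in other words, identifying $\mathscr{C}^1(G)$, up to quasi-isometry, with the product $(G/H,d_{Haus})\times\mathbb{Z}$ — is the substantive step; everything else above is formal, and if the naive coarse product fails one would have to work with a more delicate section or trivialization of the coset map $G\to G/H$.
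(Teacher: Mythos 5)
Your construction of the map $QI(G)\to QI(G/H)$ is essentially the paper's: the paper also obtains, for each coset $gH$, a coset $g'H'$ with $d_{Haus}(f(gH),g'H')$ uniformly bounded via Theorem \ref{existence_qi_inv} and Proposition \ref{fin_H_dist}, and then uses Theorem \ref{2.1} to replace $H'$ by $H$ (a small step you elide, but harmless since all cosets of $H$ are mutually Hausdorff-finite by Lemma \ref{commcharacterization}). The well-definedness and quasi-isometry estimates you list for the induced map are fine.

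The genuine gap is in the surjectivity half: you do not prove it, you reduce it to the claim that $(c,n)\mapsto s(c)h^{n}$ is a quasi-isometry from $(G/H,d_{Haus})\times\mathbb{Z}$ onto $\mathscr{C}^1(G)$, i.e.\ that the coset fibration $G\to G/H$ is coarsely trivial with uniform constants, and you explicitly flag this as unestablished ("if the naive coarse product fails one would have to work with a more delicate section"). But that claim \emph{is} the content of the assertion that an arbitrary $\psi\in QI(G/H)$ lifts, so deferring it leaves the corollary unproved. Two concrete problems with the reduction as stated: first, the asserted lower bound with $|n_1-n_2|$ is wrong whenever $H$ is distorted in $G$ (nothing in the hypotheses prevents this; within a single coset one has exactly $d(s(c)h^{n_1},s(c)h^{n_2})=|h^{n_1-n_2}|_G$, which need not be comparable to $|n_1-n_2|$), so at best one must use the metric induced on $H$ rather than $\mathbb{Z}$. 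Second, and more seriously, Theorem \ref{2.1} and Corollary \ref{q_lines_are_K_disjoint} control separation and crossing of parallel quasi-lines; they do not rule out a "shear," in which two Hausdorff-close cosets are matched by your chosen basepoints $s(c_1),s(c_2)$ with a large offset along their lengths, and that shear is exactly what breaks both the additive lower bound $d\gtrsim d_{Haus}(c_1,c_2)+(\text{fiber distance})$ and the coarse Lipschitz property of the lift $\Psi$ (the same issue defeats the simpler piecewise-left-translation or nearest-point lifts). For the record, the paper itself disposes of surjectivity with the one-line remark that "any quasi-isometry of $G/H$ to itself induces a natural quasi-isometry of $G$ to itself," so you are not missing a detailed published argument; but as submitted, your proposal establishes only the existence of the canonical map, not its surjectivity, because the uniform trivialization (or some other lifting mechanism with uniform constants over all cosets) is exactly the step that still needs a proof.
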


\begin{rmk}
We note that the kernel of this map $QI(G) \to QI(G/H)$ is exactly the set of equivalence classes of quasi-isometries $f \co \mathscr{C}^1(G) \to \mathscr{C}^1(G)$ for which the distances $d_{Haus}(f(gH), gH)$ are uniformly bounded.  
\end{rmk}

We can generalize this idea to get a weaker result for general one-ended finitely presented groups $G$.
Suppose that $G$ contains at least one two-ended subgroup $H$ with $\tilde{e}(G,H) \geq 3$, and for any such $H$, let $M_H$ denote the metric space with underlying set equal to Comm$_G(H)/H$ and the distance between any two points $gH$ and $g'H$ defined to equal the Hausdorff distance between those sets in $\mathscr{C}^1(G)$.

Fix any such $H$, and any quasi-isometry $f \co \mathscr{C}^1(G) \to \mathscr{C}^1(G)$.
Then Theorem \ref{existence_qi_inv} shows that there is some two-ended subgroup of $G$, $H'$, (possibly equal to $H$) such that $d_{Haus}(f(H), H') < \infty$ and $\tilde{e}(G,H) = \tilde{e}(G,H')$.

Proposition \ref{fin_H_dist} implies that $f$ induces a quasi-isometry from $M_H$ to $M_{H'}$ as in the above argument.
Thus, not only do we get a natural map taking $f$ into $QI(M_H, M_{H'})$, but we also get a natural map of $f$ into the symmetric group of a suitable collection of two-ended subgroups of $G$ with a fixed number of coends.

We shall fix some more notation so that we can say this more carefully.
For each $n \in \{ 3,4,5,\ldots \} \cup \{\infty\}$, let $K_n$ denote a maximal collection of two-ended subgroups of $G$ that have $n$ coends in $G$ and are of pairwise infinite Hausdorff distance in $\mathscr{C}^1(G)$.
Thus, for any $n$, $f$ induces an element of $\prod_{H \in K_n} QI(M_H, M_{\sigma(H)})$, for some $\sigma$ in the symmetric group on $K_n$.

In fact, we can say a bit more.  
Note that by Theorem \ref{Cqiinv}, any groups $H$ and $\sigma(H)$ as above must have quasi-isometric commensurizers.
Thus if we let $\{K_n^j\}_{j \in J_n}$ denote the partition of $K_n$ into collections of subgroups with quasi-isometric commensurizers, i.e. $H, H' \in K_n^j$ for some $j \in J_n$ if and only if Comm$_G(H)$ and Comm$_G(H')$ are quasi-isometric, then the permutation $\sigma$ above must be contained in $\prod_{j \in J_n}Sym(K_n^j)$, where $Sym (K_n^j)$ denotes the symmetric group on $K_n^j$.
Moreover, $f$ induces such a map for all $n$.

Thus we have the following corollary:

\begin{cor}\label{qi2}
Let $G$ be a one-ended finitely presented group and let $N= \{ 3,4,5,\ldots\} \cup \{ \infty\}$.  
For any $n \in N$, let $K_n$ be a maximal collection of two-ended subgroups of $G$ with $n$ coends that have mutually infinite Hausdorff distance, and let $\{ K_n^j\}_{j \in J_n}$ be the partition of $K_n$ into sets of subgroups of quasi-isometric commensurizers.

Then there is a canonical map 
$$QI(G) \to \left\{ \prod_{n \in N} \prod_{H \in K_n} QI(M_H, M_{\sigma(H)}) \ : \ \sigma \in \prod_{n \in N} \prod_{j \in J_n} Sym(K_n^j)\right\} .$$
\end{cor}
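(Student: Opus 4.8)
The plan is to read the map off directly from the discussion preceding the statement, and then check that every piece of data it records is well defined and invariant under finite sup distance. First I would fix a quasi-isometry $f\co \mathscr{C}^1(G)\to\mathscr{C}^1(G)$ and, for each $n\in N$ and $H\in K_n$, invoke Theorem \ref{existence_qi_inv} to get a two-ended subgroup $H'$ of $G$ with $\tilde{e}(G,H')=n$ and $d_{Haus}(f(H),H')<\infty$. By maximality of $K_n$ some member of $K_n$ is at finite Hausdorff distance from $H'$, and it is the only one, since distinct members of $K_n$ are at infinite Hausdorff distance; call it $\sigma_n(H)$. Any other subgroup that Theorem \ref{existence_qi_inv} could produce for this $f$ and $H$ is at finite Hausdorff distance from $H'$, hence yields the same $\sigma_n(H)$, and replacing $f$ by a quasi-isometry at finite sup distance from it changes none of these distances; so $\sigma_n\co K_n\to K_n$ depends only on the class of $f$ in $QI(G)$.

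Next I would show that each $\sigma_n$ is a permutation of $K_n$ preserving the partition $\{K_n^j\}_{j\in J_n}$. For bijectivity, let $\bar f$ be a quasi-inverse to $f$ and let $\bar\sigma_n$ be the map it induces; since $\bar f\circ f$ is at finite sup distance from the identity we get $d_{Haus}(\bar f(f(H)),H)<\infty$, which forces $\bar\sigma_n\circ\sigma_n=\mathrm{id}$, and symmetrically $\sigma_n\circ\bar\sigma_n=\mathrm{id}$. For the partition, note that $d_{Haus}(H',\sigma_n(H))<\infty$ with both subgroups two-ended, and since $G$ acts on its Cayley graph by isometries on the left, Lemma \ref{commcharacterization} gives ${\rm Comm}_G(\sigma_n(H))={\rm Comm}_G(H')$; meanwhile Theorem \ref{Cqiinv}, applied with $G'=G$, shows ${\rm Comm}_G(H)$ and ${\rm Comm}_G(H')$ are quasi-isometric. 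Hence $H$ and $\sigma_n(H)$ have quasi-isometric commensurizers, i.e. lie in the same block $K_n^j$. A bijection of $K_n$ carrying each block into itself, whose inverse also carries each block into itself, restricts to a permutation of each block, so $\sigma:=(\sigma_n)_{n\in N}$ lies in $\prod_{n\in N}\prod_{j\in J_n}Sym(K_n^j)$.

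Finally I would build, for each $n$ and each $H\in K_n$, the associated element of $QI(M_H,M_{\sigma_n(H)})$. Using Remark \ref{char_comm}, choose $R>0$ so that $N_R({\rm Comm}_G(H))=\coprod_i g_iN_R(H)$ is a union of pairwise finite Hausdorff distance, $3$-parting $(\phi,N_0)$ quasi-lines satisfying $iness(m_1)$ (writing $N_0$ for the quasi-line parameter, to avoid the clash with the index set $N$). By Lemmas \ref{imgs_of_qlines_are_qlines}, \ref{ess_comps_go_to_ess_comps} and \ref{iness_preserved_under_qi}, together with Remarks \ref{qlines_have_same_consts} and \ref{iness_preserved}, there is a single $R'$ such that each $N_{R'}(f(g_iN_R(H)))$ is a $3$-parting quasi-line with one fixed pair of parameters satisfying $iness(m_1')$ for one fixed $m_1'$; and since $g_i\in{\rm Comm}_G(H)$ and $f$ is a quasi-isometry, each such quasi-line is at finite Hausdorff distance from $f(H)$, hence from $\sigma_n(H)$. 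Proposition \ref{fin_H_dist}, applied inside $G$ to the subgroup $\sigma_n(H)$ with these parameters and $m_1'$, then supplies a constant $x$ for which each $N_{R'}(f(g_iN_R(H)))$, and so $f(g_iH)$ up to a uniformly bounded error, lies within $x$ of some coset of $\sigma_n(H)$ in ${\rm Comm}_G(\sigma_n(H))$. Sending $g_iH$ to that coset defines a map $M_H\to M_{\sigma_n(H)}$, well defined up to finite sup distance; a routine estimate (transporting inclusions through the quasi-isometry inequalities) shows a $(\Lambda,K)$-quasi-isometry distorts Hausdorff distance between cosets only quasi-isometrically, so this map is a quasi-isometric embedding, and the same construction for $\bar f$ supplies an inverse up to finite sup distance, making it a quasi-isometry. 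Recording $\sigma$ together with this family over all $H\in K_n$ and all $n\in N$ gives the asserted canonical map.

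I expect the crux to be the middle step, that $\sigma_n$ is a partition-preserving permutation, since this is exactly where Theorem \ref{Cqiinv} and Lemma \ref{commcharacterization} enter; the rest is bookkeeping resting on Theorem \ref{existence_qi_inv}, Proposition \ref{fin_H_dist}, and the uniformity statements assembled in Section 2.
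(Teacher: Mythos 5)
Your proposal is correct and follows essentially the same route as the paper: Theorem \ref{existence_qi_inv} (with maximality of $K_n$) to define the permutation $\sigma$, a quasi-inverse to see it is a bijection, Theorem \ref{Cqiinv} together with Lemma \ref{commcharacterization} to see it preserves the blocks $K_n^j$, and Proposition \ref{fin_H_dist} with the uniformity statements of Section 2 (as in the argument for Corollary \ref{qi1}) to induce the quasi-isometries $M_H \to M_{\sigma(H)}$. You simply spell out details that the paper leaves implicit.
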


\begin{rmk}
In contrast to Corollary \ref{qi1}, we expect that the map given in Corollary \ref{qi2} will typically not be surjective.  
\end{rmk}

\begin{rmk}
As was the case in Corollary \ref{qi1}, the kernel of the map given in Corollary \ref{qi2} is not hard to describe at the following level:  a quasi-isometry $f \co \mathscr{C}^1(G) \to \mathscr{C}^1(G)$ is in this kernel if and only if, for each two-ended subgroup $H$ with at least three coends in $G$, there is a constant $y \geq 0$ such that, for all $g \in$ Comm$_G(H)$, $d_{Haus}(f(gH), gH)<y$.
\end{rmk}


\section{When $F_n \rtimes \mathbb{Z}$ is quasi-isometric to $F_n \times \mathbb{Z}$} \label{semi_section}

Our final application of Theorem \ref{Cqiinv} characterizes when a semi-direct product of a free group with $\mathbb{Z}$ is quasi-isometric to the direct product of the two groups.  
Specifically, we shall prove the following.

\begin{cor}\label{semi}
Let $F_n$ be the free group on $n$ generators for any $n \geq 1$.
Then $F_n \rtimes \mathbb{Z}$ is quasi-isometric to $F_n \times \mathbb{Z}$ if and only if it is virtually $F_n \times \mathbb{Z}$.
\end{cor}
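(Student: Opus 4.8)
The plan is as follows. The ``if'' direction is immediate: if $F_n \rtimes \mathbb{Z}$ has a finite-index subgroup isomorphic to $F_n \times \mathbb{Z}$, then it is quasi-isometric to that subgroup, hence to $F_n \times \mathbb{Z}$. For the converse, the case $n=1$ is vacuous, since ${\rm Aut}(\mathbb{Z}) = \{\pm 1\}$ makes $F_1 \rtimes \mathbb{Z}$ either $\mathbb{Z}^2$ or the Klein bottle group, each of which is virtually $\mathbb{Z}^2 = F_1 \times \mathbb{Z}$ (and each is quasi-isometric to $\mathbb{Z}^2$). So I assume $n \geq 2$ and write $G = F_n \rtimes_\phi \mathbb{Z}$, with $F_n$ together with a fixed element $t$ generating $G$ and $t x t^{-1} = \phi(x)$. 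I would use the elementary fact that if $\phi$ has finite order in ${\rm Out}(F_n)$ then $G$ is virtually $F_n \times \mathbb{Z}$: indeed, if $\phi^k = {\rm conj}_w$ with $w \in F_n$, then inside the index-$k$ subgroup $\langle F_n, t^k\rangle$ the element $w^{-1}t^k$ centralizes $F_n$ and meets $F_n$ trivially, so $\langle F_n, t^k\rangle = F_n \times \langle w^{-1}t^k\rangle \cong F_n \times \mathbb{Z}$. Hence the corollary reduces to showing that if $G$ is quasi-isometric to $F_n \times \mathbb{Z}$ then $\phi$ has finite order in ${\rm Out}(F_n)$.

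To bring Theorem \ref{Cqiinv} to bear, observe that $F_n \times \mathbb{Z}$ (for $n \geq 2$) is one-ended and finitely presented, that its centre $H' := 1 \times \mathbb{Z}$ is a two-ended subgroup with ${\rm Comm}_{F_n \times \mathbb{Z}}(H') = F_n \times \mathbb{Z}$, and that $\tilde{e}(F_n \times \mathbb{Z}, H') \geq 3$ (indeed $= \infty$): I would verify the last point from Lemma \ref{coends_and_ess_comps}, using that a neighbourhood of $H'$ in the Cayley graph is coarsely $\{v\} \times \mathbb{R}$ inside $T \times \mathbb{R}$, where $T$ is the $2n$-regular tree, and that removing a ball about $v$ from $T$ leaves at least $2n \geq 4$ branches, each of which crossed with $\mathbb{R}$ is an essential complementary component. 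Now let $\Phi \co G \to F_n \times \mathbb{Z}$ be a quasi-isometry and apply Theorem \ref{Cqiinv} to a quasi-inverse $F_n \times \mathbb{Z} \to G$ together with the subgroup $H'$: this produces a two-ended subgroup $K$ of $G$ with $d_{Haus}\bigl(\Phi^{-1}({\rm Comm}_{F_n \times \mathbb{Z}}(H')),\, {\rm Comm}_G(K)\bigr) < \infty$. Since ${\rm Comm}_{F_n \times \mathbb{Z}}(H')$ is all of $F_n \times \mathbb{Z}$ and $\Phi^{-1}$ is coarsely surjective, ${\rm Comm}_G(K)$ is coarsely dense in $G$, hence of finite index in the finitely generated group $G$.

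It then remains to prove the algebraic statement: if $G = F_n \rtimes_\phi \mathbb{Z}$ with $n \geq 2$ contains a two-ended subgroup $K$ whose commensurizer has finite index, then $\phi$ has finite order in ${\rm Out}(F_n)$. Here $K \cong \mathbb{Z}$, and its image under $G \to \mathbb{Z}$ is either trivial or some $k\mathbb{Z}$ with $k \neq 0$ (if $K \cap F_n$ were infinite, $K$ would be polycyclic of Hirsch length $2$, not two-ended). In the first case $K = \langle g\rangle \subseteq F_n$, where we may take $g$ not to be a proper power (this does not change ${\rm Comm}_G(K)$); then ${\rm Comm}_G(K) \cap F_n = {\rm Comm}_{F_n}(\langle g\rangle) = \langle g\rangle$, of infinite index in $F_n$ for $n \geq 2$, contradicting that ${\rm Comm}_G(K)$, hence its intersection with $F_n$, has finite index. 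So $K = \langle s\rangle$ with $s = t^k w$, $k \neq 0$. For any $c \in {\rm Comm}_G(K)$, a short computation with the homomorphism $G \to \mathbb{Z}$ (which is injective on $\langle s \rangle$) forces $c s^a c^{-1} = s^a$ for some $a \geq 1$; thus ${\rm Comm}_G(K) = \bigcup_{a \geq 1} C_G(s^a)$, a directed union, and since ${\rm Comm}_G(K)$ is finitely generated it equals $C_G(s^{a_0})$ for a single $a_0 \geq 1$. Therefore $C_{F_n}(s^{a_0}) = C_G(s^{a_0}) \cap F_n$ has finite index in $F_n$. Writing $s^{a_0} = t^{k a_0} u$ with $u \in F_n$, a direct computation identifies $C_{F_n}(s^{a_0})$ with the fixed subgroup of $\theta := {\rm conj}_{u^{-1}} \circ \phi^{-k a_0} \in {\rm Aut}(F_n)$, which represents $\phi^{-k a_0}$ in ${\rm Out}(F_n)$. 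Since an automorphism of a free group with a finite-index fixed subgroup is the identity (pass to the normal core of the fixed subgroup and use uniqueness of roots in free groups), $\theta = {\rm id}$, so $\phi^{-k a_0}$ is inner; as $k a_0 \neq 0$, $\phi$ has finite order in ${\rm Out}(F_n)$, completing the argument.

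I expect the last paragraph to be the main obstacle: classifying the two-ended subgroups of $F_n \rtimes_\phi \mathbb{Z}$, showing that a finite-index commensurizer collapses onto a single centralizer $C_G(s^{a_0})$, and then extracting from this an automorphism with finite-index fixed subgroup. By contrast the Klein-bottle case, the explicit virtual direct-product splitting when $\phi$ has finite order, and the coend count for $F_n \times \mathbb{Z}$ are all routine, and the passage from ``${\rm Comm}_G(K)$ has finite index'' back to the quasi-isometry hypothesis is precisely what Theorem \ref{Cqiinv} provides.
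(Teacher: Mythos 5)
Your proposal is correct, and its outer skeleton coincides with the paper's: both directions reduce to showing that finite order of the monodromy in ${\rm Out}(F_n)$ is equivalent to being virtually $F_n\times\mathbb{Z}$ (via the same explicit index-$k$ subgroup $\langle F_n, t^k\rangle$), and the quasi-isometry hypothesis is exploited exactly as in the paper by feeding the subgroup $H'=1\times\mathbb{Z}$ of $F_n\times\mathbb{Z}$, with ${\rm Comm}(H')$ the whole group and $\tilde e(F_n\times\mathbb{Z},H')=\infty$, into Theorem \ref{Cqiinv} to produce a two-ended subgroup $K$ of $G=F_n\rtimes_\phi\mathbb{Z}$ whose commensurizer is coarsely dense, hence of finite index. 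Where you diverge is in the algebraic heart. The paper proves the contrapositive statement as Proposition \ref{infinite_prop}: assuming $\pi\alpha$ has infinite image, it exhibits, in each case for $h$ (in $F_n$, in the $\mathbb{Z}$ factor, or mixed), an element $g_0$ none of whose powers lies in the commensurizer, using unique roots (Lemma \ref{roots_in_fn}) to promote ``$g_0\notin{\rm Comm}$'' to ``$g_0^r\notin{\rm Comm}$ for all $r$'' and thereby produce infinitely many cosets; the inner-automorphism conclusion appears only in the residual case $F_n\subset{\rm Comm}_G(H)$. You instead argue directly from the finite-index hypothesis: after disposing of $K\leq F_n$ via Lemma \ref{free_gp_lemma} exactly as the paper does, you write ${\rm Comm}_G(\langle s\rangle)=\bigcup_{a\geq 1}C_G(s^a)$ using the projection to $\mathbb{Z}$, collapse this directed union to a single centralizer $C_G(s^{a_0})$ by finite generation, identify $C_{F_n}(s^{a_0})$ with the fixed subgroup of an automorphism representing $\phi^{-ka_0}$, and invoke the fact (again via unique roots) that a free-group automorphism with finite-index fixed subgroup is the identity, so $\phi^{ka_0}$ is inner. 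This is a genuinely different, and arguably cleaner, organization of the same ingredients: it replaces the paper's explicit coset-counting with a structural description of the commensurizer as a centralizer, at the small cost of the extra (easy) lemma on fixed subgroups and the observation that the finite-index hypothesis makes the commensurizer finitely generated; the paper's version yields the slightly stronger quantitative statement that ${\rm Comm}_G(H)$ has infinite index for \emph{every} infinite cyclic $H$ whenever $\pi\alpha$ has infinite image. All the steps you flag check out: the reduction $csc^{-1}$-computation via the homomorphism to $\mathbb{Z}$, the directedness of the union of centralizers, and the fixed-subgroup lemma are each sound, so I see no gap.
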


Related results have been proven by Bridson and his coauthors.
In Proposition 3.7 of \cite{AlonsoBridson}, Alonso and Bridson show that a group $G$, that is an extension of the form $1 \to H \to G \to Q \to 1$ defined by a function $\varphi \co Q \to Aut(H)$ and a cocycle $f \co Q \times Q \to H$, is quasi-isometric to $H \times Q$ if $\varphi$ and $f$ have finite images.

In \cite{Bridson_optimal_isoper_ab_by_free}, Bridson considers groups of the form $A \rtimes F$ for $A$ finitely generated and abelian, and $F$ finitely generated and free.  
By characterizing when the Dehn functions of these groups are polynomial and determining their degrees in those cases, Bridson is able to give necessary conditions for these groups to be quasi-isometric to one another.
In \cite{BridsonGersten_optimal_isoper_torus_bundles}, Bridson and Gersten further analyze the groups of the form $\mathbb{Z}^n \rtimes \mathbb{Z}$, and find further necessary (and in special cases, sufficient) conditions for two such groups to be quasi-isometric (see section 5 of \cite{BridsonGersten_optimal_isoper_torus_bundles}).

We shall begin the proof of Corollary \ref{semi} with a few standard lemmas.

\begin{lem}\label{free_gp_lemma}
Let $F_n$ be the free group on $n$ generators, $n \geq 1$, and let $H$ be an infinite cyclic subgroup of $F_n$.  Then Comm$_{F_n}(H)$ is the unique maximal infinite cyclic subgroup of $F_n$ that contains $H$.
\end{lem}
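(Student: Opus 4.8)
The plan is to identify ${\rm Comm}_{F_n}(H)$ explicitly. Write $H = \langle h \rangle$ with $h \neq 1$ and let $w \in F_n$ be the \emph{root} of $h$: the unique element that is not a proper power with $h = w^k$ for some $k \geq 1$. I will use the classical fact that in a free group the centralizer of a nontrivial element $g$ is infinite cyclic, generated by the root of $g$ (equivalently, two nontrivial elements commute if and only if they lie in a common cyclic subgroup). In particular $\langle w \rangle = C_{F_n}(h)$ is the unique maximal infinite cyclic subgroup of $F_n$ containing $H$, so it suffices to prove the equality ${\rm Comm}_{F_n}(H) = \langle w \rangle$.

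The inclusion $\langle w \rangle \subseteq {\rm Comm}_{F_n}(H)$ is immediate: for $g \in \langle w \rangle$ we have $gHg^{-1} = g\langle w^k\rangle g^{-1} = \langle w^k\rangle = H$ since $\langle w \rangle$ is abelian, so $H \cap gHg^{-1} = H$ is of finite index in both $H$ and $gHg^{-1}$. For the reverse inclusion, take $g \in {\rm Comm}_{F_n}(H)$. Since $H$ is infinite cyclic, so are $gHg^{-1}$ and $H \cap gHg^{-1}$, with the latter of finite index in each; hence there are nonzero integers $r,s$ with $h^r = (ghg^{-1})^s = g h^s g^{-1}$. Rewriting via $h = w^k$ gives $w^{rk} = (gwg^{-1})^{sk}$ with $rk, sk \neq 0$. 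Then $gwg^{-1}$ commutes with $(gwg^{-1})^{sk} = w^{rk}$, and $C_{F_n}(w^{rk}) = \langle w \rangle$ because $w$ is root-free, so $gwg^{-1} = w^t$ for some integer $t$. A standard fact about free groups (e.g. conjugate cyclically reduced elements have equal length, applied after conjugating $H$ so that $h$, hence $w$, is cyclically reduced — note the whole statement is conjugation-equivariant) forces $|t| = 1$. If $t = 1$ then $g$ centralizes $w$, so $g \in \langle w \rangle$. If $t = -1$ then $g^2 w g^{-2} = w$, so $g^2 \in C_{F_n}(w) = \langle w \rangle$; as $F_n$ is torsion-free this gives $g \in \langle w \rangle$ (if $g^2 = w^l$ with $l \neq 0$ then $g \in C_{F_n}(w^l) = \langle w\rangle$, and if $l = 0$ then $g = 1$), but then $gwg^{-1} = w \neq w^{-1}$, a contradiction. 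Hence $g \in \langle w \rangle$, completing the proof.

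Alternatively, one could argue geometrically using the free action of $F_n$ on its Cayley tree with respect to a free generating set: each nontrivial element is hyperbolic with a well-defined axis, membership in ${\rm Comm}_{F_n}(H)$ forces an element to carry a power of $h$ to a power of $h$ and hence to stabilize the axis of $h$ setwise, a free action forbids orientation-reversing line-stabilizers, so such an element translates along the axis and therefore lies in the cyclic group generated by the primitive translation along it, which is $\langle w \rangle$. In either approach the substantive step is the reverse inclusion, and the one genuine obstacle there is controlling the exponent $t$ — showing conjugation by $g$ can only send $w$ to $w^{\pm 1}$ and then ruling out $w^{-1}$ via torsion-freeness; everything else is bookkeeping with the standard structure of centralizers in free groups.
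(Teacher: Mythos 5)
Your proposal is correct, and your primary argument is genuinely different from the paper's. The paper works geometrically in the Cayley tree of $F_n$: the generator $h$ of $H$ acts as a hyperbolic isometry with an axis $\gamma$, the setwise stabilizer $H'$ of $\gamma$ is infinite cyclic (freeness of the action rules out inversions), any infinite cyclic subgroup containing $H$ preserves $\gamma$, and an element $g$ with $h^n = gh^mg^{-1}$ must satisfy $g\gamma = \gamma$ because the axis of $gh^mg^{-1}$ is $g\gamma$; this is exactly the route you sketch in your "alternative" closing paragraph. Your main argument instead runs through the combinatorial structure of free groups: centralizers are cyclic and generated by the root, commensuration gives $w^{rk} = (gwg^{-1})^{sk}$, hence $gwg^{-1} = w^t$, and the cyclically-reduced-length argument pins down $|t|=1$, with torsion-freeness excluding $t=-1$ (and, as you note, even in that case $g \in \langle w \rangle$ would follow, so the case analysis is airtight). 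What each buys: the paper's tree argument outsources the delicate step — that commensurating $H$ forces preservation of a single line, with no orientation-reversal possible — to the standard facts about axes of hyperbolic isometries (it cites Bridson--Haefliger II.6.2), so it is shorter and avoids word combinatorics; your algebraic version is self-contained modulo classical facts about roots and centralizers in free groups, at the cost of the extra bookkeeping needed to control the exponent $t$, including the conjugation-equivariance remark that lets you assume $h$ (hence $w$) cyclically reduced. Both establish the same explicit identification ${\rm Comm}_{F_n}(H) = \langle w \rangle = C_{F_n}(h)$, which is what the lemma asserts.
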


\begin{proof}
Let $\mathscr{C}^1(F_n)$ be the Cayley graph of $F_n$ with respect to the standard presentation, thus $\mathscr{C}^1(F_n)$ is a regular $2n$-valent tree on which $F_n$ acts simplicially, freely and with quotient a bouquet of $n$ copies of $S^1$.

Let $h$ denote a generator of $H$.  As $h$ acts freely on $\mathscr{C}^1(F_n)$, $h$ has an axis, which we shall denote by $\gamma$.  Let $H'$ denote the subgroup of $F_n$ that preserves $\gamma$, so $H \subseteq H'$.  As $F_n$ is torsion free, it follows that $H'$ is infinite cyclic.  
Note that any infinite cyclic subgroup of $G$ containing $H$ must preserve $\gamma$ (see Proposition II.6.2(2) of \cite{BridsonHaefliger}), hence $H'$ must be the unique maximal infinite cyclic subgroup of $F_n$ that contains $H$.

Let $h'$ denote a generator of $H'$, and fix $k \in \mathbb{Z}$ such that $h = (h')^k$.  It is clear that $h'$ normalizes $H$, and hence that $H' \subseteq $ Comm$_{F_n}(H)$.

On the other hand, suppose that $g \in $ Comm$_{F_n}(H)$, hence there exist nonzero integers $n,m$ such that $h^n = g h^m g^{-1}$.  
It follows that the axis of $gh^mg^{-1}$ equals $\gamma$.  
On the other hand, the axis of $gh^mg^{-1}$ must be $g \gamma$ (see again Proposition II.6.2(2) of \cite{BridsonHaefliger}), thus $g\gamma = \gamma$ so $g \in H'$.  Hence $H' = $ Comm$_{F_n}(H)$.
\end{proof}

\begin{lem}\label{roots_in_fn}
Let $F_n$ be the free group on $n$ generators with $n \geq 1$, and suppose that $f_1, f_2 \in F_n$ and $k$ is an integer, $k \neq 0$, such that $f_1^k = f_2^k$.  Then $f_1 = f_2$.
\end{lem}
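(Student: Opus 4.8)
The plan is to reduce to the classical fact, already used in the proof of Lemma~\ref{free_gp_lemma}, that the centralizer of a nontrivial element of $F_n$ is infinite cyclic. First I would dispose of the degenerate case: if $g:=f_1^k=f_2^k$ is trivial, then since $F_n$ acts freely on the tree $\mathscr{C}^1(F_n)$ it is torsion-free, so $f_1=f_2=e$ (recall $k\neq 0$), and we are done. So assume $g\neq e$.

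Next I would note that $f_1$ commutes with $g$, since $f_1 g=f_1^{k+1}=g f_1$, and likewise $f_2$ commutes with $g$; hence both $f_1$ and $f_2$ lie in $C_{F_n}(g)$. As in the proof of Lemma~\ref{free_gp_lemma}, the element $g$ acts freely on $\mathscr{C}^1(F_n)$ and so has an axis $\gamma$, and any element commuting with $g$ must preserve $\gamma$ (Proposition II.6.2(2) of \cite{BridsonHaefliger}); since $F_n$ is torsion-free, the stabilizer of $\gamma$ is infinite cyclic. Therefore $C_{F_n}(g)$ is infinite cyclic, generated by some $c$. Writing $f_1=c^a$ and $f_2=c^b$ for integers $a,b$, the hypothesis $f_1^k=f_2^k$ gives $c^{ak}=c^{bk}$, so $ak=bk$ because $c$ has infinite order, and hence $a=b$ because $k\neq 0$. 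Thus $f_1=f_2$.

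There is essentially no serious obstacle here; the only point that needs justification is the cyclicity of the centralizer, and this is exactly the ingredient already invoked in Lemma~\ref{free_gp_lemma}, so I would simply reuse that reasoning. (Alternatively, one could argue directly on the tree: $f_1^{-1}f_2$ fixes the axis $\gamma$ pointwise once one checks that $f_1$ and $f_2$ translate $\gamma$ by the same amount and in the same direction, and a nontrivial element of $F_n$ cannot fix a point of $\mathscr{C}^1(F_n)$; but the centralizer argument is cleaner and matches the surrounding exposition.)
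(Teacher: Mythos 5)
Your proof is correct, but it takes a genuinely different route from the paper's. You reduce the statement to the fact that the centralizer of a nontrivial element of $F_n$ is infinite cyclic, which you obtain from the axis argument already deployed in Lemma \ref{free_gp_lemma}: both $f_1$ and $f_2$ commute with $g=f_1^k=f_2^k$, hence lie in the (infinite cyclic) stabilizer of the axis of $g$, and the conclusion follows by comparing exponents of a common generator; the torsion-freeness of $F_n$ handles the degenerate case $g=e$ and guarantees the generator has infinite order, so every step goes through and there is no circularity, since Lemma \ref{free_gp_lemma} does not use Lemma \ref{roots_in_fn}. The paper instead argues combinatorially on words: it writes each $f_i$ uniquely as $u_iw_iu_i^{-1}$ with $w_i$ cyclically reduced, observes that the word length of $f_i^{j}$ is $2l(u_i)+j\,l(w_i)$, and compares the lengths of $f_1^{km}=f_2^{km}$ for all $m$ to force $l(u_1)=l(u_2)$ and $l(w_1)=l(w_2)$, whence $u_1=u_2$ and $w_1=w_2$ by reading off the reduced words. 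The trade-off is that the paper's proof is entirely elementary and self-contained (no appeal to tree geometry or to Proposition II.6.2(2) of \cite{BridsonHaefliger}), while yours is shorter given the standard centralizer fact and meshes naturally with the axis-based reasoning of the surrounding section; either is acceptable here.
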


\begin{proof}
We may assume that $k>0$.
For each $i$, note that $f_i$ can be expressed uniquely as a minimal word of the form $u_iw_iu_i^{-1}$, where $w_i$ is cyclically reduced.  Thus, for any positive integer $j$, $f_i^j$ is represented by the minimal word $u_iw_i^ju_i^{-1}$, and thus the word length of $f_i^j$ (with respect to the standard generating set for $F_n$) is equal to $l(u_i) + j\cdot l(w_i) + l(u_i^{-1}) = 2l(u_i) + j\cdot l(w_i)$.  

If $f_1^k = f_2^k$ then, for any integer $m$, $f_1^{km} = f_2^{km}$.  Hence
$$2l(u_1) + mk\cdot l(w_1) = 2l(u_2) + mk\cdot l(w_2)$$
for any integer $m$, and it follows that $l(w_1) = l(w_2)$ and $l(u_1) = l(u_2)$.  As each $u_iw_i^ku_i^{-1}$ is a reduced word, it follows that $u_1 = u_2$, and $w_1 = w_2$, thus $f_1 = f_2$.
\end{proof}

We can now prove the main fact needed for Corollary \ref{semi}:

\begin{prop}\label{infinite_prop}
Let $n \geq 1$, let $\alpha \co \mathbb{Z} \to Aut(F_n)$ be a homomorphism, let $\pi \co Aut(F_n) \to Out(F_n)$ be the quotient homomorphism, let $G$ denote the semidirect product $F_n \rtimes_\alpha \mathbb{Z}$ and let $H$ be an infinite cyclic subgroup of $G$.

If $\pi \alpha$ has infinite image, then Comm$_G(H)$ is of infinite index in $G$.
\end{prop}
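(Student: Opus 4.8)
The plan is to argue by contradiction: suppose $C := {\rm Comm}_G(H)$ has finite index in $G$. Write $\phi = \alpha(1) \in Aut(F_n)$, so the hypothesis says precisely that the class of $\phi$ has infinite order in $Out(F_n)$; in particular $n \geq 2$, since $Out(F_1)$ is finite, and the statement is vacuous when $n = 1$. Let $h$ be a generator of $H$, let $t$ be a generator of the $\mathbb{Z}$ factor of $G$, and let $k \in \mathbb{Z}$ denote the image of $h$ under the projection $G \to G/F_n \cong \mathbb{Z}$. Since $C$ has finite index in $G$, the subgroup $C_0 := C \cap F_n$ has finite index in $F_n$, hence is finitely generated; this is the only place the finite-index assumption is used. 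The argument splits according to whether $k = 0$ or $k \neq 0$.

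Suppose first that $k = 0$, so $H \leq F_n$. Since commensurability of subgroups is intrinsic, an element of $F_n$ commensurizes $H$ in $G$ if and only if it commensurizes $H$ in $F_n$, so $C_0 = {\rm Comm}_{F_n}(H)$. By Lemma \ref{free_gp_lemma} this is an infinite cyclic subgroup of $F_n$, which has infinite index since $n \geq 2$; this contradicts $C_0$ having finite index in $F_n$. (This case does not use the hypothesis on $\phi$.)

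Now suppose $k \neq 0$. The first step is to identify $C_0$ as a union of centralizers. For $c \in F_n$, the image of $c h^q c^{-1}$ in $G/F_n \cong \mathbb{Z}$ is $qk$, so comparing images shows that a nonzero power $h^p$ can coincide with a power of $chc^{-1}$ only with that power equal to $(chc^{-1})^p = c h^p c^{-1}$; hence $\langle h \rangle \cap c\langle h \rangle c^{-1}$ is nontrivial — and so finite index in both of $\langle h\rangle$ and $c\langle h\rangle c^{-1}$, as these are infinite cyclic — exactly when $c$ commutes with some nonzero power of $h$. Therefore $c \in C$ if and only if $c \in C_G(h^a)$ for some $a \geq 1$, i.e.\ $C_0 = \bigcup_{a \geq 1}\bigl(C_G(h^a) \cap F_n\bigr)$. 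Since $C_G(h^a) \subseteq C_G(h^{am})$ for every $m \geq 1$, the family $\{C_G(h^a) \cap F_n\}_{a \geq 1}$ is directed under inclusion, so this union is a subgroup; as $C_0$ is finitely generated its finitely many generators lie in a single term, whence $C_0 = C_G(h^a) \cap F_n$ for one $a \geq 1$. In other words, $h^a$ centralizes the finite-index subgroup $C_0$ of $F_n$.

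Finally, I would turn this into an innerness statement. Writing $h = f t^k$ with $f \in F_n$, one has $h^a = F_a t^{ak}$ for a suitable $F_a \in F_n$ (explicitly $F_a = f\,\phi^k(f)\cdots\phi^{(a-1)k}(f)$), so conjugation by $h^a$ acts on $F_n$ as the automorphism $\psi := \iota_{F_a} \circ \phi^{ak}$, where $\iota_g$ denotes conjugation by $g$. That $h^a$ centralizes $C_0$ is exactly the statement that $\psi$ restricts to the identity on $C_0$. Now for any $x \in F_n$ the image of $x$ in the finite group $F_n / C_0$ has finite order $m$, so $x^m \in C_0$, hence $\psi(x)^m = \psi(x^m) = x^m$; Lemma \ref{roots_in_fn} then forces $\psi(x) = x$. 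Thus $\psi = {\rm id}_{F_n}$, so $\phi^{ak} = \iota_{F_a}^{-1}$ is inner, and the class of $\phi$ in $Out(F_n)$ has order dividing $ak \neq 0$ — contradicting the hypothesis that $\pi\alpha$ has infinite image. I expect the main obstacle to lie in the previous paragraph: correctly identifying ${\rm Comm}_G(H) \cap F_n$ with the union of the centralizers $C_G(h^a) \cap F_n$, and then using finite generation of $C_0$ to collapse this union to a single exponent $a$; after that, everything is a clean application of uniqueness of roots in free groups.
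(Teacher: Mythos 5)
Your proof is correct, and in its main case it takes a genuinely different route from the paper's. The paper argues directly in three cases: when $h\in F_n$ it uses Lemma \ref{free_gp_lemma} exactly as you do; when $h$ lies in the $\mathbb{Z}$-factor it uses $|{\rm Im}(\alpha)|=\infty$ to produce a $g_0\in F_n$ moved by every nontrivial $\alpha_b$, and Lemma \ref{roots_in_fn} to see that no power $g_0^r$ commensurizes $H$, so the $g_0^r$ lie in infinitely many distinct cosets; in the mixed case it shows that if some $g\in F_n$ fails to commensurize $H$ then so does every nonzero power of $g$ (again via Lemma \ref{roots_in_fn}), and otherwise $F_n\subseteq{\rm Comm}_G(H)$, which forces $\alpha_{a^k}$ to be inner for a common exponent $k$, contradicting the hypothesis. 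You instead argue by contradiction: finite index makes $C_0={\rm Comm}_G(H)\cap F_n$ of finite index in $F_n$, hence finitely generated; for $k\neq 0$ the projection-to-$\mathbb{Z}$ computation (the same one the paper uses to force $k=k'$) identifies $C_0$ with the directed union $\bigcup_{a\geq 1}\bigl(C_G(h^a)\cap F_n\bigr)$, finite generation collapses this to a single centralizer, and then Lemma \ref{roots_in_fn} promotes ``conjugation by $h^a$ is the identity on a finite-index subgroup'' to the identity on all of $F_n$, making $\phi^{ak}$ inner. Your version unifies the paper's second and third cases and replaces the explicit exhibition of infinitely many coset representatives by the finite-generation/directed-union observation; the paper's version is more constructive about where the infinite index comes from. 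Two cosmetic points, neither a gap: $C_0$ need not be normal in $F_n$, so rather than invoking ``the finite group $F_n/C_0$'' you should obtain $x^m\in C_0$ by pigeonhole on cosets (or by passing to the normal core); and the finite-index assumption is in fact used a second time at the end, since finite index of $C_0$ in $F_n$ is what lets you upgrade $\psi|_{C_0}={\rm id}$ to $\psi={\rm id}$.
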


\begin{proof}
The proposition is vacuously true in the case that $n=1$, so suppose that $n>1$.

Let $S_F = \{ f_1, f_2, \ldots , f_n\}$ be the standard generating set for $F_n$.  Let $A$ denote the infinite cyclic subgroup of $G$ such that $G = F_n \rtimes_\alpha A$, and let $t$ denote a generator of $A$.
For any $b \in A$, let $\alpha_b$ denote the image of $b$ under $\alpha$.

Recall that $G$ is generated by $S_F \cup A$, and that, for any $a \in A$ and $f \in F_n$, we have that 
$$af = \alpha_a(f)a \  {\rm and } \  fa=a\alpha_{a^{-1}}(f).$$
Thus any $g \in G$ can be written uniquely in the form $fa$, for some $f \in F_n$ and $a \in A$.

Let $H = \langle h \rangle$ be an infinite cyclic subgroup of $G$.  If $h \in F_n$, then Lemma \ref{free_gp_lemma} implies that Comm$_{F_n}(H)$ is an infinite cyclic subgroup of $F_n$.  Thus $[F_n:$ Comm$_{F_n}(H)] = \infty$.  

Note that Comm$_{F_n}(H) = $ Comm$_G(H) \cap F_n$, hence in this case Comm$_G(H)$ is of infinite index in $G$.

Suppose next that $h = a \in A$.  If there is some  $g $ contained in $F_n \cap $Comm$_G(H)$, then there must be nonzero integers $k, k'$ such that
$$a^k = ga^{k'}g^{-1}.$$
Then $a^k = g\alpha_{a^{k'}}(g^{-1})a^{k'}$.  As $g\alpha_{a^{k'}}(g^{-1}) \in F_n$, $a^k, a^{k'} \in A$ and $F_n \cap A = \o$, it follows that $k=k'$, and that $g = \alpha_{a^k}(g)$.

Note that $|$Im$(\pi \alpha )|= \infty$ implies that $|$Im$(\alpha)|=\infty$, and we claim that $|$Im$(\alpha)|=\infty$ implies that there exists some $g_0 \in F_n$ such that $\alpha_b(g_0) \neq g_0$ for all nontrivial $b \in A$.  For if no such $g_0$ did exist, then, for each $i=1, 2, \ldots n$, there would be some positive integer $k_i$ such that $\alpha_{t^{k_i}}(f_i) = f_i$.  Let $k$ equal the least common multiple of the $k_i$'s, and it follows that $\alpha_{t^k}$ fixes each generator of $F_n$.  Thus $\alpha_{t^k} = id$ and $|$Im$(\alpha)| \leq k$, a contradiction.

Hence there is some $g_0 \in F_n$ that is such that $\alpha_b(g_0) \neq g_0$ for all nontrivial $b \in A$, and hence $g_0 \notin $ Comm$_G(H)$.  
It follows from Lemma \ref{roots_in_fn} that, for each nonzero integer $r$ and for all nontrivial $b \in A$, $\alpha_b(g_0^r) \neq g_0^r$, and hence $g_0^r \notin $ Comm$_G(H)$.
Thus the elements $g_0^r$, $r \in \mathbb{Z}$, are contained in distinct cosets of Comm$_G(H)$ in $G$, and so $[G:$ Comm$_G(H)]=\infty$ in this case as well.

The general case remains:  suppose that $h$ is neither contained in $F_n$ nor contained in $A$.  Then there are nontrivial $f \in F_n, a \in A$ such that $h = fa$.
Suppose that there is some element $g \in F_n \cap$Comm$_G(H)$, so there exist nonzero integers $k,k'$ such that 
$$(fa)^k = g(fa)^{k'}g^{-1}.$$
Then there exist $f', f'' \in F_n$ such that the above equation becomes
$$f'a^k = f''a^{k'},$$
and hence $k=k'$ as in the previous case.

For any integer $i$, define $m_i = (fa)^ia^{-i}$, and note that $m_ia^i = (fa)^i$, so in particular, $m_i \in F_n$ and, when $i$ is nonzero, $m_i$ is nontrivial.
We claim that, for any $g \in F_n$, $(fa)^k = g(fa)^{k}g^{-1}$ if and only if $\alpha_{a^k}(g) = m_k^{-1}gm_k$.
To see this, note that we have: 
$$(fa)^k = g(fa)^{k}g^{-1}$$
$$\Leftrightarrow m_ka^k = gm_ka^kg^{-1} = gm_k\alpha_{a^k}(g^{-1})a^k$$
$$\Leftrightarrow m_k = gm_k\alpha_{a^k}(g^{-1})$$
$$\Leftrightarrow \alpha_{a^k}(g) = m_k^{-1}gm_k$$
as desired.

Now suppose that there is some nontrivial $g \in F_n$ that is not contained in Comm$_G(H)$.  Then there is no nonzero $k$ such that $\alpha_{a^k}(g) = m_k^{-1}gm_k$.
Suppose that some nontrivial power of $g$, $g^r$, was contained in Comm$_G(H)$.  Then, for some nonzero value of $k$, we would have
$$\alpha_{a^k}(g^{r}) = m_k^{-1}g^rm_k$$
and hence
$$[\alpha_{a^k}(g)]^{r} = [m_k^{-1}gm_k]^r.$$
By Lemma \ref{roots_in_fn}, $\alpha_{a^k}(g) = m_k^{-1}gm_k$, so we have reached a contradiction.

Thus if $g \notin$ Comm$_G(H)$, then for each nonzero integer $r$, $g^r \notin $ Comm$_G(H)$.  As in the previous case, it follows that the elements $g^r$ are contained in distinct cosets of Comm$_G(H)$ in $G$ and $[G:$ Comm$_G(H)]=\infty$ as desired.

It remains to consider the case that $F_n \subset $ Comm$_G(H)$.
In this case, we have that in particular $f_i \in $ Comm$_G(H)$ for each $1 \leq i \leq n$; let $k_i$ be a nonzero integer such that 
$$(fa)^{k_i} = f_i(fa)^{k_i}f_i^{-1}.$$
Note that we can take each $k_i$ to be positive.

Let $k$ equal the least common multiple of the $k_i$'s.  Thus, for each $i$, 
$$(fa)^{k} = f_i(fa)^{k}f_i^{-1}$$
and hence $\alpha_{a^k}(f_i) = m_k^{-1}f_im_k$ for each $i$.  It follows that $\alpha_{a^k}$ acts on $F_n$ by conjugation by $m_k$ and hence that $|$Im$(\pi \alpha)|$ is finite.
It follows that $|Im(\pi \alpha)|=\infty$ implies that $[G:$ Comm$_G(H)]=\infty$ for any infinite cyclic subgroup $H$ of $G$.
\end{proof}

We can now prove the main result in this section.
\\

\noindent {\bf Corollary \ref{semi}.}{\it
Let $F_n$ be the free group on $n$ generators for any $n \geq 1$.
Then $F_n \rtimes \mathbb{Z}$ is quasi-isometric to $F_n \times \mathbb{Z}$ if and only if it is virtually $F_n \times \mathbb{Z}$.
}
\\

\begin{proof}
Let $\alpha \co \mathbb{Z} \to Aut(F_n)$ be the homomorphism such that $F_n \rtimes \mathbb{Z}=F_n \rtimes_\alpha \mathbb{Z}$.
Let $A = \langle t \rangle$ denote the infinite cyclic subgroup in the statement of the corollary, let $F_n = \langle f_1, \ldots , f_n\rangle$, let $G_\rtimes$ denote $F_n \rtimes_\alpha A$ and let $G_\times$ denote $F_n \times A$.

Certainly if $G_\rtimes$ is virtually $G_\times$, then the groups are quasi-isometric.
We claim that $|$Im$(\pi \alpha)|<\infty$ implies that $G_\rtimes$ is virtually $G_\times$, where $\pi$ denotes the quotient homomorphism from $Aut(F_n)$ to $Out(F_n)$ as in the previous proposition.

For suppose that $|$Im$(\pi \alpha)|<\infty$, so that there is some nonzero $k$ such that $\alpha_{t^k}$ is an inner automorphism of $F_n$, with $m \in F_n$ such that $\alpha_{t^k}$ takes any $f \in F_n$ to $mfm^{-1}$.
Let $G_k$ denote the subgroup of $G_\rtimes$ that is generated by $f_1, \ldots , f_n,$ and $t^k$, and we have that $G_k \cong G_\times$, by an isomorphism from $G_\times$ to $G_k$ that acts as the identity on $F$, and takes $t$ to $m^{-1}t^k$.

To see that $G_k$ is of finite index in $G_\rtimes$, recall that any element of $G_\rtimes$ can be written uniquely in the form $ft^l$ for some $f \in F_n$ and $l \in \mathbb{Z}$.  It follows that any $g \in G_\rtimes$ is of the form $(ft^{mk})(t^i)$, for some $m \in \mathbb{Z}$ and $0 \leq i < k$.  Thus $G_\rtimes = \cup_{i=0}^{k-1}G_kt^i$, so $G_k$ is of index no more than $k$ in $G_\rtimes$, as desired.

Thus it remains to show that $G_\rtimes$ and $G_\times$ are {\em not} quasi-isometric in the case that $|Im(\pi \alpha)|=\infty$.
Recall from Proposition \ref{infinite_prop} that in this case, for each subgroup $H \cong \mathbb{Z}$ of $G_\rtimes$, $[G_\rtimes:$ Comm$_{G_\rtimes}(H)]=\infty$.

It is clear that the quotient by $A$ of the Cayley graph of $G_\times$ has infinitely many ends, and hence $\tilde{e}(G_\times, A) = \infty$.  Note also that $G_\times =$ Comm$_{G_\times}(A)$.
Thus if there were a quasi-isometry from $G_\times$ to $G_\rtimes$, then it would follow from Theorem \ref{Cqiinv} that there is some $z>0$ and some infinite cyclic subgroup $H$ of $G_\rtimes$ such that the $z$-neighborhood of Comm$_{G_\rtimes}(H)$ in the Cayley graph of $G_\rtimes$ is equal to the whole Cayley graph.  This, however, would imply that Comm$_{G_\rtimes}(H)$ is of finite index in $G_\rtimes$, a contradiction.
\end{proof}

\appendix 

\section{Appendix}

This appendix contains a proof of Theorem \ref{2.1}, which is provided to clarify the argument given in \cite{papa_qlines}.
A few definitions are required first.

Let $(X,d)$ be a metric space.  If $A, B \subset X$ then let $d_{inf}(A,B)$ denote 
$$\inf_{a \in A, b \in B} d(a,b).$$
We note that when $A$ and $B$ are not single points, this function does not necessarily obey the triangle inequality.
Nonetheless, this notation will be useful to us.

If $A \subset X$, let $fr(A)$ denote the frontier of $A$ in $X$, so $fr(A) = \overline{A} \cap \overline{X-A}$.  

Let $A \subset B \subset X$, with $B$ path connected, and we will let $CH(A,B)$ denote the convex hull of $A$ in $B$, with respect to the path metric in $B$ induced by the geometry of $X$. 

Fix $A_0>0$, and recall from the proof of Lemma \ref{char_fg_subgps} that a sequence $s_0, s_1, \ldots , s_n$ of points in $X$ is an $A_0$-chain from $s_0$ to $s_n$ if $d(s_i, s_{i+1})<A_0$ for all $i$.

Let $L'$ be a $(\phi, N)$ quasi-line with associated line $l'$.  
Following \cite{papa_qlines}, if $x = l'(x_0)$ and $y = l'(y_0)$, then we will denote by $[x,y]_{l'}$ the segment $l'([x_0, y_0])$ in $L'$, and we will write $x<y$ if $x_0 < y_0$.
Moreover, for arbitrary $x,y \in L'$, we shall denote by $[x,y]_{L'}$ the ``thickened segment'' $\{ z \in L' : d(z, [x_0, y_0]_{l'} \leq N\}$, where $x_0$ ($y_0$ respectively) is a point in $l'$ that can be connected to $x$ ($y$ respectively) by a path in $L'$ of length no more than $N$.
Let length$([x,y]_{L'})$ denote length$([x_0, y_0])$.

Recall that we say that two points $a$ and $b$ are {\it $K$-separated} by a quasi-line $L_1$ if $B_K(a)$ and $B_K(b)$ are in different components of $X-L_1$.
\\

\noindent {\bf Theorem \ref{2.1}} {\it
\cite{papa_qlines} 
Let $G$ be a one-ended, finitely presented group, and let $L, L_1$ be $(\phi',N')$ quasi-lines in $\mathscr{C}^1(G)$ that satisfy $iness(m_1')$.
Suppose that $L$ is 3-parting.

Then there is some $K = K(G, \phi', N', m_1')$ such that no two points $a,b \in L$ are $K$-separated by $L_1$.
}
\\

\begin{proof}
Let $\mathscr{C}^1 = \mathscr{C}^1(G)$ and let $\mathscr{C}^2 = \mathscr{C}^2(G)$.
Recall that we may think of $\mathscr{C}^1$ as the 1-skeleton of $\mathscr{C}^2$, and that there is a uniform bound to the size of the 2-cells of $\mathscr{C}^2$.
We will begin by showing that it suffices to work in $\mathscr{C}^2$, instead of $\mathscr{C}^1$.

We note that there exist $\Lambda \geq 1,C \geq 0$, depending only on $G$ (and its associated finite presentation), such that the inclusion of $\mathscr{C}^1$ into $\mathscr{C}^2$ is a $(\Lambda, C)$ quasi-isometry.

Suppose that $L'$ is a $(\phi', N')$ quasi-line in $\mathscr{C}^1$ that satisfies $iness(m_1')$ and is $n$-parting in $\mathscr{C}^1$, for some $n \geq 1$, and consider $L'$ as a subset of the 1-skeleton of $\mathscr{C}^2$.
By Lemmas \ref{iness_preserved_under_qi} and \ref{ess_comps_go_to_ess_comps}, there is a neighborhood of $L'$ in $\mathscr{C}^2$, which we shall call $\tilde{L}'$, that is an $n$-parting $(\phi'', N'')$ quasi-line satisfying $iness(m_1)$, where $\phi'',N''$ and $m_1$ depend only on $G, \phi', N'$ and $m_1'$.
We can further assume that $\tilde{L}'$ is a subcomplex of $\mathscr{C}^2$.

Now recall the quasi-lines $L$, $L_1$ in $\mathscr{C}^1$.
Suppose that there is some $K$ such that no two points in $\tilde{L}$ can be $K$-separated by $\tilde{L}_1$ in $\mathscr{C}^2$.
Recall that inclusion is a $(\Lambda, C)$ quasi-isometry from $\mathscr{C}^1$ to $\mathscr{C}^2$.
Thus no two points in $L$ can be $(\Lambda K +C + N'')$-separated by $L_1$ in $\mathscr{C}^1$, so the theorem will follow, with $(\Lambda K +C + N'')$ replacing $K$.
Thus we shall no longer work with $\mathscr{C}^1$, but with $\mathscr{C}^2$ instead, together with $\tilde{L}$ and $\tilde{L}_1$, which are both $(\phi'', N'')$ quasi-lines.
\\

We shall next reduce to the case that $\tilde{L}$ is simply connected.
Let $\Delta'$ denote a collection of regular polygons attached to $\mathscr{C}^2$ along all simple closed edge paths of $\tilde{L}$ of length no more than $[\phi'' ( 2N''+1) + 2N'' + 1]$.
Then the methods from the proof of Lemma \ref{L''_simp_conn} show that $\tilde{L} \cup \Delta'$ is simply connected in $\mathscr{C}^2 \cup \Delta'$.
Let $X$ denote $\mathscr{C}^2 \cup \Delta'$, and we note that $X$ is simply connected.

By construction $\tilde{L}$ is $3$-parting in $\mathscr{C}^2$, and it follows that $\tilde{L} \cup \Delta'$ is 3-parting in $X$.
Also we have that $\tilde{L} \cup \Delta'$ is a $(\phi, N)$ quasi-line, for some $\phi$ and $N$ depending on $\phi''$ and $N''$.

Consider the union of $\tilde{L_1}$ with any cells of $\Delta'$ that meet it.  
This union has at least as many essential complementary components in $X$ as $L_1$ does in $\mathscr{C}^1$, and is also a $(\phi, N)$ quasi-line.
By abuse of notation, we shall refer to this union containing $\tilde{L_1}$ as $L_1$, and we shall refer to $\tilde{L} \cup \Delta'$ as $L$ throughout the following.

Thus $L$ and $L_1$ are both $(\phi, N)$ quasi-lines, and $L$ is 3-parting.
We note that both $L$ and $L_1$ satisfy $iness(m_1)$.
Moreover, as $L$ is simply connected and $X$ is the union of a Cayley complex of $G$ and some 2-cells of bounded size, the methods in the proof of Lemma \ref{compl_has_fin_many_comps} show that $L$ and $L_1$ satisfy $ess(m_0)$ for some $m_0 \geq 0$.

We claim that it suffices to consider $L$ and $L_1$ in $X$ to prove the theorem.
For the inclusion of $\mathscr{C}^2$ into $X$ is a $(\Lambda', C')$ quasi-isometry for some $\Lambda' \geq 1, C' \geq 0$ depending on $G$ and $[\phi'' ( 2N''+1) + 2N'' + 1]$.
So if we can find a value of $K$ such that no two points in $L$ can be $K$-separated by $L_1$ in $X$, then it follows that no two points of $\tilde{L}$ can be $(\Lambda' K + C')$-separated by $\tilde{L}_1$ in $\mathscr{C}^2$, so the theorem holds.

We shall make one final reduction before beginning our argument.
Let $l$ be the line associated to $L$ and let $l_1$ be the line associated to $L_1$.  
If we show that no two points $a$ and $b$ of $L$ can be $K$-separated by $L_1$ in the case that $a$ and $b$ are vertices of $l$, then our result follows: in order to get the constant $K$ for arbitrary $a, b \in L$, it suffices to add $(N+1)$ to the constant we find, since any point in $L$ is a distance of less than $(N+1)$ from a vertex of $l$.

Thus we shall prove that, given the new quasi-lines $L$ and $L_1$ defined above in the 2-dimensional CW complex $X$, there exists some $K$ such that no two vertices $a,b$ in $l$ can be $K$-separated by $L_1$.
\\

We shall make use of winding numbers (of curves about points in the disk) in our argument.  See, for instance, Chapter 10 of \cite{Munkres}.
In particular, we will need the following fact.

\begin{lem}
Suppose that $\alpha, \beta, \gamma$ are oriented curves in a 2-disk $D^2$, and let $-\gamma$ denote the curve $\gamma$ with the opposite orientation.
Suppose further that $\alpha \cup \beta$, $\alpha \cup \gamma$, and $\beta \cup -\gamma$ are closed oriented curves, and that $v$ is a point in $D^2$ that is not met by $\alpha, \beta$ or $\gamma$.
For any oriented closed curve $\delta \subset D^2-\{ v\}$, let $w_v(\delta)$ denote the winding number of $\delta$ about $v$.

Then 
$$w_v (\alpha \cup \beta) = w_v(\alpha \cup \gamma) + w_v(\beta \cup -\gamma).$$
\end{lem}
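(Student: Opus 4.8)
The plan is to prove this by reducing everything to the additivity of winding numbers under concatenation of paths. First I would unpack the endpoint hypotheses. Saying that $\alpha \cup \beta$ is a closed oriented curve means precisely that, after reparametrization, $\alpha$ is a path from some point $p$ to some point $q$ in $D^2 - \{v\}$ and $\beta$ is a path from $q$ back to $p$, so that $\alpha \cup \beta$ is literally the loop $\alpha$ followed by $\beta$. The hypothesis that $\alpha \cup \gamma$ is closed then forces $\gamma$ to also run from $q$ to $p$, and one checks that $\beta \cup -\gamma$ being closed is then automatic: it is the loop consisting of $\beta$ (from $q$ to $p$) followed by $-\gamma$ (from $p$ to $q$), based at $q$. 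Thus all three of $\alpha\cup\beta$, $\alpha\cup\gamma$, $\beta\cup-\gamma$ are genuine loops obtained by concatenating paths with matching endpoints.

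Next I would introduce, for an arbitrary path $\sigma \co [0,1] \to D^2 - \{v\}$, the \emph{total angle change} $\Theta(\sigma) := \tilde\theta(1) - \tilde\theta(0)$, where $\tilde\theta \co [0,1] \to \mathbb{R}$ is any continuous lift of $t \mapsto \arg(\sigma(t) - v)$ along the covering $\mathbb{R} \to S^1$; such a lift exists by the path-lifting property and is unique up to a constant in $2\pi\mathbb{Z}$, so $\Theta(\sigma)$ is well defined. The elementary facts I would record about $\Theta$ are: it is additive under concatenation, $\Theta(\sigma \cdot \tau) = \Theta(\sigma) + \Theta(\tau)$ when $\sigma(1) = \tau(0)$; it satisfies $\Theta(-\sigma) = -\Theta(\sigma)$; and for any loop $\delta$ in $D^2-\{v\}$ the winding number is $w_v(\delta) = \tfrac{1}{2\pi}\Theta(\delta)$. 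Each of these follows at once from the definition of the lift.

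Granting these, the lemma is pure bookkeeping. By additivity, $2\pi\, w_v(\alpha \cup \beta) = \Theta(\alpha) + \Theta(\beta)$, while $2\pi\, w_v(\alpha \cup \gamma) = \Theta(\alpha) + \Theta(\gamma)$ and $2\pi\, w_v(\beta \cup -\gamma) = \Theta(\beta) + \Theta(-\gamma) = \Theta(\beta) - \Theta(\gamma)$. Adding the last two equalities and comparing with the first yields
\[
2\pi\, w_v(\alpha \cup \gamma) + 2\pi\, w_v(\beta \cup -\gamma) = \Theta(\alpha) + \Theta(\beta) = 2\pi\, w_v(\alpha \cup \beta),
\]
and dividing by $2\pi$ gives the claim. (An equally quick alternative is to argue in $H_1(D^2 - \{v\}) \cong \mathbb{Z}$: the concatenation of two paths sharing an endpoint is homologous, as a singular $1$-chain, to their formal sum, so $[\alpha\cup\beta] = [\alpha]+[\beta]$, and likewise for the others; then $[\alpha]+[\beta] = ([\alpha]+[\gamma]) + ([\beta]-[\gamma])$ is an identity of $1$-cycles, to which one applies the winding-number homomorphism.) I do not expect any real obstacle: the only point that needs genuine care is the first step, namely correctly reading off from the three ``closed curve'' hypotheses that $\alpha$, $\beta$, and $\gamma$ have the matching endpoints $p$ and $q$ so that all the concatenations are legitimately defined; everything afterward is formal manipulation of the additive invariant $\Theta$.
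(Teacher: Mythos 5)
Your proof is correct. Note that the paper offers no proof of this lemma at all: it is stated as a standard fact about winding numbers, with a pointer to Chapter 10 of Munkres, and is then used as a black box in the proof of Lemma \ref{scc_case}. Your argument — reading off from the three ``closed curve'' hypotheses that $\alpha$, $\beta$, $\gamma$ share the endpoints $p$, $q$, then reducing the identity to additivity of the lifted angle function $\Theta$ under concatenation together with $\Theta(-\sigma)=-\Theta(\sigma)$ and $w_v=\tfrac{1}{2\pi}\Theta$ — is a complete and entirely standard justification of that fact, and the alternative phrasing via the winding-number homomorphism on $H_1(D^2-\{v\})\cong\mathbb{Z}$ is equally valid; so your write-up supplies a proof where the paper simply cites one.
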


We will also make use of the next lemma.
Stated in the setting of $\mathscr{C}^1$, Lemma 1.9 of \cite{papa_qlines} may be restated as the following.  
\begin{lem}\label{1.9} \cite{papa_qlines}
Let $G$ be a finitely presented group and let $L'$ be a 1-parting $(\phi, N)$ quasi-line in $\mathscr{C}^2(G)$ that satisfies $ess(m_0)$.  
Given any $r_1>0$, there is some $r_2 = r_2(G, \phi, N, r_1, m_0) >\max \{r_1, m_0\}$ such that, for any vertices $a< b$ in $L'$ with length$([a,b]_{L'})>2r_2$, 
and for any essential component $Y$ of $\mathscr{C}^2(G)-L'$, there is a simplicial path $p$ joining $a$ to $b$ in $Y \cup L'$, such that
\begin{enumerate}
\item{$p \cap N_{r_1}( [a+r_2, b-r_2]_{L'}) =$ \o, and }
\item{$p \subset N_{r_2}( [a,b]_{L'} ) $.}
\end{enumerate}
\end{lem}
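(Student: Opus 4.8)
The plan is to build $p$ so that it hugs $L'$ only inside bounded neighbourhoods of the endpoints $a$ and $b$, and in between runs through $Y$ ``parallel'' to the line $l'$ associated to $L'$, at a height that is uniformly bounded but strictly larger than $r_1$. Write $a = l'(a_0)$, $b = l'(b_0)$, and work throughout in the simply connected complex $\mathscr{C}^2 = \mathscr{C}^2(G)$, whose $2$-cells have uniformly bounded perimeter.

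First I would collect the structural properties of $Y$ forced by its being an essential component. Since $Y$ is essential, $e(Y \cup L') = 1$, while $e(L') = 2$ by Lemma \ref{e(L)=2}; hence $Y$ cannot be contained in any $N_r(L')$ (otherwise $Y \cup L'$ would be a quasi-line and have two ends), so $Y$ is unbounded, and moreover $Y$ must meet both infinite components of $L' \setminus K'$ for every sufficiently long subsegment $K'$ of $L'$ — otherwise the missed tail would supply a second end of $Y \cup L'$. By Corollary \ref{from_ess_proof}, applied with such a $K'$ centred at an arbitrary point of $l'$, this yields that $Y$ comes within $m_0$ of every point of $l'$, and, replacing such a nearby vertex of $Y$ by the last point of a shortest arc from it to $L'$, that $\overline{Y} \cap L'$ is $2m_0$-dense along $l'$. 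Finally, the Van Kampen argument from the proof of Lemma \ref{CC_lem} (applied to the complementary components of $L'$ in $\mathscr{C}^2$, after the usual capping of short edge loops), together with the chain construction there, provides a constant $n_0 = n_0(G,\phi,N)$ such that any two vertices of $\overline{Y} \cap L'$ are joined by an $(L',n_0)$-chain lying entirely in $\overline{Y} \cap L'$.

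The core step is to produce a path $q$ inside $Y$ running parallel to $l'$ over the parameter range $[a_0,b_0]$ at height between $r_1 + N$ and a uniform constant. Here I would combine three ingredients: consecutive terms of an $(L',n_0)$-chain differ in $l'$-parameter by at most $\phi(n_0)$ (the distortion bound), $\overline{Y} \cap L'$ is $2m_0$-dense along $l'$, and $Y$ is unbounded and connected. From a chain joining a point of $\overline{Y} \cap L'$ near $a$ to one near $b$, extract a roughly monotone subsequence $w_0, w_1, \dots, w_M$ of vertices of $\overline{Y} \cap L'$ with $w_i$ within a constant $c = c(\phi,n_0,m_0)$ of $l'(a_0 + ic)$; from each $w_i$ run a path in $Y$ out to a vertex $y_i$ at distance just above $r_1 + N$ from $l'$ but still within a uniform constant of $l'(a_0+ic)$; and join $y_i$ to $y_{i+1}$ by a path in $Y$ of uniformly bounded length. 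Concatenating yields $q$. One then sets $r_2$ to be any number exceeding every constant accumulated so far — in particular $m_0$, $r_1$, $N$, $\phi(n_0)$, the bound on the joining arcs, and $\phi$ evaluated at each of these — so that $q \subset N_{r_2}([a,b]_{L'})$; and $q$ avoids $N_{r_1}([a+r_2, b-r_2]_{L'})$ because each of its points is either at distance $> r_1 + N$ from $l'$, hence outside that set, or lies near $a$ or $b$, where condition $1$ is automatic: the distortion bound forces the distance from $a$ (resp. $b$) to the inner thickened segment to be at least $\rho(r_2)$ for a proper function $\rho$, so for $r_2$ large a whole ball about $a$ (resp. $b$) is disjoint from $N_{r_1}$ of the inner segment.

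Finally I would assemble $p$ by joining $a$ to $y_0$ inside a bounded ball about $a$ (where condition $1$ is free), running along $q$, and returning from $y_M$ to $b$ symmetrically; the result lies in $Y \cup L'$ and satisfies conditions $1$ and $2$ with the chosen $r_2$. The main obstacle is the core step, and precisely the assertion that $Y$ contains, near the segment, vertices at height roughly $r_1$ that are joinable along the segment by bounded-length paths inside $Y$: this is where one genuinely uses that $Y$ is an essential (one-ended) complementary component rather than merely an unbounded one — morally, $Y \cup L'$ being one-ended forces $Y$ to look, coarsely, like a half-plane with boundary $L'$, so that such a parallel band exists with constants depending only on $G, \phi, N, m_0$ and $r_1$.
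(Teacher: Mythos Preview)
The paper does not prove this lemma: it is quoted from Papasoglu's paper \cite{papa_qlines} (as Lemma~1.9 there) and used as a black box in the appendix. So there is no ``paper's own proof'' to compare against.

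On its own merits, your outline has a genuine gap precisely where you flag it. The one-endedness of $Y\cup L'$ does give you, for any bounded $K\subset Y\cup L'$, a path from $a$ to $b$ in $(Y\cup L')\setminus K$; taking $K=N_{r_1}([a+r_2,b-r_2]_{L'})\cap(Y\cup L')$ and choosing $r_2>\phi(r_1+N)$ already yields condition~1. What one-endedness does \emph{not} give is condition~2 with a uniform $r_2$: the detour could a priori wander arbitrarily far from $[a,b]_{L'}$, and your ``half-plane'' heuristic does not bound it. Your proposed mechanism --- vertices $y_i\in Y$ at height roughly $r_1+N$ above each $l'(a_0+ic)$, joined in $Y$ by uniformly short arcs --- is exactly what needs proof, and nothing you have assembled (density of $\overline{Y}\cap L'$ along $l'$, the $(L',n_0)$-chain, unboundedness of $Y$) forces $Y$ to be ``tall'' over every subsegment of $l'$, nor forces short connections at that height. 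Essentiality only says $Y$ is not contained in \emph{any} $N_r(L')$; it does not say $Y\setminus N_{r_1}(L')$ meets every bounded column over $l'$.

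To close the gap one needs the simply connected structure of $\mathscr{C}^2(G)$ more seriously than through Lemma~\ref{CC_lem} alone: the $(L',n_0)$-chain gives boundedly short arcs in $\overline{Y}\cap L'$, and these, together with the $2$-cells of $\mathscr{C}^2$ they bound, let one push a path in $Y\cup L'$ off $N_{r_1}(L')$ while keeping it inside a controlled neighbourhood of $[a,b]_{L'}$. This is what Papasoglu's argument does; your sketch has the right shape but is missing this step.
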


As $X$ is a Cayley complex, together with additional 2-cells of bounded size, we note that there is an increasing function $i \co \mathbb{R}_+ \to \mathbb{R}_+$ such that, for any simplicial path in $X$ of length less than or equal to $r$, there is a simplicial null-homotopy of that path that is contained in the $i(r)$-neighborhood of the path.
The methods of Lemma \ref{L''_simp_conn}, together with this observation, imply the following lemma.

\begin{lem}\label{coarsely_sc}
There exists a constant $M = M(X, \phi'', N'', i)$ such that, for any closed curve $\gamma$ in $L_1$, there is a null-homotopy of $\gamma$ that is contained in $N_M(\gamma) \subset N_M(L_1)$.
\end{lem}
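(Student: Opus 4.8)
The plan is to reprise the scheme used to prove Lemma~\ref{L''_simp_conn}, with two changes: rather than enlarging $X$ by adding $2$-cells I will fill the short circuits that arise using the function $i$, and I will bookkeep at every stage how far the current curve is from $\gamma$ itself, not merely from $L_1$. Write $l_1$ for the line associated to $L_1$ and $\phi, N$ for the quasi-line parameters of $L_1$ in $X$; recall that $L_1$ is a subcomplex of $X$ all of whose $2$-cells have diameter at most some $\delta_0 = \delta_0(X)$, that $L_1 \subset N_N(l_1)$, and that every point of $L_1$ is joined to $l_1$ by a path in $L_1$ of length at most $N$.

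First I would make $\gamma$ cellular: by cellular approximation, with the homotopy moving points only inside cells, $\gamma$ is homotopic through a homotopy supported in $N_{\delta_0}(\gamma)$ to a loop in the $1$-skeleton $L_1^{(1)}$. Next I would attach spikes exactly as in Lemma~\ref{L''_simp_conn}: choose a finite set $P$ of points of $\mathrm{Im}(\gamma) \setminus l_1$ whose $(1/2)$-neighborhood covers $\mathrm{Im}(\gamma) \setminus l_1$, choose for each point of $P$ a path in $L_1$ of length at most $N$ to $l_1$, and homotope $\gamma$ to run out and back along these spikes whenever it meets $P$. Each spike lies within distance $N$ of a point of $\gamma$, so this homotopy is supported in $N_N(\gamma)$, and it carries $\gamma$ to a loop $\gamma_0 \subset N_N(\gamma)$ that lies in $l_1$ except on finitely many ``excursion'' sub-arcs, each of length at most $2N+1$.

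Then I would fill the excursions one at a time. An excursion $\sigma$ has both endpoints on $l_1$, at distance at most $2N+1$ in $X$, so they are joined in $l_1$ by an arc $p_\sigma$ of length at most $D_{l_1}(2N+1) \le \phi(2N+1)$. The concatenation $\sigma \cdot p_\sigma^{-1}$ is a simplicial loop of length at most $r_0 := \phi(2N+1) + 2N + 1$, so by the defining property of $i$ it bounds a simplicial null-homotopy contained in the $i(r_0)$-neighborhood of itself, hence in $N_{i(r_0) + \phi(2N+1)}(\gamma)$. Pushing $\sigma$ across this disk replaces it by $p_\sigma$; iterating over the finitely many excursions produces a loop $\gamma_1$ lying entirely in $l_1$. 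The key point to verify is that the running loop never strays far from $\gamma$: at every stage its image is a union of sub-arcs of $\gamma_0$ (within $N$ of $\gamma$) and arcs $p_\sigma$ (within $\phi(2N+1)$ of $\gamma$), and each filling step pushes out by at most a further $i(r_0)$. So everything so far stays inside $N_M(\gamma)$ for $M := \delta_0 + \max\{N, \phi(2N+1)\} + i(r_0)$.

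Finally, $\gamma_1$ lies in the embedded line $l_1 \cong \mathbb{R}$, so its image is a connected subset of $l_1$, namely an interval $l_1([\alpha, \beta])$, and by the previous paragraph this interval lies in $N_M(\gamma)$; hence $\gamma_1$ contracts to a point inside this interval, still within $N_M(\gamma)$. Concatenating all the homotopies gives a null-homotopy of the original $\gamma$ contained in $N_M(\gamma)$, and $M$ depends only on $X$, the quasi-line parameters of $L_1$ (hence on $X$, $\phi''$, $N''$), and $i$, as required. I expect the only real difficulty to be exactly this last point of bookkeeping: keeping the homotopy within a bounded neighborhood of $\gamma$ itself rather than of $L_1$, which could be far larger than any bounded neighborhood of $\gamma$. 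The observation that makes it go through is that the interval of $l_1$ swept out by $\gamma_1$ is assembled from pieces each of which is already close to $\gamma$, so the terminal contraction never leaves that interval, hence never leaves $N_M(\gamma)$.
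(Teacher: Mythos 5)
Your proposal is correct and is essentially the paper's own (only sketched) argument: the paper proves this lemma by citing ``the methods of Lemma \ref{L''_simp_conn}, together with this observation'' about the filling function $i$, i.e.\ exactly your scheme of spiking $\gamma$ to $l_1$, filling the short excursions using $i$ instead of attached disks, and contracting the resulting loop inside the interval of $l_1$ it sweeps out, while tracking distance to $\gamma$ itself. The only nitpick is the final constant: points of the arcs $p_\sigma$ lie within $\phi(2N+1)$ of excursion endpoints, which are themselves within roughly $N+\delta_0$ of $\gamma$, so $M$ should involve the sum $N+\phi(2N+1)$ rather than $\max\{N,\phi(2N+1)\}$ --- a harmless enlargement of $M$.
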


We shall now set up some constants so that we can show that no two vertices in $l$ can be $K$-separated by $L_1$, for a value of $K$ to follow.
Let $R > (\frac{1}{2}\phi (2N) + N + M)$, 
let $K_1 > [\frac{1}{2} \phi (2(N+R+1)) + (2N+R+1)+N]$, let $r_1$ be larger than $K_1+N$, and let $r_2$ be as in Lemma \ref{1.9}, with respect to $r_1$ and the other data we are working with.
Let $K>\max \{ \frac{1}{2}[\phi(K_1 + N+r_2) + K_1 + N + r_2], r_2 + R + 1 \}$.


In summary, we have the following conditions on constants:
\begin{itemize}
\item{$R> \frac{1}{2}\phi(2N)+N+M$}     
\item{$K_1 > \frac{1}{2} \phi (2(N+R+1)) + (2N+R+1)+N$}
\item{$r_1 > K_1+N$}
\item{$r_2$ is from Lemma \ref{1.9} and depends on $G, \phi, N, r_1$ and $m_0$, with $r_2>\max\{ r_1, m_0\}$}
\item{$K> \max \{ \frac{1}{2}[\phi(K_1 + N+r_2) + K_1 + N + r_2], r_2 + R + 1 \}$}
\end{itemize}
Recall also that $\phi \co \mathbb{R}_{\geq 0} \to \mathbb{R}_{\geq 0}$ is such that $\phi(t) \geq t$ for all $t \in \mathbb{R}_{\geq 0}$.

Now suppose that $a < b \in l$ are $K$-separated by $L_1$.
Let $X_1, X_2, X_3$ denote essential complementary components of $L$, and, for $i=1,2,3$, let $p_i$ denote a path from $a$ to $b$ in $X_i \cup L$ from Lemma \ref{1.9}, with respect to the constants $r_1$ and $r_2$.

We can alter each $p_i$ so that it is a simple (simplicial) path, by deleting any subpaths that begin and end at the same point.  Note that this does not alter any of the properties from Lemma \ref{1.9} that are satisfied by $p_i$.

As $a$ and $b$ are in distinct components of the complement of $L_1$ and each $p_i$ is a path from $a$ to $b$, it follows that $(p_i \cap L_1)$ is nonempty.
Let $x \in (p_i \cap L_1)$ for any $i$, and we shall show that $d_{inf}(x,L) > K_1$. 
It suffices to show that, for any point $c \in l$, $d(x, c)>K_1+N$.

As $r_1 > K_1+N$ and $x \in p_i$, if $c \in [a+r_2, b-r_2]_l$ then it is clear that $d(x, c)>K_1+N$ from the construction of $p_i$.
If $c \in [(B_{r_2}(a)\cup B_{r_2}(b)) \cap [a,b]_l]$, then note that $K > \frac{1}{2}[\phi(K_1 + N+r_2) + K_1 + N + r_2] \geq \frac{1}{2}[(K_1 + N+r_2) + K_1 + N + r_2] = K_1+N+ r_2$, and recall that $a$ and $b$ are $K$-separated by $L_1$.
Thus 
$$d (x,c) \geq d_{inf}(L_1,c) \geq d_{inf}(L_1, B_{r_2}(a) \cup B_{r_2}(b)) > K_1+N,$$
so $d(x,c) >K_1+N$.

It follows from the construction of $p_i$ that $x$ is of distance no more than $r_2$ from a point $d$ in $[a,b]_l$.
Now suppose that $c \in (-\infty , a-r_2]_l$.
If $d(x,c) \leq K_1+N$, then $d(d,c) \leq d(d,x) + d(x,c) \leq K_1 + N + r_2$, and hence length$[d,c]_l \leq \phi (K_1+N+r_2)$.
On the other hand, $x \in L_1$, so $x \notin B_K(a)$ and hence 
$$K \leq d(a,x) \leq d(a,c)+d(c,x) \leq {\rm length}[a,c]_l + (K_1 + N)$$
and
$$K \leq d(a,x)\leq d(a,d)+d(d,x) \leq {\rm length}[a,d]_l + r_2.$$
As $a \in (d,c)_l$, ${\rm length}[c,d]_l = {\rm length}[a,c]_l+{\rm length}[a,d]_l$, hence the addition of the above equations yields
$$2K \leq {\rm length}[c,d]_l + K_1 + N + r_2.$$
Combining this with the above observation that length$[d,c]_l \leq \phi (K_1+N+r_2)$, we have that
$$2K-K_1-N-r_2 \leq {\rm length}[a,d]_l \leq \phi(K_1 + N+r_2).$$

Thus $K \leq \frac{1}{2}[\phi(K_1 + N+r_2) + K_1 + N + r_2]$.  
But this contradicts our choice of $K$.
Thus $d(x,c) > K_1 + N$, and similarly if $c \in [b+r_2, \infty )_l$, then $d(x,c) > K_1+N$ as well.
Hence $d_{inf}(x, L) > K_1$.  
\\

Note that, if $i \neq j$, then $(p_i \cap p_j) \subset L$.
It follows from this and our previous observation that $p_i$ and $p_j$ will not meet in $L_1$.

For each $i$, let $q_i \co S^1 \to (X_i \cup L)$ traverse the closed simplicial path $p_i \cup [a,b]_{l}$.  
As $L$ is simply connected, Van Kampen's Theorem implies that each $X_i \cup L$ is simply connected.  Thus there is a map $g_i \co (D_i, \partial D_i) \to (X_i \cup L, q_i(S^1))$, where $D_i$ denotes a copy of the 2-disk $D^2$, and $g_i|_{\partial D_i} = q_i$.  

We shall denote by $\hat{p}_i$ the subpath of $\partial D_i$ mapped homeomorphically onto $p_i$ by $g_i$. 
Similarly let $\hat{[a,b]}$ denote the subpath of $\partial D_i$ mapped homeomorphically onto $[a,b]_{l}$, so $\partial D_i = \hat{p}_i \cup \hat{[a,b]}$. 
We may resize $D_i$ and assume that the restriction of $g_i$ to the domains $\hat{p}_i$ and $\hat{[a,b]}$ has unit speed.

Let $\hat{D} = D_1 \coprod D_2 \coprod D_3 / \sim$, where $\sim$ denotes the canonical identification of the subpaths $\hat{[a,b]}$ in $D_1, D_2, D_3$.
We shall from now on consider $D_1, D_2,$ and $D_3$ as subsets of $\hat{D}$.
Let $g$ denote the map from $\hat{D}$ into $X$ that is induced by $g_1, g_2, g_3$.  
The restriction of $g$ to $\hat{[a,b]}$ is a unit speed homeomorphism onto $[a,b]_l$; let $\hat{a}$ denote the preimage of $a$ under this restriction of $g$, and let $\hat{b}$ denote the preimage of $b$.
The restriction of $g$ to each $\hat{p}_i$ is also a unit speed homeomorphism.
As $a$ and $b$ are in different components of the complement of $L_1$, note that the preimage of $L_1$ under $g$ is a (possibly disconnected) subset of $\hat{D}$ that separates $\hat{a}$ from $\hat{b}$.

Note that each $p_i$ is simplicial and $L$ is a subcomplex of $X$, hence $\partial \hat{D}$ meets $g^{-1}(L)$ in only finitely many components.

As $L_1$ $K$-separates $a$ and $b$, and $K>R$, 
it follows that there exist points $e_1< e_2 \in (a,b)_l$ such that $B_{R}(e_1) \cap L_1 = $ \o $  = B_{R}(e_2) \cap L_1 $, and $e_1$ and $e_2$ are in different components of $X-L_1$, say $Y_1$ and $Y_2$ respectively.
Furthermore, we may choose $e_1$ and $e_2$ such that, fixing $\epsilon \ll 1 < d(e_1, e_2)$ (recalling that the edges of $X$ are of length 1), we have that for any $q \in [e_1+\epsilon, e_2-\epsilon]_l$, $B_{R}(q)$ meets $L_1$.
Let $B_i$ denote $B_{R}(e_i)$ for $i=1,2$.

As $g$ is a homeomorphism from $\hat{[a,b]}$ to $[a,b]_l$, for each $k=1,2$, $g^{-1}(e_k) \cap \hat{[a,b]}$ is one point - denote it by $\hat{e}_k$.

For any $1 \leq i < j \leq 3$, let $D_{ij}$ denote $(D_i \cup D_j) \subset \hat{D}$.  
Note that $D_{ij}$ is a copy of the disk and $\partial D_{ij} = \hat{p}_i \cup \hat{p}_j$.
Recall that $g$ was defined to take $\hat{D}$ into $X$; from now on, we shall have $g$ denote the restriction of this map to $D_{ij}$.
Note that the image of this restriction is contained in $(X_i \cup X_j \cup L)$.

Let $\hat{B_k}$ denote the connected component of $g^{-1}(B_k)$ that contains $\hat{e_k}$.
As $L_1$ separates $B_1$ from $B_2$, it follows that $g^{-1}(L_1)$ separates $\hat{B_1}$ from $\hat{B_2}$.
Let $A_{ij}$ denote the connected component of $g^{-1}(Y_1)$ in $D_{ij}$ that contains $\hat{B_1}$.  
Thus there is some connected component, $\hat{\delta}_{ij}$, of the frontier of $A_{ij}$ that separates $\hat{B_1}$ from $\hat{B_2}$.
Note that $\hat{\delta}_{ij}$ is contained in $g^{-1}(L_1)$.

We may further take $\hat{\delta}_{ij}$ to be a simple curve by removing subpaths that begin and end at the same point, while maintaining that $\hat{\delta}_{ij}$ separates $\hat{B_1}$ from $\hat{B_2}$.

A priori, $\hat{\delta}_{ij}$ may or may not be a closed curve.

\begin{lem}\label{scc_case}
$\hat{\delta}_{ij}$ is not a closed curve, but rather must be an arc.

\end{lem}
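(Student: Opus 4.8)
\emph{The plan.} Assume for contradiction that $\hat\delta_{ij}$ is a simple closed curve in $D_{ij}$. I will use it to produce a point of $[a,b]_{l}$ lying in $L_{1}$ that is ruled out by the choice of $e_{1},e_{2}$. First, $\hat\delta_{ij}$ cannot equal $\partial D_{ij}$: one has $g(\partial D_{ij})=p_{i}\cup p_{j}$, and $a$ is an endpoint of $p_{i}$ with $d(a,L_{1})\ge K>0$, so $p_{i}\not\subset L_{1}$, whereas $\hat\delta_{ij}\subset g^{-1}(L_{1})$. A simple closed curve meeting $\partial D_{ij}$ without crossing it is treated in the same spirit, so we may assume $\hat\delta_{ij}$ lies in the interior of $D_{ij}$; by the Jordan curve theorem it then bounds a sub-disk $\Delta$, namely the component of $D_{ij}\setminus\hat\delta_{ij}$ disjoint from $\partial D_{ij}$.

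Since $\hat\delta_{ij}$ separates the connected sets $\hat B_{1}\ni\hat e_{1}$ and $\hat B_{2}\ni\hat e_{2}$, these lie in the two distinct components $\Delta$ and $D_{ij}\setminus\overline\Delta$ of $D_{ij}\setminus\hat\delta_{ij}$, so exactly one of $\hat e_{1},\hat e_{2}$ lies in $\Delta$. Relabelling $a,b,e_{1},e_{2}$ if necessary, assume $\hat e_{1}\in\Delta$. The endpoint $\hat a$ of the arc $\widehat{[a,b]}$ lies on $\partial D_{ij}$, hence not in $\Delta$; and since $g(\hat a)=a$ with $d(a,L_{1})\ge K>0$, we have $\hat a\notin g^{-1}(L_{1})\supset\hat\delta_{ij}$, so $\hat a\in D_{ij}\setminus\overline\Delta$. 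Therefore the sub-arc of $\widehat{[a,b]}$ from $\hat a$ to $\hat e_{1}$ passes from the complement of $\overline\Delta$ into $\Delta$, and so meets the common frontier $\hat\delta_{ij}$ at a point $\hat c$. As $g$ restricts to a homeomorphism of $\widehat{[a,b]}$ onto $[a,b]_{l}$, the point $c:=g(\hat c)$ lies in $[a,e_{1}]_{l}$, while $c\in g(\hat\delta_{ij})\subset L_{1}$. But by the choice of $e_{1}$ and $e_{2}$, which delimit the maximal subsegment of $[a,b]_{l}$ every $R$-ball of which meets $L_{1}$, each $q\in[a,e_{1}]_{l}$ has $B_{R}(q)\cap L_{1}=\varnothing$, so in particular $q\notin L_{1}$ --- a contradiction. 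Hence $\hat\delta_{ij}$ is not a closed curve, and being a simple curve it must be an arc.

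The real work lies in the plane topology of the first two steps: verifying that a simple-curve component of the frontier of the connected open set $A_{ij}$ is, apart from the degenerate cases noted, an embedded circle in the interior of $D_{ij}$, so that the Jordan curve theorem applies cleanly and the separation hypothesis forces exactly one of $\hat e_{1},\hat e_{2}$ into the enclosed disk $\Delta$. A second point to pin down is that $e_{1}$ and $e_{2}$ were chosen so that $[a,e_{1}]_{l}$ and $[e_{2},b]_{l}$ are disjoint from $L_{1}$ (they may be taken to be the endpoints of the convex hull in $l$ of the set of points of $[a,b]_{l}$ whose $R$-balls meet $L_{1}$); once that is in place, the contradiction above is immediate.
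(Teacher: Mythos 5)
There is a genuine gap, and it sits exactly where your proposal declares the argument ``immediate.'' Your contradiction needs the claim that every $q\in[a,e_1]_l$ has $B_R(q)\cap L_1=\emptyset$ (or at least that $[a,e_1]_l\cap L_1=\emptyset$), but that is not how $e_1,e_2$ are chosen. In the proof of Theorem \ref{2.1} they are chosen so that $B_R(e_1)\cap L_1=\emptyset=B_R(e_2)\cap L_1$, so that $e_1,e_2$ lie in different components of $X-L_1$, and so that every point of $[e_1+\epsilon,e_2-\epsilon]_l$ has its $R$-ball meeting $L_1$; nothing forces $e_1$ to lie in the same component of $X-L_1$ as $a$, and the segment $[a,e_1]_l$ may cross $L_1$ many times. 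Indeed the paper's own proof of this lemma explicitly allows $g^{-1}(L_1)$ to meet $\hat{(a,e_1)}$ and only records that such points land in $L^-$, so producing a crossing point $\hat{c}$ of $\hat{\delta}_{ij}$ with the arc from $\hat{a}$ to $\hat{e_1}$ yields no contradiction at all. Your proposed repair --- taking $e_1,e_2$ to be the endpoints of the convex hull of $\{q\in[a,b]_l: B_R(q)\cap L_1\neq\emptyset\}$ --- is not available: first, $\hat{\delta}_{ij}$ is defined in terms of the already-fixed $e_1,e_2$ (via $B_1=B_R(e_1)$, $Y_1$, $A_{ij}$), so the lemma must be proved for that choice; second, with your choice the property that every point of $[e_1+\epsilon,e_2-\epsilon]_l$ has its $R$-ball meeting $L_1$ generally fails (the set of such $q$ need not be an interval), and that property is used essentially later, both in building $\chi$ via the projection $\pi_\chi$ and in the proof of Lemma \ref{second_lemma}. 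The two properties cannot in general be arranged simultaneously.

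More broadly, the lemma is not a consequence of plane topology alone, so identifying ``the real work'' as the Jordan-curve bookkeeping misplaces the difficulty. A priori $g^{-1}(L_1)$ can contain a simple closed curve encircling $\hat{e_1}$; ruling this out requires genuine geometric input: the image loop $\delta_{ij}=g(\hat{\delta}_{ij})$ lies in $L_1\cap(X_i\cup X_j\cup L)$ and, by Lemma \ref{coarsely_sc}, bounds a null-homotopy inside $N_M(L_1)$, which misses $B_{R'}(e_1)$; one then decomposes $L$ into $L^+$, $L^-$ and $B_{R'}(e_1)\cap L$ and runs the winding-number induction over the components $\mathscr{C}''$ of $h^{-1}(L)$ to show $\hat{\delta}_{ij}$ would have winding number zero about $\hat{e_1}$, contradicting that it encloses $\hat{B_1}$. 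None of that machinery is replaced by your argument, so as written the proposal does not prove the lemma.
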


The proof of this lemma follows the completion of this argument.


Thus $\hat{\delta}_{ij}$ is a curve with two distinct endpoints in $\hat{p}_i \cup \hat{p}_j$.
Let $\hat{x}, \hat{y}$ denote these endpoints.
We may assume that these endpoints are exactly the intersection of $\hat{\delta}_{ij}$ with $\hat{p}_i \cup \hat{p}_j$.

For any $c_1, c_2 \in l$ such that $a \leq c_1 \leq c_2 \leq b$, let $\hat{[c_1, c_2]}$ denote the arc contained in $\hat{[a,b]}$ that is mapped homeomorphically onto $[c_1, c_2]_l$ by $g$, and define the notation $\hat{[c_1, c_2)}, \hat{(c_1, c_2]},$ and $\hat{(c_1, c_2)}$ simiarly.
Thus $\hat{[e_1, e_2]}$ is a path in $D_{ij}$ from $\hat{e_1}$ to $\hat{e_2}$, and hence $\hat{\delta}_{ij}$ must meet $\hat{[e_1, e_2]}$.

Let $\hat{\gamma_x}$ be the subpath of $\hat{\delta}_{ij}$ from $\hat{x}$ to $g^{-1}([e_1, e_2]_l)$, that does not meet $g^{-1}([e_1, e_2]_l)$ in its interior, and define $\hat{\gamma_y}$ similarly.
Let $x$ denote $g(\hat{x})$, let $y$ denote $g(\hat{y})$, let $\gamma_x$ denote the path $g(\hat{\gamma_x})$ and let $\gamma_y$ denote $g(\hat{\gamma_y})$.
Note that $\gamma_x$ ($\gamma_y$ respectively) is a path from $x$ ($y$ respectively) to $[e_1, e_2]_l$, that does not meet $[e_1, e_2]_l$ except at one endpoint.

The path $\gamma_x$ is contained in $(X_i \cup X_j \cup L)$, since the image of $g$ is contained in this union.
As $\hat{\delta}_{ij} \subset g^{-1}(L_1)$, $\gamma_x$ is contained in $L_1$.

Moreover as $L_1 \cap B_1 = $ \o, $\gamma_x$ is contained in $X-B_1$.
Thus $\gamma_x$, and similarly $\gamma_y$, is contained in $[L_1\cap (X_i \cup L \cup X_j)\cap (X-B_1)]$.

Recall that, by definition, the $R$-ball about each point in $[e_1+ \epsilon, e_2-\epsilon]_l$ meets $L_1$.
Thus the $(R+\epsilon)$-ball about each point of $[e_1, e_2]_l$ meets $L_1$, and the $(N+R+\epsilon)$-ball about each point of $[e_1, e_2]_l$ meets $l_1$.
Let $\pi_\chi \co [e_1, e_2]_l \to l_1$ denote nearest point projection.
Then Im$(\pi_\chi)$ contains a $2(N+R+\epsilon)$-chain from $\pi_\chi(e_1)$ to $\pi_\chi (e_2)$, with consecutive points in the chain connected in $l_1$ by paths of length no more than $\phi (2(N+R+\epsilon))$.
Let $\chi$ denote $CH ($Im$(\pi_\chi),l_1)$, and it follows that $\chi$ is contained in the $[\frac{1}{2} \phi (2(N+R+\epsilon)) + (N+R+\epsilon)]$-neighborhood of $[e_1, e_2]_l$.
Let $A$ denote $[\frac{1}{2} \phi (2(N+R+\epsilon)) + (N+R+\epsilon)]$.

As $x,y \in (p_i \cup p_j) \cap L_1$, we have from an argument above that $d_{inf}(x,L), d_{inf}(y,L)>K_1$.
Let $\pi \co L_1 \to l_1$ denote nearest point projection.
Since $L_1$ is a $(\phi, N)$ quasi-line, $d(z, \pi(z)) \leq N$ for any $z \in L_1$.
We claim that $\pi(x), \pi(y) \notin \chi$.
To see this, we have that
$$d_{inf}(\pi(x), \chi) \geq d_{inf}(B_N(x), N_A(L)) \geq K_1 - N - A.$$
As $K_1 > N+A$, we have that $d_{inf}(\pi(x), \chi)>0$.
Similarly $d_{inf}(\pi (y), \chi) >0$, and thus $\pi(x), \pi(y) \notin \chi$. 

Let $\gamma_x'$ denote the component of $CH(\pi (\gamma_x),l_1)-\chi$ that contains $\pi(x)$, so $\gamma_x'$ is an arc in $l_1$ from $\pi (x)$ to $\chi$.
Define $\gamma_y'$ simiarly.

A less immediate result is the following.

\begin{lem}\label{second_lemma}
$\pi(x) \notin \gamma_y'$ and $\pi(y) \notin \gamma_x'$.

\end{lem}

A proof for this lemma is given after the completion of this argument.

So $\gamma_x', \gamma_y'$ are segments in $l_1$ that both meet $\chi$ in precisely one point.
The subspace $l_1 - \chi$ consists of two components, and it follows from Lemma \ref{second_lemma} that $\gamma_x'$ is contained in the closure of one and $\gamma_y'$ in the closure of the other.

Our argument up to this point has been on the restriction of $g$ to $D_{ij}$.  
Note that, while $\hat{\delta}_{ij}$ depended in $i,j$, the points $\hat{e_1}$ and $\hat{e_2}$ did not, nor did the segment $\chi \subset l_1$.

Let $k \in \{ 1,2,3\}$, $k \neq i,j$, and we can run the same argument in $D_{ik}$.
This will result in points $z,w \in (X_i \cap p_i) \cup (X_k \cap p_k)$, and segments $\gamma_z', \gamma_w'$ from $\pi(z)$, $\pi(w)$ respectively, to $\chi$, with $\gamma_z'$ and $\gamma_w'$ contained in the closures of different components of $l_1-\chi$.

Without loss of generality, we may suppose that $\gamma_x'$ and $\gamma_z'$ are contained in the closure of the same component of $l_1-\chi$, and $\gamma_y'$ and $\gamma_w'$ are contained in the closure of the other component.
We would like to say that $x$ and $z$, or $y$ and $w$, are contained in distinct elements of $\{ X_1, X_2, X_3\}$.
If this is not the case, then we must have that all four points are contained in $X_i$.
Then we will run our above argument for $D_{jk}$.
This will result in another pair of points, say $\overline{z}, \overline{w}$, and segments $\gamma_{\overline{z}}', \gamma_{\overline{w}}'$.
Without loss of generality, we shall suppose that $\gamma_x'$ and $\gamma_{\overline{z}}'$ are contained in the closure of the same component of $l_1-\chi$, and we must have that $x$ and $\overline{z}$ are contained in different $X_l$'s.

Thus, without loss of generality, we shall assume that $x$ and $z$ are contained in distinct components of the complement of $L$, say with $x \in X_l$ and $z \in X_{l'}$.

Note that $\gamma_x'$ is made up of a $2N$-chain in $\pi(\gamma_x) \subset l_1$, together with connecting segments in $l_1$ of length no more than $\phi (2N)$.
It follows that $\gamma_x'$ is contained in the $(\frac{1}{2}\phi (2N) + N)$-neighborhood of $\gamma_x$.
Similarly $\gamma_{z}'$ is contained in the $(\frac{1}{2}\phi (2N) + N)$-neighborhood of $\gamma_{z}$.
As $\gamma_x \subset (X_l \cup L)$, we have that $\gamma_x' \subset N_{\frac{1}{2}\phi(2N) + N}(X_l \cup L)$, and as $\gamma_{z} \subset (X_{l'} \cup L)$, we have that $\gamma_{z}' \subset N_{\frac{1}{2}\phi(2N) + N}(X_{l'} \cup L)$.

Recall that $d_{inf}(x,L)>K_1$, and we also have that $d_{inf}(z, L)>K_1$.
Thus $d_{inf}(\pi(x), L)>(K_1-N)$ and $d_{inf}(\pi (z), L)>(K_1-N)$.
Recall that $\pi(x) \in [\gamma_x' \cap X_l]$, and $\pi (z) \in [\gamma_z' \cap X_{l'}]$, and hence
$$d_{inf}(\pi(x), \gamma_z') \geq d_{inf}([X_l - N_{K_1-N}(L)], N_{\frac{1}{2}\phi(2N) + N}(X_{l'} \cup L))$$
and
$$d_{inf}(\pi(z), \gamma_x') \geq d_{inf}([X_{l'} - N_{K_1-N}(L)], N_{\frac{1}{2}\phi(2N) + N}(X_l \cup L)).$$
As $K_1-N > (\frac{1}{2}\phi(2N) + N)$, we have that $d_{inf}(\pi(x), \gamma_z'), d_{inf}(\pi(z), \gamma_x') >0$, thus  $\pi(x) \notin \gamma_{z}'$ and $\pi(z) \notin \gamma_x'$. 
But $\pi(x) \in \gamma_x'$, $\pi(z_s) \in \gamma_{z_s}'$, and both $\gamma_x'$ and $\gamma_{z_s}'$ are segments in the same component of $\overline{l_1-\chi}$, and both contain the endpoint of that component.

As $l_1$ is an embedded copy of the real line, this situation is impossible.
Thus we have reached a contradiction, so the conclusion of the Theorem follows.
\end{proof}

It remains to prove Lemmas \ref{scc_case} and \ref{second_lemma}.

\noindent {\bf Lemma \ref{scc_case}.
{\it
Let $g$ be a continuous map from $D_{ij}$ to $X$ as defined in the proof of Theorem \ref{2.1}, with $A_{ij}$ the component of $g^{-1}(Y_1)$ that contains $\hat{B_1}$ and $\hat{\delta}_{ij}$ the component of the frontier of $A_{ij}$ that separates $\hat{B_1}$ from $\hat{B_2}$, made to be simple by the removal of loops that do not change that $\hat{\delta}_{ij}$ separates $\hat{B}_1$ from $\hat{B}_2$.

Then $\hat{\delta}_{ij}$  not a closed curve.
}

\begin{proof}
Suppose that $\hat{\delta}_{ij}$ is a closed curve.

Then $\hat{\delta}_{ij}$ is a curve about $\hat{B_1}$ or $\hat{B_2}$; without loss of generality, let's say it is about $\hat{B_1}$.
Let $\delta_{ij}$ denote the closed (not necessarily embedded) curve $g(\hat{\delta}_{ij})$, which is contained in $[L_1 \cap (X_i \cup X_j \cup L)]$.  

By Lemma \ref{coarsely_sc}, $\delta_{ij}$ admits a null-homotopy in $X$ that is contained in the $M$-neighborhood of $L_1$.
Let $\mathscr{D}$ denote a copy of the 2-disk, with $h \co \mathscr{D} \to X$ representing this null-homotopy.
Let $k \co \partial \mathscr{D} \to \hat{\delta}_{ij}$ be a homeomorphism, and further choose $h$ and $k$ such that $h|_{\partial \mathscr{D}} = g \circ k|_{\partial \mathscr{D}}$.

Recall that $L$ is a $(\phi, N)$ quasi-line.
We claim that, if $R'$ is any constant greater than $\frac{1}{2}\phi(2N) + N$, then the $R'$-ball about $e_1$ separates $L$ into two infinite components, with one contained in $N_N((-\infty, e_1)_l)$ and the other contained in $N_N((e_1, \infty)_l)$.

To see this, suppose for a contradiction that there is a point $q \in (L-B_{R'}(e_1)) \cap N_N((-\infty, e_1)_l) \cap N_N((e_1, \infty)_l)$.
Then there must be points $q^- \in (-\infty, e_1)_l$ and $q^+\in (e_1, \infty)_l$ that do not meet $B_{R'-N}(e_1)$, that are both of distance no more than $N$ from $q$, and hence $d(q^-, q^+) \leq 2N$.
It follows that $length[q^-, q^+]_l \leq \phi (2N)$ and hence that one of $q^-, q^+$ is of distance no more than $\frac{1}{2}\phi (2N)$ from $e_1$.
But $(R'-N)>\frac{1}{2}\phi(2N)$, so neither $q^-$ nor $q^+$ was contained in $B_{R'-N}(e_1)$, a contradiction.
It follows that $B_{R'}(e_1)$ separates $L$ as desired.

Now let $R'$ denote $R-M$, and note that $R'> \frac{1}{2}\phi (2N)+N$.
Let $l^+$ denote $(e_1, \infty)_l$, let $l^-=(-\infty, e_1)_l$, let $L^+$ denote $(N_N(l^+)\cap L)-B_{R'}(e_1)$ and let $L^-$ denote $(N_N(l^-) \cap L)-B_{R'}(e_1)$.
Thus the regions $L^+, L^-$ and $(B_{R'}(e_1)\cap L)$ may not be connected subsets of $L$, but they are disjoint, and their union is $L$.

Note that, while Im$(h)$ is contained in $N_{M}(L_1)$, it need not be contained in $(X_i \cup X_j \cup L)$ nor in $L_1$.
However, as $L_1$ does not meet $B_1 = B_R(e_1)$, it follows that Im$(h)$ does not meet $B_{R'}(e_1)$.
Thus Im$(h) \cap L$ is contained in $L^+ \cup L^-$.

We shall want to consider $h^{-1}(L) \subset \mathscr{D}$.  
Let $\mathscr{C}$ denote the set of components of $h^{-1}(L) \cap \partial \mathscr{D}$.  
Note that, as $h = g \circ k$ on $\partial \mathscr{D}$, as $g$ is cellular and as $L$ is a subcomplex of $S$, we have that $\mathscr{C}$ is finite.

Let $PM = \{ +, -\}$, so each component of $h^{-1}(L)$ is contained in exactly one $h^{-1}(L^\rho)$, with $\rho \in PM$.
Thus the same is true for each element of $\mathscr{C}$.
Let $\mathscr{C}^\rho$ denote $\mathscr{C} \cap h^{-1}(L^\rho)$ for each $\rho \in PM$.
It follows that no component of $\mathscr{C}^+$ is connected to any component of $\mathscr{C}^-$ through $h^{-1}(L)$.

Recall that $h(\partial \mathscr{D}) \subset $ Im$(g)$, so we have that $h(\partial \mathscr{D}) \subset (X_i \cup L \cup X_j)$.  
Hence, for each $c \in \mathscr{C}$, either $c$ meets in $\partial \mathscr{D}$ only $h^{-1}(X_i)$, only $h^{-1}(X_j)$, or meets both $h^{-1}(X_i)$ and $h^{-1}(X_j)$.
Let $\mathscr{C}'$ denote the collection of components $c \in \mathscr{C}$ that, in $\partial \mathscr{D}$, meet both $h^{-1}(X_i)$ and $h^{-1}(X_j)$.  

Recall the identification $k \co \partial \mathscr{D} \to \hat{\delta}_{ij}$, where $h|_{\partial \mathscr{D}} = g \circ k|_{\partial \mathscr{D}}$.
Let $k(\mathscr{C}') = \{ k(c) : c \in \mathscr{C}'\}$, and 
thus the regions in $k(\mathscr{C}')$ are exactly those components of $g^{-1}(L) \cap \hat{\delta}_{ij}$ that meet both $g^{-1}(X_i)$ and $g^{-1}(X_j)$ in $\hat{\delta}_{ij}$.

Let us return to the disk $\mathscr{D}$, and consider again the full components of $h^{-1}(L)$.  Consider only those components that meet $\mathscr{C}'$, so only those components that separate a component of $h^{-1}(X_i) \cap \partial \mathscr{D}$ from a component of $h^{-1}(X_j) \cap \partial \mathscr{D}$.
Let $\mathscr{C}''$ denote the set of these components.


We first claim that $\mathscr{C}''$ is nonempty.  For this, it suffices to note that $\mathscr{C}'$ is nonempty, which will follow if we know that $\hat{\delta}_{ij}$ meets both $g^{-1}(X_i)$ and $g^{-1}(X_j)$.

Thus, we shall shift our attention back to $D_{ij}$.  
Suppose that this is not the case, so without loss of generality $\hat{\delta}_{ij}$ is contained in $g^{-1}(X_i) \cup g^{-1}(L)$.  

Recall that $[g^{-1}(L_1) \cap \hat{(a,e_1)}] \subset [g^{-1}(X-B_1) \cap \hat{(a,e_1)}]$ must be contained in $g^{-1}(L^-)$, and $[g^{-1}(L_1) \cap \hat{(e_1, b)}]$ is contained in $g^{-1}(L^+)$.
Recall that $\hat{\delta}_{ij}$ is a simple closed curve about $\hat{e_1}$, hence it must contain an arc that connects $\hat{(a,e_1)}$ to $\hat{(e_1, b)}$ within $D_j$.
Thus this arc must connect $g^{-1}(L^+)$ to $g^{-1}(L^-)$, and must be entirely contained in $g^{-1}(L)$, since we've assumed that $\hat{\delta}_{ij}\subset [g^{-1}(X_i) \cup g^{-1}(L)]$.
It follows that this arc must meet $g^{-1}(B_{R'}(e_1)) \subset g^{-1}(B_1)$.
But $\hat{\delta}_{ij}$ is contained in $g^{-1}(L_1)$, so this would imply that $g^{-1}(B_1)$ meets $g^{-1}(L_1)$, and hence that $B_1$ meets $L_1$, a contradiction.
Hence $\mathscr{C}''$ must be nonempty.


Note that we can see from the above paragraph that $\hat{\delta}_{ij}$ must meet both $g^{-1}(L^+)$ and $g^{-1}(L^-)$, and any arc in $\hat{\delta}_{ij}$ connecting $g^{-1}(L^+)$ to $g^{-1}(L^-)$ must lie in $g^{-1}(X_i)$ or $g^{-1}(X_j)$.
Furthermore, there must be at least one such arc in $g^{-1}(X_i)$ and at least one in $g^{-1}(X_j)$.

It follows that $\mathscr{C}' \cap \mathscr{C}^-$ and $\mathscr{C}' \cap \mathscr{C}^+$ must both be nonempty.  
Any component of $\mathscr{C}'$ is contained in a component of $\mathscr{C}''$, so it follows that $\mathscr{C}'' \cap h^{-1}(L^+)$ and $\mathscr{C}'' \cap h^{-1}(L^-)$ are both nonempty.
But $h^{-1}(L^+)$ and $h^{-1}(L^-)$ do not meet, so $|\mathscr{C}''| \geq 2$.


Thus there exists a component $\eta$ of $\partial \mathscr{D} - \mathscr{C}''$ that meets a component $C^+$ of $(\mathscr{C}'' \cap h^{-1}(L^+))$ and a component $C^-$ of $(\mathscr{C}'' \cap h^{-1}(L^-))$.
Suppose that $|\mathscr{C}''| = 2$, so $\mathscr{C}'' = \{ C^+, C^-\}$.

Then the component of $\mathscr{D}-(C^+ \cup C^-)$ that contains $\eta$ meets exactly one other component of $\partial \mathscr{D}-\mathscr{C}''$, say $\eta'$.
Let $p$ be a point in the interior of $\eta$, fix $p'$ in the interior of $\eta'$, and consider the two components of $\partial \mathscr{D}-\{ p, p'\}$.

One of these components must contain $C^+ \cap \partial \mathscr{D}$ and be disjoint from $C^- \cap \partial \mathscr{D}$ and the other must contain $C^- \cap \partial \mathscr{D}$ and be disjoint from $C^+\cap \partial \mathscr{D}$.  Let $\gamma^+$ denote the former component, and $\gamma^-$ the latter.

We can moreover choose $p$ and $p'$ so that $h(p)$ and $h(p')$ are both contained in $X_i$ or both contained in $X_j$.  
Without loss of generality, let's suppose both are contained in $X_i$. 

Consider again $D_{ij}$.  
Recall that $\hat{[a,b]}$ is a simple curve that contains $\hat{e_1}$ and separates $D_{ij}$ into two components, and note that $k(p), k(p') \in \hat{\delta}_{ij}$ are contained in the same component of $D_{ij}-\hat{[a,b]}$.

It may be the case that $k(\gamma^-)$ meets $\hat{(e_1, b]}$, but as $\hat{(e_1,b]} \subset g^{-1}(L^+)$, and $\gamma^-$ does not meet $\mathscr{C}^+$, note that $k(\gamma^-)$ will not cross from $g^{-1}(X_i)$ to $g^{-1}(X_j)$ through $\hat{(e_1, b]}$.
Similarly $k(\gamma^+)$ does not cross from $g^{-1}(X_i)$ to $g^{-1}(X_j)$ through $\hat{[a,e_1)}$.
Thus $\hat{\delta}_{ij} = [k(\gamma^+) \cup k(\gamma^-) \cup k(p) \cup k(p')]$ is the union of two segments that meet each other in the same component of $D_{ij}-\hat{[a,b]}$, and each component only crosses from $g^{-1}(X_i)$ to $g^{-1}(X_j)$ through $\hat{[a,e_1)}$ or $\hat{(e_1,b]}$.
As $g^{-1}(B_1)$ separates $g^{-1}(L)$ and contains $\hat{e_1}$, and $\hat{\delta}_{ij}$ does not meet $g^{-1}(B_1)$, it follows that $\hat{\delta}_{ij}$ has winding number zero about $\hat{e_1}$, a contradiction.  
Hence $|\mathscr{C}''| > 2$.

Before moving on to the general case, we note that we did not need such strong conditions on $\mathscr{D}$ for the above argument to work.
Suppose still that $h$ maps $\mathscr{D}$ into $N_{M}(L_1)$, and $h$ maps $\partial \mathscr{D}$ into $(X_i \cup X_j \cup L) $, so that we may define $\mathscr{C}, \mathscr{C}', \mathscr{C}''$ as before.
Suppose that $k$ is an embedding of $\partial \mathscr{D}$ into $D_{ij}$.
Retain $\hat{\delta}_{ij}$ and the map $g$ as was defined earlier.

Suppose however that $h$ is not equal to $g \circ k$ when restricted to a region of $\partial \mathscr{D}$, and moreover, that the image under $k$ of this region need not be in $\hat{\delta}_{ij}$.

In particular, let $c \in \mathscr{C}-\mathscr{C}'$ be this region and suppose that $g \circ k = h$ except on $c$.

Let $\iota_1$ and $\iota_2$ denote the endpoints of $c$ in $\partial \mathscr{D}$, let $\hat{c}$ denote the arc in $\hat{\delta}_{ij}$ that connects $k(\iota_1)$ to $k(\iota_2)$ and is not $k(\partial D - c)$.
As $c \notin \mathscr{C}'$, $k(\iota_1)$ and $k(\iota_2)$ must both be in $D_i$ or both be in $D_j$.
Hence we may choose a path $\iota$ that connects $k(\iota_1)$ to $k(\iota_2)$ and is contained entirely in $D_i$ or $D_j$, and does not meet $\hat{[a,b]}$, except possibly at its endpoints, if $k(\iota_1)$ or $k(\iota_2)$ is in $\hat{[a,b]}$.

Consider the closed curve attained by replacing $\hat{c}$ in $\hat{\delta}_{ij}$ with $\iota$, and the closed curve $\iota \cup \hat{c}$.
As we are still assuming that $|\mathscr{C}''| = 2$, note that our previous argument may be applied to show that the former curve has winding number zero about $\hat{e_1}$.
Thus if $\hat{c}$ is such that $\iota \cup \hat{c}$ also has winding number zero about $\hat{e_1}$, then it would follow that $\hat{\delta}_{ij}$ has winding number zero about $\hat{e_1}$, a contradiction.

Moreover, if there were more than one region like $c$ in $\mathscr{C}-\mathscr{C}' \subset \partial \mathscr{D}$ on which $\partial \mathscr{D}$ and $\hat{\delta}_{ij}$ did not correspond via $k$, and where each corresponding pair of paths $\iota$ and $\hat{c}$ made a curve with winding number zero about $\hat{e_1}$, then a contradiction would also follow.


To complete the proof that $\hat{\delta}_{ij}$ cannot be a closed curve, we shall induct on $|\mathscr{C}''|$ to get a contradiction in every case.
Let $D_{ij}$, $g$ and $\hat{\delta}_{ij}$ be defined as above.

We shall work with exclusively with the following type of situation.
Let $\mathscr{D}_0$ be a disk, let $h_0$ be any continuous map of $\mathscr{D}_0$ into $N_{M}(L_1)$, with $h_0$ taking $\partial \mathscr{D}_0$ into $(X_i \cup X_j \cup L)$, and let $k_0 \co \partial \mathscr{D}_0 \to D_{ij}$ be an embedding.
Parallel with our definitions of the sets associated to $\mathscr{D}$ and the map $h$,
\begin{itemize}
\item{let $\mathscr{C}_0$ denote the set of components of $h_0^{-1}(L) \cap \partial \mathscr{D}_0$,}

\item{let $\mathscr{C}_0'$ denote the subset of components $c \in \mathscr{C}_0$ that meet both $h_0^{-1}(X_i)$ and $h_0^{-1}(X_j)$ in $\partial \mathscr{D}_0$,}
\item{let $\mathscr{C}_0''$ denote the components of $h_0^{-1}(L)$ in $\mathscr{D}_0$ that meet $\mathscr{C}_0'$, and}

\item{for each $\rho \in PM$, let $\mathscr{C}_0^\rho$ denote $\mathscr{C}_0 \cap h_0^{-1}(L^\rho)$.}
\end{itemize}

We will assume that $|\mathscr{C}_0''|$ is finite and greater than 2.

Let $\mathscr{S}_0 \subset \mathscr{C}_0-\mathscr{C}'_0$ and suppose that the restriction of $k_0$ to $\partial \mathscr{D}_0-\mathscr{S}_0$ is an embedding into $D_{ij}$ with image contained in $\hat{\delta}_{ij}$.
Suppose further that, on the restricted domain $\partial \mathscr{D}_0-\mathscr{S}_0$, $h_0 = g \circ k_0$.

For each $s \in \mathscr{S}_0$, let $\hat{s}$ denote the arc that is the component of $\hat{\delta}_{ij}$ minus the image under $k_0$ of the endpoints of $s$, that does not meet $k_0(\partial D - s)$.
Let $\hat{\mathscr{S}_0} = \{ \hat{s} : s \in \mathscr{S}_0\}$.
We shall assume that $k_0|_{\partial \mathscr{D}_0-\mathscr{S}_0}$ does not interchange the ``order'' of segments inherited from $\partial \mathscr{D}_0$ and $\hat{\delta}_{ij}$, in the sense that there exists a homotopy in $D_{ij}$ rel $k_0(\partial \mathscr{D}_0-\mathscr{S}_0)$ that takes each $\hat{s}$ to $k_0(s)$.

Note that since $s \notin \mathscr{C}'_0$, the two components of $\partial \mathscr{D}_0-\mathscr{C}_0$ that $s$ meets are both contained in $h_0^{-1}(X_i)$ or are both contained in $h_0^{-1}(X_j)$.
Thus both endpoints of $\hat{s}$ are contained in $D_i$ or $D_j$.
Hence for each $s \in \mathscr{S}_0$, there exists a path $\iota_s$ that connects the endpoints of $\hat{s}$, the interior of which is contained in $D_{ij}-\hat{[a,b]}$, and is such that a small neighborhood $n$ of $\hat{s}$ in $\hat{\delta}_{ij}$ has $(n-\hat{s}) \cup \iota_s$ is a path in $D_i$ or $D_j$.  
(In other words, $\iota_s$ may meet $\hat{[a,b]}$ at its endpoints, but $(n-\hat{s}) \cup \iota_s$ does not cross $\hat{[a,b]}$ from $D_i$ into $D_j$ or vice versa.)

Also assume that the collection of closed curves $\hat{s} \cup \iota_s$ all have winding number zero about $\hat{e_1}$.

We shall refer to the maps and subspaces associated to $\mathscr{D}_0$ and $h_0$ defined and satisfying the hypotheses above as the {\em data} associated to $\mathscr{D}_0$.
In the case that $\mathscr{D}_0$ and its data satisfies all the hypotheses above, we shall say that $\mathscr{D}_0$ (with data implicit) {\em has property P}.

Note that our original disk $\mathscr{D}$, with $\mathscr{S} =$ \o, has property P.

Fix $n>2$, and our induction hypothesis is the following.
For any disk $\mathscr{D}_0$ with property P, if $|\mathscr{C}''_0| = (n-1)$, then the closed curve attained from $\hat{\delta}_{ij}$ by replacing each $\hat{s} \in \hat{\mathscr{S}_0}$ with $\iota_s$ has winding number zero about $\hat{e_1}$.
As we have assumed that each curve $\hat{s} \cup \iota_s$ has winding number zero about $\hat{e_1}$ as well, it follows that $\hat{\delta}_{ij}$ has winding number zero about $\hat{e_1}$.
But this contradicts the construction of $\hat{\delta}_{ij}$, and thus no $\mathscr{D}_0$ with such data can exist.

Assume now that $\mathscr{D}_0$ has property P, and that $|\mathscr{C}_0''| = n$.  
So $\mathscr{S}_0$ is some (possibly empty) subset of $\mathscr{C}_0-\mathscr{C}'_0$ and the restriction of $k_0$ to $(\partial \mathscr{D}_0 - \mathscr{S}_0)$ has image in $\hat{\delta}_{ij}$, and is such that $h_0$ restricted to $(\partial \mathscr{D}_0 - \mathscr{S}_0)$ is equal to $g \circ k_0$.
In addition, for each $s \in \mathscr{S}_0$, the closed curve $\hat{s} \cup \iota_s$ has winding number zero about $\hat{e_1}$.

We shall show that we can reduce to the $|\mathscr{C}_0''| = (n-1)$ case.
Let $C'' \in \mathscr{C}''_0$ be an element of $\mathscr{C}''_0$ such that, for some arc $c$ of $fr(C'')$, $c$ separates $C''$ from every other component of $\mathscr{C}''_0$.
Let $\iota_1, \iota_2$ denote the endpoints of $c$, and let $d$ denote the component of $\partial \mathscr{D}_0-\{ \iota_1, \iota_2\}$ that does not meet any element of $\mathscr{C}''_0$ other than $C''$.

Now consider the simple closed curve attained from $\partial \mathscr{D}_0$ by replacing $d$ with $c$.  

Let $\mathscr{D}_1$ denote the disk in $\mathscr{D}_0$ that is bounded by this region.
Note that the restriction of $h_0$ to $\mathscr{D}_1$ is a map into $N_{M}(L_1)$, and, as $c \subset h_0^{-1}(L)$, the restriction of $h_0$ to $\partial \mathscr{D}_1$ is a map into $(X_i \cup X_j \cup L) $.
Let $h_1$ denote the restriction of $h_0$ to $\mathscr{D}_1$, so $h_1$ yields data $\mathscr{C}_1, \mathscr{C}_1'$ and $\mathscr{C}''_1$, defined analogously to the data $\mathscr{C}_0$, etc., that was defined with respect to $\mathscr{D}_0$ and $h_0$.
Note that $c \in \mathscr{C}_1 - \mathscr{C}_1'$, hence $\mathscr{C}_1'' = (\mathscr{C}_0'' - C'')$ and $|\mathscr{C}''_1| = (n-1)$.

Let $k_1 \co \partial \mathscr{D}_1 \to D_{ij}$ be equal to $k_0$ on the domain $\partial \mathscr{D}_1 \cap \partial \mathscr{D}_0$, and take $c$ to $k_0(d)$.
Recall that $k_0$ is an embedding, thus $k_1$ is also an embedding.

Thus we shall have that $\mathscr{D}_1$, with $\mathscr{S}_1 = [\mathscr{S}_0  \cap \partial \mathscr{D}_1] \cup \{ c\}$, has property P if we can find a path $\iota_c$ in $D_{ij}$ that connects $k_0(\iota_1)$ with $k_0(\iota_2)$, does not cross $\hat{[a,b]}$, and is such that $\iota_c \cup k_0(d)$ has winding number zero about $\hat{e_1}$.

As $c$ meets only $h_0^{-1}(X_l)$ in $\partial \mathscr{D}_1$, for some $l = i$ or $j$, a small neighborhood of each of $k_0(\iota_1)$ and $k_0(\iota_2)$ in $k_0(\partial \mathscr{D}_1 - c)$ must be contained in $k_0^{-1}(X_l)$.

It follows that there is a path $\iota_c$ in $D_{ij}$ that connects $k_0(\iota_1)$ to $k_0(\iota_2)$ and is such that a small neighborhood of $k_0(c)$ in $k_0(\partial \mathscr{D}_1)$, with $k_0(c)$ replaced by $\iota_c$, does not cross $\hat{[a,b]}$.
Next we will see that $k_0(d) \cup \iota_c$ has winding number zero about $\hat{e_1}$.

The components of $\mathscr{C}_0-\mathscr{C}_0'$ that meet $d$ will complicate our argument slightly.
Let $\mathscr{T}$ denote those components of $(\mathscr{C}_0-\mathscr{C}_0')$ that are contained in $d$.
If $t \in \mathscr{T} \cap \mathscr{S}_0$, then let $\iota_t$ and $\hat{t}$ be the paths in $D_{ij}$ given in the hypotheses for $\mathscr{D}_0$, and recall that by assumption, the closed curve $\iota_t \cup \hat{t}$ has winding number zero about $\hat{e}_1$.

For the remaining $t \in \mathscr{T}$, there is a path $\iota_t$ with interior in $D_{ij}-\hat{[a,b]}$ that connects the image under $k_0$ of the endpoints of $t$, and is such that a small neighborhood of $k_0(t)$ in $k_0(d)$, with $k_0(t)$ replaced by $\iota_t$, does not cross $\hat{[a,b]}$.
As $t \notin \mathscr{S}_0$, $k_0(t) \subset g^{-1}(L^\rho)$ for some $\rho \in PM$.
Thus, for these components $t \in \mathscr{T}-\mathscr{S}_0$, the curve $k_0(t) \cup \iota_t$ crosses only $\hat{[a,e_1)}$ or $\hat{(e_1, b]}$, and thus has winding number zero about $\hat{e_1}$.

Hence, for each $t \in \mathscr{T}$, $\iota_t$ together with $\hat{t}$ (if $t \in \mathscr{S}_0$) or $k_0(t)$ (if $t \notin \mathscr{S}_0$) has winding number zero about $\hat{e_1}$.
So in order to show that $k_0(d) \cup \iota_c$ has winding number zero about $\hat{e_1}$, it suffices to show that $\iota_c$, together with the path $\hat{d}'$, which is attained by starting with $k_0(d)$ and replacing $\hat{t}$ or $k_0(t)$ by $\iota_t$ for each $t \in \mathscr{T}$, has winding number zero about $\hat{e_1}$.

To see this, recall that the components of $(\iota_c \cup \hat{d}')-k_0(d)$ are segments that do not cross $\hat{[a,b]}$ and are connected by arcs in $k_0(d)$.
Also recall that $d$ meets only one component of $\mathscr{C}_0''$, hence these arcs in $k_0(d)$ are all contained in $g^{-1}(L^+)$, or are all in $g^{-1}(L^-)$.
Thus $\iota_c \cup \hat{d}'$ has winding number zero about $\hat{e_1}$.

It follows that $\iota_c \cup k_0(d)$ has winding number zero about $\hat{e_1}$, and hence $\mathscr{D}_1$ has property P.
But $|\mathscr{C}_1''| = (n-1)$, so the closed curve attained by replacing each $\hat{s} \in \hat{\mathscr{S}}_1$ with $\iota_s$ has winding number zero about $\hat{e_1}$, and hence so does $\hat{\delta}_{ij}$.
We have reached a contradiction, and it follows that $|\mathscr{C}_0''|=n$ is an impossibility, so the proof is complete.
\end{proof}

\noindent {\bf Lemma \ref{second_lemma}.}  {\it
Let $D_{ij}$, $\hat{\delta}_{ij}$, $g \co D_{ij} \to (X_i \cup X_j \cup L) \subset X$, $\pi \co L_1 \to l_1$ and $\chi \subset l_1$ be as in the proof of Theorem \ref{2.1}.
Let $\hat{x},\hat{y}$ be the endpoints of $\hat{\delta}_{ij}$, and let $x=g(\hat{x}), y=g(\hat{y})$.
Let $\hat{\gamma}_x$ denote the segment of $\hat{\delta}_{ij}$ from $\hat{x}$ to $g^{-1}([e_1, e_2]_l)$,
let $\gamma_x$ denote the path $g(\hat{\gamma}_x)$ from $x$ to $[e_1, e_2]_l$, and let $\gamma_x'$ denote the component of $CH(\pi (\gamma_x)),l_1)-\chi$ that contains $x$.
Define $\hat{\gamma}_y$ and $\gamma_y'$ similarly.

Then $\pi(x) \notin \gamma_y'$ and $\pi(y) \notin \gamma_x'$.
}

\begin{proof}
We shall prove that $\pi (x)$ is not contained in $\gamma_y'$; that $\pi(y) \notin \gamma_x'$ shall follow analogously.

To prove that $\pi(x) \notin \gamma_y'$, it suffices to show that there is no path in $L_1$ from $x$ to $\gamma_y'$, of length less than or equal to $N$. 
Moreover, it suffices to show that there is no path in $L_1$ from $x$ to $\pi(\gamma_y)$ of length less than or equal to $(\frac{1}{2}\phi(2N) + N)$, as $\gamma_y'$ is comprised of a $2N$-chain in $\pi (\gamma_y)$, together with connecting segments in $l_1$ of length no more than $\phi (2N)$.
Furthermore, it suffices to show that there is no path in $L_1$ from $x$ to $\gamma_y$ of length less than or equal to $(\frac{1}{2}\phi (2N)+2N)$.

Suppose that this is not the case, and we will show that, if $\pi (x) \subset \gamma_y'$, then we can alter $g|_{D_i}$ or $g|_{D_j}$ so that $\hat{\delta}_{ij}$ becomes a closed curve.
By Lemma \ref{scc_case}, this is a contradiction, and thus $\pi (x)$ cannot be contained in $\gamma_y'$.

Let $\alpha$ be a path from $x$ to $\gamma_y$ that is contained in $L_1$ and has length no more than $(\frac{1}{2}\phi (2N) + 2N)$.  
Recall that $d_{inf}(x,L)>K_1$, and $K_1 > (\frac{1}{2}\phi (2N) + 2N)$.
Thus the path $\alpha$ does not meet $L$, and moreover does not meet a small neighborhood of $L$.

Let $\alpha_0$ denote the endpoint of $\alpha$ that is contained in $\gamma_y$, and let $\hat{\alpha}_0$ denote a point in $g^{-1}(\alpha_0) \cap \hat{\gamma_y}$.

Recall that $\hat{x}$ is contained in $\hat{p}_i$ or $\hat{p}_j$ - let's say that, without loss of generality, $\hat{x} \in \hat{p}_i$.  
Thus $\hat{x} \in g^{-1}(X_i)$, and as $\alpha \cap L = $ \o, $\hat{\alpha}_0$ is also in $g^{-1}(X_i) \subset D_i$.

Next we claim that there exists some path $\beta$ from $\hat{x}$ to $\hat{\alpha_0}$ that is contained in $D_i$ and does not meet $g^{-1}([e_1, e_2]_l)$.  
We first claim that $[e_1, e_2]_l$ does not meet $p_i$.

To see this, first recall that by construction, $[a,b]_l$ does not meet $p_i$ outside of $r_2$-neighborhoods of $a$ and $b$.
Also recall that $L_1$ meets the $R$-neighborhood of any point in $[e_1+\epsilon, e_2-\epsilon]_l$, so the $(R+\epsilon)$-neighborhood of each point in $[e_1, e_2]_l$ meets $L_1$, and that $L_1$ $K$-separates $a$ and $b$.
Thus no point in $[e_1,e_2]_l$ is contained in the $(K-R-\epsilon)$-neighborhoods of $a$ or $b$.

Recall that 
$$K>r_2+R+\epsilon,$$
and thus that $r_2 < (K-R-\epsilon)$.
It follows that no point in $[e_1, e_2]_l$ is contained in the $r_2$-neighborhoods of $a$ or $b$, and therefore $[e_1, e_2]_l$ does not meet $p_i$.

Hence in $D_{ij}$, $g^{-1}([e_1,e_2]_l)$ does not meet $g^{-1}(p_i)$, and in particular does not meet $\hat{p}_i$.
Similarly, $g^{-1}([e_1,e_2]_l)$ does not meet $\hat{p}_j$, and thus does not meet $\partial D_{ij}$.

Recall that $\hat{x} \in \hat{p}_i$, and $\hat{\alpha_0}$ is contained in $\hat{\gamma}_y$, which is a path that does not meet $g^{-1}([e_1, e_2]_l)$ but does contain $\hat{y} \in (\hat{p}_i \cup \hat{p}_j)$.
It follows that $g^{-1}([e_1, e_2]_l)$ does not separate $\hat{x}$ from $\hat{\alpha_0}$ in $D_{ij}$.

Moreover, since $g^{-1}([e_1, e_2]_l) \cap \hat{[a,b]} = \hat{[e_1, e_2]}$ and $\partial D_i = \hat{[a,b]} \cup \hat{p}_i$, a similar argument shows that $g^{-1}([e_1, e_2]_l)$ does not separate $\hat{x}$ from $\hat{\alpha_0}$ in $D_i$.
Thus let $\beta$ be a path in $D_i$ connecting $\hat{x}$ to $\hat{\alpha_0}$, that does not meet $g^{-1}([e_1, e_2]_l)$.
One may further assume that $\beta$ does not meet $\partial D_i$.

Let $\hat{\delta}'$ denote the arc in $\hat{\delta}_{ij}$ from $\hat{x}$ to $\hat{\alpha}_0$, so $\hat{\delta}' \cup \beta$ is a closed curve in $D_{ij}$.
We claim that this curve has nonzero winding number about $\hat{e_1}$ or $\hat{e_2}$.
(Since we are only worried about showing that this winding number is nonzero, we need not be careful about curve orientation.)

Let $\hat{\delta}''$ denote $\hat{\delta}_{ij}-\hat{\delta}'$.
Thus $\hat{\delta}'' \subset \hat{\gamma}_y$, so $\hat{\delta}''$ does not meet $g^{-1}([e_1, e_2]_l)$.

Let $\partial_0$, $\partial_1$ denote the two components of $\partial D_{ij}-\{ \hat{x}, \hat{y}\}$, so that, for some $\{ m,m'\} = \{ 1,2\}$, $\hat{\delta}_{ij} \cup \partial_0$ has winding number $\pm 1$ about $\hat{e_m}$ and winding number zero about $\hat{e_{m'}}$, and $\hat{\delta}_{ij} \cup \partial_1$ has winding number $\pm 1$ about $\hat{e_{m'}}$ and winding number zero about $\hat{e_m}$.

Now consider the closed curves $(\partial_k \cup \hat{\delta}'' \cup \beta)$ for $k=0$ and $1$.
These curves do not meet $g^{-1}([e_1, e_2]_l)$, so in particular they do not meet $\hat{[e_1, e_2]_l}$, and hence do not separate $\hat{e}_1$ and $\hat{e}_2$.
As $\partial D_{ij}$ has winding number one about $\hat{e}_1$ and $\hat{e}_2$, one curve $(\partial_k \cup \hat{\delta}'' \cup \beta)$ has winding number zero about $\hat{e_1}$ and $\hat{e_2}$, and the other has winding number $\pm 1$ about both points.
Suppose without loss of generality that $(\partial_1 \cup \hat{\delta}'' \cup \beta)$ has winding number zero about the two points.

As $(\partial_1 \cup \hat{\delta}'' \cup \beta)$ has winding number zero about $\hat{e_{m'}}$ and $(\partial_1 \cup \hat{\delta}_{ji})$ has winding number $\pm 1$ about $\hat{e_{m'}}$, it follows that $(\hat{\delta}' \cup \beta)$ must have winding number $\pm 1$ about $\hat{e_{m'}}$.

Next we shall redefine $g$ on a small neighborhood of $\beta$.
The restriction of $g$ to $(\hat{p}_i \cup \hat{p}_j \cup \hat{[a,b]})$ shall be unchanged.

By a ``small neighborhood'' of $\beta$, we shall mean a small open neighborhood that is contained in the interior of $D_i$ and does not meet $g^{-1}([e_1, e_2]_l)$.  
Such a neighborhood exists since $\beta$ does not meet $g^{-1}([e_1, e_2]_l)$ or $\partial D_i$, and $\beta, g^{-1}([e_1, e_2]_l)$ and $\partial D_i$ are all closed.

Recall that $\beta \subset D_i$, so $g(\beta)$ is a path contained in $X_i \cup L$ from $x$ to $\alpha_0$.
We saw earlier that there is a path $\alpha$ contained in $(L_1 \cap X_i)$ from $x$ to $\alpha_0$.

Since $\pi_1 (X_i \cup L) = 0$, it follows that we can homotope $g(\beta)$ within $(X_i \cup L)$ to $\alpha$.
Homotope the map $g$ on a small neighborhood of $\beta$ so that $g$ now takes $\beta$ to $\alpha$ (and $g$ is not altered on any portion of the neighborhood of $\beta$ that meets $\hat{\delta}''$).

Recall the definition of $\hat{\delta}_{ij}$, defined with respect to the old map $g$, and consider now the simple curve, call it $\hat{\delta}_{ij}'$, defined in the same manner, but with respect to the altered map $g$.
As $\hat{\delta}' \cup \beta$ is a closed curve with winding number $\pm 1$ about $\hat{e_{m'}}$, it follows that $\hat{\delta}_{ij}'$ must be a simple closed curve about $\hat{e_{m'}}$.

By Lemma \ref{scc_case} this cannot happen.
Thus $\pi(x)$ cannot be contained in $\gamma_y'$, as desired.
Similarly $\pi(y)$ cannot be contained in $\gamma_x'$.

\end{proof}

\bibliographystyle{alpha}
\bibliography{my_biblio}

\def\cprime{$'$}
\begin{thebibliography}{Kro90b}

\bibitem[AB95]{AlonsoBridson}
Juan~M. Alonso and Martin~R. Bridson.
\newblock Semihyperbolic groups.
\newblock {\em Proc. London Math. Soc. (3)}, 70(1):56--114, 1995.

\bibitem[BG96]{BridsonGersten_optimal_isoper_torus_bundles}
M.~R. Bridson and S.~M. Gersten.
\newblock The optimal isoperimetric inequality for torus bundles over the
  circle.
\newblock {\em Quart. J. Math. Oxford Ser. (2)}, 47(185):1--23, 1996.

\bibitem[BH99]{BridsonHaefliger}
Martin~R. Bridson and Andr{\'e} Haefliger.
\newblock {\em Metric spaces of non-positive curvature}, volume 319 of {\em
  Grundlehren der Mathematischen Wissenschaften [Fundamental Principles of
  Mathematical Sciences]}.
\newblock Springer-Verlag, Berlin, 1999.

\bibitem[Bow98]{BowCutPts}
Brian~H. Bowditch.
\newblock Cut points and canonical splittings of hyperbolic groups.
\newblock {\em Acta Math.}, 180(2):145--186, 1998.

\bibitem[Bow02]{BowJSJ}
Brian~H. Bowditch.
\newblock Splittings of finitely generated groups over two-ended subgroups.
\newblock {\em Trans. Amer. Math. Soc.}, 354(3):1049--1078 (electronic), 2002.

\bibitem[Bri95]{Bridson_optimal_isoper_ab_by_free}
Martin~R. Bridson.
\newblock Optimal isoperimetric inequalities for abelian-by-free groups.
\newblock {\em Topology}, 34(3):547--564, 1995.

\bibitem[Coh72]{Cohen_cohom_dim_one}
Daniel~E. Cohen.
\newblock {\em Groups of cohomological dimension one}.
\newblock Lecture Notes in Mathematics, Vol. 245. Springer-Verlag, Berlin,
  1972.

\bibitem[DS99]{DunSa}
M.~J. Dunwoody and M.~E. Sageev.
\newblock J{SJ}-splittings for finitely presented groups over slender groups.
\newblock {\em Invent. Math.}, 135(1):25--44, 1999.

\bibitem[DS00]{DunSwen}
M.~J. Dunwoody and E.~L. Swenson.
\newblock The algebraic torus theorem.
\newblock {\em Invent. Math.}, 140(3):605--637, 2000.

\bibitem[FP06]{FujiPapa}
K.~Fujiwara and P.~Papasoglu.
\newblock J{SJ}-decompositions of finitely presented groups and complexes of
  groups.
\newblock {\em Geom. Funct. Anal.}, 16(1):70--125, 2006.

\bibitem[Jac75]{Jaco_roots}
William Jaco.
\newblock Roots, relations and centralizers in three-manifold groups.
\newblock In {\em Geometric topology ({P}roc. {C}onf., {P}ark {C}ity, {U}tah,
  1974)}, pages 283--309. Lecture Notes in Math., Vol. 438. Springer, Berlin,
  1975.

\bibitem[Joh79]{Johannson}
Klaus Johannson.
\newblock {\em Homotopy equivalences of {$3$}-manifolds with boundaries},
  volume 761 of {\em Lecture Notes in Mathematics}.
\newblock Springer, Berlin, 1979.

\bibitem[JS79]{JacoShalen}
William~H. Jaco and Peter~B. Shalen.
\newblock Seifert fibered spaces in {$3$}-manifolds.
\newblock {\em Mem. Amer. Math. Soc.}, 21(220):viii+192, 1979.

\bibitem[Kap]{Kap_ggt_book}
Michael Kapovich.
\newblock {Lectures on Geometric Group Theory}.
\newblock Spring 2009 revision,
  www.math.ucdavis.edu/$\sim$kapovich/EPR/ggt.pdf.

\bibitem[KL95]{KapLeeb_SFS}
M.~Kapovich and B.~Leeb.
\newblock On asymptotic cones and quasi-isometry classes of fundamental groups
  of {$3$}-manifolds.
\newblock {\em Geom. Funct. Anal.}, 5(3):582--603, 1995.

\bibitem[KL97]{KapLeeb_Haken}
Michael Kapovich and Bernhard Leeb.
\newblock Quasi-isometries preserve the geometric decomposition of {H}aken
  manifolds.
\newblock {\em Invent. Math.}, 128(2):393--416, 1997.

\bibitem[KR89]{KropRol}
P.~H. Kropholler and M.~A. Roller.
\newblock Relative ends and duality groups.
\newblock {\em J. Pure Appl. Algebra}, 61(2):197--210, 1989.

\bibitem[Kro90a]{KropJSJ}
P.~H. Kropholler.
\newblock An analogue of the torus decomposition theorem for certain
  {P}oincar\'e duality groups.
\newblock {\em Proc. London Math. Soc. (3)}, 60(3):503--529, 1990.

\bibitem[Kro90b]{Kropholler_note}
P.~H. Kropholler.
\newblock A note on centrality in {$3$}-manifold groups.
\newblock {\em Math. Proc. Cambridge Philos. Soc.}, 107(2):261--266, 1990.

\bibitem[Mun00]{Munkres}
James~R. Munkres.
\newblock {\em Topology}.
\newblock Prentice-Hall Inc., Englewood Cliffs, N.J., second edition, 2000.

\bibitem[Pap05]{papa_qlines}
Panos Papasoglu.
\newblock Quasi-isometry invariance of group splittings.
\newblock {\em Ann. of Math. (2)}, 161(2):759--830, 2005.

\bibitem[RS97]{RipsSela}
E.~Rips and Z.~Sela.
\newblock Cyclic splittings of finitely presented groups and the canonical
  {JSJ} decomposition.
\newblock {\em Ann. of Math. (2)}, 146(1):53--109, 1997.

\bibitem[San06]{Sankaran}
Parameswaran Sankaran.
\newblock On homeomorphisms and quasi-isometries of the real line.
\newblock {\em Proc. Amer. Math. Soc.}, 134(7):1875--1880 (electronic), 2006.

\bibitem[Sco73]{Scott_compact_core}
G.~P. Scott.
\newblock Compact submanifolds of {$3$}-manifolds.
\newblock {\em J. London Math. Soc. (2)}, 7:246--250, 1973.

\bibitem[Sco83]{Scott_geometries}
Peter Scott.
\newblock The geometries of {$3$}-manifolds.
\newblock {\em Bull. London Math. Soc.}, 15(5):401--487, 1983.

\bibitem[Sco98]{Scott_intersection_numbers}
Peter Scott.
\newblock The symmetry of intersection numbers in group theory.
\newblock {\em Geom. Topol.}, 2:11--29 (electronic), 1998.

\bibitem[Sel97]{SelaJSJ}
Z.~Sela.
\newblock Structure and rigidity in ({G}romov) hyperbolic groups and discrete
  groups in rank {$1$} {L}ie groups. {II}.
\newblock {\em Geom. Funct. Anal.}, 7(3):561--593, 1997.

\bibitem[SS03]{SS}
Peter Scott and Gadde~A. Swarup.
\newblock Regular neighbourhoods and canonical decompositions for groups.
\newblock {\em Ast\'erisque}, (289):vi+233, 2003.

\bibitem[SS07]{SS_PDn_pairs}
Peter Scott and Gadde~A. Swarup.
\newblock {Annulus-Torus decompositions for Poincar\'{e} duality pairs}.
\newblock math.GR/0703890v2, 2007.

\bibitem[SW79]{ScottWall}
Peter Scott and Terry Wall.
\newblock Topological methods in group theory.
\newblock In {\em Homological group theory (Proc. Sympos., Durham, 1977)},
  volume~36 of {\em London Math. Soc. Lecture Note Ser.}, pages 137--203.
  Cambridge Univ. Press, Cambridge, 1979.

\bibitem[SW02]{SoucheWiest}
Estelle Souche and Bert Wiest.
\newblock An elementary approach to quasi-isometries of tree {$\times\ \Bbb
  R\sp n$}.
\newblock In {\em Proceedings of the {C}onference on {G}eometric and
  {C}ombinatorial {G}roup {T}heory, {P}art {II} ({H}aifa, 2000)}, volume~95,
  pages 87--102, 2002.

\bibitem[Tol78]{Tollefson_involutions_of_SFSs}
Jeffrey~L. Tollefson.
\newblock Involutions of {S}eifert fiber spaces.
\newblock {\em Pacific J. Math.}, 74(2):519--529, 1978.

\bibitem[Wal67]{Waldhausen_gruppen}
Friedhelm Waldhausen.
\newblock Gruppen mit {Z}entrum und {$3$}-dimensionale {M}annigfaltigkeiten.
\newblock {\em Topology}, 6:505--517, 1967.

\end{thebibliography}

\end{document}